\newcommand{\newsection}[1]{\setcounter{equation}{0} \section{#1}}
\def\textmatrix#1&#2\\#3&#4\\{\bigl({#1 \atop #3}\ {#2 \atop #4}\bigr)}
\def\dispmatrix#1&#2\\#3&#4\\{\left({#1 \atop #3}\ {#2 \atop #4}\right)}
\newcommand{\be}{\begin{equation}}
	\newcommand{\ee}{\end{equation}}
\newcommand{\ben}{\begin{eqnarray*}}
	\newcommand{\een}{\end{eqnarray*}}
\newcommand{\bi}{\begin{itemize}}
	\newcommand{\ei}{\end{itemize}}
\newtheorem{thmI}{Theorem}
\newtheorem{theorem}{Theorem}[section]
\newtheorem{lemma}[theorem]{Lemma}
\newtheorem{proposition}[theorem]{Proposition}
\newtheorem{corollary}[theorem]{Corollary}
\theoremstyle{definition}
\newtheorem{definition}[theorem]{Definition}
\newtheorem{example}[theorem]{Example}
\newtheorem{question}[theorem]{Question}
\newtheorem{remark}[theorem]{Remark}
\numberwithin{equation}{section}
\begin{document}

\title{On $R$-equivalence of Automorphism groups of Associative algebras}

\author[Das]{Dibyendu Das}
\address{Indian Statistical Institute, Statistics and Mathematics Unit, 8th Mile, Mysore Road, Bangalore, 560059,
India}
\email{ddas4282@gmail.com}


\subjclass{20G15, 14E08, 16Gxx}
\keywords{Automorphism groups of algebras, Associative algebras, Rationality, Stably rational, $R$-triviality, Quiver and relations.}
	
\begin{abstract}
Let $A$ be a finite-dimensional associative $k$-algebra with identity. The primary aim of this paper is to study the rationality properties of the group of all $k$-algebra automorphisms of $A$, as an affine algebraic group over an arbitrary field $k$. We investigate mainly the $R$-equivalence property of the identity component of $\mathrm{Aut}_{k}(A)$ over a perfect field $k$.
\end{abstract}
	
\maketitle

\tableofcontents

\newsection{Introduction and Main results }\label{sec: intro}

Let $k$ be an arbitrary field and $A$ be a finite-dimensional unital associative algebra over $k$. We denote by $\text{Aut}_k(A)$ the group of all $k$-algebra automorphisms of $A$.
The understanding of the algebraic group $\text{Aut}_k(A)$ and its different properties is of deep interest. Many researchers have looked into the group of automorphisms of finite-dimensional unital associative algebras from an algebraic perspective. For example, see the papers (\cite{Djokovic}, \cite{Luks}, \cite{H.G}, \cite{Stanley}, \cite{MM}) and their references for more details. The papers \cite{Frolich} and \cite{Yau2} discuss relations of $\text{Aut}_k(A)$ with the Picard group of $A$ and the classification of isolated hypersurface singularities, respectively. The article (\cite{Pollack}) by David R. Pollack served as our main inspiration, where he has shown how different properties of $G_A$ (the identity component of $\text{Aut}_k(A)$) affect the properties of $A$ over an algebraically closed field. The solvability and unipotency of the group $\text{Aut}_k(A)$ are important in this context, as is apparent from Yau's work in (\cite{Yau2}). Perepechko (\cite{Perepechko}) and Saor\'in et al. (\cite{AS1}, \cite{AS2}) have made significant contributions to the understanding of the solvability of $\text{Aut}_k(A)$. Moreover, Saor\'in and Guil Asensio explored the Picard groups along with the outer automorphism groups of finite-dimensional associative algebras in their papers (\cite{AS3}, \cite{AS2}). Nonetheless, little is known about the geometric properties of $\text{Aut}_k(A)$. This paper studies some birational properties of $\text{Aut}_k(A)$ as a $k$-group. One such property is similar to the concept of path connectedness. In algebraic geometry, this notion corresponds to the $R$-equivalence of a variety. We have explored the rationality and $R$-equivalence properties of $G_A$ (the identity component of $\mathrm{Aut}_k(A)$), which is crucial in the study of $\text{Aut}_k(A)$.

\vspace{0.2in}
\noindent
For a connected algebraic group $G$ defined over an arbitrary field $k$, an important question is the rationality problem: Is the function field $k(G)$ purely transcendental over $k$? Determining whether a variety is rational is a central problem in algebraic geometry. In this context, the notion of $R$-equivalence was introduced by Y.~Manin around $1972$ (see \cite{Manin}, Chapter $2$). The knowledge that a group is $R$-trivial helps in studying its rationality properties. A systematic study of this notion for linear algebraic groups was then initiated by J.-L.~Colliot-Thélène and J.-J.~Sansuc \cite{CTS}. Around 1996, A.~Merkurjev \cite{AM2} and Ph.~Gille \cite{Gille2} studied $R$-triviality for the group of proper projective similitudes of a quadratic form or, more generally, automorphism groups of central simple associative algebras with involutions. For more on $R$-equivalence of automorphism groups of central simple algebras with involutions, one can look at \cite{PA}, \cite{AP}, \cite{Preeti} and their references. On the other hand, the automorphism groups of finite-dimensional associative algebras are not as thoroughly investigated in this context.
\vskip2mm 
\begin{question}
Let $G=G_A$, where $A$ is a finite-dimensional associative algebra over a perfect field $k$. Is the function field $k(G_A)$ purely transcendental over $k$?  
\end{question}
\vskip2mm
To the best of our knowledge, there are no known results regarding the above question. $R$-triviality is the necessary condition for a group to be rational (see Section \ref{R-equivalence}). This article provides a systematic framework for investigating the $R$-equivalence property of connected algebraic groups that are of the form of $G_A$, where $A$ is a split finite-dimensional associative algebra, which is essential for the investigation of any finite-dimensional associative algebra (see Section \ref{sec: Preliminaries} and Definition \ref{Definition 2.6}). For simplicity, our results are given for the case of split finite-dimensional associative algebras. More precisely, we have the following results for $G_A$ over a perfect field $k$, where $A$ is a split finite-dimensional associative algebra:
\vspace{0.2in}

\begin{thmI}[]
$(=\text{Theorem }\ref{Theorem 4.20})$
Let $A$ be a split finite-dimensional associative algebra over a perfect field $k$ with $A_s$ a semisimple subalgebra of $A$ such that $A= A_s\oplus J$ (vector space decomposition) and $G_{A, A_s}\coloneqq\{\sigma\in G_A: \sigma(a)=a \quad \forall a\in A_s\}$. Then, $G_A$ is $R$-trivial if and only if $G_{A,A_s}$ is $R$-trivial. Suppose any one of the following holds:
\begin{enumerate}
    \item $J^2=0$;
    \vskip2mm
    \item $\emph{dim}(J/J^2)\leq 5$;
    \vskip2mm
    \item $\emph{rank} (G_{A,A_s})=\emph{dim}(J/J^2)$.
\end{enumerate}
\vskip2mm
 Then, $\mathrm{Aut}_k(A)^{0}\coloneqq G_A$ is $R$-trivial.
\end{thmI}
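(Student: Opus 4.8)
I will write $J=\mathrm{rad}(A)$; since $k$ is perfect, Wedderburn--Malcev gives $A=A_s\oplus J$ with $A_s\cong A/J\cong\prod_i M_{n_i}(k)$ ($A$ being split), and all such complements are conjugate by an element of $1+J$. From Section~\ref{R-equivalence} I will use freely: $k$-split unipotent groups, split reductive groups such as $\prod_i\mathrm{PGL}_{n_i}$, and products of inner forms of general linear groups are $k$-rational, hence $R$-trivial; torsors under $k$-split unipotent groups are Zariski-locally trivial; and the devissage principles \emph{(i)} for $1\to U\to G\to G/U\to1$ with $U$ $k$-split unipotent, $G$ is $R$-trivial iff $G/U$ is, and \emph{(ii)} for $1\to N\to G\to H\to1$ of smooth connected $k$-groups admitting a $k$-rational section, $G$ is $R$-trivial iff both $N$ and $H$ are. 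To prove the equivalence I note that conjugations by $1+J$ form a normal $k$-split unipotent subgroup $U_J\trianglelefteq G_A$, and conjugacy of Wedderburn complements gives $G_A=U_J\cdot\mathcal P$ with $\mathcal P=\mathrm{Stab}_{G_A}(A_s)$ and $\mathcal P\cap U_J$ normal in $\mathcal P$. Restriction to $A_s$ yields $1\to G_{A,A_s}\to\mathcal P\to\overline{\mathcal P}\to1$ with $\overline{\mathcal P}\le\mathrm{Aut}_k(A_s)^0=\prod_i\mathrm{PGL}_{n_i}$; since conjugation by the units of $A_s\subseteq A$ lies in $\mathcal P$ and maps onto $\mathrm{PInn}(A_s)=\prod_i\mathrm{PGL}_{n_i}$, in fact $\overline{\mathcal P}=\prod_i\mathrm{PGL}_{n_i}$, with $u\mapsto c_u$ an explicit $k$-rational section. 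By \emph{(ii)}, $\mathcal P$ is $R$-trivial iff $G_{A,A_s}$ is; and since $\mathcal P/(\mathcal P\cap U_J)\cong G_A/U_J$, two applications of \emph{(i)} give that $G_{A,A_s}$, $\mathcal P$, and $G_A$ are simultaneously $R$-trivial (replacing groups by identity components where needed).

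Next, extension by the identity on $A_s$ identifies $G_{A,A_s}$ with the group of $A_s$-bimodule automorphisms of the nilpotent non-unital algebra $J$ that preserve its multiplication. Its action on $J/J^2$ gives
\[
1\longrightarrow V\longrightarrow G_{A,A_s}\longrightarrow Q\longrightarrow1,
\]
where $V$ (the automorphisms acting trivially on $J/J^2$) is $k$-split unipotent, any such automorphism differing from the identity by an endomorphism that strictly raises the $J$-adic filtration, and $Q$ is a closed subgroup of $\mathrm{Aut}_{A_s\text{-}A_s}(J/J^2)$. Since $A$ is split, $A_s\otimes_k A_s^{\mathrm{op}}$ is split semisimple, so $\mathrm{Aut}_{A_s\text{-}A_s}(J/J^2)\cong\prod_{i,j}\mathrm{GL}_{m_{ij}}(k)$ with $m_{ij}$ the multiplicity of the simple bimodule $k^{\,n_i\times n_j}$ in $J/J^2$, and $\dim_k(J/J^2)=\sum_{i,j}m_{ij}n_in_j$. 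As $R_u(G_{A,A_s})$ is $k$-split unipotent (perfect $k$), \emph{(i)} reduces everything to the reductive quotient $M:=G_{A,A_s}/R_u(G_{A,A_s})$: $G_A$ is $R$-trivial iff $M$ is, and $M$ is a reductive subquotient of $\prod_{i,j}\mathrm{GL}_{m_{ij}}(k)$.

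It now suffices to show $M$ is $k$-rational under each hypothesis. \emph{(1)} If $J^2=0$, multiplicativity on $J$ is vacuous, so $G_{A,A_s}=\mathrm{Aut}_{A_s\text{-}A_s}(J)=\prod_{i,j}\mathrm{GL}_{m_{ij}}(k)$ is already reductive and $k$-rational. \emph{(3)} A maximal torus of $G_{A,A_s}$ meets $V$ trivially, so injects into $\mathrm{Aut}_{A_s\text{-}A_s}(J/J^2)$, whence $\mathrm{rank}(G_{A,A_s})\le\sum_{i,j}m_{ij}\le\sum_{i,j}m_{ij}n_in_j=\dim_k(J/J^2)$; the hypothesis forces equality throughout, so $n_i=n_j=1$ on the support of $(m_{ij})$ and a maximal torus of $G_{A,A_s}$ maps onto a maximal torus of $\prod_{i,j}\mathrm{GL}_{m_{ij}}$. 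Hence $Q$ contains a maximal torus of the ambient group, which forces $M=Q/R_u(Q)$ to be a Levi-type subquotient — a product of inner forms $R_{E_\gamma/k}\mathrm{GL}_{r_\gamma}(D_\gamma)$ of general linear groups — each of which is $k$-rational. \emph{(2)} If $d:=\dim_k(J/J^2)\le5$, there are only finitely many bimodule decomposition types for $J/J^2$, and for each the reductive $k$-groups that can occur as the automorphism group of a nilpotent $J$ of that type form a short explicit list (products of groups among $\mathbb{G}_m$, $\mathrm{GL}_r$, $\mathrm{SL}_r$ with $r\le d$, quasi-trivial tori, and $\mathrm{GL}_1$ or $\mathrm{SL}_1$ of a quaternion algebra), each of which is $k$-rational; one verifies these case by case.

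I expect everything through the reduction to $M$ to be routine devissage once the $R$-equivalence inputs of Section~\ref{R-equivalence} are in place, the only mild subtlety being that $\prod_i\mathrm{PGL}_{n_i}$ is not a special group, so the descent along $\mathcal P\to\overline{\mathcal P}$ must be run through the explicit section $u\mapsto c_u$. The genuine obstacle is case \emph{(2)}: one must determine exactly which connected reductive $k$-groups arise as the bimodule automorphism group of a radical $J$ with $\dim_k(J/J^2)\le5$, combining the constraints from $A_s$-bilinearity, from multiplicativity on all of $J$ (not merely on $J/J^2$), and from realizability over $k$, and then check $k$-rationality of each possibility; the Levi-type step in \emph{(3)} is of the same nature but considerably lighter. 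This classification of small cases, together with the numerology fixing the bound ``$5$'', is the heart of the argument.
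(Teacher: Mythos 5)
Your reduction from $G_A$ to $G_{A,A_s}$ is essentially the paper's argument in different clothing: the paper uses the explicit decompositions $G_A=\hat J\,G_A^{A_s}$ (Lemma~\ref{Lemma 4.21}) and $G_A^{A_s}\cong\mathrm{Inn}(A_s^\times)\,G_{A,A_s}$ via a split exact sequence (Lemma~\ref{Lemma 4.23}), and then d\'evissage along Lemma~\ref{Lemma 4.18}; you package the same content as the dévissage principles (i), (ii) with the explicit section $u\mapsto c_u$. Case (1) is identical (Lemma~\ref{Lemma 4.25}: $G_{A,A_s}\cong\prod\mathrm{GL}_{\lambda_i}$). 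Case (3) is also the same idea as the paper's Proposition~\ref{Proposition 4.14}/Lemma~\ref{Lemma 4.26} --- the maximal torus of $G_{A,A_s}$ injects into $\mathrm{GL}(J/J^2)$, and the rank hypothesis forces it to be a maximal torus there, hence a product of Weil restrictions $R_{L_i/k}\mathbb{G}_m$, which is rational; your bimodule refinement ($n_i=n_j=1$ on the support) is true but unnecessary, and the ``Levi-type subquotient'' step would need more care than you give it, since the paper bypasses it by going through tori directly and invoking Proposition~\ref{Proposition 4.8}.

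The genuine gap is case (2), and you acknowledge it. Your proposed strategy --- classify all connected reductive $k$-groups that can arise as $M$ when $\dim(J/J^2)\le 5$, then check rationality case by case --- is not what the paper does, and as sketched it is both incomplete and unproved. Your candidate list (products of $\mathbb{G}_m$, $\mathrm{GL}_r$, $\mathrm{SL}_r$, quasi-trivial tori, $\mathrm{GL}_1/\mathrm{SL}_1$ of a quaternion algebra) misses precisely the hard configurations the paper's Proposition~\ref{Proposition 4.15} is built to handle: for $\dim(J/J^2)=5$ the reductive quotient can be a rank-$3$ anisotropic torus $T\subset\mathrm{SL}_5$, which is neither quasi-trivial, nor a $\mathrm{GL}_r$ or $\mathrm{SL}_r$, nor quaternionic. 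The paper deals with this by showing $T\cdot T'=L^{(1)}$ for an \'etale algebra $L$ of degree $5$ (Proposition~$13.2.3$ of Springer), then runs an argument on the possible shapes of $L$ (degree-$5$ field, $E\times F$ with $[E:k]=3$, $[F:k]=2$, etc.) to conclude $T$ is rational or stably rational. Nothing in your proposal replaces this analysis. More structurally, the paper does not classify the reductive groups $M$ at all --- it proves the stronger and more usable statement that \emph{every} maximal torus of $\mathrm{GL}_n$, $n\le 5$, is stably rational, and then appeals to Proposition~\ref{Proposition 4.8} and Remark~\ref{Remark 4.9}, which say that a connected group over a perfect field is stably rational as soon as its maximal tori are. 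You should adopt that route rather than attempt a classification of reductive $M$: the torus-level statement is exactly why the bound $5$ appears (tori of rank $\le 2$ are rational; $\mathrm{GL}_6$ already admits non-$R$-trivial tori, which is also the source of the counterexamples in Section~\ref{counter}).
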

\vskip2mm

\begin{thmI}[]
$(=\text{Theorem }\ref{Theorem 4.33})$ Let $A$ be a split finite-dimensional associative algebra over a perfect field $k$. Assume that $\emph{\text{Aut}}_k(A)$ is a nilpotent group. Then, $\emph{\text{Aut}}_k(A)^{0}\coloneqq  G_A$ is rational.
\end{thmI}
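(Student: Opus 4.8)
The plan is to describe $G_A$ precisely enough to recognise it as an open subvariety of an affine space, which forces $k(G_A)$ to be purely transcendental over $k$. The first move is to reduce to the basic case. Since $k$ is perfect, Wedderburn--Malcev gives $A=A_s\oplus J$ with $J=\mathrm{rad}(A)$, and $A$ being split means $A_s\cong\prod_i M_{n_i}(k)$. Every automorphism preserves $J$, so restriction along $A\twoheadrightarrow A/J$ gives a homomorphism $\mathrm{Aut}_k(A)\to\mathrm{Aut}_k(A/J)$; lifting units along $A^{\times}\twoheadrightarrow (A/J)^{\times}$ and using Skolem--Noether shows that already the connected subgroup $\mathrm{Int}(A^{\times})\subseteq G_A$ surjects onto $\mathrm{Aut}_k(A/J)^{0}=\prod_i\mathrm{PGL}_{n_i}$. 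Thus $\prod_i\mathrm{PGL}_{n_i}$ is a quotient of the nilpotent group $G_A$, which forces all $n_i=1$; hence $A/J\cong k^m$ and $A$ is basic. Fix orthogonal idempotents $e_1,\dots,e_m$ spanning a Levi subalgebra $S$, and record the Peirce decomposition $A=\bigoplus_{i,j}e_iAe_j$.

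Next I would use nilpotency to split $G_A$. As a connected nilpotent linear algebraic group over the perfect field $k$, $G_A=T\times U$ with $T$ its (central, unique) maximal torus and $U=R_u(G_A)$. Over a perfect field $U$ is split unipotent, hence $k$-isomorphic as a variety to $\mathbb{A}^{\dim U}$; so it suffices to prove $T$ is $k$-split, for then $G_A\cong\mathbb{G}_m^{\dim T}\times\mathbb{A}^{\dim U}$ is an open subvariety of $\mathbb{A}^{\dim G_A}$. Now $T$ is connected, so it acts through a finite group of permutations of the $m$ factors of $A/J\cong k^m$, hence trivially on $A/J$; being a torus, $T$ is linearly reductive, so one may take the Levi $S$ to be $T$-stable and $k$-rational (Mostow), and then $T$ fixes $S$ pointwise, hence fixes each $e_i$ and preserves each $e_iAe_j$. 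Moreover $T$ acts faithfully on $V\coloneqq J/J^2$: an element of $T$ acting trivially on $A/J$ and on $V$ acts unipotently on the filtration $A\supseteq J\supseteq J^2\supseteq\cdots$, hence is $1$. This faithful action respects $V=\bigoplus_{i,j}e_iVe_j$.

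It then remains to show $T$ is split, using the structural input (to be extracted from the earlier analysis) that nilpotency of $\mathrm{Aut}_k(A)$ forces $\dim_k e_iVe_j\le 1$ for all $i,j$ — equivalently, the quiver of $A$ has at most one arrow between any ordered pair of vertices. Granting this, each nonzero $e_iVe_j$ is a $T$-eigenline defined over $k$, so its weight lies in $X^*(T)^{\Gamma}$ with $\Gamma=\mathrm{Gal}(\bar k/k)$; and faithfulness of the $T$-action on $V=\bigoplus e_iVe_j$ makes these weights generate $X^*(T)$. Hence $X^*(T)=X^*(T)^{\Gamma}$, i.e.\ $T$ is $k$-split, and the argument concludes as in the previous paragraph.

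The main obstacle is precisely this bound $\dim_k e_i(J/J^2)e_j\le 1$. The mechanism is transparent on examples: two distinct arrows joining a fixed ordered pair of vertices let the maximal torus act with two distinct weights on their span, and hence with a nonzero weight on some higher-order deformation direction built from them; that direction lies in $U$, so the corresponding one-parameter subgroup of $T$ acts nontrivially by conjugation on $U\supseteq\mathrm{Int}(1+J)$, contradicting $G_A=T\times U$. Making this rigorous requires the explicit description of $\mathrm{Aut}_k(kQ/I)$ together with the $T$-action on the radical filtration, which is where the quiver-and-relations bookkeeping from the earlier sections is used. The remaining ingredients — $k$-rationality of the $T$-stable Levi, and the standard facts about split unipotent groups and connected nilpotent groups over perfect fields — are routine.
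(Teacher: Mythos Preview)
Your reduction to the basic case and the decomposition $G_A=T\times U$ with $U$ split unipotent are fine and match the paper's opening moves. The gap is precisely where you locate it: the claim that nilpotency of $\mathrm{Aut}_k(A)$ forces $\dim_k e_i(J/J^2)e_j\le 1$ for all $i,j$. This is actually \emph{false} as stated. Your off-diagonal argument is correct and shows $e_iJe_j=0$ for $i\neq j$ (so $A=\bigoplus_i e_iAe_i$ is a product of split local algebras $A_i$), but on the diagonal it breaks down: Pollack's dichotomy (Lemma~\ref{Lemma 4.32}) allows a local block $A_i$ with $G_{A_i}$ purely unipotent, and such an $A_i$ can have $\dim_k(J_i/J_i^2)\ge 2$. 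Your sketched mechanism --- ``two distinct arrows let the maximal torus act with two distinct weights'' --- presupposes that $T$ is large enough to separate the arrows at a vertex, which is exactly what fails in the unipotent-block case and is essentially what you are trying to prove.

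The paper closes this gap differently. After arriving at $A=\bigoplus_i A_i$ with each $A_i$ split local, it invokes Lemma~\ref{Lemma 4.32}: for split local $A_i$ with $G_{A_i}$ nilpotent, $G_{A_i}$ is either a torus or unipotent, never a nontrivial product. If $G_{A_i}$ is unipotent it is rational; if $G_{A_i}$ is a torus, Theorem~\ref{Theorem 3.4} (proved earlier by pinning $A_i$ down to $k[X]/\langle X\rangle^2$) shows it is split, hence rational. Your argument can be repaired along these lines: what you actually need is not the dimension bound but the weaker statement that $T$ acts on $J/J^2$ through $k$-rational characters, and this follows once you feed in Lemma~\ref{Lemma 4.32} and Theorem~\ref{Theorem 3.4} on each diagonal block (trivial $T$-action on unipotent blocks, a single $k$-rational weight on toral blocks). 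Without those inputs the proposal remains incomplete.
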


\vspace{0.2in}
\noindent
At first, we reduce the problem of studying $R$-equivalence of $G_A$, where $A$ is a split associative algebra, to that of studying $R$-equivalence of $G_A$, where $A$ is a split local algebra (see Definition \ref{Definition 2.6}).
\vskip2mm 
\noindent
We prove in Theorem \ref{Theorem 4.20} that $G_A$ is $R$-trivial if and only if $G_{A,A_s}$ is $R$-trivial, where $A_s$ is a semisimple subalgebra of $A$ such that $A=A_s\oplus J$ (vector space decomposition) with Jacobson radical $J$ and $G_{A,A_s}\coloneqq\{\sigma\in G_A: \sigma(a)=a \quad \forall a\in A_s\}$. In Proposition \ref{Proposition 4.34}, we show that the property of $R$-triviality of $G_A$ is invariant under \emph{Morita equivalence} (see Definition \ref{Definition 2.3}) using Theorem \ref{Theorem 4.20}. Consequently, it suffices to examine split local algebras in order to investigate $R$-triviality of $G_A$ as shown in Remark \ref{Remark 4.35}. Furthermore, Proposition \ref{Proposition 4.37} indicates that for a split local algebra $A$, the group $G_A$ is stably birationally equivalent to $\text{ Im}(\Phi_A)$, where $\Phi_A: G_A\rightarrow \mathrm{GL}(J/J^2)$ is the canonical map and $J$ is the Jacobson radical of $A$. In particular, we have \[G_A(F)/R\cong \mathrm{Im}(\Phi_A)(F)/R\] for every field extension $F/k$ and for every split local algebra $A$ over $k$ (see Section \ref{R-equivalence} for notation). Therefore, the analysis can be effectively narrowed down to the study of $\text{ Im}(\Phi_A)$.
\vskip4mm
\noindent
Thereafter, we are primarily concerned with the study of $R$-triviality of the automorphism groups of split local commutative algebras. In Lemma \ref{Lemma 5.33}, we show the existence of a $\mathrm{Im}(\Phi_A)$-stable finite-dimensional subspace $W$ associated to a split local commutative algebra $A$ using \emph{quiver and relation} representation of $A$. Then we use this subspace to examine the $R$-equivalence property of $G_A$, where $A$ is a split local commutative algebra. Under certain new sufficient conditions, we prove that $G_A$ is $R$-trivial, thereby establishing that the conditions of Theorem \ref{Theorem 4.20} are sufficient but not necessary for commutative algebras (see Example \ref{Example 5.15} and Theorem \ref{Theorem 5.35}). Furthermore, we show the existence of a local commutative algebra $A$ with $\text{dim}(J/J^2)\geq 6$ and $G_A$ is not $R$-trivial (see Theorem \ref{Theorem 6.4}). More precisely, we have the following results stated below for $G_A$ over a perfect field $k$, where $A$ is a split local commutative algebra:
\vspace{0.2in}

\begin{thmI}
$(=\text{Theorem }\ref{Theorem 5.13})$ Let $A$ be a finite-dimensional local commutative algebra over a field $k$ with $\text{dim}(J/J^2)=n$. Suppose $A\cong k[X_1,\dots, X_n]/I$ by quiver and relation representation of $A$, where $I=\langle X_1,\dots, X_n\rangle^l+\langle f\rangle$ and $2\leq \mathrm{ deg}(f)<l$ is the Lowey length of $A$. Then, $G_A$ is rational or $R$-trivial as a $k$-group if and only if $\mathrm{Stab}^{0}(f)$ is rational or $R$-trivial.
\end{thmI}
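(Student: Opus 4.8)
The plan is to analyze $G_A$ for $A \cong k[X_1,\dots,X_n]/I$ with $I = \langle X_1,\dots,X_n\rangle^l + \langle f\rangle$ via the reduction machinery already established. Since $A$ is split local commutative, Proposition~\ref{Proposition 4.37} tells us that $G_A$ is stably birationally equivalent to $\mathrm{Im}(\Phi_A) \subseteq \mathrm{GL}(J/J^2)$, and that $G_A(F)/R \cong \mathrm{Im}(\Phi_A)(F)/R$ for every field extension $F/k$. Since stable birational equivalence preserves both rationality and $R$-triviality (the latter via the isomorphism on $R$-equivalence classes over all extensions, the former by definition), it suffices to identify $\mathrm{Im}(\Phi_A)$ with $\mathrm{Stab}^0(f)$ up to the relevant equivalence. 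So the first step is to make the identification $\mathrm{Im}(\Phi_A) = \mathrm{Stab}^0(f)$ precise.

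The second step is the heart of the matter: describing how an automorphism of $A$ acts, and why the only constraint that survives modulo $J^2$ is the one coming from $f$. An automorphism $\sigma$ of $A$ is determined by the images $\sigma(X_i) \in J$, which we may write as $\sigma(X_i) = \sum_j a_{ij} X_j + (\text{higher order terms in } J^2)$; invertibility forces $(a_{ij}) \in \mathrm{GL}_n$. The map $\Phi_A$ sends $\sigma$ to the induced linear map $(a_{ij})$ on $J/J^2 = \langle X_1,\dots,X_n\rangle/\langle X_1,\dots,X_n\rangle^2$. The key point is that the defining relations of $A$ split into the monomial part $\langle X_1,\dots,X_n\rangle^l$, which is automatically preserved by any substitution $X_i \mapsto (\text{element of }J)$ with invertible linear part (since such a substitution preserves the filtration by powers of the maximal ideal), and the single relation $f$, which must be sent to $0$ in $A$, i.e. $\sigma(f) \in \langle X_1,\dots,X_n\rangle^l + \langle f\rangle$. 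Because $2 \le \deg(f) < l$, the condition on the linear part $g = (a_{ij})$ is exactly that $g$ transforms $f$ into a scalar multiple of $f$ modulo terms of degree $\ge l$; modulo the higher-order freedom this is precisely membership in $\mathrm{Stab}(f)$ (the group of linear substitutions fixing the line $kf$, or fixing $f$ itself, depending on the paper's convention in Section~\ref{sec: Preliminaries}). Passing to identity components gives $\mathrm{Im}(\Phi_A) = \mathrm{Stab}^0(f)$.

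The third step records the consequence: combining $G_A \sim_{\text{st.bir.}} \mathrm{Im}(\Phi_A) = \mathrm{Stab}^0(f)$, we get that $G_A$ is rational if and only if $\mathrm{Stab}^0(f)$ is rational (stable rationality of a connected linear algebraic group is equivalent to rationality in the relevant cases, or one invokes the paper's standing conventions), and $G_A(F)/R \cong \mathrm{Stab}^0(f)(F)/R$ for all $F/k$, so $G_A$ is $R$-trivial if and only if $\mathrm{Stab}^0(f)$ is.

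The main obstacle I anticipate is step two: one must show carefully that the kernel of $\Phi_A$ — automorphisms acting trivially on $J/J^2$ — contributes nothing to the obstruction, i.e. that every linear substitution $g \in \mathrm{Stab}(f)$ actually lifts to an automorphism of $A$. This requires checking that given $g$ fixing $f$ up to the allowed ambiguity, one can choose the higher-order terms of $\sigma(X_i)$ so that $\sigma(f) \equiv 0$ in $A$ exactly; this is where the hypothesis $\deg(f) < l$ is essential, as it guarantees enough room in degrees between $\deg(f)$ and $l$ to absorb correction terms, and the monomial part $\langle X_1,\dots,X_n\rangle^l$ is killed anyway. One also needs that $\ker\Phi_A$ is split unipotent (a standard fact for the kernel of the action on $J/J^2$, cf. the discussion around Proposition~\ref{Proposition 4.37}), so that the stable birational equivalence genuinely holds and the $R$-equivalence classes match. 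The rest is bookkeeping with the quiver-and-relation presentation.
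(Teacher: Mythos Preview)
Your overall strategy is right---reduce $G_A$ to $\mathrm{Im}(\Phi_A)$ via the unipotent kernel---but there are two places where the proposal diverges from what actually works, and both stem from not exploiting that $I$ is a \emph{homogeneous} ideal (so $A$ is graded by the radical).

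First, the identification of $\mathrm{Im}(\Phi_A)$. You hedge between ``fixing the line $kf$'' and ``fixing $f$ itself''; in the paper's conventions these are \emph{different} groups, $\mathrm{Sim}(f)$ and $\mathrm{Stab}(f)$ respectively (Definitions~\ref{Definition 5.10} and~\ref{Definition 5.11}), and it is the former that equals $\mathrm{Im}(\Phi_A)$. The scalar matrices always lie in $\mathrm{Im}(\Phi_A)$ but only lie in $\mathrm{Stab}(f)$ when they are $d$-th roots of unity, so your claim ``$\mathrm{Im}(\Phi_A)=\mathrm{Stab}^0(f)$'' is off by a central $\mathbb{G}_m$. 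The paper handles this via Proposition~\ref{Proposition 5.12}, which gives $\mathrm{Im}(\Phi_A)=\mathrm{Sim}(f)=\mathbb{G}_m\cdot\mathrm{Stab}(f)$; one then has to argue that the extra rational $\mathbb{G}_m$ factor does not affect rationality or $R$-triviality of the product. You never make this last step explicit.

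Second, your ``main obstacle'' in step two---lifting a linear $g\in\mathrm{Sim}(f)$ to an automorphism of $A$ by adjusting higher-order terms---is a non-issue here, and worrying about it obscures the mechanism. Because $I$ is homogeneous, the linear substitution $X\mapsto XM$ already sends $I$ into $I$ on the nose (it preserves $\langle X_1,\dots,X_n\rangle^l$ and sends $f$ to a scalar multiple of $f$), so no corrections are needed. This is exactly Lemma~\ref{Lemma 5.3}, and it is what gives the clean semidirect product $\mathrm{Aut}_k(A)\cong\mathrm{Im}(\Phi_A)\ltimes\mathrm{Ker}(\Phi_A)$ of Proposition~\ref{Proposition 5.4}. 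Using this genuine splitting rather than Proposition~\ref{Proposition 4.37} also lets you avoid the dubious claim that ``stable rationality of a connected linear algebraic group is equivalent to rationality in the relevant cases''---that is an open problem in general, not a standing convention.
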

\vskip2mm
\begin{thmI}
$(=\text{Theorem }\ref{Theorem 5.35})$ Let $A$ be a split local commutative graded by the radical algebra over a field $k$ with $\emph{dim}(J/J^2)=n$. Let $W$ be the minimal degree subspace associated to $A$ (see Remark \ref{Remark 5.34}). Assume $W$ contains a non-singular homogeneous polynomial of degree at least $3$ if $\emph{char }k=0$. Additionally if $\emph{char }k=p>3$ and $W$ contains a non-singular homogeneous polynomial of degree $d$ with $2<d<p$. Then the following two statements are equivalent:
\vskip2mm
\begin{enumerate}
\item $G_A$ is $k$-split solvable; 
\vskip2mm
\item $W$ has a stable full flag by $\emph{\text{Im}}(\Phi_A)$.
\end{enumerate}
Therefore, under the condition $(2)$, the group is rational. In particular, if the $\text{dim }W=1$, then $G_A$ is rational.
\end{thmI}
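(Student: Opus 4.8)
The plan is to transfer the whole question from $G_A$ to its image $\mathrm{Im}(\Phi_A)\subseteq\mathrm{GL}(J/J^{2})$. Because $A$ is graded by the radical, write $A=k[X_{1},\dots,X_{n}]/I$ with $I$ homogeneous, identify $J/J^{2}$ with the degree-one piece $V=\langle X_{1},\dots,X_{n}\rangle$, and take $W$ to be the lowest-degree homogeneous component $I_{d}$ of $I$ (with $d\ge 2$), which is the minimal degree subspace of Remark \ref{Remark 5.34}. The group $\mathrm{Im}(\Phi_A)$ acts on $\mathrm{Sym}^{\bullet}V=k[X_{1},\dots,X_{n}]$ and preserves $W$: applying $\sigma\in G_A$ to a minimal-degree relation and comparing degree-$d$ components shows the linear part $\Phi_A(\sigma)$ carries $I_{d}$ into itself. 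Using the development around Proposition \ref{Proposition 4.37}, $\Phi_A\colon G_A\twoheadrightarrow\mathrm{Im}(\Phi_A)$ has unipotent kernel; hence $G_A$ is solvable iff $\mathrm{Im}(\Phi_A)$ is, any maximal torus of $G_A$ embeds into $\mathrm{Im}(\Phi_A)$, and $\mathrm{R}_{u}(G_A)$ is connected unipotent over the perfect field $k$, so $k$-split. Combined with the fact that a quotient of a $k$-split solvable group is $k$-split solvable, this yields that $G_A$ is $k$-split solvable if and only if $\mathrm{Im}(\Phi_A)$ is; and since $k$-split solvable groups are $k$-rational, the two ``therefore'' clauses of the statement will drop out once $(1)\Leftrightarrow(2)$ is proved. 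I also record that $\mathrm{Im}(\Phi_A)$ is connected, being the image of the connected group $G_A$.

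For $(1)\Rightarrow(2)$ I would argue directly: if $\mathrm{Im}(\Phi_A)$ is $k$-split solvable then it is $k$-trigonalizable, so every rational $k$-representation of it --- in particular its action on the $\mathrm{Im}(\Phi_A)$-submodule $W$ --- admits a stable complete flag defined over $k$, which is $(2)$. No assumption on the non-singular form is needed here.

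The substance is the reverse implication $(2)\Rightarrow(1)$, and this is where the non-singular form $f\in W$ enters. Put $G=\mathrm{Im}(\Phi_A)$ and let $\rho\colon G\to\mathrm{GL}(W)$ be the action on $W$. A $G$-stable complete flag of $W$ places $\rho(G)$ inside a Borel subgroup of $\mathrm{GL}(W)$ defined over $k$, so $\rho(G)$ is connected and $k$-trigonalizable, hence $k$-split solvable. The key step is to show $\ker\rho$ is \emph{finite}: an element of $\ker\rho$ fixes every vector of $W$, so in particular it lies in the stabilizer $\mathrm{Stab}(f)\subseteq\mathrm{GL}(V)$ of $f$; and for $n=\dim V\ge 2$ this stabilizer is finite because the group of projective-linear automorphisms of the smooth hypersurface $\{f=0\}\subseteq\mathbb{P}^{n-1}$ of degree $d\ge 3$ is finite by the theorem of Matsumura--Monsky (it is precisely to keep this theorem valid that one assumes $d\ge 3$ when $\mathrm{char}\,k=0$ and $2<d<p$ when $\mathrm{char}\,k=p>3$, the latter forcing $d$ to avoid the exceptional degrees $p^{e}+1$), while $\mathrm{Stab}(f)$ differs from that finite group only by the scalars $\mu_{d}$; the case $n=1$ is trivial, since then $A=k[X]/(X^{l})$ and $G_A$ is visibly $k$-split solvable. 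Consequently $\rho\colon G\to\rho(G)$ is an isogeny onto a $k$-split solvable group. Solvability survives isogenies of connected groups (a finite connected derived subgroup is trivial), so $G$ is connected solvable; its maximal torus maps isogenously onto a split torus and is therefore split, and $\mathrm{R}_{u}(G)$ is connected unipotent over the perfect field $k$ and therefore split; hence $G$ is $k$-split solvable, and by the first paragraph so is $G_A$.

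Finally, $k$-split solvable groups are $k$-rational, so under $(2)$ the group $G_A$ is rational; and when $\dim W=1$ the line $W=\langle f\rangle$ is its own $\mathrm{Im}(\Phi_A)$-stable complete flag, so $(2)$ holds automatically (the non-singularity of its generator being the standing hypothesis) and $G_A$ is rational. I expect the only real obstacle to be the finiteness of $\mathrm{Stab}(f)$: the remainder is routine structure theory of connected solvable groups over a perfect field, but that finiteness genuinely invokes the hypersurface-automorphism theorem, and it is the breakdown of that theorem for quadrics and in small characteristic that forces the degree and characteristic restrictions on $f$ in the hypothesis.
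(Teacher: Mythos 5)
Your proposal is essentially the paper's own proof: reduce from $G_A$ to $\mathrm{Im}(\Phi_A)$ via the unipotent kernel of $\Phi_A$, get $(1)\Rightarrow(2)$ from the fixed-point theorem for $k$-split solvable groups acting on the flag variety of $W$, and get $(2)\Rightarrow(1)$ by using the non-singular polynomial $s\in W$ to control the kernel of the representation $\mathrm{Im}(\Phi_A)\to\mathrm{GL}(W)$.

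One genuine improvement in your version: the paper asserts $\mathrm{Ker}(\Psi)\subset\mathrm{Stab}^{0}(s)=\{Id\}$ and concludes $\mathrm{Im}(\Phi_A)\cong\mathrm{Im}\Psi$, but the kernel of a morphism from a connected group need not be connected, so one may only conclude $\mathrm{Ker}(\Psi)^{0}\subset\mathrm{Stab}^{0}(s)=\{Id\}$, i.e., the kernel is finite. You handle this correctly by treating $\rho$ as an isogeny and checking that solvability, splitness of the maximal torus, and splitness of the unipotent radical all pass back along an isogeny of connected groups over a perfect field. The other difference is citation, not substance: the paper invokes Jordan's finiteness theorem for the orthogonal group of a non-singular symmetric multilinear form of degree $r\ge 3$ with $\mathrm{char}\,k=0$ or $>r$ (Theorem \ref{Theorem 5.31}), whereas you invoke Matsumura--Monsky finiteness of the automorphism group of a smooth projective hypersurface; under the stated degree and characteristic restrictions the two give the same finiteness of $\mathrm{Stab}(s)$, though the paper's version is the one whose hypotheses match the theorem statement exactly, so is the cleaner reference to use here.
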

\vskip2mm
\begin{thmI}
$(=\text{Theorem }\ref{Theorem 6.4})$ For each $n\geq3$, there exists a finite-dimensional commutative algebra $A_n$ over a field $k_n$ with $\emph{dim } \displaystyle (\frac{J_n}{J_n^2})\geq 2n$, where $J_n$ is the Jacobson radical of $A_n$, such that $G_{A_n}$ is not $R$-trivial as a $k_n$-group and rank of $G_{A_n}$ is $(n+1)$. 
\end{thmI}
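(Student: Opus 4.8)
The plan is to construct the family $A_n$ explicitly by writing down, via the quiver-and-relation representation, a local commutative graded-by-radical algebra whose defining ideal is controlled by a single homogeneous form $f_n$ whose stabilizer (in the appropriate $\mathrm{GL}$) is a group known \emph{not} to be $R$-trivial over a suitable field $k_n$. Concretely, I would take $A_n = k_n[X_1,\dots,X_{2n}]/I_n$ with $I_n = \langle X_1,\dots,X_{2n}\rangle^{l}+\langle f_n\rangle$ for $l$ large, so that by Theorem \ref{Theorem 5.13} (or its graded analogue) the $R$-triviality of $G_{A_n}$ is equivalent to that of $\mathrm{Stab}^0(f_n)\subseteq \mathrm{GL}_{2n}$. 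Then $\mathrm{dim}(J_n/J_n^2)=2n$ automatically, and computing the rank of $G_{A_n}$ reduces to computing the rank of $\mathrm{Stab}^0(f_n)$ together with the contribution of the radical-shifting automorphisms (the image of $\Phi_{A_n}$ versus its kernel), which by Proposition \ref{Proposition 4.37} is a stable-birational statement and so does not affect $R$-triviality.

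The heart of the construction is choosing $f_n$. The natural candidate is a norm form or a quadratic/cubic form whose special orthogonal or special unitary group is anisotropic and non-$R$-trivial — for instance, letting $f_n$ be the generic norm form of degree related to $n$ of a division algebra, or a quadratic form $q_n$ in $2n$ variables attached to a suitable quaternion (or higher-degree) algebra over a field $k_n$ such as an iterated Laurent series field $\mathbb{Q}((t_1))\cdots((t_{2n}))$, where by Merkurjev's and Gille's results the automorphism group of the associated algebra-with-involution is not $R$-trivial. One then identifies $\mathrm{Stab}^0(f_n)$ with (an open piece of) that classical group: for a non-degenerate quadratic form $q_n$, $\mathrm{Stab}(q_n)=\mathrm{O}(q_n)$, so $\mathrm{Stab}^0(q_n)=\mathrm{SO}(q_n)$, and for $q_n$ suitably anisotropic this group fails $R$-triviality. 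The rank being exactly $n+1$ then comes from: $\mathrm{rank}\,\mathrm{SO}(q_n)=n$ for $q_n$ in $2n$ variables (split rank $n$ if $q_n$ were split; but we need the \emph{absolute} rank over $\bar k$, which is $n$), plus one extra torus dimension coming from the scaling automorphism $X_i\mapsto \lambda X_i$ that rescales $f_n$ — since $f_n$ is homogeneous of a single degree $d$, the stabilizer in $\mathrm{GL}_{2n}$ of the \emph{line} $\langle f_n\rangle$ (which is what governs $G_{A_n}$ for the ideal $\langle X\rangle^l+\langle f_n\rangle$) contains $\mathrm{SO}(q_n)\times \mathbb{G}_m$, giving absolute rank $n+1$.

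The key steps, in order: (i) fix the field $k_n$ and the form $f_n=q_n$ (an appropriate even-dimensional anisotropic quadratic form, or a higher-degree form if $q_n$ of dimension $2n$ cannot be made to have non-$R$-trivial $\mathrm{SO}$ for small $n$ — this is why the statement only claims $n\ge 3$ and allows $\mathrm{dim}(J_n/J_n^2)\ge 2n$ rather than $=2n$, giving room to pad with extra variables that do not change $R$-triviality but let $q_n$ be large enough); (ii) define $A_n$ by quiver and relations with ideal $\langle X_1,\dots\rangle^l+\langle q_n\rangle$ and verify it is a finite-dimensional local commutative graded-by-radical algebra with the stated $\mathrm{dim}(J_n/J_n^2)$; (iii) invoke Theorem \ref{Theorem 5.13}/Proposition \ref{Proposition 4.37} to reduce $R$-triviality of $G_{A_n}$ to that of $\mathrm{Stab}^0(q_n)=\mathrm{SO}(q_n)$ up to stable birational equivalence; (iv) quote Merkurjev/Gille (\cite{AM2}, \cite{Gille2}) to conclude $\mathrm{SO}(q_n)(k_n)/R\ne 1$, hence $G_{A_n}$ is not $R$-trivial; (v) compute the rank: $\mathrm{rank}\,\mathrm{Aut}_{\bar{k_n}}(A_n\otimes \bar{k_n})^0 = \mathrm{rank}\,\mathrm{SO}_{2n}+1 = n+1$, accounting for the scalar $\mathbb{G}_m$ and checking the unipotent radical (the kernel of $\Phi_{A_n}$) contributes no torus.

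The main obstacle I expect is step (v) combined with the small-$n$ behaviour in step (i): one must choose $q_n$ so that simultaneously (a) $\mathrm{SO}(q_n)$ is \emph{not} $R$-trivial over $k_n$ — which for quadratic forms requires $q_n$ to be sufficiently anisotropic and typically forces $\dim q_n\ge 5$ or more and a carefully built $k_n$ with enough Laurent-series variables — and (b) the \emph{absolute} rank works out to exactly $n+1$ with no accidental extra toral part creeping in from automorphisms of $A_n$ that are not captured by $\mathrm{Stab}(q_n)$, e.g.\ automorphisms permuting or mixing the $X_i$ within a single graded piece but outside $\mathrm{O}(q_n)$, or toral automorphisms hidden in $\ker\Phi_{A_n}$. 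Verifying that $\mathrm{Aut}$ of the graded algebra is \emph{exactly} the line-stabilizer of $q_n$ inside $\mathrm{GL}(J_n/J_n^2)$ — i.e.\ that the graded-by-radical structure together with the top power $\langle X\rangle^l$ forces every automorphism to respect the form up to scalar — is the delicate algebraic point; it is here that the quiver-and-relation machinery of Lemma \ref{Lemma 5.33} and the structure theory from Section \ref{sec: Preliminaries} must be applied carefully, and where a naive dimension count could be off, which is presumably why the theorem is stated with the safe inequality $\mathrm{dim}(J_n/J_n^2)\ge 2n$.
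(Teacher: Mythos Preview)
Your overall architecture is right — build $A_n=k_n[X_1,\dots,X_m]/(\langle X\rangle^l+\langle f_n\rangle)$ for a single homogeneous $f_n$, so that by Proposition~\ref{Proposition 5.12} one has $G_{A_n}\cong(\mathbb{G}_m\cdot\mathrm{Stab}^0(f_n))\ltimes U$ and hence $G_{A_n}(F)/R\cong\mathrm{Stab}^0(f_n)(F)/R$ — but the specific choice $f_n=q_n$ is fatal. By Proposition~\ref{Proposition 5.14} the group $\mathrm{SO}(q)$ is \emph{rational} for every quadratic form $q$, so $\mathrm{Stab}^0(q_n)=\mathrm{SO}(q_n)$ is always $R$-trivial; indeed Example~\ref{Example 5.15} records precisely that the algebra with ideal $\langle X\rangle^l+\langle q\rangle$ has rational $G_A$. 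The non-$R$-trivial groups produced by Merkurjev, Gille, and Bhaskhar are the \emph{adjoint} groups $\mathrm{PSO}(q)$, not $\mathrm{SO}(q)$, and $\mathrm{PSO}(q)$ is not the stabilizer of $q$ in $\mathrm{GL}_{2n}$.

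What the paper does instead is pass to a faithful irreducible representation $V_n$ of $\mathrm{PSO}(q_{2n})$ and invoke Lemma~\ref{Lemma 5.29} (the Garibaldi--Guralnick result) to produce a homogeneous invariant $f_n\in k_n[V_n]^{\mathrm{PSO}(q_{2n})}\setminus k_n[q]$ of degree $>2$ whose stabilizer in $\mathrm{GL}(V_n)$ has identity component exactly $\mathrm{PSO}(q_{2n})$. The algebra is then $A_n=k_n[V_n]/(\langle X\rangle^{l_n}+\langle f_n\rangle)$ with $\dim(J_n/J_n^2)=\dim V_n\ge 2n$; this inequality is not padding for small-$n$ anomalies but the unavoidable price of replacing the standard representation by one that factors through $\mathrm{PSO}$. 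Since $Z(\mathrm{PSO}(q_{2n}))=1$, one gets $\mathrm{Sim}^0(f_n)\cong\mathbb{G}_m\times\mathrm{PSO}(q_{2n})$ cleanly, yielding rank $n+1$ and $G_{A_n}(k_n)/R\cong\mathrm{PSO}(q_{2n})(k_n)/R\neq 1$ by Theorem~\ref{Theorem 6.2}. Your worry about ``extra toral part'' is also resolved by this: Proposition~\ref{Proposition 5.12} gives $\mathrm{Im}(\Phi_{A_n})=\mathrm{Sim}(f_n)$ on the nose, not just up to containment.
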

\vspace{0.2in}

Apart from the above geometric property of $\text{Aut}_k(A)$, we have also explored some algebraic properties of $\text{Aut}_k(A)$. In Theorem \ref{Theorem 3.4}, we show that if $\text{Aut}_k(A)\cong T$, where $T$ is a torus defined over a perfect field $k$ and $A$ is a split finite-dimensional associative algebra over $k$, then $T$ must be split. One knows that split groups are always rational. Towards rationality, it is of interest to know if the identity component of the automorphism group $G_A$ is split when $A$ is a split local associative algebra. In Proposition \ref{Proposition 4.12}, we prove that for a split local associative algebra $A$, the rank of $G_A$ is bounded by $\text{dim}(J/J^2)$.  From the results of \cite{AS1}, it follows that $G_A$ is split for a split local commutative monomial algebra $A$ with rank of $G_A$ equals to $\text{dim}(J/J^2)$. In \cite{AS2}, these results have been refined for semimonomial algebras. Nevertheless, this assertion does not hold in all cases. We present a set of examples where the group is not split (see Theorem \ref{Theorem 5.16} and Corollary \ref{Corollary 5.19}). Saor\'in et al. have previously shown in \cite{AS1} that if $A$ is a split local commutative algebra with $\text{ dim}(J/J^2)=n$, then $G_A$ contains a split torus of rank $n$ if and only if $A$ is a monomial algebra. This motivates us to explore a generalisation of this question. Namely:
\vskip2mm
\begin{question}
Let $A$ be a split local commutative algebra over field $k$ with $\text{dim}(J/J^2)=n$. What is the necessary and sufficient condition on $A$ so that $G_A$ contains a split torus of rank $r< n=\text{dim}(J/J^2)$? 
\end{question}
\vskip2mm
\noindent
We answer this question in Theorem \ref{Theorem 5.25}. The more precise results are given below:
\vspace{0.2in}

\begin{thmI}
$(=\text{Theorem }\ref{Theorem 3.4})$ Let $T$ be a torus of rank $r$ over a perfect field $k$ such that there exists a split finite-dimensional associative algebra $A$ over $k$ with $\emph{\text{Aut}}_k(A)\cong T$, then $T$ is split.   
\end{thmI}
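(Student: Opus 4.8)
The plan is to show that the hypothesis is so restrictive that $A$ must be, up to isomorphism, a product of copies of $k$ and of $k[x]/(x^2)$, for which $\mathrm{Aut}_k(A)$ is obviously a split torus. Write $J$ for the Jacobson radical of $A$. Since a torus is connected, $T=\mathrm{Aut}_k(A)=G_A$, and $T$ has no nontrivial unipotent subgroup; this is the lever I would use throughout.

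\emph{Step 1: $A$ is commutative.} The conjugation homomorphism $A^{\times}\to\mathrm{Aut}_k(A)$ of algebraic $k$-groups has kernel $Z(A)^{\times}$, and $A^{\times}$ is connected (it is an extension of the connected group $\prod_i\mathrm{GL}_{n_i}$ by the connected unipotent group $1+J$, writing $A/J\cong\prod_i M_{n_i}(k)$ by splitness), so its image $\mathrm{Inn}(A)\cong A^{\times}/Z(A)^{\times}$ is a connected closed subgroup of the torus $T$, hence itself a torus. Two consequences: (i) the unipotent subgroup $1+J$ maps trivially, so $1+J\subseteq Z(A)^{\times}$, i.e. $J\subseteq Z(A)$; (ii) fixing a Wedderburn--Malcev complement $A_s\cong\prod_i M_{n_i}(k)$, the commutator subgroup $\prod_i\mathrm{SL}_{n_i}$ of $A_s^{\times}$ maps into $[\mathrm{Inn}(A),\mathrm{Inn}(A)]=1$, hence lies in the abelian group $Z(A)^{\times}$, forcing every $n_i=1$ and $A_s\cong k^m$. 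As $A_s$ is commutative and $J$ is central, $A$ is commutative. (Splitness is used essentially here, to force the residue fields to be $k$ rather than a proper extension $k'$: otherwise $k'[x]/(x^2)$ would realise the non-split norm torus $R_{k'/k}\mathbb{G}_m$ as $\mathrm{Aut}_k$.)

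\emph{Step 2: reduction to the local case and $J^2=0$.} Being commutative Artinian with $A/J\cong k^m$, $A$ decomposes as $A\cong\prod_c A^{(c)}$ with each $A^{(c)}$ local commutative, $A^{(c)}/J(A^{(c)})=k$, and $T=\prod_c\mathrm{Aut}_k(A^{(c)})^0$, so each $\mathrm{Aut}_k(A^{(c)})^0$ is a torus. Fix such a block $B$, with radical $\mathfrak m$ and Loewy length $l$, and suppose $\mathfrak m^2\neq0$, so $l\ge3$; present $B=k[x_1,\dots,x_n]/I$ with $n=\dim_k\mathfrak m/\mathfrak m^2\ge1$, $I\subseteq(x_1,\dots,x_n)^2$, $(x_1,\dots,x_n)^l\subseteq I$, and choose $0\neq z\in\mathfrak m^{l-1}$. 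Then $x_1\mapsto x_1+z$, $x_i\mapsto x_i$ ($i>1$) is a well-defined automorphism $\sigma$ of $B$: for $f\in I$ a direct expansion gives $f(x_1+z,x_2,\dots)=f(x_1,\dots)+z\,\partial_1 f+z^2(\cdots)$, which vanishes in $B$ because $f=0$, $z^2\in\mathfrak m^{2l-2}=0$, and $z\,\partial_1 f\in\mathfrak m^{l-1}\mathfrak m=0$. This $\sigma$ is nontrivial since $z\neq0$ and unipotent since $(\sigma-\mathrm{id})(B)\subseteq\mathfrak m^{l-1}$ and $(\sigma-\mathrm{id})(\mathfrak m^{l-1})\subseteq z\,\mathfrak m^{l-2}=0$, so $(\sigma-\mathrm{id})^2=0$; letting $z$ range over the line $kz$ embeds $\mathbb{G}_a$ in $\mathrm{Aut}_k(B)^0$, contradicting that it is a torus. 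Hence $\mathfrak m^2=0$, so $B\cong k[x_1,\dots,x_n]/(x_1,\dots,x_n)^2$ and $\mathrm{Aut}_k(B)=\mathrm{GL}(\mathfrak m)=\mathrm{GL}_n(k)$ (every linear automorphism of $\mathfrak m$ extends, all products in $\mathfrak m$ being zero); this is a torus only if $n\le1$, so $\mathrm{Aut}_k(B)^0\in\{1,\mathbb{G}_m\}$ is split, and therefore $T=\prod_c\mathrm{Aut}_k(A^{(c)})^0$ is split.

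I expect the main work to be Step 2 — writing down the unipotent automorphism when $\mathfrak m^2\neq0$ and verifying it is well defined and unipotent — and, in Step 1, pinning down exactly where splitness enters. The remaining ingredients (a connected unipotent subgroup of a torus is trivial; homomorphic images of unipotent groups are unipotent; commutative Artinian algebras are products of local ones) are standard. I would also point out that this route never invokes the map $\Phi_A\colon G_A\to\mathrm{GL}(J/J^2)$ or a $T$-stable Wedderburn complement, and so sidesteps the characteristic-$p$ subtleties those would entail.
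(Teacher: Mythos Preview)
Your proof is correct and reaches the same endpoint as the paper's --- that $A$ must be a product of copies of $k$ and of $k[x]/(x^2)$ --- but by a more self-contained route. The paper appeals to its Corollary~\ref{Corollary 3.2} (reductivity of $G_A$ forces $J^2=0$ and $J\subset Z(A)$) to obtain $\mathfrak m^2=0$ in each local block, whereas you exhibit an explicit $\mathbb{G}_a$ (the family $x_1\mapsto x_1+tz$ with $z\in\mathfrak m^{l-1}$) that obstructs $\mathrm{Aut}_k(B)^0$ from being a torus when $l\ge 3$; and where the paper cites Theorem~1.3 of \cite{AS1} to force $n=1$, you simply note $\mathrm{Aut}_k\bigl(k[x_1,\dots,x_n]/(x_1,\dots,x_n)^2\bigr)=\mathrm{GL}_n$. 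Your Step~1 is likewise more direct: the paper quotes ``the proof of Theorem~\ref{Theorem 3.1}'' to see that $A$ is basic, while you argue via the image of $\prod_i\mathrm{SL}_{n_i}$ inside the abelian group $\mathrm{Inn}(A)$. The paper's version has the virtue of reusing structural results already in place; yours has the virtue of being elementary and independent of those earlier lemmas and the external reference.
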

\vskip2mm
\begin{thmI}
$(=\text{Theorem }\ref{Theorem 5.16})$ Let $A$ be a finite-dimensional local commutative algebra with $\text{dim}(J/J^2)=n$. Suppose $A\cong k[X_1,\dots, X_n]/I$ by quiver and relation representation of $A$, where $I$ is given by $\langle X_1, X_2,\dots, X_n\rangle^l+\langle q\rangle$ and $\emph{char }k\neq2$; $l>2$ is the Lowey length of $A$ and $(J/J^2,q)$ is an anisotropic quadratic space over the field $k$, then $G_A$ is not a $k$-split group.
\end{thmI}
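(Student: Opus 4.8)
The plan is to determine the image of the canonical $k$-homomorphism $\Phi_A\colon G_A\to\mathrm{GL}(J/J^2)\cong\mathrm{GL}_n$ exactly --- I claim it is the connected group of proper similitudes $\mathrm{GSO}(q)$ of the quadratic space $(J/J^2,q)\cong(k^n,q)$ --- and then to read off non-splitness of $G_A$ from the anisotropy of $q$. Write $\mathfrak m=\langle X_1,\dots,X_n\rangle$, so that $A=k[X_1,\dots,X_n]/I$ with $I=\mathfrak m^l+\langle q\rangle$ a homogeneous ideal. Note first that the Loewy-length hypothesis forces $n\geq 2$: if $n=1$ then $q=aX_1^2$ with $a\neq 0$ and $l>2$, so $\mathfrak m^l\subseteq\langle q\rangle$, whence $I=\langle q\rangle$ and $A$ has Loewy length $2\neq l$. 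Thus $(k^n,q)$ is anisotropic of rank $n\geq 2$, and since $\mathrm{char}\, k\neq 2$ its orthogonal and similitude groups are reductive.

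The first step is the identification $\mathrm{Im}(\Phi_A)=\mathrm{GSO}(q)$. For the inclusion $\mathrm{Im}(\Phi_A)\subseteq\mathrm{GSO}(q)$, let $\sigma\in\mathrm{Aut}_k(A)$; it preserves $J$, so $\sigma(\bar X_i)=\overline{P_i}$ with $P_i\in\mathfrak m$ of linear part $L_i=\sum_j a_{ij}X_j$, where $a=(a_{ij})=\Phi_A(\sigma)\in\mathrm{GL}_n$. Since $l>2$, the degree-$2$ homogeneous component of the homogeneous ideal $I$ is exactly the line $k\cdot q$; hence $\sigma(q)=q(P_1,\dots,P_n)\in I$ forces its degree-$2$ part, the transformed form $v\mapsto q(av)$, into $k\cdot q$, i.e.\ $q\circ a=\lambda q$ with $\lambda\in k^{\times}$ (nonzero because $a$ is invertible and $q\neq 0$). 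So $a\in\mathrm{GO}(q)$, and as $G_A$ is connected, $\mathrm{Im}(\Phi_A)\subseteq\mathrm{GO}(q)^{0}=\mathrm{GSO}(q)$. For the reverse inclusion I would use linear lifts: for $a\in\mathrm{GO}(q)$ with $q\circ a=\mu q$, the linear substitution $\sigma_a(X_i)=\sum_j a_{ij}X_j$ sends $q$ to $\mu q$ and $\mathfrak m^l$ into $\mathfrak m^l$, hence descends to an endomorphism of $A$, invertible because $a^{-1}\in\mathrm{GO}(q)$ supplies the inverse; thus $\sigma_a\in\mathrm{Aut}_k(A)$ with $\Phi_A(\sigma_a)=a$. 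Since $a\mapsto\sigma_a$ is a $k$-morphism of algebraic groups, it carries $\mathrm{GSO}(q)=\mathrm{GO}(q)^{0}$ into $\mathrm{Aut}_k(A)^{0}=G_A$, and composing with $\Phi_A$ gives $\mathrm{GSO}(q)\subseteq\mathrm{Im}(\Phi_A)$. (This is the $f=q$ case of the identification $\mathrm{Im}(\Phi_A)=\mathrm{Stab}^{0}(f)$ underlying Theorem \ref{Theorem 5.13}; combined with Proposition \ref{Proposition 4.37} it also shows $G_A$ is stably birational to $\mathrm{GSO}(q)$.)

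The second step concludes by structure theory. Suppose $G_A$ were $k$-split; then it has a split maximal torus $S$, and since a surjective $k$-homomorphism carries maximal tori onto maximal tori, $T:=\Phi_A(S)$ is a split maximal torus of $\mathrm{Im}(\Phi_A)=\mathrm{GSO}(q)$. The multiplier character $\mathrm{GSO}(q)\to\mathbb G_m$ is surjective with kernel $\mathrm{SO}(q)$; its restriction to $T$ is onto (else $T\subseteq\mathrm{SO}(q)$, contradicting $\dim T=\mathrm{rank}(\mathrm{SO}(q))+1$), so $(T\cap\mathrm{SO}(q))^{0}$ is a torus of dimension $\dim T-1=\mathrm{rank}(\mathrm{GSO}(q))-1=\mathrm{rank}(\mathrm{SO}(q))$, i.e.\ a maximal torus of $\mathrm{SO}(q)$, and it is split, being a subtorus of the split torus $T$. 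Hence $\mathrm{SO}(q)$ is $k$-split, which forces $q$ to have maximal Witt index. Since $n\geq 2$ and $q$ is anisotropic, this is impossible, so $G_A$ is not a $k$-split group.

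The main obstacle is the precise bookkeeping in the first step: showing that $\mathrm{Im}(\Phi_A)$ is exactly $\mathrm{GSO}(q)$ --- neither losing the central scalar torus arising from the radical grading of $A$, nor gaining improper similitudes --- and that the linear lifts genuinely assemble into a $k$-morphism $\mathrm{GSO}(q)\to G_A$; together with the structure-theoretic input in the second step that $\mathrm{GSO}(q)$ being $k$-split forces $q$ to be isotropic, which must be argued uniformly in $n$: for $n=2$ the group $\mathrm{GSO}(q)$ is a $2$-dimensional torus with trivial derived group, so the reduction to $\mathrm{SO}(q)$ cannot pass through the derived subgroup and must use the maximal-torus comparison above.
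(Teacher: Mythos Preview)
Your proof is correct and follows essentially the same route as the paper: identify $\mathrm{Im}(\Phi_A)$ with the (connected) similitude group of $q$ --- this is exactly Proposition~\ref{Proposition 5.12} specialized to $f=q$ --- and then deduce from anisotropy of $q$ that no maximal torus of $G_A$ can be $k$-split, via the standard fact that $\mathrm{SO}(q)$ is $k$-isotropic only if $q$ is. Your treatment is slightly more careful than the paper's in two places: you explicitly note $n\geq 2$ (needed so that the torus landing in $\mathrm{SO}(q)$ is nontrivial), and your maximal-torus comparison via the multiplier character handles the $n=2$ case cleanly without passing through the derived subgroup.
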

\vskip2mm
\begin{thmI}
$(=\text{Theorem } \ref{Theorem 5.25})$ Let $A$ be a split local commutative algebra over an infinite field $k$ with $\emph{dim}(J/J^2)=n$. Let $r\in \mathbb{N}$ and $1\leq r\leq n$. Then the following statements are equivalent:
\begin{enumerate}
    \vskip4mm
    \item $G_A$ contains a diagonal change of variables of the form \[D(r,k)=
\begin{pmatrix}
a_1 & 0 &\cdots & 0 & 0 & \cdots & 0 \\
0 & a_2 & \cdots & 0 & 0 & \cdots & 0\\
\vdots & \vdots & \ddots & \vdots& \vdots & \vdots & \vdots\\
0 & 0 & \cdots & a_{r-1} & 0 &\cdots & 0 \\
0 & \cdots & 0 & 0 & \beta & \cdots & 0\\
\vdots & \vdots & \vdots & \vdots & \vdots & \ddots & \vdots\\
0 & \cdots & \cdots & 0 & 0 & \cdots & \beta
\end{pmatrix}_{n\times n}
,\]
where  $a_{i}, \beta \in k^{\times}, 1\leq i\leq r-1;$
\vskip4mm
     \item There exists an admissible ideal $I$ which satisfies the Property $*$ for $r$ (see Definition \ref{Definition 5.24}) or a monomial ideal $I$ such that $A\cong k[X_1,\dots,X_n]/I$.
\end{enumerate}
\vskip4mm
If the hypothesis $(2)$ is satisfied, we have a $k$-embedding $\phi: \mathbb{G}_m^{r}\rightarrow G_A$. 
\end{thmI}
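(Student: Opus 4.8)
The plan is to translate both conditions into a single statement about the block torus $T_r:=\{\,\mathrm{diag}(a_1,\dots,a_{r-1},\beta,\dots,\beta):a_i,\beta\in\mathbb{G}_m\,\}\cong\mathbb{G}_m^r$ acting on a quiver-and-relations presentation $A\cong R/I$, where $R=k[X_1,\dots,X_n]$, $\mathfrak{m}=\langle X_1,\dots,X_n\rangle$, $I\subseteq\mathfrak{m}^2$ and $\mathfrak{m}^N\subseteq I$ for $N\gg0$. The basic dictionary I would set up first is: a linear substitution $g\in\mathrm{GL}_n=\mathrm{GL}(J/J^2)$ descends to a $k$-algebra automorphism of $A$ iff $g(I)=I$; such changes of variables form a closed $k$-subgroup of $\mathrm{Aut}_k(A)$ on which $\Phi_A$ restricts to the identity; and since $\{g:g(I)\subseteq I\}$ is Zariski closed, the set of $g\in T_r$ that are automorphisms of $A$ is a closed $k$-subgroup $S\le T_r$. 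I read condition $(1)$ as asserting $S=T_r$ (equivalently, when $k^\times$ contains a multiplicatively independent $r$-tuple, that a single ``generic'' $D(r,k)$ lies in $G_A$), and it is the identity-component hypothesis that makes this reading force structure on $I$: a finite-order block element may lie in $\mathrm{Aut}_k(A)$, or even in $G_A$, without $T_r$ doing so, as the quadratic-form constructions behind Theorem \ref{Theorem 5.16} illustrate.

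For $(2)\Rightarrow(1)$ and the embedding $\phi$: if $I$ has Property $*$ for $r$ (Definition \ref{Definition 5.24}), then $I$ has a minimal generating set of polynomials homogeneous for the $\mathbb{Z}^r$-grading of $R$ with $\deg X_i=\varepsilon_i$ for $1\le i\le r-1$ and $\deg X_j=\varepsilon_r$ for $r\le j\le n$; the torus $T_r$ preserves every graded piece, so $T_r(I)=I$. If instead $I$ is monomial, the full diagonal torus $\mathbb{G}_m^n\supseteq T_r$ stabilizes $I$. In either case the action produces a $k$-homomorphism $\phi:\mathbb{G}_m^r\to\mathrm{Aut}_k(A)$; it is a closed immersion because $\Phi_A\circ\phi$ is the inclusion $T_r\hookrightarrow\mathrm{GL}_n$ and $\Phi_A$ is separated, and its image is connected, hence lies in $G_A$. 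Every $k$-point of $\phi(\mathbb{G}_m^r)$ is then a change of variables of the shape $D(r,k)$ in $G_A$, which proves $(1)$ and exhibits the asserted $\phi$.

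For $(1)\Rightarrow(2)$: assuming $T_r\le G_A$, the ideal $I$ is $T_r$-stable, and since $T_r$ is a split torus acting locally finitely on $R$, the weight-space decomposition gives $I=\bigoplus_{\chi\in X^*(T_r)}\bigl(I\cap R_\chi\bigr)$, where $R_\chi$ is spanned by the monomials of $T_r$-weight $\chi$. Thus $I$ is homogeneous for the $\mathbb{Z}^r$-grading above and has a $\mathbb{Z}^r$-homogeneous minimal generating set; for $r<n$ this is precisely the condition of Property $*$ for $r$, while for $r=n$ each $R_\chi$ is one-dimensional, so $I$ is generated by monomials. In either case $A\cong R/I$ is of the form required by $(2)$, possibly after rescaling the chosen lifts of a basis of $J/J^2$ within the isotypic components.

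The main obstacle I expect is not computational but conceptual: one must fix the correct meaning of ``$G_A$ contains $D(r,k)$'' — namely that the whole torus $T_r$ acts, which is where both the infiniteness of $k$ and, crucially, working inside the \emph{identity} component $G_A$ rather than all of $\mathrm{Aut}_k(A)$ enter — and one must verify that torus-stability of $I$ is genuinely equivalent to Property $*$ for $r$ as formalized in Definition \ref{Definition 5.24}. Once that equivalence is in hand, the remaining steps (the change-of-variables dictionary, closedness of $\phi$, and the $r=n$ monomial case) are routine.
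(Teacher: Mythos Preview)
Your argument is correct, and for the implication $(2)\Rightarrow(1)$ it coincides with the paper's. For $(1)\Rightarrow(2)$, however, you take a genuinely different route. The paper proceeds by induction on the Loewy length $l$: it passes to $\tilde A=A/J^{l-1}$, uses the induction hypothesis to control $\tilde I=I+\mathfrak m^{l-1}$, and then for each generator $f_j$ of $I$ compares $\Gamma(f_j)$ with $f_j$ term by term, using the linear independence of the degree-$(l-1)$ monomials in $J^{l-1}$ together with the infinitude of $k$ to force the ``mixed'' coefficients $\alpha_i$ to vanish. Your approach bypasses this induction entirely: once $T_r$ stabilizes $I$, the weight-space decomposition $I=\bigoplus_\chi(I\cap R_\chi)$ for the $\mathbb Z^r$-grading $\deg X_i=\varepsilon_i$ ($i<r$), $\deg X_j=\varepsilon_r$ ($j\ge r$) immediately produces a $\mathbb Z^r$-homogeneous generating set, and one checks directly that every $\mathbb Z^r$-homogeneous polynomial is either a monomial or of the form $X_1^{d_1}\cdots X_{r-1}^{d_{r-1}}\cdot g(X_r,\dots,X_n)$ with $g$ homogeneous and non-monomial, i.e.\ an $r$-monomial homogeneous polynomial in the sense of Definition~\ref{Definition 5.22}. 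This is precisely Property~$*$ for $r$ (or the monomial case when $r=n$), and no change of presentation or ``rescaling of lifts'' is needed --- the original $I$ already works. The infinitude of $k$ enters your argument at the expected place: one needs the $k$-points of $T_r$ to be Zariski dense so that $T_r(k)\cdot I\subseteq I$ forces the full torus to stabilize $I$ and hence the character decomposition of $I$ to hold. Your approach is more conceptual and shorter; the paper's induction is more explicit and perhaps makes the role of the Loewy filtration (and of Proposition~\ref{Proposition 4.12}) more visible, but at the cost of a somewhat delicate coefficient computation.
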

\vskip2mm
\noindent
Clearly $\text{Aut}_{k}(A)$ is an affine algebraic group. Associative algebras and affine algebraic groups have a rich structure theory. From now onwards, algebra means a finite-dimensional unital associative algebra. Both (\cite{Humphreys}) and (\cite{Springer}) serve as general references for algebraic groups, and we use the theory of associative algebras from (\cite{Pierce}, \cite{Auslander}, \cite{Curtis}). The concept of representation of a split basic finite-dimensional associative algebra by \emph{quiver and relation} is taken from (\cite{Auslander}, \cite{PG}, \cite{ISD}). For the theory of Galois cohomology and quadratic forms, one can follow \cite{KMRT}. In this article, we will deal with finite-dimensional associative algebras over perfect fields (some of the results may be valid over arbitrary fields, but we mainly stick to perfect fields). Unless specified otherwise, there are no other restrictions on the field $k$.
\vskip2mm

The rest of the paper is organized as follows: In Section \ref{sec: Preliminaries}, we recall some well-known results on the structure of finite-dimensional associative algebras and provide some basic definitions that are used throughout the paper. Section \ref{Review} reviews Pollack's results over perfect fields. Section \ref{R-equivalence} is devoted to the study of $R$-equivalence of the automorphism groups of split finite-dimensional associative algebras. In Section \ref{sec: commutative algebra}, we establish the result about the $R$-equivalence of automorphism groups of split local commutative algebras by using quiver and relation. Finally, in Section \ref{counter}, we provide examples that demonstrate the existence of non-$R$-trivial groups of the form $G_A$, where $A$ is a finite-dimensional commutative algebra.
 
\section{Preliminaries}\label{sec: Preliminaries}
\noindent
This section discusses the structure theory of finite-dimensional associative algebras and provides some basic definitions that are necessary for subsequent sections.
\vskip2mm
\noindent
Throughout this paper, $A$ is a finite-dimensional associative algebra over a field $k$ with unity, and $J$ is its Jacobson radical (in short, we call it radical). For any affine algebraic group $G$ over $k$, $G^{0}$ denotes its identity component. In this article, $\mathrm{Aut}_k(A)^{0}=G_A$. Following the point of view in (\cite{KMRT}), we can regard any affine algebraic group as a representable functor from the category of algebra to the category of groups. In this sense $\text{Aut}_k(A)$ is a functor, 
\begin{equation*}\label{eq1}
\text{Aut}_k(A): B\mapsto \text{Aut}_{B}(A\otimes_{k} B)
\end{equation*}
Let $G$ be an algebraic group over $k$, then $G(F)$ denotes its $F$-rational points, where $F$ is a field extension of $k$.
\vskip2mm
\noindent
Next, we recall the principal structure theorem for finite-dimensional associative algebras: the Wedderburn-Malcev decomposition.

\vspace{2mm}

\begin{theorem}\label{Theorem 2.1}
\textbf{$($\emph{Wedderburn-Malcev}}, \cite{Curtis}, \emph{Theorem} $72.19)$ Let $A$ be a finite-dimensional associative algebra over $k$ (arbitrary field) with radical $J$ such that the residue class algebra $B=A/J$ over $k$ is separable. Then, there exists a semisimple subalgebra $A_s$ of $A$ such that $A=A_s\oplus J$ (vector space decomposition). If there exists a semisimple subalgebra $A'_s$ such that $A=A'_s\oplus J$, then there exists an element $r\in J$ such that $A'_s=(1+r)A_s(1+r)^{-1}$.
\end{theorem}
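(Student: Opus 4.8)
The statement is the classical Wedderburn--Malcev theorem (see \cite{Curtis}), and the plan is to reproduce its standard proof, which has two parts: existence of the complement $A_s$, and its uniqueness up to conjugation by $1+r$ with $r\in J$. Both parts I would establish by induction on the nilpotency index $m$ of $J$ (so $J^m=0$ and $J^{m-1}\neq 0$), the inductive step reducing each time to the square-zero case $J^2=0$, where the separability of $B=A/J$ enters through the vanishing of low-degree Hochschild cohomology.

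First I would handle existence when $J^2=0$. Then $J$ is a $B$-bimodule in the evident way, and $0\to J\to A\to B\to 0$ is a square-zero extension of $k$-algebras. Picking any $k$-linear section $\sigma\colon B\to A$ of the projection, the defect $f(x,y)=\sigma(xy)-\sigma(x)\sigma(y)$ lands in $J$ and is a Hochschild $2$-cocycle of $B$ with values in $J$. Since $B$ is separable, $H^2(B,J)=0$; concretely, a separability idempotent $e=\sum_i e_i'\otimes e_i''\in B\otimes_k B^{\mathrm{op}}$ lets one average $\sigma$ and exhibit $f$ as a coboundary, producing a $k$-linear $g\colon B\to J$ for which $\sigma+g$ is multiplicative. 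Its image $A_s\coloneqq(\sigma+g)(B)\cong B$ is then a semisimple subalgebra with $A=A_s\oplus J$. For general $m$, I would set $N=J^{m-1}$ (so $N^2=0$ once $m\geq 2$), apply the inductive hypothesis to $A/N$ --- whose radical $J/N$ has index $\leq m-1$ and quotient $\cong B$ --- to obtain a semisimple complement $\overline{A_s}\subseteq A/N$, pull it back to a subalgebra $A'\subseteq A$ with $A'\supseteq N$, $A'/N\cong\overline{A_s}$ semisimple, $A=A'+J$ and $A'\cap J=N$, and then apply the square-zero case to $A'$ with its ideal $N$ to split off $N$; a dimension count together with $A_s\cap J\subseteq A_s\cap N=0$ then yields $A=A_s\oplus J$.

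For uniqueness, let $A_s,A'_s$ be semisimple subalgebras with $A=A_s\oplus J=A'_s\oplus J$; since $J$ is nilpotent, $1+r$ is a unit for every $r\in J$, so the assertion is meaningful. I would again induct on $m$. When $J^2=0$, the algebra sections $s,s'\colon B\to A$ with images $A_s,A'_s$ differ by $d=s'-s\colon B\to J$, which is a derivation, hence a Hochschild $1$-cocycle; separability gives $H^1(B,J)=0$, so $d$ is inner, $d(b)=r\cdot b-b\cdot r$ for some $r\in J$, and a short computation using $r^2=0$ gives $s'(b)=(1+r)s(b)(1+r)^{-1}$, i.e. $A'_s=(1+r)A_s(1+r)^{-1}$. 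For general $m$, I would pass to $A/J^{m-1}$, obtain by induction an element conjugating the images of $A_s$ and $A'_s$ there, lift it to $r_1\in J$, replace $A'_s$ by $(1+r_1)^{-1}A'_s(1+r_1)$ so that it now agrees with $A_s$ modulo $J^{m-1}$, view both as semisimple complements of the square-zero ideal $J^{m-1}$ inside the subalgebra $A_s+J^{m-1}$, apply the square-zero case to get $r_2\in J^{m-1}$, and take $1+r=(1+r_1)(1+r_2)$.

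The hard part is the cohomological input in the square-zero case --- the vanishing $H^1(B,J)=H^2(B,J)=0$ for separable $B$, equivalently the explicit construction of the correcting cochains $g$ and $r$ from a separability idempotent of $B$. Everything else (the reductions to $J^2=0$, the dimension bookkeeping, and checking that the successive conjugations multiply to a single $1+r\in 1+J$) is routine.
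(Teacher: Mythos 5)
The paper does not prove Theorem \ref{Theorem 2.1}; it is stated as the classical Wedderburn--Malcev theorem with a citation to Curtis--Reiner, Theorem 72.19, and no proof is given in the text. Your proof sketch is a correct reconstruction of the standard argument: induction on the nilpotency index of $J$, reduction to the square-zero case, and the use of $H^2(B,J)=0$ (for existence) and $H^1(B,J)=0$ (for uniqueness), both of which hold because $B$ is separable; the separability idempotent makes the coboundary/inner-derivation corrections explicit. A small point worth flagging so the square-zero uniqueness step reads cleanly: when you write $s'(b)=(1+r)s(b)(1+r)^{-1}$, the term $r\,s(b)\,r$ vanishes not because $r^2=0$ per se but because $r\,s(b)\,r \in J\,A\,J \subseteq J^2 = 0$; stating it that way avoids any appearance of needing $r$ to square to zero inside $A_s$. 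This is essentially the same proof that appears in the cited reference (phrased there via separability idempotents rather than explicitly via Hochschild cohomology, but the two formulations are equivalent), so your route agrees with the source the paper defers to.
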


\vskip2mm

\begin{corollary}\label{Corollary 2.2}
$($\cite{Pierce}, \emph{Page} $211)$ If $k$ is a perfect field, and $A$ is finite-dimensional associative $k$-algebra, then there is a semisimple subalgebra $A_s$ of $A$ such that $A/J\cong A_s$ and $A=A_s\oplus J$. Moreover, $A_s$ is unique upto
conjugation by units of the form $(1+r)$, where $r\in J$.
\end{corollary}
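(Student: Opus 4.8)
The plan is to deduce the statement from the Wedderburn--Malcev theorem (Theorem \ref{Theorem 2.1}) by checking that its separability hypothesis is automatic when $k$ is perfect. First I would record the structure of the semisimple quotient $B \coloneqq A/J$: being a finite-dimensional semisimple $k$-algebra, by the Wedderburn--Artin theorem $B \cong \prod_{i=1}^{m} M_{n_i}(D_i)$ with each $D_i$ a finite-dimensional division $k$-algebra whose center $Z_i$ is a finite field extension of $k$. The one substantive point is then that $B$ is separable over $k$: since $k$ is perfect, each $Z_i/k$ is a separable extension; each factor $M_{n_i}(D_i)$, being central simple over $Z_i$, is a separable $Z_i$-algebra; and separability of algebras is transitive along $k \subseteq Z_i$, so $B$ is a separable $k$-algebra. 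Equivalently, since $Z_i \otimes_k \bar k$ is a product of copies of $\bar k$ and $D_i \otimes_{Z_i} \bar k \cong M_{d_i}(\bar k)$, the algebra $B \otimes_k \bar k$ is a product of matrix algebras over $\bar k$, hence semisimple, which is the defining property of separability over $k$.

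With separability in hand, Theorem \ref{Theorem 2.1} applies verbatim and produces a semisimple subalgebra $A_s \subseteq A$ with $A = A_s \oplus J$ as $k$-vector spaces, together with the uniqueness clause: if $A = A_s' \oplus J$ with $A_s'$ semisimple, then $A_s' = (1+r)A_s(1+r)^{-1}$ for some $r \in J$, where $1+r$ is a unit of $A$ because $r$ is nilpotent (the radical of a finite-dimensional algebra is nilpotent). It remains to identify $A_s$ with $A/J$: I would observe that the composite $A_s \hookrightarrow A \xrightarrow{\pi} A/J$ is a $k$-algebra homomorphism with kernel $A_s \cap J = 0$ and image all of $A/J$ since $A = A_s + J$, hence an isomorphism $A_s \xrightarrow{\sim} A/J$.

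I expect the only real obstacle to be the separability of $B = A/J$ over the perfect field $k$; once that is established, the remainder is a direct invocation of Theorem \ref{Theorem 2.1} together with elementary linear algebra. If one prefers to bypass the structure theory of central simple algebras, the separability step can instead be drawn from the characterization that a finite-dimensional semisimple $k$-algebra is separable exactly when its center is a product of separable field extensions of $k$ --- a condition satisfied automatically over a perfect field. See \cite{Pierce}, p.~211 and \cite{Curtis}, Theorem $72.19$ for the standard treatments.
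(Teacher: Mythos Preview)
Your proposal is correct and follows the standard route: verify that $A/J$ is separable over the perfect field $k$ and then invoke Theorem~\ref{Theorem 2.1}. The paper itself does not supply a proof of Corollary~\ref{Corollary 2.2} --- it is simply quoted from \cite{Pierce} --- so there is nothing to compare against, and your argument is exactly the intended deduction.
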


\vskip2mm 
\noindent
Let $A^{\times}$ denote the group of units in $A$ and $k^{\times}=k-\{0\}$. We have an obvious map from $A^{\times}$ to $\text{Aut}_k(A)$ given  by $\psi:A^{\times}\rightarrow \text{Aut}_k(A) \text{ with }\psi(a)=\text{Int}(a)$, where $\text{Int}(a)$ is the automorphism  $x\mapsto axa^{-1}$.

\vskip2mm
\noindent
This is a morphism of algebraic groups, whose image is $\text{Inn}(A$), the group of all inner automorphisms of $A$. Observe that $\{1+r:r\in J\}$ is a closed, connected, unipotent subgroup of $A^{\times}$ whose image $\hat{J}\coloneq\{\text{Int}(1+r):r\in J\}$ is normal in $\text{Aut}_k(A)$ and $\hat{J}\subset U$, where $U$ is the $k$-unipotent radical of $\text{Aut}_k(A)$.

\vskip2mm

\begin{definition}\label{Definition 2.3}$($\cite{Pierce}, Section $9.6$$)$
Two algebras $A$ and $B$ are said to be \emph{Morita equivalent} if their left module categories are equivalent.
\end{definition}

\vskip2mm
\begin{definition}\label{Definition 2.4}
An algebra $A$ over $k$ is a \emph{basic algebra} if $A/J\cong \bigoplus_{i=1}^{n} D_i$, where each $D_i$ is a division algebra over $k$ and $n\in \mathbb{N}$. In particular, if $A/J\cong D$ for some division algebra $D$ over $k$, we call $A$ a \emph{local algebra} over $k$.  
\end{definition}

\vskip2mm 
\begin{theorem}\label{Theorem 2.5}
$($\cite{Pierce}, \emph{Section} $9.6)$ For each finite-dimensional associative algebra $A$ over a perfect field $k$, there always exists a basic algebra $B$ of $A$ which is Morita equivalent to $A$.   
\end{theorem}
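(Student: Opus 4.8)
The plan is to realise the basic algebra of $A$ concretely as the endomorphism algebra of a minimal progenerator and then verify, separately, Morita equivalence and the defining property of Definition~\ref{Definition 2.4}. First I would decompose the left regular module ${}_A A$ into indecomposables: since $A$ is finite-dimensional, indecomposable summands have local endomorphism rings, so by Krull--Schmidt one may write ${}_A A \cong \bigoplus_{i=1}^n P_i^{\,m_i}$ with $P_1,\dots,P_n$ pairwise non-isomorphic indecomposable projectives and each $m_i\geq 1$. Set $P \coloneqq \bigoplus_{i=1}^n P_i$ and $B \coloneqq \mathrm{End}_A(P)^{\mathrm{op}}$ (equivalently, $B \cong e_A A e_A$ for an idempotent $e_A$ obtained by keeping one primitive idempotent from each isomorphism class).

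Next I would check that $P$ is a progenerator for the category of finite-dimensional left $A$-modules: it is projective by construction, and it is a generator because ${}_A A$ is a direct summand of $P^{\,m}$ for $m=\max_i m_i$, every indecomposable projective of $A$ occurring in the decomposition of ${}_A A$. The Morita theorems then give that $\mathrm{Hom}_A(P,-)$ is an equivalence onto the category of $B$-modules, so $B$ is Morita equivalent to $A$.

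It then remains to see that $B$ is basic, i.e.\ $B/\mathrm{rad}(B)\cong\bigoplus_{i=1}^n D_i$ with each $D_i$ a division $k$-algebra. Writing $S_i = P_i/\mathrm{rad}\,P_i$, Schur's lemma gives $\mathrm{End}_A(P_i)/\mathrm{rad}\cong \mathrm{End}_A(S_i)=\colon D_i$, a division ring, finite-dimensional over $k$ by finite-dimensionality of $A$; and for $i\neq j$ every element of $\mathrm{Hom}_A(P_i,P_j)$ is a non-isomorphism between indecomposables with local endomorphism rings, hence lies in the radical of $\mathrm{End}_A(P)$. Assembling the block structure of $\mathrm{End}_A(P)=\bigoplus_{i,j}\mathrm{Hom}_A(P_i,P_j)$, the quotient by the ideal generated by the off-diagonal blocks and the $\mathrm{rad}(\mathrm{End}_A P_i)$ is $\bigoplus_i D_i$, a semisimple algebra; since $B$ is Artinian and this ideal kills every simple $B$-module, it equals $\mathrm{rad}(B)$. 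Thus $B/\mathrm{rad}(B)\cong\bigoplus_{i=1}^n D_i$, so $B$ is basic. I would note that perfectness of $k$ plays no role in this construction; it is kept only to match the running hypotheses of the paper (and it guarantees the Wedderburn--Malcev splitting of Corollary~\ref{Corollary 2.2} used elsewhere).

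The step I expect to need the most care is the radical computation in the last paragraph: one must argue cleanly that the off-diagonal Hom-spaces together with $\bigoplus_i\mathrm{rad}(\mathrm{End}_A P_i)$ exhaust $\mathrm{rad}(B)$, i.e.\ that the displayed quotient is semisimple (clear) and the kernel of $B\to\bigoplus_i D_i$ is nilpotent (because it annihilates all simple $B$-modules and $B$ is Artinian). Since this is precisely the content of the cited statement (\cite{Pierce}, Section~9.6), in the paper I would simply invoke that reference, the sketch above indicating why it holds.
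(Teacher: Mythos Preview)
Your proposal is correct and is the standard construction of the basic algebra, essentially as in Pierce, Section~9.6. Note, however, that the paper does not give its own proof of Theorem~\ref{Theorem 2.5}: the statement is simply quoted from \cite{Pierce} and used as a black box, so there is no argument in the paper to compare against. Your sketch is exactly what one would find upon following the citation, and your closing remark---that in the paper one would just invoke the reference---matches precisely what the authors do.
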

\vskip2mm
\noindent
Thus, over a perfect field $k$, every finite-dimensional associative algebra \( A \) can be decomposed as \( A = A_s \oplus J \) (vector space decomposition), as established by Corollary \ref{Corollary 2.2}. In this decomposition, \( A_s\cong A/J \) is a semisimple subalgebra that is isomorphic to \( \bigoplus_{i=1}^{m} M_{n_{i}}(D_{i}) \), where each $D_i$ is a division algebra over $k$ for all $1\leq i\leq m$ and \( J \) denotes the Jacobson radical of the algebra \( A \). 
\vskip2mm
\begin{definition}\label{Definition 2.6}
An associative algebra $A$ over $k$ is called a \emph{split associative algebra} if $A/J\cong \bigoplus_{i=1}^{m} M_{n_{i}}(k)$, where $m\in \mathbb{N}$. An associative algebra $A$ over $k$ is called a \emph{split basic associative algebra} if $A/J\cong \bigoplus_{i=1}^{m}k$, and an associative algebra $A$ over a field $k$ is a \emph{split local associative algebra} if $A/J\cong k$.  
\end{definition}

\vskip2mm
\section{Review of Pollack's Results over Perfect fields}\label{Review}
\vskip2mm
\noindent
In this section, we discuss the results of (\cite{Pollack}) related to the reductivity and semisimplicity of $G_A$ in terms of the properties of $A$ over a perfect field $k$, where $A$ is a split associative algebra over $k$. Over an algebraically closed field, any finite-dimensional associative algebra is split. Therefore, for a split finite-dimensional associative algebra over a perfect field $k$, the proof of Theorem \ref{Theorem 3.1} follows from a similar line of argument as done by Pollack in \cite {Pollack}. The other proofs of this section are given for the sake of completeness. Let us recall a few notations, $\text{Inn}(A)$ is the inner automorphism group of $A$, $\hat{J}=\{\text{Int}(1+r):r\in J\}$ and $U$ denotes the $k$-unipotent radical of $G_A$.

\vskip2mm
\begin{theorem}\label{Theorem 3.1}
Let $A$ be a split finite-dimensional associative algebra over a perfect field $k$, and let $J$ be the radical of $A$. If $U$ is the $k$-unipotent radical of $G_A$, then $J^2=0$ if and only if $J\subset A^{U}$, where $A^U\coloneq \{a\in A:\sigma(a)=a \quad \forall \sigma\in U(F), \text{ for all field extension $F$ over $k$ } \}$. 
\end{theorem}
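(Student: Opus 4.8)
The plan is to run both implications off the Wedderburn--Malcev decomposition $A = A_s \oplus J$ (Corollary~\ref{Corollary 2.2}) and the structure of the normal subgroup $\hat{J} = \{\mathrm{Int}(1+r) : r \in J\} \subseteq U$. First I would record that, for every field extension $F/k$, $\mathrm{Int}(1+r)$ fixes $a\otimes 1$ for all $r \in J\otimes F$ precisely when $a$ centralizes $J$; hence $A^{\hat J}$ is the centralizer $Z_A(J)$ and $A^U \subseteq Z_A(J)$. Two consequences follow at once: if $J\subseteq A^U$ then $[J,J]=0$; and if $[J,J]\neq 0$, choosing $r,s\in J$ with $[r,s]\neq 0$ and comparing $J$-adic filtration degrees gives $\mathrm{Int}(1+r)(s) = s + [r,s] + (\text{terms in }J^{3}+J^{4}+\cdots) \neq s$, so $s\notin A^U$. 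Thus the statement is already clear unless $J$ is commutative, and the two genuine tasks become: $(\Rightarrow)$ $J^2=0$ implies $J\subseteq A^U$, and $(\Leftarrow)$ $J$ commutative with $J^2\neq 0$ implies $J\not\subseteq A^U$.

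For $(\Rightarrow)$ the point is to identify $U$. When $J^2=0$ the associated graded algebra $\mathrm{gr}_J A$ is canonically $A$ itself, graded with $A_s$ in degree $0$ and $J$ in degree $1$, and I would study the homomorphism $\rho\colon G_A \to H := \mathrm{Aut}_k^{\mathrm{gr}}(A)$. I claim $\ker\rho = \hat J$ and $\mathrm{Im}\,\rho = H^0$, which is reductive. For the kernel: an element of $\ker\rho$ has the form $\mathrm{id}+\delta$ with $\delta$ a derivation $A\to J$ vanishing on $J$ --- the hypothesis $J^2=0$ is precisely what kills the quadratic term in the homomorphism identity --- so $\delta$ is determined by a derivation $A_s\to J$, which is inner because $A_s$ is separable over the perfect field $k$ (so $H^1(A_s,J)=0$); hence $\mathrm{id}+\delta\in\hat J$, and conversely $\hat J\subseteq\ker\rho$. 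For the image: since $A=\mathrm{gr}_J A$, every element of $H$ is literally an automorphism of $A$, so $H^0\subseteq G_A$, $\rho|_{H^0}=\mathrm{id}$, and $\mathrm{Im}\,\rho = H^0$; and $H^0$ fits in a short exact sequence $1 \to \mathrm{Aut}_{A_s^{e}}(J) \to H^0 \to \prod_i \mathrm{PGL}_{n_i} \to 1$, in which $A_s^{e}=A_s\otimes_k A_s^{\mathrm{op}}$ is semisimple (again by separability), so $\mathrm{Aut}_{A_s^{e}}(J)$ is reductive (being a product of general linear groups over division algebras), and an extension of a reductive group by a reductive group is reductive (its unipotent radical maps to $1$ in the quotient, hence lies in the kernel, hence is trivial). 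Since $\hat J$ is connected, normal and unipotent, $\hat J\subseteq U$; and the image of $U$ in $G_A/\hat J\cong H^0$ is a connected normal unipotent subgroup of a reductive group, hence trivial, so $U=\hat J$. As $J^2=0$ forces $[J,J]=0$ over every extension, $U=\hat J$ fixes $J$ pointwise, i.e.\ $J\subseteq A^U$.

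For $(\Leftarrow)$, with $J$ commutative and $J^2\neq 0$, I would reduce to the split basic, and then split local, case: $J^2=0$ is Morita invariant, and --- as in the Morita reduction used later in the paper and in Pollack's treatment --- it suffices to treat $A\cong k[X_1,\dots,X_n]/I$ with $I$ admissible of Loewy length $l$, where now $l\geq 3$. By Nakayama $J^{l-1}\neq 0$, so fix a monomial $m$ of degree $l-1$ with $m\notin I$. For any variable $X_j$, the substitution $\sigma\colon X_j\mapsto X_j+m$, $X_i\mapsto X_i$ $(i\neq j)$, preserves $I$: a short computation with $f\in I\subseteq\langle X_1,\dots,X_n\rangle^{2}$ shows $\sigma(f)-f\in\langle X_1,\dots,X_n\rangle^{l}\subseteq I$. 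Hence $\sigma\in\mathrm{Aut}_k(A)$; it lies in the one-parameter subgroup $\{X_j\mapsto X_j+tm\}_{t}\cong\mathbb{G}_a$, which is contained in the connected normal unipotent subgroup $N=\ker\Phi_A$ of automorphisms inducing the identity on $J/J^2$ (so $N\subseteq U$); and $\sigma(X_j)=X_j+m\neq X_j$ since $m\neq 0$ in $A$. Therefore $X_j\in J$ is not $U$-fixed, and $J\not\subseteq A^U$.

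The step I expect to be the main obstacle is this commutative $(\Leftarrow)$ case. Two points need care: the reduction to the split basic/local situation --- i.e.\ that $J\subseteq A^U$ is genuinely detected after Morita reduction, which one either takes from Pollack or extracts from the functorial description of $U$ and $\hat J$ --- and the claim that the change-of-variables automorphism above lies in the \emph{unipotent radical} $U$ and not merely in some non-normal $\mathbb{G}_a$; the latter is handled by recognizing $N=\ker\Phi_A$ as a connected normal unipotent subgroup of $G_A$, hence contained in $U$, together with the observation that $N$ is nontrivial exactly when $J^2\neq 0$ (which is also why the two directions are consistent: $N=1$ precisely when $J^2=0$, matching $U=\hat J$). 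Everything else --- the implication $(\Rightarrow)$ and the non-commutative case of $(\Leftarrow)$ --- is comparatively soft, and the whole argument parallels Pollack's over algebraically closed fields.
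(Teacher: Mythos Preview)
Your $(\Rightarrow)$ argument is correct and is essentially the standard one (Pollack's): identifying $\ker\rho=\hat J$ via separability of $A_s$, showing $G_A/\hat J\cong H^0$ is reductive, and concluding $U=\hat J$ fixes $J$. The non-commutative branch of $(\Leftarrow)$ is also fine, and in fact simpler than you make it: $\mathrm{Int}(1+r)(s)=s$ is literally equivalent to $rs=sr$, so no filtration argument is needed. Since the paper's own proof is merely a pointer to Pollack, your write-up is considerably more explicit on these parts.

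The genuine gap is in the commutative branch of $(\Leftarrow)$. Your reduction ``split basic, then split local, so $A\cong k[X_1,\dots,X_n]/I$'' does not go through: a split \emph{basic} algebra is \emph{not} a direct sum of local algebras unless the primitive idempotents of $A_s$ are central in $A$, and ``$J$ commutative'' does not force this. Concretely, take the quiver with vertices $1,2$, a loop $\beta$ at $1$ and an arrow $\alpha\colon 1\to 2$, with relations $\beta^3=0$ and $\beta\alpha=0$. Then $A_s=k\times k$, $J=\langle\alpha,\beta,\beta^2\rangle$, $J^2=k\beta^2\neq 0$, and $J$ is commutative (all products of arrows except $\beta^2$ vanish), yet $A$ is not commutative ($e_2\alpha=\alpha\neq 0=\alpha e_2$) and is indecomposable as an algebra. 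So neither the ``direct sum of local pieces'' step nor the passage to $k[X_1,\dots,X_n]/I$ is available. The Morita step you flag is a separate issue on top of this: you would need ``$J\subseteq A^{U}$'' itself, not just ``$J^2=0$'', to be Morita invariant, and you do not establish that.

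The fix is to avoid the reduction entirely. When $[J,J]=0$ and $J^2\neq 0$, one checks that for $i\neq j$ the off-diagonal pieces $e_iJe_j$ annihilate $J$ on both sides, so $J^2=\bigoplus_i (e_iJe_i)^2$ and some corner $A'=e_iAe_i$ is split local commutative with $(J')^2\neq 0$. Your explicit automorphism $X_j\mapsto X_j+m$ on $A'$ then extends by the identity on the complementary Peirce components to an automorphism of $A$ lying in $(\ker\Psi_A)^0\subseteq U$ and moving an element of $J$. This is the missing bridge; once inserted, your overall strategy is sound and close in spirit to Pollack's.
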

\vskip1mm
\begin{proof}
Since over a perfect field $k$, a group is reductive if and only if the $k$-unipotent radical is trivial, and we also have $A/J\cong \bigoplus_{i=1}^{m}M_{n_{i}}(k)$. Therefore, the proof of the above statement follows in a similar line of argument as done by Pollack in Theorem $1.1$ of \cite{Pollack}.
\end{proof}
\vskip2mm

\vspace{2mm}
\begin{corollary}\label{Corollary 3.2}
Let $A$ be a split finite-dimensional associative algebra over a perfect field $k$. Then $G_A$ is reductive if and only if $J^2=0$ and $J\subset Z(A)$, where $Z(A)$ is the center of $A$.
\end{corollary}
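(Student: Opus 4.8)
The plan is to reduce everything, over the perfect field $k$, to the fact that $G_A$ is reductive if and only if its $k$-unipotent radical $U$ is trivial, and then to observe that since $U\subseteq\mathrm{Aut}_k(A)$ this is equivalent to $A^{U}=A$, i.e.\ to $A^{U}$ containing both the radical $J$ and a Wedderburn--Malcev complement $A_s$. Theorem \ref{Theorem 3.1} already controls exactly when $J\subseteq A^{U}$, so all the work lies in controlling the semisimple part.

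For the easy implication, if $G_A$ is reductive then $U=\{e\}$, so $A^{U}=A\supseteq J$ and Theorem \ref{Theorem 3.1} gives $J^2=0$; moreover $\hat J=\{\mathrm{Int}(1+r):r\in J\}\subseteq U=\{e\}$, so $\mathrm{Int}(1+r)=\mathrm{id}_A$ for every $r\in J$, whence $1+r$ (and hence $r$) is central, giving $J\subseteq Z(A)$.

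For the converse, assume $J^2=0$ and $J\subseteq Z(A)$. By Theorem \ref{Theorem 3.1}, $J\subseteq A^{U}$, so $U$ fixes $J$ pointwise over every extension. The key point is that centrality of $J$ forces uniqueness of the Wedderburn--Malcev complement: over any field extension $F$, $A_s\otimes F$ is a radical-complement of $A\otimes F$, and by Theorem \ref{Theorem 2.1} any other complement is $\mathrm{Int}(1+r)(A_s\otimes F)$ with $r\in J\otimes F\subseteq Z(A\otimes F)$, hence equals $A_s\otimes F$. Therefore every automorphism of $A\otimes F$ preserves $A_s\otimes F$, and restriction defines a morphism of $k$-groups $\rho\colon G_A\to\mathrm{Aut}_k(A_s)$ whose kernel fixes $A_s$ pointwise. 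On one hand $U\cap\ker\rho$ fixes $A_s\oplus J=A$ pointwise, hence is trivial, so $\rho|_{U}$ is injective. On the other hand, for $g\in A_s^{\times}$ the inner automorphism $\mathrm{Int}(g)$ of $A$ lies in $G_A$ (it is $\psi(g)$, with $g$ in the connected group $A_s^{\times}$) and restricts on $A_s$ to $\mathrm{Int}_{A_s}(g)$, so $\rho(G_A)$ contains $\mathrm{Inn}(A_s)=\mathrm{Aut}_k(A_s)^{0}\cong\prod_i\mathrm{PGL}_{n_i}$; being connected, $\rho(G_A)=\mathrm{Aut}_k(A_s)^{0}$, a semisimple group. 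Then $\rho(U)$ is a connected unipotent subgroup normalized by $\rho(G_A)$, hence a connected normal unipotent subgroup of a semisimple group, so $\rho(U)=\{e\}$; with injectivity of $\rho|_{U}$ this forces $U=\{e\}$, so $G_A$ is reductive.

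The main obstacle is the converse, and within it the claim that every automorphism (over every field extension) preserves $A_s$: this is precisely where $J\subseteq Z(A)$ enters, through uniqueness in Wedderburn--Malcev, and it is what lets $U$ be funneled into $\mathrm{Aut}_k(A_s)$. A secondary point requiring care is to check that $\rho(G_A)$ is the full identity component $\mathrm{Aut}_k(A_s)^{0}$ rather than a smaller, possibly non-reductive, connected subgroup — hence the explicit use of the inner automorphisms coming from $A_s^{\times}$ — since the final step needs the ambient group to be semisimple.
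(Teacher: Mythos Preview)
Your proof is correct, and the forward direction matches the paper's exactly. The converse, however, takes a genuinely different route. The paper's argument is a one-liner: it invokes a fact contained in Pollack's proof of Theorem~\ref{Theorem 3.1} (not just its statement), namely that when $J^{2}=0$ one has $U=\hat J$; then $J\subset Z(A)$ makes $\hat J=\{Id\}$, so $U=\{Id\}$ and $G_A$ is reductive. You instead use only the \emph{statement} of Theorem~\ref{Theorem 3.1} together with uniqueness of the Wedderburn--Malcev complement under centrality of $J$, to build a restriction morphism $\rho\colon G_A\to \mathrm{Aut}_k(A_s)^{0}\cong\prod_i\mathrm{PGL}_{n_i}$, and then kill $U$ by pushing it into a semisimple group. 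What the paper's approach buys is brevity, at the cost of relying on an unstated internal step of Pollack's argument; what your approach buys is self-containment---you never need to know that $U$ equals $\hat J$---at the cost of a longer argument requiring the auxiliary computation of $\rho(G_A)$ via inner automorphisms from $A_s^{\times}$.
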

\vskip1mm
\begin{proof}
    Let $G_A$ be reductive, then $U=\{Id\}$, $U$ is the $k$-unipotent radical. Hence, it follows that $A^U=A\supset J$. Therefore, by Theorem \ref{Theorem 3.1}, we have $J^2=0$. Since $U=\{Id\}$, we also have $\hat{J}=\{Id\}$, which further implies $J\subset Z(A)$. Conversely, if we assume $J^2=0$, then it follows from the proof of Theorem \ref{Theorem 3.1} that the $k$-unipotent radical of $G_A$ is $\hat{J}=U$. However, $J\subset Z(A)$ shows that $\hat{J}=\{Id\}$. Hence, the group $G_A$ is reductive.
\end{proof}

\vskip2mm
\begin{corollary}\label{Corollary 3.3}
Let $A$ be a split finite-dimensional associative algebra over a perfect field $k$. Then $G_A$ is semisimple if and only if $A\cong \bigoplus_i M_{n_i}(k)$. 
\end{corollary}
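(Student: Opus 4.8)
The plan is to derive the corollary from Corollary~\ref{Corollary 3.2}. A connected linear algebraic group is semisimple exactly when its radical is trivial, and for a reductive group the radical coincides with the connected center. Since a semisimple group is reductive, if $G_A$ is semisimple then Corollary~\ref{Corollary 3.2} gives $J^2=0$ and $J\subseteq Z(A)$; conversely, if $A\cong\bigoplus_i M_{n_i}(k)$ then $J=0$. So it remains to prove: (a) if $J=0$ then $G_A$ is semisimple, not merely reductive; and (b) if $J^2=0$ and $J\subseteq Z(A)$, then $J=0$ --- and for (b) I will exhibit, whenever $J\neq0$, a nontrivial central torus in $G_A$, which forces $G_A$ to be non-semisimple.

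For (a): when $J=0$ we have $A=A_s\cong\bigoplus_{i=1}^{m}M_{n_i}(k)$, so $A^{\times}\cong\prod_i\mathrm{GL}_{n_i}$, and the homomorphism $\psi\colon A^{\times}\to\mathrm{Aut}_k(A)$, $\psi(a)=\mathrm{Int}(a)$, has kernel $Z(A)^{\times}\cong\prod_i k^{\times}$; its image $\mathrm{Inn}(A)$ is therefore connected, and by Skolem--Noether it consists of exactly those automorphisms fixing each Wedderburn component. Since the remaining automorphisms only permute components of equal size, $\mathrm{Inn}(A)$ has finite index, so $G_A=\mathrm{Inn}(A)\cong A^{\times}/Z(A)^{\times}\cong\prod_i\mathrm{PGL}_{n_i}$, which is semisimple as a finite product of semisimple groups (if every $n_i=1$ then $G_A$ is trivial).

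For (b): assume $J^2=0$ and $J\subseteq Z(A)$, and write $A=A_s\oplus J$ with $A_s\cong\bigoplus_i M_{n_i}(k)$ the semisimple complement of Corollary~\ref{Corollary 2.2}. The key construction is the family of ``radical scalings'': for $\lambda\in k^{\times}$ let $s_\lambda$ be the $k$-linear map on $A$ that is the identity on $A_s$ and multiplication by $\lambda$ on $J$. Using only $J^2=0$ one checks $s_\lambda$ is an algebra automorphism: for $a_s,b_s\in A_s$, $j,l\in J$ the product $(a_s+j)(b_s+l)$ equals $a_sb_s+(a_sl+jb_s)$ with $a_sb_s\in A_s$ and $a_sl+jb_s\in J$, and multiplicativity of $s_\lambda$ follows at once. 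The same recipe over an arbitrary $k$-algebra $B$ (scaling $J\otimes_k B$ by a unit of $B$) is functorial and defines a morphism of algebraic groups $s\colon\mathbb{G}_m\to\mathrm{Aut}_k(A)$; its image is connected, hence lies in $G_A$, and $s$ is injective --- so $s(\mathbb{G}_m)\cong\mathbb{G}_m$ --- as soon as $J\neq0$. Finally I claim $s(\mathbb{G}_m)$ is central in $G_A$, and this is where $J\subseteq Z(A)$ enters: for $r\in J$ and $a_s\in A_s$ we have $(1+r)a_s(1+r)^{-1}=a_s+ra_s-a_sr-ra_sr=a_s$ (since $r$ is central and $ra_sr\in J^2=0$), so by the uniqueness clause of Theorem~\ref{Theorem 2.1}, $A_s$ is the \emph{unique} semisimple complement of $J$; hence every algebra automorphism $\tau$ satisfies $\tau(A_s)=A_s$ and $\tau(J)=J$, so $\tau$ is block-diagonal for $A=A_s\oplus J$ and thus commutes with every $s_\lambda$. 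Therefore $G_A$ contains a nontrivial central torus, so it is not semisimple --- a contradiction. Hence $J=0$ and $A\cong\bigoplus_i M_{n_i}(k)$.

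The step I expect to require the most care is the centrality of $s(\mathbb{G}_m)$: the uniqueness argument above shows that $\tau$ preserves $A_s$ and $J$ when $\tau$ is an automorphism over a \emph{field}, so to obtain centrality as algebraic groups I would verify it on $\overline{k}$-points --- over $\overline{k}$ the algebra $A\otimes_k\overline{k}$ is again split with $J\otimes_k\overline{k}$ a square-zero central radical, so the same uniqueness applies --- and then invoke smoothness of $G_A$ (the centralizer of $s(\mathbb{G}_m)$ is closed and contains the dense subset $G_A(\overline{k})$, hence equals $G_A$). Everything else is routine: the multiplicativity of $s_\lambda$, the facts that $s$ is a group homomorphism and a closed immersion, and the computation $G_A\cong\prod_i\mathrm{PGL}_{n_i}$ via Skolem--Noether.
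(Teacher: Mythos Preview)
Your proof is correct, but the contradiction step differs from the paper's. Both arguments start identically: semisimplicity gives reductivity, hence $J^2=0$ and $J\subseteq Z(A)$ by Corollary~\ref{Corollary 3.2}, and one must rule out $J\neq 0$. The paper then considers the restriction map $\epsilon_A\colon G_A\to\mathrm{GL}(J)$, $\phi\mapsto\phi|_J$, and observes that $J^2=0$ alone makes $\epsilon_A$ surjective: any $T\in\mathrm{GL}(J)$ extends to the automorphism $\tilde{T}(a+r)=a+T(r)$ (your $s_\lambda$ is exactly $\tilde{T}$ for $T=\lambda\cdot\mathrm{Id}$). Since a quotient of a semisimple group is semisimple, $\mathrm{GL}(J)$ would be semisimple, which is absurd when $J\neq 0$.

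Your route instead singles out the one-parameter subgroup $s(\mathbb{G}_m)$ of scalar scalings and proves it is \emph{central} in $G_A$, invoking $J\subseteq Z(A)$ and the uniqueness clause of Theorem~\ref{Theorem 2.1} to force every automorphism to preserve $A_s$. This is a perfectly valid alternative, and the care you take over $\overline{k}$-points is appropriate. The paper's approach is a bit shorter because it bypasses the centrality discussion entirely (it never uses $J\subseteq Z(A)$ in this step) and appeals only to the elementary fact that $\mathrm{GL}(J)$ is not semisimple. On the other hand, your argument makes the obstruction more explicit: it locates a concrete central torus inside $G_A$ rather than in a quotient.
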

\vskip1mm
\begin{proof}
$G_A$ is semisimple implies that it is reductive, so $J^2=0$ and $J\subset Z(A)$. We will show that $J=0$. Suppose $J\neq 0$, we have a morphism from $G_A$ to $GL(J)$ as follows $\epsilon_A: G_A\rightarrow \text{GL}(J)$, given by $\epsilon_A(\phi)=\phi|_{J}$. Using the fact $J^2=0$, we can prove that the homomorphism is surjective. Let $T\in \text{GL}(J)$, then consider $\Tilde{T}(a+r)=a+T(r)$, where $a\in A_s$ and $r\in J$. However, $J^2=0$ implies $\Tilde{T}\in G_A$ and $\epsilon_A(\Tilde{T})=T$. So, $\epsilon_A$ is a surjection. Therefore, $\text{GL}(J)$ is semisimple. Hence, $J$ must be trivial. Conversely, if $A\cong \bigoplus_i M_{n_i}(k)$, then $G_{A}\cong \prod_i \mathrm{PGL_{n_i}}$, which is a semisimple group.
\end{proof}

\vskip2mm
\begin{theorem}\label{Theorem 3.4}
 Let $T$ be a torus of rank $r$ over a perfect field $k$ such that there exists a split finite-dimensional associative algebra $A$ over $k$ with $T\cong \emph{\text{Aut}}_k(A)$, then $T$ is a split torus.
\end{theorem}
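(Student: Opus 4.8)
The plan is to exploit the reductivity that $T$ being a torus forces on $\mathrm{Aut}_k(A)$, combined with Corollary \ref{Corollary 3.2}, to pin $A$ down tightly enough that $\mathrm{Aut}_k(A)$ can be computed by hand. First I would observe that since $\mathrm{Aut}_k(A)\cong T$ is a torus it is connected, so $\mathrm{Aut}_k(A)=\mathrm{Aut}_k(A)^{0}=G_A\cong T$; in particular $G_A$ is reductive. As $k$ is perfect and $A$ is split, Corollary \ref{Corollary 3.2} then yields $J^{2}=0$ and $J\subseteq Z(A)$.

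Next I would show that $A$ is commutative. Write $A=A_s\oplus J$ with $A_s\cong\bigoplus_{i=1}^{m}M_{n_i}(k)$, and consider the morphism of algebraic groups $\mathrm{Int}\colon A_s^{\times}\to\mathrm{Aut}_k(A)$ given by conjugation. Because $J\subseteq Z(A)$, every $\mathrm{Int}(u)$ with $u\in A_s^{\times}$ fixes $J$ pointwise and restricts to the usual inner automorphism on $A_s$, so its kernel is $A_s^{\times}\cap Z(A)=Z(A_s)^{\times}$, the group of scalar matrices. Hence the image is a subgroup of $\mathrm{Aut}_k(A)=T$ isomorphic to $\prod_{i=1}^{m}\mathrm{PGL}_{n_i}$. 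Since a subgroup of the commutative group $T$ is commutative, each $\mathrm{PGL}_{n_i}$ must be commutative, forcing $n_i=1$ for all $i$. Thus $A_s\cong k^{m}$, and since $J$ is central, $A$ is commutative.

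Now I would reduce to the local case and finish. Decompose the commutative Artinian algebra as $A=\prod_{i=1}^{m}A_i$ into its local factors; each $A_i=k\oplus J_i$ with $J_i^{2}=0$. Automorphisms of $A$ permute the primitive idempotents, giving a homomorphism $\mathrm{Aut}_k(A)\to S_m$ with kernel $\prod_i\mathrm{Aut}_k(A_i)$ and finite image; connectedness of $T$ kills this finite quotient, so $\mathrm{Aut}_k(A)=\prod_i\mathrm{Aut}_k(A_i)$, and each factor $\mathrm{Aut}_k(A_i)$, being a direct factor of the torus $T$, is itself a torus. Finally, exactly as in the proof of Corollary \ref{Corollary 3.3}, for $A_i=k\oplus J_i$ with $J_i^{2}=0$ every $k$-linear automorphism of $J_i$ extends uniquely to an algebra automorphism fixing $1$, and conversely; hence $\mathrm{Aut}_k(A_i)\cong\mathrm{GL}(J_i)\cong\mathrm{GL}_{d_i}$ with $d_i=\dim_k J_i$. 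Since this is a torus, $d_i\le 1$, and $\mathrm{GL}_0$ and $\mathrm{GL}_1=\mathbb{G}_m$ are split tori; therefore $T\cong\prod_i\mathrm{GL}_{d_i}$ is a product of split tori and hence split.

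The only genuinely structural input here is Corollary \ref{Corollary 3.2}; everything afterwards is bookkeeping. The point most in need of care is that it is the connectedness of $T$ that prevents automorphisms of $A$ from permuting its block and idempotent structure, and this is what makes it legitimate to compute $\mathrm{Aut}_k(A)$ block by block. I do not expect an obstacle beyond carrying out these reductions carefully; in particular, the inner-automorphism argument that eliminates the matrix blocks $M_{n_i}(k)$ with $n_i\ge 2$ seems to be the crux.
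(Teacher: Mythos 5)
Your proof is correct, and the overall skeleton is the same as the paper's: use reductivity/nilpotency of $T$ via Corollary~\ref{Corollary 3.2} to force $J^2=0$ and $J$ central, reduce to a product of split local commutative factors $A_i=k\oplus J_i$, and then read off $\mathrm{Aut}_k(A_i)$ directly to see that the torus must be a product of copies of $\mathbb{G}_m$. The places where you diverge are worth noting because they make the argument more self-contained: the paper dispatches the matrix blocks $M_{n_i}(k)$ by citing Pollack's nilpotency results to conclude $A$ is basic, whereas you carry this out explicitly by observing that $\mathrm{Inn}(A_s^\times)\cong\prod_i\mathrm{PGL}_{n_i}$ embeds in the abelian group $T$ and hence all $n_i=1$; and where the paper appeals to the quiver presentation and Theorem~1.3 of \cite{AS1} to say that $\mathrm{Aut}_k\bigl(k[X_1,\dots,X_n]/\langle X_1,\dots,X_n\rangle^2\bigr)$ is a torus only when $n\le1$, you instead compute directly that $\mathrm{Aut}_k(k\oplus J_i)\cong\mathrm{GL}(J_i)$ whenever $J_i^2=0$ and conclude $\dim J_i\le1$. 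Both shortcuts are legitimate and arguably cleaner than the citations they replace; the one point you should keep explicit is, as you already noted, that connectedness of $T$ is what kills the finite quotient coming from permuting primitive idempotents, so that $\mathrm{Aut}_k(A)=\prod_i\mathrm{Aut}_k(A_i)$ exactly (a closed finite-index subgroup of a connected group is the whole group).
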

\begin{proof}
It is given that $T\cong \text{Aut}_k(A)=G_A$ over a perfect field $k$, where $A$ is a split finite-dimensional algebra over $k$. Since $T$ is nilpotent, it follows from the proof of Theorem \ref{Theorem 3.1} that $A$ must be a split basic algebra. Therefore, $\prod_i G_{A_i}\cong T$, where $A_i$ is a split local algebra such that $A\cong \bigoplus_i A_i$. Hence, $G_{A_i}$ is either a torus or a trivial group. If $G_{A_i}$ is a trivial group, then $A_i=\bigoplus_j k$, and if $G_{A_i}$ is a torus, then by using Corollary \ref{Corollary 3.2}, we get that $J_i^2=0$ and $J_i\subset Z(A_i)=k\oplus Z(J_i)$, where $J_i$ is the Jacobson radical of $A_i$. Since each $A_i$ is a split local algebra, we have that each $A_i$ is commutative. However, by the quiver and relation representation of $A_i$, we get that $A_i\cong k[X_1,\dots,x_n]/I$ with $\langle X_1,\dots,X_n\rangle^l\subseteq I \subseteq\langle X_1,\dots,X_n\rangle^2$, where $l=\text{min }\{n:J_i^n=0\}\geq 2$ and $n\geq 1$ (see \cite{AS1} or Section \ref{sec: commutative algebra}). Since $J_{i}^2=0$, we have $A_i\cong k[X_1,\dots,X_n]/\langle X_1,\dots, X_n\rangle^2$. Therefore, for each $G_{A_i}$ to be a torus, we must have $A_i\cong k[X]/\langle X\rangle^2$ by Theorem $1.3$ of \cite{AS1}. Finally we have either $A_i\cong \bigoplus_j k$ or $A_i\cong k[X]/\langle X\rangle^2$. Hence, $A\cong (\bigoplus_j k)\oplus (\bigoplus_i k[X]/\langle X\rangle).$ Now it follows that $G_A\cong \mathbb{G}_{m,k}^r$.
\end{proof}

\vskip2mm
\noindent
The following results are related to the radical of an associative algebra $A$ which may not be finite-dimensional; here $rad(A)$ denotes the Jacobson radical of $A$.

\vskip2mm

\begin{theorem}\label{Theorem 3.5}
$($\cite{Lam}, \emph{Theorem} $5.14)$ For any $k$-algebra $A$ and any field extension $K/k$, we have $A\cap rad(A\otimes K)\subset rad A$. If $K/k$ is an algebraic extension, or if $\mathrm{dim}_{k}A < \infty$, then $A \cap rad(A\otimes K)=rad A$.
\end{theorem}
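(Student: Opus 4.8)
\emph{Proof proposal.} The plan is to work throughout with the quasi-regularity description of the Jacobson radical: for a unital ring $S$, an element $s$ lies in $rad(S)$ if and only if $1-xs$ is left invertible in $S$ for every $x\in S$, in which case $1-xs$ is automatically a two-sided unit. Put $S=A\otimes K$ and view $A=A\otimes 1$ as a subring of $S$. Fix a $k$-basis $\{e_i\}_{i\in I}$ of $K$ with $e_{i_0}=1$; then $S=\bigoplus_{i\in I}Ae_i$ is a free left $A$-module and each $e_i$ lies in the centre of $S$.

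\emph{The inclusion $A\cap rad(A\otimes K)\subseteq rad(A)$ for arbitrary $K/k$.} The key point is a descent-of-invertibility lemma: if $u\in A$ is a unit of $S$, then $u$ is already a unit of $A$. To see this, write $u^{-1}=\sum_i b_ie_i$ with $b_i\in A$ (finite support), and compare coefficients in $uu^{-1}=e_{i_0}$ and $u^{-1}u=e_{i_0}$ relative to the free basis $\{e_i\}$, using that the $e_i$ are central; one reads off $ub_{i_0}=b_{i_0}u=1$. Granting this, let $a\in A\cap rad(A\otimes K)$. For each $x\in A$ the element $xa$ lies in $rad(A\otimes K)$, so $1-xa$ is a unit of $S$, hence a unit of $A$ by the lemma; as $x$ ranges over $A$, the quasi-regularity criterion yields $a\in rad(A)$.

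\emph{The reverse inclusion under the extra hypotheses.} Since $rad(A)\subseteq A$, it suffices to prove $rad(A)\subseteq rad(A\otimes K)$. If $\mathrm{dim}_k A<\infty$, then $rad(A)$ is nilpotent, say $rad(A)^m=0$; the two-sided ideal $rad(A)\otimes K$ of $A\otimes K$ then satisfies $(rad(A)\otimes K)^m=rad(A)^m\otimes K=0$, and a nilpotent two-sided ideal is contained in the Jacobson radical, so $rad(A)\subseteq rad(A)\otimes K\subseteq rad(A\otimes K)$. If instead $K/k$ is algebraic, I would first establish that for every \emph{finite} field extension $L/k$ one has $rad(A)\subseteq rad(A\otimes L)$: here $L$ is central in $A\otimes L$ and $A\otimes L$ is finitely generated as a left $A$-module, so any simple left $A\otimes L$-module $M$ is nonzero and finitely generated over $A$ (namely $M=\sum_j A(e_jm)$ for a fixed $0\neq m\in M$ and a $k$-basis $\{e_j\}$ of $L$); by centrality of $L$, the left $A$-submodule $rad(A)M$ is an $A\otimes L$-submodule of $M$, so Nakayama's lemma gives $rad(A)M\neq M$ and hence $rad(A)M=0$ by simplicity, i.e.\ $rad(A)\subseteq rad(A\otimes L)$. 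Finally, given $y\in rad(A)$ and an arbitrary $z\in A\otimes K$, the element $z$ involves only finitely many of the $e_i$, which generate a finite subextension $L/k$ because $K/k$ is algebraic; hence $z\in A\otimes L$ and $zy\in rad(A\otimes L)$ (a two-sided ideal containing $rad(A)$), so $1-zy$ is a unit of $A\otimes L\subseteq A\otimes K$. As $z$ was arbitrary, $y\in rad(A\otimes K)$, and combined with the first part we get the asserted equality.

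The classical inputs here — the quasi-regularity characterization of $rad$, the nilpotence of $rad$ for finite-dimensional algebras, the fact that nilpotent ideals sit inside $rad$, and Nakayama's lemma — are all routine. The one step carrying real content is the algebraic case: one must use the \emph{centrality} of the scalar ring to promote $rad(A)M$ from an $A$-submodule to an $A\otimes L$-submodule (so that simplicity forces $rad(A)M=0$), and one must first cut $K$ down to a finite subextension before Nakayama can be applied. That reduction is precisely where the hypothesis ``$K/k$ algebraic'' is used, and it is also the reason the equality can fail for transcendental extensions of infinite-dimensional algebras.
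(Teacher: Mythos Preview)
Your proof is correct. The paper itself does not prove this statement at all --- it is quoted verbatim as Theorem~5.14 from Lam's \emph{A First Course in Noncommutative Rings} and used only as background input for Remark~\ref{Remark 3.7} and Lemma~\ref{Lemma 4.25}. So there is no in-paper argument to compare against.

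For what it is worth, your approach is essentially the standard one (and close to Lam's): the descent-of-invertibility step for the first inclusion, nilpotence of the radical for the finite-dimensional case, and the Nakayama argument over finite subextensions for the algebraic case are exactly the expected ingredients. The only point worth double-checking in your write-up is the sentence ``$rad(A)M$ is an $A\otimes L$-submodule by centrality of $L$'': this is true, but the reason is that $rad(A)$ is a two-sided ideal of $A$ together with the centrality of $L$, so $(a\otimes\ell)(r\otimes 1)m=(ar\otimes 1)\bigl((1\otimes\ell)m\bigr)\in rad(A)M$; you might make that explicit. Otherwise the argument is clean and complete.
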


\vskip2mm

\begin{theorem}\label{Theorem 3.6}
$($\cite{Lam}, \emph{Theorem} $5.17)$ Let $A$ be a $k$-algebra and $K/k$ be a separable algebraic extension. Then $rad(A\otimes_{k} K)$ = $(radA)\otimes_{k} K$.
\end{theorem}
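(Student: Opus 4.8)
The plan is to prove the inclusions $(\mathrm{rad}\,A)\otimes_{k}K\subseteq\mathrm{rad}(A\otimes_{k}K)$ and $\mathrm{rad}(A\otimes_{k}K)\subseteq(\mathrm{rad}\,A)\otimes_{k}K$ separately; only the second uses separability, the first needing just that $K/k$ be algebraic. For the first inclusion, Theorem~\ref{Theorem 3.5} applied to the algebraic extension $K/k$ gives $A\cap\mathrm{rad}(A\otimes_{k}K)=\mathrm{rad}\,A$, hence $\mathrm{rad}\,A\subseteq\mathrm{rad}(A\otimes_{k}K)$; since $K$ is central in $A\otimes_{k}K$, the ideal $\mathrm{rad}(A\otimes_{k}K)$ is a $K$-subspace, and therefore contains the $K$-span $(\mathrm{rad}\,A)\otimes_{k}K$ of $\mathrm{rad}\,A\otimes 1$.

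For the reverse inclusion I would first reduce, in two steps, to the case of a finite Galois extension. Write $K=\bigcup_{i}L_{i}$ as the directed union of its finite subextensions $k\subseteq L_{i}\subseteq K$; each $L_{i}/k$ is finite and separable. Any $\eta\in\mathrm{rad}(A\otimes_{k}K)$ already lies in some $A\otimes_{k}L_{i}$, and Theorem~\ref{Theorem 3.5}, applied to the algebraic extension $K/L_{i}$ of the $L_{i}$-algebra $A\otimes_{k}L_{i}$, yields $(A\otimes_{k}L_{i})\cap\mathrm{rad}(A\otimes_{k}K)=\mathrm{rad}(A\otimes_{k}L_{i})$; so it suffices to prove the statement for each finite separable $L_{i}/k$. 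Next, for a finite separable $L/k$ with Galois closure $\widetilde{L}/k$, Theorem~\ref{Theorem 3.5} applied to $\widetilde{L}/L$ gives $\mathrm{rad}(A\otimes_{k}L)=(A\otimes_{k}L)\cap\mathrm{rad}(A\otimes_{k}\widetilde{L})$, and a routine computation with a $k$-basis of $\widetilde{L}$ refining a $k$-basis of $L$ identifies $(A\otimes_{k}L)\cap\bigl((\mathrm{rad}\,A)\otimes_{k}\widetilde{L}\bigr)$ with $(\mathrm{rad}\,A)\otimes_{k}L$. Hence the whole statement reduces to proving $\mathrm{rad}(A\otimes_{k}M)\subseteq(\mathrm{rad}\,A)\otimes_{k}M$ for $M/k$ finite Galois, say with group $G$.

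For this case I would use an averaging argument. Since $M/k$ is separable, the trace form $(\alpha,\beta)\mapsto\mathrm{tr}_{M/k}(\alpha\beta)$ on $M$ is nondegenerate, so for any $k$-basis $c_{1},\dots,c_{n}$ of $M$ the $k$-linear map $\beta\mapsto\bigl(\mathrm{tr}_{M/k}(c_{1}\beta),\dots,\mathrm{tr}_{M/k}(c_{n}\beta)\bigr)$ is surjective onto $k^{n}$. The ideal $\mathfrak{r}:=\mathrm{rad}(A\otimes_{k}M)$ is stable under $G$, which acts by ring automorphisms. Given $\eta\in\mathfrak{r}$, write it uniquely as $\eta=\sum_{i=1}^{n}a_{i}\otimes c_{i}$ with $a_{i}\in A$; for any $\beta\in M$ the element $\theta_{\beta}:=\sum_{g\in G}g\bigl(\eta\,(1\otimes\beta)\bigr)$ lies in $\mathfrak{r}$ (being a sum of $G$-translates of $\eta\,(1\otimes\beta)\in\mathfrak{r}$) and is $G$-invariant, hence lies in $(A\otimes_{k}M)^{G}=A\otimes 1$, hence in $\mathfrak{r}\cap A=\mathrm{rad}\,A$ by Theorem~\ref{Theorem 3.5}. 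A direct computation shows $\theta_{\beta}=\sum_{i}\mathrm{tr}_{M/k}(c_{i}\beta)\,a_{i}$ (under $A\cong A\otimes 1$), and by the surjectivity above, choosing $\beta$ so that $\bigl(\mathrm{tr}_{M/k}(c_{i}\beta)\bigr)_{i}$ is the $j$-th standard basis vector of $k^{n}$ gives $a_{j}\in\mathrm{rad}\,A$ for every $j$, i.e. $\eta\in(\mathrm{rad}\,A)\otimes_{k}M$.

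I expect the genuine obstacle to be the reverse inclusion: it fails for inseparable extensions, so separability must enter essentially, and the mechanism that makes it work is the trace/Galois-descent step. The one delicate point in organizing the argument is that a separable algebraic extension need not itself be a union of Galois subextensions, which is exactly why one descends first to a finite separable subextension and then to its Galois closure before running the averaging.
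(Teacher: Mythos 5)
The paper does not prove this statement; it is quoted verbatim from Lam (\cite{Lam}, Theorem 5.17) as a background fact, so there is no in-paper argument to compare your proof against. That said, your proof is correct, and it follows the standard line of attack (which is essentially what Lam does as well): the easy inclusion $(\mathrm{rad}\,A)\otimes_k K\subseteq\mathrm{rad}(A\otimes_k K)$ from Theorem~\ref{Theorem 3.5} plus $K$-linearity of the radical; then a two-stage reduction of the hard inclusion to the finite Galois case (first to a finite separable subextension containing a given element, then to its Galois closure, each time re-invoking Theorem~\ref{Theorem 3.5} and a direct-sum/basis-refinement computation); and finally a Galois-descent/averaging argument using that $\mathrm{rad}(A\otimes_k M)$ is stable under $G=\mathrm{Gal}(M/k)$, that $(A\otimes_k M)^G=A\otimes 1$, and that the trace form of a separable extension is nondegenerate to force each coefficient $a_i$ of $\eta=\sum_i a_i\otimes c_i$ into $\mathrm{rad}\,A$. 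All the intermediate claims check out: the identification $(A\otimes_k L)\cap\bigl((\mathrm{rad}\,A)\otimes_k\widetilde L\bigr)=(\mathrm{rad}\,A)\otimes_k L$ follows from refining a $k$-basis, $\theta_\beta=\sum_i\mathrm{tr}_{M/k}(c_i\beta)\,a_i$ is the correct computation, and $G$-invariance plus $\theta_\beta\in\mathfrak r$ puts $\theta_\beta$ into $A\cap\mathrm{rad}(A\otimes_k M)=\mathrm{rad}\,A$. Your closing remark correctly locates where separability is indispensable, and the observation that separable algebraic extensions need not be exhausted by Galois subextensions is exactly why the intermediate ``finite separable $\to$ Galois closure'' step is needed rather than a single direct-limit-over-Galois-subfields argument.
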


\vskip2mm

\begin{remark}\label{Remark 3.7}
   Let $A$ be a split finite-dimensional algebra over the perfect field $k$, with $J$ being the radical of $A$. If $A=A_s\oplus J$, where $A_s\cong \bigoplus_{i=1}^{m} M_{n_i}(k)$, then $A\otimes_k K=(A_s\otimes_k K)\oplus (J\otimes_k K)$, where $A_s\otimes_k K\cong \bigoplus_{i=1}^{m} M_{n_i}(K)$, for every extension $K$ of $k$ and $m\in \mathbb{N}$.
\end{remark}

\vskip2mm
\section{$R$-Equivalence and $R$-triviality}\label{R-equivalence}
\vskip2mm
\noindent
Let $k$ be a perfect field and $G_A$ be the identity component of the automorphism group of a split finite-dimensional associative algebra $A$. In this section, we discuss the $R$-equivalence of $G_A$.

\vskip2mm
\noindent
Let $X$ and $Y$ be varieties (irreducible) over $k$. We write $X\cong_b Y$ if $X$ and $Y$ are birationally equivalent, that is, the function fields $k(X)$ and $k(Y)$ are isomorphic over $k$. And we use the notation $X \cong^{s.b} Y$ if $X$ and $Y$ are stably birationally equivalent, that means, $X\times \mathbb{A}_k^m\cong_b Y\times \mathbb{A}_k^n$ for some $m,n$.

\vskip2mm
\noindent
We say that a variety $X$ is \emph{rational} (resp. \emph{stably rational)} if $X\cong_b\mathbb{A}_k^n$ (resp. $X\times_k \mathbb{A}_k^n\cong_b \mathbb{A}_k^{m})$  for some $n,m\geq 0$.
  
\vskip2mm
\noindent
Let X be an irreducible variety over a field k with $X(k)\neq\phi$. We define two points $x, y \in X(k)$ to be $R$-equivalent ($x \sim y$) if there exists a sequence points $x_0= x, x_1,...,x_n=y$ in $X(k)$ and rational
maps $f_i:\mathbb{A}^{1}_{k}\dashrightarrow X$, $1\leq i\leq n$, defined over $k$, regular at $0$ and $1$, such
that $f_{i}(0)=x_{i-1},f_{i}(1)=x_{i}$ (see \cite{Manin}).

\vskip2mm
\noindent
Let $G$ be a connected algebraic group defined over $k$. The set of points in $G(k)$ that are $R$-equivalent to identity in $G(k)$ forms a normal subgroup of $G(k)$, denoted by $RG(k)$. The set $G(k)/R$ of $R$-equivalence classes in $G(k)$ is in canonical bijection with $G(k)/RG(k)$ and thus has a natural group structure. We identify $G(k)/R$ with the group $G(k)/RG(k)$. In the literature, it is known that this group is very useful in studying the rationality properties of $G$, for example, one may look at the references (\cite{Voskrenskii}, \cite{Gille1}).

\vskip2mm
\noindent
We call $G$ is \emph{$R$-trivial}, if $G(L)/R = \{1\}$ for all field extensions $L$ of $k$. It is a fact that if $G$ is $k$-rational, then $G$ is $R$-trivial (see
\cite{Voskrenskii}, Chap. $6$, Prop. $2$). Also, because tori of rank at most $2$ are rational, it follows that any reductive algebraic group of rank at most $2$ is rational (\cite{CM}, Cor. $2.3$). If $X$ and $Y$ are two irreducible varieties over $k$ then,
\begin{enumerate}
    \item $(X\times Y)(L)/R\cong X(L)/R\times Y(L)/R $.
    \vskip2mm
    \item $R_{F/k}(X)(k)/R=X(F)/R$.  (\cite{Voskrenskii})
\end{enumerate}

\vskip2mm
\noindent
If a group $G$ is $k$-split, then $G$ is $k$-rational (see \cite{VT}). It is also known that any connected unipotent algebraic group over a perfect field is rational (\cite{Voskrenskii}).

\vskip2mm
\begin{proposition}\label{Proposition 4.1}
Let $A$ be a finite-dimensional associative $k$-algebra. Assume that $\emph{\text{Aut}}_k(A)= \emph{Inn}(A)$. Then $\emph{\text{Aut}}_k(A)$ is $R$-trivial.
\end{proposition}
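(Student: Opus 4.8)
The plan is to exploit the surjection $\psi\colon A^{\times}\to\mathrm{Inn}(A)$ given by $a\mapsto\mathrm{Int}(a)$, which is a morphism of algebraic groups, and to transfer rationality (hence $R$-triviality) from $A^{\times}$ to $\mathrm{Aut}_k(A)$. First I would observe that $A^{\times}$ is a Zariski-open subset of the affine space $A$ (it is the non-vanishing locus of the reduced norm / a suitable determinant polynomial on the regular representation), hence $A^{\times}$ is a rational $k$-variety; in particular $A^{\times}$ is $R$-trivial, and this remains true after any field extension $L/k$ since $A\otimes_k L$ still has the same shape. The hypothesis $\mathrm{Aut}_k(A)=\mathrm{Inn}(A)$ then says $\psi$ is a surjective morphism of algebraic groups $A^{\times}\twoheadrightarrow\mathrm{Aut}_k(A)$, defined over $k$.

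The key step is to pass $R$-triviality through the surjection $\psi$. For this I would use the standard fact that $R$-equivalence is compatible with rational maps: if $x\sim y$ in $A^{\times}(L)$ via rational curves $f_i\colon\mathbb{A}^1_L\dashrightarrow A^{\times}$ regular at $0,1$, then $\psi\circ f_i\colon\mathbb{A}^1_L\dashrightarrow\mathrm{Aut}_k(A)$ are again rational, regular at $0$ and $1$, and connect $\psi(x)$ to $\psi(y)$. Thus $\psi$ carries $RA^{\times}(L)$ into $R\,\mathrm{Aut}_k(A)(L)$, i.e. it induces a homomorphism $A^{\times}(L)/R\to\mathrm{Aut}_k(A)(L)/R$. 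Since $A^{\times}(L)/R=\{1\}$ for every $L$ (because $A^{\times}$ is $L$-rational), the image of this map is trivial; so it suffices to show $\psi\colon A^{\times}(L)\to\mathrm{Aut}_k(A)(L)$ is surjective on $L$-points for every field extension $L/k$. Here the assumption $\mathrm{Aut}_k(A)=\mathrm{Inn}(A)$ as algebraic groups means that after base change $\mathrm{Aut}_k(A)\otimes_k L=\mathrm{Inn}(A\otimes_k L)=\psi_L(A^{\times}\otimes_k L)$; evaluating on $L$-points and using that $A^{\times}$ is a rational variety (so $A^{\times}(L)$ is Zariski-dense and $\psi_L$ is surjective on $L$-points — equivalently the kernel of $\psi_L$ is the centre, a subgroup whose cohomology vanishes appropriately, or more directly: an $L$-automorphism that is inner is $\mathrm{Int}(a)$ for some $a\in(A\otimes_k L)^{\times}$ by definition of $\mathrm{Inn}$), one gets the needed surjectivity. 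Therefore every element of $\mathrm{Aut}_k(A)(L)$ is $R$-equivalent to the identity, giving $\mathrm{Aut}_k(A)(L)/R=\{1\}$ for all $L/k$, i.e. $\mathrm{Aut}_k(A)$ is $R$-trivial.

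I expect the main obstacle to be the surjectivity of $\psi$ on $L$-rational points: a priori $\mathrm{Inn}(A)$ as an \emph{algebraic group} is the image of $\psi$ in the scheme-theoretic sense (a homogeneous space / quotient $A^{\times}/Z(A)^{\times}$), and its $L$-points could in principle be strictly larger than $\psi(A^{\times}(L))$ due to a non-trivial $H^1(L,Z)$ obstruction for the central kernel $Z=Z(A)^{\times}$. The resolution is exactly that $A^{\times}$ is a rational (open in affine space) variety, so $\psi$ is a smooth surjective morphism with $A^{\times}$ rational; combined with $R$-triviality of $A^{\times}$ over every $L$, the induced map on $R$-equivalence classes is onto, and since the source is trivial the target is forced to be trivial regardless of whether $\psi_L$ hits every $L$-point — one only needs that $\mathrm{Aut}_k(A)$ is generated as a group by $R$-trivial pieces, which the equality $\mathrm{Aut}_k(A)=\mathrm{Inn}(A)=\overline{\psi(A^{\times})}$ together with $R$-triviality of $A^{\times}$ supplies. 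This makes the argument go through cleanly over an arbitrary field extension.
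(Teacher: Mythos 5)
Your argument is essentially the paper's proof, just stated more abstractly: the paper connects $\mathrm{Id}$ to $\mathrm{Int}(a)$ by the explicit rational curve $\theta(t)=\mathrm{Int}\bigl((1-t)\cdot 1+ta\bigr)$, which is exactly $\psi$ composed with the line segment from $1$ to $a$ in the rational open set $A^{\times}$, and then reads off $\mathrm{Aut}_F(A\otimes F)=\mathrm{Inn}(A\otimes F)=\psi\bigl((A\otimes F)^{\times}\bigr)$ from the functorial form of the hypothesis, just as you do. The only stylistic difference is that you flag the potential $H^1(L,Z(A)^{\times})$-obstruction to surjectivity of $\psi$ on $L$-points and then (somewhat circuitously) dismiss it, whereas the paper sidesteps it entirely by interpreting $\mathrm{Inn}(A\otimes F)$ as the image of $(A\otimes F)^{\times}$ from the start; in fact that cohomology group vanishes, so no gap arises either way.
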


\vskip1mm
\begin{proof}
Consider the following rational map $\theta :\mathbb{A}^1_{k}\dashrightarrow \text{Inn}(A)$ defined by 
\begin{equation*}\label{eq4}
\theta(t)(x)=\{(1-t).1+ta\}x\{(1-t).1+ta\}^{-1}
\end{equation*}
\vskip1mm
\noindent
Therefore, $\theta(0)=Id$ and $\theta(1)=\text{Int}(a)$. It is clear that $L_a:A\rightarrow A$ is invertible if and only if $a$ is invertible, where $L_a(x)=ax$. 
It follows that $c=\{(1-t).1+t.a\}$ is invertible 
except for a finite set. Therefore, the map $\theta$ is well-defined. Hence, $k$-rational points are $R$-equivalent. Now, $\text{Aut}(A)(F)\cong \text{Aut}_F(A\otimes F)=\text{Inn}(A\otimes F)$ for any field extension $F$ over $k$. Again, by a similar line of argument, we can show that $F$-rational points are $R$-equivalent. Hence, we are done. 
\end{proof}

\vskip1mm

\begin{example}\label{Example 4.2}
Let $A=\bigoplus_{i=1}^{m} M_{n_i}(k)$, then by the classical result of Skolem and Noether $G_A\cong \prod_{i=1}^{m} \text{PGL}_{n_{i}}$. However, $\text{PGL}_n$ is $R$-trivial as a $k$-group. Hence, $G_A$ is $R$-trivial.
\end{example}

\vskip2mm

\begin{example}\label{Example 4.3}
Let $A$ be the set of upper triangular matrices in $M_n(k)$. Here, the radical $J=$ upper triangular nilpotent matrices and $A_s=$ diagonal matrices, and we have $A=A_s\oplus J$ as the Wedderburn-Malcev decomposition. Then $\text{Aut}_k(A)=\text{Inn}(A)$ as shown in (\cite{Pollack}, \cite{Stanley}). Therefore, this group is $R$-trivial by Proposition \ref{Proposition 4.1}.
\end{example}

\vskip2mm

\begin{example}\label{Example 4.4}
Let $V$ be an $n$-dimensional vector space over a field $k$ and let $A$ be the exterior algebra constructed from $V$. Then by Example $1.5$ in \cite{Pollack}, $\text{Aut}_k(A)\cong \text{GL}_n\ltimes U$, where $U$ is a unipotent group. Therefore, $\text{Aut}_k(A)$ is rational because unipotent groups are rational, hence $R$-trivial.
\end{example}

\vskip2mm

\begin{example}\label{Example 4.5}
Let $A=\displaystyle\frac{k[X_1,X_2,\dots,X_n]}{\langle X_1,X_2,\dots,X_n\rangle^l}$ for some $l\geq 2$, then $\text{Aut}_k(A)\cong \mathrm{GL}_n\ltimes U$, where $U$ is a unipotent group (see \cite{AS1}, Theorem $1.3)$. Therefore, $\text{Aut}_k(A)$ is rational, hence $R$-trivial.
\end{example}

\vskip2mm

\begin{example}\label{Example 4.6}
In \cite{AS1}, it has been proved that if $A$ is a split local commutative associative monomial algebra, then the identity component $G_A$ has rank $n$ with a split torus of rank $n$. So, as a $k$-group, $G_A$ is $R$-trivial as any $k$-split group is rational by \cite{VT}. 
\end{example}

\vskip2mm

\begin{proposition}\label{Proposition 4.7}
$($\cite{Voskrenskii}, \emph{Section} $17)$ If every maximal torus within a connected algebraic group \( G \) defined over a perfect field is \( R \)-trivial, then the group is \( R \)-trivial.
\end{proposition}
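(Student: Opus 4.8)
The plan is to peel an arbitrary rational point of $G$ apart using the Jordan decomposition and to treat its semisimple and unipotent parts by different mechanisms: the semisimple part by the hypothesis on maximal tori, the unipotent part by rationality of unipotent groups. Since $R$-triviality is demanded over every field extension, I would fix a field extension $L/k$ and aim to show $G(L)/R=\{1\}$; as every step is "the same" after replacing $G$ by $G_L$, it is enough to explain the argument for $L=k$ and then indicate how the hypothesis propagates.

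First I would record the two structural inputs. (i) Every unipotent element $u\in G(k)$ lies in a connected unipotent $k$-subgroup $V$ of $G$ -- a classical fact over a perfect field. Since a connected unipotent group over a perfect field is $k$-rational, hence $R$-trivial (as recalled earlier in this section), composing $V\hookrightarrow G$ with a $k$-morphism $\mathbb{A}^1\to V$ sending $0$ to $e$ and $1$ to $u$ shows $u\in RG(k)$. (ii) Every semisimple element $s\in G(k)$ lies in a maximal $k$-torus $T$ of $G$; one may take a maximal $k$-torus of the connected $k$-group $C_G(s)^0$, in which $s$ is central, and observe that it is maximal in $G$ as well. By hypothesis $T$ is $R$-trivial, so $T(k)=RT(k)$, and since a chain of rational curves inside $T$ is in particular a chain inside $G$, we have $RT(k)\subseteq RG(k)$; hence $s\in RG(k)$.

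Then the assembly at $k$-points would be immediate: for $g\in G(k)$, perfectness of $k$ gives a rational Jordan decomposition $g=g_s g_u=g_u g_s$ with $g_s,g_u\in G(k)$, $g_s$ semisimple and $g_u$ unipotent. By (i) and (ii) both $g_s$ and $g_u$ lie in $RG(k)$, and since $RG(k)$ is a subgroup of $G(k)$ we conclude $g=g_s g_u\in RG(k)$, so $G(k)/R=\{1\}$. Running the same argument with $G$ replaced by $G_L$ for an arbitrary $L/k$ would give $G(L)/R=\{1\}$ for all $L$, i.e.\ $G$ is $R$-trivial -- provided the $R$-triviality hypothesis transfers to the maximal tori of $G_L$. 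For that I would use that the variety $\mathcal{T}$ of maximal tori of $G$ is $k$-rational, so that every maximal $L$-torus of $G_L$ arises by specialising the generic maximal torus of $G$ over $k(\mathcal{T})$ along an $L$-point of $\mathcal{T}$; applying the hypothesis over the extension $k(\mathcal{T})/k$ then yields $R$-triviality of that generic torus, hence of its specialisations.

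The main obstacle is precisely this last point: the Jordan-decomposition skeleton is formal, but the delicate bookkeeping lies in (a) propagating the hypothesis on maximal $k$-tori to maximal tori defined over arbitrary -- possibly imperfect -- extensions $L$, via the $k$-rational family $\mathcal{T}$ and a specialisation argument, and (b) the two structural facts invoked above, namely that over a perfect field every semisimple element of $G(k)$ sits in a maximal $k$-torus and every unipotent element sits in a connected unipotent $k$-subgroup, both of which genuinely use perfectness of $k$. Once these are granted, normality of $RG(L)$ in $G(L)$ makes the conclusion automatic.
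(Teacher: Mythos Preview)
The paper gives no proof of its own here; the statement is quoted from Voskresenskii. The argument there is not via Jordan decomposition but via the birational structure of $G$: over a perfect field one reduces to the reductive quotient, and for a connected reductive $k$-group with a chosen maximal $k$-torus $T$ one has a $k$-birational isomorphism $G\cong_b T\times\mathbb{A}^N$ (this is the content of Voskresenskii, \S4; it is exactly what underlies the paper's Proposition~\ref{Proposition 4.8} and Remark~\ref{Remark 4.9}). That single $k$-isomorphism base-changes to every extension $L/k$, so one obtains a surjection $T(L)/R\twoheadrightarrow G(L)/R$ for all $L$ at once; since $R$-triviality of $T$ means precisely $T(L)/R=\{1\}$ for all $L$, the conclusion follows. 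Note that only \emph{one} maximal $k$-torus is used, and nothing is required of maximal tori defined only over larger fields.

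Your Jordan-decomposition outline is correct over the perfect base field $k$ itself, but the passage to an arbitrary extension $L/k$ is a genuine gap, and the fix you sketch does not close it. The generic torus over $k(\mathcal{T})$ is not a maximal $k$-torus of $G$, so the hypothesis says nothing about it; and even if it did, $R$-triviality is not known to specialise along $L$-points of the base. More fundamentally, when $L$ is imperfect (e.g.\ $k$ perfect of characteristic $p$ and $L=k(t)$) the Jordan decomposition of a point of $G(L)$ need not be $L$-rational, and the structural inputs ``a semisimple $L$-point lies in a maximal $L$-torus'' and ``a unipotent $L$-point lies in a connected unipotent $L$-subgroup'' can fail outright. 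So the argument cannot be rerun over such $L$. The birational approach avoids all of this by fixing the relevant geometry over $k$ once and then base-changing.
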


\vskip2mm

\begin{proposition}\label{Proposition 4.8}
$($\cite{CM}, \emph{Proposition} $2.2)$ If any maximal torus of a connected reductive group $G$ is rational (resp. stably rational) over its field of definition, then the group $G$ is itself rational (resp. stably rational). 
\end{proposition}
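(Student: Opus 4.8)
The plan is to reduce the rationality of $G$ to that of a single canonical maximal torus by exploiting the birational geometry of the conjugation action of $G$ on itself; the extra input one needs beyond purely formal manipulation is the rationality of the variety of all maximal tori, and that is where the real work lies.

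First I would set up a birational model of $G$. Let $\mathcal T$ denote the variety of maximal tori of $G$; fixing one maximal torus $T_0$ with normaliser $N=N_G(T_0)$, conjugation identifies $\mathcal T\cong G/N$. Form the incidence variety $\mathcal X=\{(x,T)\ :\ T\text{ a maximal torus of }G,\ x\in T\}$, with its two projections $p:\mathcal X\to G$, $(x,T)\mapsto x$, and $q:\mathcal X\to\mathcal T$, $(x,T)\mapsto T$. Over the dense open locus $G^{\mathrm{rs}}$ of regular semisimple elements the map $p$ is an isomorphism — a regular semisimple $x$ lies in exactly one maximal torus, namely $C_G(x)$, and $x\mapsto(x,C_G(x))$ is an inverse section — so $p$ is $k$-birational and $k(G)\cong k(\mathcal X)$. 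The map $q$ exhibits $\mathcal X$ as a bundle over $\mathcal T$ whose fibre over $T$ is $T$ itself; its generic fibre is the \emph{generic maximal torus} $T_\eta$, a maximal torus of $G_K$ defined over $K:=k(\mathcal T)$. Hence $k(G)\cong k(\mathcal X)\cong K(T_\eta)$, the function field of the $K$-torus $T_\eta$.

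Next I would invoke the hypothesis for $T_\eta$ — a maximal torus of $G$ whose field of definition is $K$ — to conclude that $T_\eta$ is $K$-rational (resp. stably $K$-rational), so $K(T_\eta)=K(t_1,\dots,t_r)$ is purely transcendental over $K$ with $r=\dim T_0$ (resp. becomes so after adjoining finitely many further indeterminates). Combined with the key fact that $\mathcal T=G/N_G(T_0)$ is $k$-rational — so that $K=k(\mathcal T)=k(s_1,\dots,s_m)$ with $m=\dim\mathcal T$ — this gives $k(G)=k(s_1,\dots,s_m,t_1,\dots,t_r)$, purely transcendental over $k$ of transcendence degree $m+r=\dim G$; thus $G$ is $k$-rational, and the stably rational statement is the same computation with the extra indeterminates carried through. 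The main obstacle is precisely this key fact: unlike the quasi-split case, where $G$ already decomposes birationally as $U^{-}\times T_0\times U$ and $\mathcal T$ is handled directly, the rationality of the variety of maximal tori of a general connected reductive group is a substantive statement and is the step I expect to demand the most work; a secondary, bookkeeping-type point is to read the hypothesis "any maximal torus of $G$ over its field of definition" as covering maximal tori defined over transcendental extensions of $k$, such as $T_\eta$ over $k(\mathcal T)$, and not merely the maximal $k$-tori of $G$.
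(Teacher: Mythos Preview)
Your proposal is correct and follows precisely the approach the paper defers to: the paper's ``proof'' is just a pointer to \cite{CM} Proposition~2.2 and Section~4.1 of \cite{Voskrenskii}, and the argument there is exactly the one you sketch --- pass to the incidence variety, identify $k(G)$ with $K(T_\eta)$ for the generic torus $T_\eta$ over $K=k(G/N_G(T_0))$, invoke Chevalley's theorem that the variety of maximal tori $G/N_G(T_0)$ is $k$-rational, and apply the hypothesis to $T_\eta$. You have also correctly flagged the two points that carry the content: the rationality of $G/N_G(T_0)$ is the substantial input (it is exactly what Voskrenskii supplies), and the hypothesis ``any maximal torus \dots\ over its field of definition'' must be read to cover the generic torus over the transcendental extension $k(G/N_G(T_0))$, which is how both \cite{CM} and the applications in this paper (Propositions~\ref{Proposition 4.14} and~\ref{Proposition 4.15}) use it.
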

\vskip1mm
\begin{proof}
A similar proof will work for the stably rational case as done in Proposition $2.2$ of \cite{CM} for the rational case with the help of Section $4.1$ in \cite{Voskrenskii}.
\end{proof}
\vskip2mm
\begin{remark}\label{Remark 4.9}
For any connected algebraic group $G$ over a perfect field $k$, we have $G\cong_{b} G/U\times U$, where $U$ is the $k$-unipotent radical. Therefore, $G$ is rational (resp. stably rational) if and only if $G/U$ is rational (resp. stably rational), as $U$ is always rational. However, $G/U$ is a reductive group. Hence, if any maximal torus of $G$ is rational (resp. stably rational), then $G$ is also rational (resp. stably rational) by Proposition \ref{Proposition 4.8}.
\end{remark}
\vskip2mm
\begin{definition}\label{Definition 4.10}
A grading on an algebra $A$ is a family of vector subspaces $(A_n)_{n\in\mathbb{Z}}$ of $A$ such that $A=\bigoplus_{n\in\mathbb{Z}} A_n$ and $A_n.A_m\subseteq A_{n+m}$ for all $n,m\in \mathbb{Z}$. The grading is called \emph{by the radical} when $A_m=0$ for $m<0$ and $J^m=\bigoplus_{n\geq m} A_n$, for each $m\geq 0$ (see \cite{Saorin}, Proposition $1.1$ for equivalent definitions$)$.  For any associative algebra $A$ with $J^0=A$, we can associate a graded by the radical algebra, $G_{r}(A)\coloneq \bigoplus_{m\geq 0} J^m/J^{m+1}$ (\emph{associated graded by the radical algebra}), as a vector space, and where the multiplication $(a+J^{m+1}). (b+J^{n+1})=(ab+J^{m+n+1})$ for $a\in J^{m}$ and $b\in J^{n}$, extends to all of $G_{r}(A)$ by linearity. An algebra $A$ is said to be \emph{graded by the radical algebra} if it has a grading by the radical, or equivalently, $A$ is isomorphic to $G_{r} (A)$. For more details, we refer to \cite{AS1} and \cite{Saorin}.
\end{definition}
\vskip2mm
\begin{remark}\label{Remark 4.11}
It is easy to see that there are well-defined maps for a split local algebra $A$ over $k$, which are given below. Since for any automorphism $\phi\in \mathrm{Aut}_{k}(A)$, it induces an automorphism $\bar{\phi}:J/J^2\rightarrow J/J^2$. We have a morphism of algebraic groups $\Phi_A: \mathrm{Aut}_{k}(A)\rightarrow \mathrm{GL}(J/J^2)$, where $\Phi_A(\phi)=\bar{\phi}$. If $\phi\in \mathrm{Aut}_{k}(A)$, then we can define a $k$-linear bijective map $\Psi_A(\phi)$, which is just the direct sum of the maps induced by $\phi$ on all $J^m/J^{m+1}$, $m\in \mathbb{N}\cup \{0\}$. It is clear that $\Psi_A(\phi)$ is actually an algebra automorphism, thus yields a map $\Psi_A:\mathrm{Aut}_{k}(A)\rightarrow \mathrm{Aut}_{k}(G_{r}(A))$. Similarly, for every $m\in\{2,\dots,n\}$, where $J^{n+1}=0$, we have a canonical map $\phi_{k}: \mathrm{Aut}_{k}(A)\rightarrow \mathrm{Aut}_{k}(A/J^m)$ that takes $\phi\in \mathrm{Aut}_{k}(A)$ to the induced automorphism $\tilde{\phi}: A/J^m\rightarrow A/J^m$.  It is easy to see that we have similar kind of maps $\Phi_{G_{r}(A)}: \mathrm{Aut}_{k}(G_{r}(A))\rightarrow \mathrm{GL}(J/J^2)$ and $\Phi_{A/J^{m}}:\mathrm{Aut}_{k}(A/J^m)\rightarrow \mathrm{GL}(J/J^2)$.  All of the above maps $(\Phi_A, \Psi_A, \phi_{k},\Phi_{G_{r}(A)}, \Phi_{A/J^m}$) are morphism of algebraic groups.
\end{remark}
\vskip2mm
\noindent
The following result is known from \cite{Pollack}. We include a proof here for convenience.

\vskip2mm
\begin{proposition}\label{Proposition 4.12}
Let $A$ be a split local associatve algebra over $k$ and  $G_r(A)$ is defined as above, i.e., $G_{r}(A)\coloneq (k\oplus\bigoplus_{m=1}^{n}(J^m/J^{m+1}))$ be the associated graded by the radical algebra of $A$. Consider the following commutative diagrams $(1)$ and $(2)$:
\begin{equation}\label{eq6}
\begin{minipage}{0.45\linewidth}
\begin{tikzcd}
    \mathrm{Aut}_k(A) \arrow[dr, "\Phi_A"] \arrow[d, "\Psi_A"']\\
    \mathrm{Aut}_k(G_{r}(A)) \arrow[r,"\Phi_{G_{r}(A)}"'] & \mathrm{GL}(J/J^2)
\end{tikzcd}
\begin{center}
\textbf{(1)}
\end{center}
\end{minipage}
\begin{minipage}{0.45\linewidth}
\begin{tikzcd}
   \mathrm{Aut}_k(A) \arrow[dr, "\Phi_A"] \arrow[d, "\phi_k"']\\
    \mathrm{Aut}_k(A/J^m) \arrow[r,"\Phi_{A/J^m}"'] & \mathrm{GL}(J/J^2)
\end{tikzcd}
\begin{center}
\textbf{(2)}
\end{center}
\end{minipage}
\end{equation}
Then each of the induced morphisms $(\Phi_A, \Phi_{G_{r}(A)}, \Phi_{A/J^m})$ has a unipotent kernel. Therefore, the rank of $\emph{\text{Aut}}_k(A)$ is bounded by the dimension of $J/J^2$.
\end{proposition}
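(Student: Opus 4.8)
The plan is to describe $\ker\Phi_A$ (and likewise $\ker\Phi_{G_{r}(A)}$ and $\ker\Phi_{A/J^m}$) as a group of automorphisms acting trivially on the associated graded of the radical filtration, deduce from this that these kernels are unipotent algebraic groups, and then read off the rank bound by restricting $\Phi_A$ to a maximal torus. To identify the kernel, let $J^{n+1}=0$, fix a field extension $F/k$ and $\phi\in\ker\Phi_A(F)\subseteq\mathrm{Aut}_F(A\otimes_k F)$, so that $\phi$ induces the identity on $(J/J^2)\otimes_k F$. Since $A$ is split local, $(A\otimes_k F)/(J\otimes_k F)\cong F$ is semisimple while $J\otimes_k F$ is nilpotent, hence $J\otimes_k F=\mathrm{rad}(A\otimes_k F)$ and $\phi$ stabilises it; moreover $\phi$ fixes $F\cdot 1$ pointwise and $\phi(x)-x\in J^2\otimes_k F$ for every $x\in J\otimes_k F$. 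Expanding $\phi(x_1\cdots x_m)=\prod_i(x_i+y_i)$ with $x_i\in J\otimes_k F$ and $y_i:=\phi(x_i)-x_i\in J^2\otimes_k F$, every summand other than $x_1\cdots x_m$ contains some $y_i$ together with at least $m-1$ factors from $J\otimes_k F$, hence lies in $J^{m+1}\otimes_k F$; therefore $(\phi-\mathrm{id})(J^m\otimes_k F)\subseteq J^{m+1}\otimes_k F$ for all $m\geq 0$. Iterating this bound gives $(\phi-\mathrm{id})^{n+1}=0$ on $A\otimes_k F$, so $\phi$ is a unipotent operator.

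The previous computation exhibits $\ker\Phi_A$ as a closed $k$-subgroup of the group $\mathcal U$ of invertible linear operators on $A$ that preserve the filtration $A\supseteq J\supseteq J^2\supseteq\cdots$ and act as the identity on $\bigoplus_{m\geq 0}J^m/J^{m+1}$. The group $\mathcal U$ is split unipotent: it carries a central series whose successive quotients are vector groups sitting inside $\bigoplus_m\mathrm{Hom}_k(J^m/J^{m+1},J^{m+j}/J^{m+j+1})$. Hence $\ker\Phi_A$ is unipotent (equivalently, all its elements are unipotent, so Kolchin's theorem applies). Running the identical argument for the split local algebra $G_{r}(A)$, whose radical is $\bigoplus_{m\geq 1}J^m/J^{m+1}$, and for the split local algebra $A/J^m$, whose radical is $J/J^m$, shows that $\ker\Phi_{G_{r}(A)}$ and $\ker\Phi_{A/J^m}$ are unipotent as well. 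The commutativity of diagrams $(1)$ and $(2)$ is immediate, since $\Psi_A(\phi)$ restricts on the degree-one component $J/J^2$ of $G_{r}(A)$ to $\overline\phi$, and $\phi_k(\phi)$ restricts on $J/J^2\subseteq A/J^m$ to $\overline\phi$, so both composites equal $\Phi_A(\phi)$.

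For the rank bound, let $T$ be a maximal torus of $\mathrm{Aut}_k(A)$, so $\mathrm{rank}\,\mathrm{Aut}_k(A)=\dim T$. Since $\ker\Phi_A$ is unipotent, the group scheme $T\cap\ker\Phi_A$ is simultaneously of multiplicative type and unipotent, hence trivial; therefore $\Phi_A$ restricts to an isomorphism of $T$ onto a subtorus $\Phi_A(T)$ of $\mathrm{GL}(J/J^2)\cong\mathrm{GL}_n$, where $n=\dim(J/J^2)$. Over $\overline k$ every subtorus of $\mathrm{GL}_n$ lies in a maximal torus of $\mathrm{GL}_n$, which has rank $n$, so $\mathrm{rank}\,\mathrm{Aut}_k(A)=\dim T=\dim\Phi_A(T)\leq n=\dim(J/J^2)$.

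I expect the only genuinely non-routine point to be the filtration estimate $(\phi-\mathrm{id})(J^m)\subseteq J^{m+1}$ in the first paragraph, together with the passage from ``every element is unipotent'' to ``the kernel is a unipotent group scheme''; once $\ker\Phi_A$ has been realised as a closed subgroup of the standard filtration-preserving unipotent group $\mathcal U$, both the unipotence of the three kernels and the rank inequality are formal consequences of the structure theory of linear algebraic groups.
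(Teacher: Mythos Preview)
Your proof is correct and follows essentially the same approach as the paper: both show that every $\phi\in\ker\Phi_A$ satisfies $(\phi-\mathrm{id})(J^m)\subseteq J^{m+1}$ and hence is a unipotent operator, then deduce the rank bound by embedding a maximal torus into $\mathrm{GL}(J/J^2)$. The only cosmetic difference is that the paper verifies $(\phi-\mathrm{id})^2(J)\subseteq J^3$ via a telescoping computation on $\phi(\sum x_iy_i)-\sum x_iy_i$ and then iterates, whereas you obtain $(\phi-\mathrm{id})(J^m)\subseteq J^{m+1}$ in one step by expanding $\prod(x_i+y_i)$ and then iterate the operator; your framing via the filtration-preserving unipotent group $\mathcal U$ is a nice packaging of the same idea.
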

\vskip1mm
\begin{proof}
We will prove that $\text{Ker}(\Phi_A)$ is unipotent; other cases follow similarly. Since $A$ is a split local algebra, $\text{Aut}_k(A)=\text{Aut}_k(J)$. Therefore, \[\text{Ker}(\Phi_A)=\{\phi\in \text{Aut}_k(J):\phi(x)-x\in J^2, \forall x\in J\}=\{\phi\in \text{Aut}_k(J):(\phi -Id)(J)\subset J^2\}.\] Let $x\in J$ and $(\phi-Id)(x)=\sum_{i=1}^n x_iy_i$, where each $x_i,y_i\in J$. Then,
\begin{align*}
(\phi-Id)\bigl((\phi -Id)(x)\bigl)={} & 
\sum_{i=1}^n\phi(x_iy_i)-\sum_{i=1}^nx_iy_i
\end{align*}
which implies,

\begin{flalign*}
 (\phi-Id)^2(x) ={}&
  \!\begin{aligned}[t] 
    &\sum_{i=1}^{n}\phi(x_i)\phi(y_i)-x_1\phi(y_1)+x_1\phi(y_1)-\dots\\ 
     &-x_n\phi(y_n)+x_n\phi(y_n)-\sum_{i=1}^nx_iy_i
  \end{aligned}\\
  ={}&
  \!\begin{aligned}[t]
    &\sum_{i=1}^n(\phi-Id)(x_i)\phi(y_i)+\sum_{i=1}^nx_i(\phi-Id)(y_i) \\
  \end{aligned}\\
\end{flalign*}
\noindent
However, $(\phi-Id)(x)\in J^2, \forall x\in J$. Hence, $(\phi-Id)^2(J)\subset J^3.$ Suppose $J^m=0$ for some $m\geq 2$. So, we can iterate the process and show that $(\phi-Id)^{m-1}=0.$ Since the kernel is a unipotent group, any maximal torus of $\text{Aut}_k(A)$ embeds in $\text{GL}(J/J^2)$. Hence, the rank of any maximal torus is less than or equal to $\text{dim}(J/J^2)$.
 \end{proof}
 \vskip2mm
 \begin{remark}\label{Remark 4.13}
Restricting $\Phi_A$ to $ G_A$ yields a homomorphism from $ G_A$ to $\text{GL} (J/J^2)$. By a similar line of argument, we can prove that $\text{Ker}(\Phi_A|_{G_{A}})$ is a unipotent group. Similarly, we can apply restrictions to other mappings to their identity components. Still, the proposition remains valid. To avoid notational ambiguity, the notation used for the identity component mappings will be the same as $\Phi_A$ and $\Psi_A$.
 \end{remark}

\vskip2mm

\begin{proposition}\label{Proposition 4.14}
Let $A$ be a split local associative algebra over $k$ such that $\emph{dim}(J/J^2)=\emph{rank}(G_A)=n$, then $G_A$ is rational.
\end{proposition}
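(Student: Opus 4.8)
The plan is to combine the structural reduction of Remark~\ref{Remark 4.9} with an explicit identification of a maximal torus of $G_A$ as a maximal torus of $\mathrm{GL}(J/J^2)\cong\mathrm{GL}_n$. By Remark~\ref{Remark 4.9}, the group $G_A$ (connected, over the perfect field $k$) is rational as soon as one of its maximal tori is rational, so it suffices to produce a rational maximal torus.

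First I would pick a maximal torus $T\subset G_A$; by definition of rank, $\dim T=\mathrm{rank}(G_A)=n=\dim(J/J^2)$. By Proposition~\ref{Proposition 4.12} together with Remark~\ref{Remark 4.13}, the restriction $\Phi_A\colon G_A\to\mathrm{GL}(J/J^2)$ has unipotent kernel. Hence $\ker(\Phi_A|_T)=T\cap\ker(\Phi_A|_{G_A})$ is simultaneously of multiplicative type and unipotent, so it is trivial; consequently $\Phi_A|_T$ is a closed immersion and $S\coloneqq\Phi_A(T)$ is a $k$-subtorus of $\mathrm{GL}(J/J^2)$ of dimension $n$ which is $k$-isomorphic to $T$.

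Next I would argue that $S$ is a \emph{maximal} $k$-torus of $\mathrm{GL}(J/J^2)$: after base change to $\overline{k}$ it is a subtorus of $\mathrm{GL}_{n,\overline{k}}$ of dimension $n=\mathrm{rank}(\mathrm{GL}_n)$, hence maximal there (a torus cannot properly contain a torus of equal dimension), and since $\mathrm{GL}_n$ is split this forces $S$ to be a maximal torus already over $k$. Every maximal $k$-torus of $\mathrm{GL}_n$ has the form $R_{E/k}\mathbb{G}_m$ for an étale $k$-algebra $E$ with $\dim_k E=n$ (namely $E$ is the commutant of $S$ in the matrix algebra, a maximal commutative separable subalgebra over the perfect field $k$, and $S$ is its unit group). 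Since $R_{E/k}\mathbb{G}_m$ is a Zariski-open subscheme of the affine space $R_{E/k}\mathbb{A}^1_k\cong\mathbb{A}^n_k$, it is $k$-rational; therefore $T\cong S$ is rational, and Remark~\ref{Remark 4.9} (via Proposition~\ref{Proposition 4.8}) then yields that $G_A$ is rational.

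The argument is short; the only steps requiring a little care are checking that $\Phi_A|_T$ is literally a closed immersion (so that the isomorphism $T\cong S$ holds over $k$ and not merely geometrically) and invoking the classical description and rationality of the maximal tori of $\mathrm{GL}_n$. I do not expect any serious obstacle beyond these standard facts: the real content of the proposition is precisely that the rank hypothesis $\mathrm{rank}(G_A)=\dim(J/J^2)$ forces a maximal torus of $G_A$ into $\mathrm{GL}(J/J^2)$ as a full (maximal) torus, where rationality is automatic even though such a torus need not be $k$-split.
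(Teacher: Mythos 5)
Your proof is correct and follows essentially the same route as the paper's: both use the unipotency of $\ker\Phi_A$ (Proposition~\ref{Proposition 4.12}) to embed a maximal torus of $G_A$ into $\mathrm{GL}(J/J^2)$, use the rank hypothesis to see that it is a maximal torus of $\mathrm{GL}_n$, invoke the standard description of such tori as Weil restrictions of $\mathbb{G}_m$ over \'etale $k$-algebras (hence rational), and conclude via Proposition~\ref{Proposition 4.8} and Remark~\ref{Remark 4.9}. You are slightly more careful than the paper in making explicit that $\Phi_A|_T$ is a closed immersion over $k$ and that a rank-$n$ subtorus of $\mathrm{GL}_n$ is automatically maximal, but these are refinements rather than a different argument.
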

\vskip1mm
\begin{proof}
We have the canonical map 
\begin{equation*}
\Phi_A: G_A \rightarrow \text{GL}(J/J^2)
\end{equation*}
whose kernel is unipotent by Proposition \ref{Proposition 4.12}. Therefore, any maximal torus of $G_A$ is isomorphic to a maximal torus of $\text{GL}(J/J^2)$. However, we know that any maximal torus of $\text{GL}_n$ over $k$ is isomorphic to \[R_{L_{1}/k}(\mathbb{G}_{m, L_1})\times R_{L_{2/k}}(\mathbb{G}_{m, L_{2}})\times\dots \times R_{L_{s}/k}(\mathbb{G}_{m,L_s}),\] where $L_i$'s are separable extensions of $k$ and $\sum_{i=1}^s[L_i:k]=n$. It follows from Section $3.12$ of \cite{Voskrenskii} that the Weil restriction of $\mathbb{G}_m$ is rational. Hence, the result follows from the Proposition \ref{Proposition 4.8} and Remark \ref{Remark 4.9}.
\end{proof}
\vskip2mm

\begin{proposition}\label{Proposition 4.15}
Let $A$ be a split local associative algebra satisfying $\emph{dim}(J/J^2)\leq 5$, then $G_A$ is stably rational, hence $R$-trivial.
\end{proposition}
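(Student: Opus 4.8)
The plan is to reduce, exactly as in Propositions~\ref{Proposition 4.12}--\ref{Proposition 4.14}, to the rationality of a single maximal torus. By Proposition~\ref{Proposition 4.12} and Remark~\ref{Remark 4.13} the restriction $\Phi_A\colon G_A\to\mathrm{GL}(J/J^2)$ has unipotent kernel, so a maximal torus $T$ of $G_A$ is carried isomorphically onto a subtorus of $\mathrm{GL}(J/J^2)$; in particular $T$ acts faithfully on the $k$-vector space $V\coloneqq J/J^2$ with $\dim_k V\le 5$. By Remark~\ref{Remark 4.9} together with Proposition~\ref{Proposition 4.8}, $G_A$ is stably rational as soon as $T$ is, and stably rational implies $R$-trivial; so it suffices to prove $T$ stably rational. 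We may assume $r\coloneqq\mathrm{rank}(T)=\mathrm{rank}(G_A)\ge 3$, since tori of rank at most $2$ are rational, and we may assume $r<\dim_k V$, since the case $r=\dim_k V$ is Proposition~\ref{Proposition 4.14}.

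Next I would analyse the weights. Since $T$ acts on $A\otimes_k\bar k$ by algebra automorphisms, this finite-dimensional split local algebra becomes graded by the character lattice $\widehat T$, and $V\otimes_k\bar k$ inherits a decomposition into weight spaces, say with distinct weights $\chi_1,\dots,\chi_s$ whose multiplicities sum to $n\coloneqq\dim_k V\le 5$; in particular $s\le 5$. Because $V$ is defined over $k$, the absolute Galois group $\Gamma_k$ permutes the finite set $\{\chi_1,\dots,\chi_s\}$, and faithfulness of the $T$-action forces $\chi_1,\dots,\chi_s$ to generate $\widehat T\cong\mathbb Z^r$; hence $r\le s\le 5$. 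If $s=r$, then $\chi_1,\dots,\chi_r$ form a $\Gamma_k$-permuted $\mathbb Z$-basis of $\widehat T$, so $\widehat T$ is a permutation module, $T\cong\prod_i R_{L_i/k}\mathbb G_{m}$ is quasi-split, and hence rational.

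The previous cases leave only finitely many configurations: $3\le r<s\le n\le 5$, so $s\in\{4,5\}$ and $s-r\in\{1,2\}$. Writing $P$ for the permutation $\Gamma_k$-lattice with $\mathbb Z$-basis the $\Gamma_k$-set $\{\chi_1,\dots,\chi_s\}$ and $\pi\colon P\twoheadrightarrow\widehat T$ for the $\Gamma_k$-equivariant surjection sending a basis vector to the corresponding $\chi_i$, we obtain a presentation $0\to K\to P\to\widehat T\to 0$ in which $P$ is a permutation lattice of rank $\le 5$ and $K=\ker\pi$ is a $\Gamma_k$-sublattice of $P$ of rank $s-r\le 2$. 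The remaining task is to show that every such $\widehat T$ gives a stably rational torus, and I would do this by running through the short list of possible pairs $(P,K)$: a permutation lattice $P$ of rank at most $5$ is a sum of modules $\mathbb Z[\Gamma_k/H_i]$ with $\sum[\Gamma_k:H_i]\le 5$, a $\Gamma_k$-sublattice of rank $1$ is $\mathbb Z$ or a quadratic sign-twist of $\mathbb Z$, and a sublattice of rank $2$ inside such a $P$ is likewise very restricted; in each case one computes a flasque resolution $0\to\widehat T\to P'\to F\to 0$ and checks that $F$ is invertible, equivalently one invokes the classification of algebraic tori of dimension at most $4$ (Kunyavskii and others) to conclude $T$ is stably rational.

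The main obstacle is precisely this last step, and it is genuinely where the numerical bound enters. A general torus of rank $3$ or $4$ need \emph{not} be stably rational---the norm-one torus $R^{1}_{L/k}\mathbb G_m$ of a biquadratic extension $L/k$ is a non-stably-rational torus of rank $3$, and Theorem~\ref{Theorem 6.4} realises non-$R$-trivial groups $G_{A_n}$ with $\dim(J_n/J_n^2)\ge 6$---so the argument must exploit the two restrictions forced by $\dim_k V\le 5$, namely that the corank $s-r$ of the presentation is at most $2$ and that the ambient permutation lattice $P$ has rank at most $5$, in order to rule out every non-stably-rational possibility; one must also be careful that ``$F$ invertible'' is upgraded to ``stably rational'' using the known coincidence of retract, stable, and ordinary rationality in low dimension. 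An alternative to this lattice bookkeeping would be to classify directly, building on Pollack's structure results and the quiver description, which reductive groups occur as $G_A/U$ when $\dim(J/J^2)\le 5$, and to observe that all of them---products of $\mathrm{PGL}_{n_i}$ and $\mathrm{GL}_{n_i}$, similitude groups of small quadratic forms, quasi-split tori, and the like---are stably rational.
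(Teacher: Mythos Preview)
Your opening reduction---pass to a maximal torus $T\hookrightarrow\mathrm{GL}(J/J^2)$ via Proposition~\ref{Proposition 4.12} and Remark~\ref{Remark 4.13}, then invoke Remark~\ref{Remark 4.9} and Proposition~\ref{Proposition 4.8}---is exactly how the paper begins, and the boundary cases $r\le 2$ and $r=\dim(J/J^2)$ are disposed of the same way. The divergence is in the intermediate ranks. The paper never abstracts to a permutation presentation of $\widehat T$; instead it argues directly with the determinant character $\det|_T$ to split off $\mathbb G_m$-factors birationally, enlarges the residual anisotropic $T$ to a maximal torus $S=\mathbb G_m\!\cdot T$ of $\mathrm{GL}_n$, uses that $S\cong\prod_i R_{L_i/k}\mathbb G_m$ is rational, and---most elaborately for $n=5$---runs through the handful of \'etale algebras $L$ of degree $n$ whose norm-one torus $L^{(1)}$ can contain such a $T$, pinning $T$ down as an explicit product of norm-one tori of small field extensions.

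Your lattice route, by contrast, has a genuine gap at exactly the point you flag but do not close. The biquadratic norm-one torus $T=R^{1}_{L/k}\mathbb G_m$ with $G=\mathrm{Gal}(L/k)\cong(\mathbb Z/2)^2$ embeds in $\mathrm{GL}_4$ via the regular representation of $L$; the four weights on $L\otimes\bar k$ are the four \emph{distinct} images of the group elements in $\widehat T=\mathbb Z[G]/\mathbb Z N$, so in your notation $r=3$, $s=4$, $P=\mathbb Z[G]$, $K=\mathbb Z N$. This lands squarely in your case $(r,s)=(3,4)$ with $\mathrm{rank}\,K=1$, yet $T$ is \emph{not} stably rational (indeed not even retract rational: the $2$-Sylow of $G$ is not cyclic, so $\widehat T$ fails the Endo--Miyata criterion for invertibility). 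Thus the enumeration you propose---``permutation lattice of rank $\le 5$ modulo a sublattice of rank $\le 2$, then compute a flasque resolution and check $F$ invertible''---will encounter a case where $F$ is \emph{not} invertible, and the argument collapses. The two numerical restrictions $s-r\le 2$ and $\mathrm{rank}\,P\le 5$ that you isolate simply do not separate the stably rational tori from the rest; something finer than the bare permutation presentation of $\widehat T$ is required, and your reduction has discarded precisely the structure (the explicit embedding in a rational maximal torus of $\mathrm{GL}_n$ and the \'etale-algebra bookkeeping) that the paper's argument exploits.
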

\vskip1mm
\begin{proof}
Let $\Phi_A$ be the canonical map $\Phi_A:G_A\rightarrow \text{GL}(J/J^2)$ and consider 
\begin{equation*}\label{eq8}
\text{GL}(J/J^2)\xrightarrow{\text{det}} \mathbb{G}_m 
\end{equation*} 
If the $\text{dim}(J/J^2)$ is $1$ or $2$, then the result follows from the fact that any torus of rank at most $2$ is rational.
\vskip2mm
\noindent
\textbf{Case $1$, $\text{dim}(J/J^2)=3$:} Let $T$ be any maximal torus of $G_A$ that embeds in $\text{GL}(J/J^2)$. Therefore, the rank of $T$ is less than or equal to $3$. If $\text{det}(T)=1$, then $T\subset \text{SL}(J/J^2)$, and thus dimension of $T$ is less than or equal to $2$. On the other hand, if $\text{det}(T)\neq 1$, then $T$ has a non-trivial character namely, $\text{det}|_{T}$. Therefore, $T\cong_{b} \mathbb{G}_m^{r}\times_k T_a$, where $r\geq 1$ and the dimension of the anisotropic part $T_a$ of the torus $T$ is less than or equal to $2$ by Section $17.3$ of \cite{Voskrenskii}. Hence, in this case, using rationality of rank $\leq 2$ tori, it follows that $T$ is rational. 
\vskip2mm 
\noindent
\textbf{Case $2$, $\text{dim}(J/J^2)=4$:} In this case the possible rank of a maximal torus $T$ in $G_A$ is $1,2,3,4$. If the rank of $T$ is $ 1,2\text { or } 4$, then it follows from the rationality of tori rank $\leq 2$ and Proposition \ref{Proposition 4.14} that $T$ is rational. Suppose $\text{rank}(T)=3$ and $\text{det}(T)$ is non-trivial, then we are done by an argument similar to the previous case. Suppose $\text{det}(T)=1$, then $T\subset \text{SL}(J/J^2)$. Therefore, $S\coloneq \mathbb{G}_{m}.T$ is a maximal torus in $\text{GL}_{4}$, where $\mathbb{G}_m$ is the torus of scalar matrices. If $T$ has a non-trivial character defined over $k$, then $T\cong_b T_a\times \mathbb{G}_m^{r}$, $r\geq 1$ and $T_a$ is anisotropic of rank $\leq 2$. From this, it follows that $T$ is rational. Hence we may assume that $S\cong_b S_a \times \mathbb{G}_m^{r}$($r\geq 1$) with $S_a=T$, where $S_a$ is the anisotropic part of $S$. Again, we know that $S$ is rational; therefore, $T$ is a stably rational torus. Hence, we have shown that any maximal torus of $G_A$ under the given hypothesis is either rational or stably rational. As any rational torus is stably rational, therefore, Proposition \ref{Proposition 4.8} and Remark \ref{Remark 4.9} complete the proof.
\vskip2mm
\noindent
\textbf{Case $3$, $\text{dim}(J/J^2)=5$:} The rank of the maximal torus in this case is possibly $1,2,3,4,5$. If it is $1,2,5$ then we are done by the rationality of tori rank $\leq 2$ and Proposition \ref{Proposition 4.14}. Suppose, $\text{rank}(T)=3\text{ or }4$ and $\text{det}(T)\neq 1$, then $T$ is isotropic and $T\cong_b \mathbb{G}_m^{r}\times T_a$, $r\geq 1$ and the rank($T_a$) is at most $2 \text{ or } 3$, and $T_a\subset \text{SL}(J/J^2)$. If $\text{rank}(T_a)\leq 2$, $T_a$ is rational. If $\text{ det}(T)=1$ and $\text{rank}(T)=4$, then $T$ is contained in $\text{SL}_5$. It follows that $S\coloneq \mathbb{G}_m.T$ is a maximal torus in $\text{GL}_5$, and since $S$ is rational and $S\cong_b S_a\times \mathbb{G}_m^r$, $r\geq 1$; as before, $T$ is stably rational. 
\vskip2mm
\noindent
So, in both the remaining cases, it reduces to the case where $T$ is anisotropic and $\text{rank}(T)=3$.  Assume now $T$ is anisotropic and $\text{rank}(T)=3$. Since $T\subset \text{SL}(J/J^2)$, $T\subset L^{(1)}$, the norm $1$ torus of an \'etale algebra over $k$ of degree $5$. Moreover, there exists a rank $1$ torus $T'$ such that $T.T'=L^{(1)}$ by Proposition $13.2.3$ of \cite{Springer}. If $T'=\mathbb{G}_m$, then we can write $L^{(1)}\cong_b T\times \mathbb{G}_m$. However, we know that $S=L^{(1)}.\mathbb{G}_m$ is a maximal torus in $\text{GL}_5$, where $\mathbb{G}_m$ is the torus of scalar matices. It implies that $S\cong_b (T\times\mathbb{G}_m).\mathbb{G}_m$. Since $T$ is anisotropic, we have $S\cong_b S_{a}\times \mathbb{G}_m^2$ with $S_a=T$. Therefore, $T$ is stably rational. Suppose $T'=K^{(1)}$, where $K$ is a degree $2$ extension of $k$, then 
\begin{equation}\label{eq 4.0.7}
T.K^{(1)}=L^{(1)}.
\end{equation}
Since the left-hand side of the previous equation \ref{eq 4.0.7} is anisotropic, $L^{(1)}$ must be anisotropic. Hence, $L$ has only two choices: either $L$ has to be a field extension of degree $5$ over $k$ or $L=E\times F$, where $E$ is a degree $3$ field extension and $F$ is a degree $2$ field extension over $k$. For other possibilities of \'etale algebras $L$ give a norm $1$ torus which is isotropic, which implies $\mathbb{G}_{m,k}$ embeds in $T$ as $\mathbb{G}_{m,k}\cap K^{(1)}=\{1\}$, which is a contradiction as $T$ is anisotropic. For example if $L=k\times F_1\times F_2$, where $F_1$ and $F_2$ both are field extension of degree $2$ over $k$, then \[L^{(1)}=\{(a,b,c)\in k\times F_1\times F_2:N(a)N(b)N(c)=1\}, \text{ where $N$ is the norm map, }\] which implies $L^{(1)}=\{(1/N(b)N(c),b,c):b\in F_{1}^{\times}, c\in F_{2}^{\times}\}$. Hence $L^{(1)}$ is isotropic. If $L$ is a field extension of degree $5$ over $k$, then $L^{(1)}$ is isotropic over $K$ as $K^{(1)}$ is isotropic over $K$, and it contains $K^{(1)}$ as $L^{(1)}=T.K^{(1)}$, which is not possible because $[L:k]=5$ and $[K:k]=2$. So we have $T.K^{(1)}=(E\times F)^{(1)}$. However, $T.K^{(1)}$ is isotropic over $K$. Therefore, $(E\times F)^{(1)}$ is isotropic over $K$ and contains $K^{(1)}$. Hence, $K\cap F\neq k$ and $[K:k]=[F:k]=2$ imply $K\cap F=F=K$. So finally we get that $T.K^{(1)}=(E\times K)^{(1)}$. It is easy to see that $E^{(1)}\times K^{(1)}\subset TK^{(1)}$. If $e\in E^{(1)}$, then $e=t\alpha$ with $t\in T$ and $\alpha\in K^{(1)}$. Suppose $\alpha =1$ for all $e\in E^{(1)}$, then we have $E^{(1)}\subset T$ which implies $T\cong E^{(1)}.K'^{(1)}$, where $K'$ is a field extension of degree $2$ over $k$ as $T$ is anisotropic. However, $E^{(1)}\cap K'^{(1)}=\{1\}$ as $[E:k]=3$ and $[K':k]=2$. Therefore $T\cong E^{(1)}\times K'^{(1)}$. Hence, $T$ is rational as both $E^{(1)}$ and $K'^{(1)}$ are rational. If $\alpha\neq 1$ for some $e\in E^{(1)}$, then $t=e\alpha^{-1}$ and the Zariski closure of the cyclic group generated by ${\{(e,\alpha^{-1})\}}$ is the group $E^{(1)}\times K^{(1)}\subset T$. It implies that $T=E^{(1)}\times K^{(1)}$. Therefore, $T$ is rational. Hence, it concludes the proof for $\text{dim}(J/J^2)=5$. In all the above cases, we have proved that any maximal torus in $G_A$ is stably rational. Therefore, the result follows from Proposition \ref{Proposition 4.8} and Remark \ref{Remark 4.9}.
\end{proof}

\vskip2mm

\begin{corollary}\label{Corollary 4.16}
Let $A$ be a split local algebra with $\mathrm{dim}_k A\leq 7$. Then, $G_A$ is stably rational, hence $R$-trivial.
\end{corollary}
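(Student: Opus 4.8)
The plan is to split on the value of $\dim(J/J^2)$ and push everything but one boundary case into Proposition~\ref{Proposition 4.15}. Since $A$ is split local, the Wedderburn--Malcev decomposition (Corollary~\ref{Corollary 2.2}) gives a vector space decomposition $A = k\cdot 1 \oplus J$, so $\dim_k J = \dim_k A - 1 \le 6$, and hence $\dim(J/J^2) \le \dim_k J \le 6$.

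If $\dim(J/J^2) \le 5$, then Proposition~\ref{Proposition 4.15} applies directly and shows that $G_A$ is stably rational, hence $R$-trivial; this already settles everything except $\dim(J/J^2) = 6$. In that remaining case the inequalities $6 = \dim(J/J^2) \le \dim_k J \le 6$ force $\dim_k J = 6$ and $J^2 = 0$. Here I would argue exactly as in the proof of Corollary~\ref{Corollary 3.3}: because $A$ is split local, every $k$-algebra automorphism fixes $k\cdot 1$ and is determined by its restriction to $J$, so the canonical map $\Phi_A\colon \mathrm{Aut}_k(A)\to \mathrm{GL}(J/J^2)=\mathrm{GL}(J)$ is injective; and since $J^2=0$, each $T\in\mathrm{GL}(J)$ extends to an algebra automorphism $\widetilde{T}(\lambda\cdot 1 + r) = \lambda\cdot 1 + T(r)$ (the multiplicativity condition reads $0=0$ on $J\cdot J$), so $\Phi_A$ is surjective as well. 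Thus $\mathrm{Aut}_k(A)\cong \mathrm{GL}(J)\cong \mathrm{GL}_6$; this group is connected, so $G_A\cong \mathrm{GL}_6$, which is rational, in particular stably rational and $R$-trivial. Collecting the two cases and using that stable rationality implies $R$-triviality finishes the proof.

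As for the main obstacle: there is really no hard step here, since the genuine work (the torus-theoretic case analysis over dimension $5$) has already been carried out in Proposition~\ref{Proposition 4.15}. The only point needing a little care is the numerical observation that the unique configuration with $\dim_k J = 6$ lying outside the range of that proposition forces $J^2=0$, after which $G_A$ collapses to $\mathrm{GL}_6$ via the same surjectivity trick used for Corollary~\ref{Corollary 3.3}.
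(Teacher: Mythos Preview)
Your proof is correct and follows essentially the same approach as the paper. The paper splits into the cases $J^2=0$ (where $\mathrm{Aut}_k(A)=\mathrm{GL}(J)$ is rational) and $J^2\neq 0$ (where $\dim J^2\geq 1$ forces $\dim(J/J^2)\leq 5$, so Proposition~\ref{Proposition 4.15} applies); your case split on $\dim(J/J^2)\leq 5$ versus $\dim(J/J^2)=6$ is just the same dichotomy viewed from the other side.
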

\vskip1mm
 \begin{proof}
 If $J^2=0$, then $\text{Aut}_k(A)=\text{Aut}_k(J)=\text{GL}(J)$. So, $G_A$ is rational, hence $R$-trivial. Suppose $J^2\neq0$. Since $A$ is a local algebra with $\text{dim }A\leq 7$, $\text{dim }J\leq 6$ and $\text{dim }J^2\leq 5$ (if $J^2=J$ then $J=0$). Hence, we get $\text{dim}(J/J^2)\leq 5$. Using Proposition \ref{Proposition 4.15}, we see that $G_A$ is stably rational.
\end{proof}

\vskip2mm

\begin{remark}\label{Remark 4.17}
For any split local algebra $A$, if a maximal torus in $\text{Aut}_k(A)$ has dimension equal to the dimension of $J/J^2$, it must be isotropic. Let $T$ be a maximal torus in $G_A$, then $\text{det}(T)\neq 1$, otherwise $T\subset \text{SL}(J/J^2)$ which implies that $\text{dim}(T)<\text{dim}(J/J^2)$. Therefore $T$ has a non-trivial character defined over $k$, namely $\text{det}|_{T}$.
\end{remark}

\vspace{0.2in}
\noindent
We will often need the following result:
\vskip2mm
\begin{lemma}\label{Lemma 4.18}
$($\cite{CP}, \emph{Lemma} $1)$ Let $G$ be a connected algebraic group defined over an arbitrary field $k$. If $H$ is a connected subgroup of $G$ defined over $k$ such that for any field extension $F/k$, the kernel of the natural morphism $H^1(F,H)\rightarrow H^1(F,G)$ is trivial, then the variety 
\begin{equation*}\label{eq9}
G\cong_{b} G/H \times H.\end{equation*}
\end{lemma}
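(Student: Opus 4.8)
The plan is to realize the quotient morphism $\pi\colon G\to G/H$ as an $H$-torsor and to prove that it is \emph{birationally trivial}, i.e.\ that it admits a rational section $s\colon G/H\dashrightarrow G$. Once this is established, $(\bar g,h)\mapsto s(\bar g)\,h$ is a birational isomorphism $G/H\times H\dashrightarrow G$, which is exactly the assertion $G\cong_{b}G/H\times H$; here $G/H$ and $H$ are irreducible varieties because $G$ and $H$ are connected. Now $\pi$ is faithfully flat, and $(g,h)\mapsto(g,gh)$ is an isomorphism $G\times H\xrightarrow{\ \sim\ }G\times_{G/H}G$, so $\pi$ is indeed an $H$-torsor over $G/H$. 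Restricting it to the generic point $\mathrm{Spec}\,k(G/H)$ yields a class $\xi\in H^{1}(k(G/H),H)$ in nonabelian cohomology (fppf cohomology, if one does not assume $H$ smooth). We use the standard fact that a torsor under $H$ over an irreducible variety $X$ is trivial over a dense open subset if and only if its generic fibre is a trivial torsor: a trivialisation of the generic fibre is a section over $\mathrm{Spec}\,k(X)$, which spreads out to a section over a dense open $U\subseteq X$, and a torsor admitting a section is trivial. Hence it suffices to prove that $\xi=1$.

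This is the only place the hypothesis enters. Applying it with $F=k(G/H)$, the kernel of the natural map $H^{1}(k(G/H),H)\to H^{1}(k(G/H),G)$ is trivial, so it is enough to show that the image of $\xi$ in $H^{1}(k(G/H),G)$ vanishes; equivalently, that the $G$-torsor obtained from $\pi$ by extending the structure group along $H\hookrightarrow G$ is trivial. That $G$-torsor is represented by the contracted product $G\times^{H}G:=(G\times G)/H$, where $H$ acts by $h\cdot(x,g)=(xh,\,h^{-1}g)$, with $G$ acting by right translation on the second coordinate and the structure map $(x,g)\mapsto xH$. The assignment $(x,g)\mapsto(xg,\,xH)$ is $H$-invariant and descends to an isomorphism $G\times^{H}G\xrightarrow{\ \sim\ }G\times(G/H)$ of $G$-torsors over $G/H$, the target carrying the trivial torsor structure (with $G$ acting by right translation on the first coordinate). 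Consequently this $G$-torsor is trivial, a fortiori over the generic point of $G/H$, so the image of $\xi$ in $H^{1}(k(G/H),G)$ is trivial, and the hypothesis forces $\xi=1$.

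Combining the two paragraphs, $\xi=1$ says that $\pi$ is generically trivial: there are a dense open $U\subseteq G/H$ and an isomorphism $\pi^{-1}(U)\cong U\times H$ over $U$. Since $\pi^{-1}(U)$ is a dense open subvariety of $G$, this gives $G\cong_{b}U\times H\cong_{b}(G/H)\times H$, proving the lemma. The one step that is not a formal consequence of the standard dictionary relating torsors and nonabelian cohomology is the identification $G\times^{H}G\cong G\times(G/H)$ --- that extending the structure group of the tautological torsor $G\to G/H$ from $H$ to $G$ trivialises it --- so that is the step I would expect to have to spell out explicitly; a minor subsidiary issue, relevant only for imperfect $k$ with $H$ non-smooth, is to carry out the torsor arguments in the fppf topology, which affects nothing essential.
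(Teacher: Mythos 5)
The paper does not reproduce a proof of this lemma --- it simply cites \cite{CP}, Lemma 1 --- so there is no in-paper proof to compare against, but your argument is correct and is the standard (and, as far as I recall, Chernousov--Platonov's own) route: pass to the generic point of $G/H$, observe that the $H$-torsor $G\to G/H$ pushed forward along $H\hookrightarrow G$ is the tautologically trivial $G$-torsor, invoke the hypothesis on the kernel of $H^1(k(G/H),H)\to H^1(k(G/H),G)$ to conclude the generic fibre is a trivial $H$-torsor, and spread out a trivialising section to get a birational splitting $G\cong_b (G/H)\times H$. The one step you flagged, $G\times^H G\cong G\times(G/H)$, checks out: the map $(x,g)\mapsto(xg,xH)$ is $H$-invariant, $G$-equivariant for right translation on the displayed coordinate, lies over $G/H$, and is therefore an isomorphism of $G$-torsors; your remarks about passing to the fppf topology when $H$ is not smooth and about irreducibility of $G/H$ and $H$ are also the right caveats.
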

\vskip2mm
\begin{remark}\label{Remark 4.19}
Let $G$ be a connected algebraic group over a perfect field $k$ and $U$ be a $k$-unipotent subgroup of $G$, then $G\cong_b G/U\times U$ by using Lemma \ref{Lemma 4.18} as $H^{1}(F, U)$ is trivial for the unipotent group $U$ for every field extension $F$ over $k$.  
\end{remark}
\vspace{0.2in}

\noindent
The above results motivate us to ask whether every connected algebraic group of the form $G_A$, where $A$ is a finite-dimensional associative algebra over a field $k$, is $R$-trivial or not. Below, we prove a key result on $R$-triviality of $G_A$.

\vspace{0.2in}
\begin{theorem}\label{Theorem 4.20}
Let $A$ be a split finite-dimensional associative algebra over a perfect field $k$ with $A_s$ a semisimple subalgebra of $A$ such that $A=A_s\oplus J$ (vector space decomposition) and $G_{A, A_s}\coloneq\{\sigma\in G_A: \sigma(a)=a \quad \forall a\in A_s\}$. Then, $G_A$ is $R$-trivial if and only if $G_{A,A_s}$ is $R$-trivial. Suppose any one of the following holds: 
\begin{enumerate}
    \item $J^2=0$;
    \vskip2mm
    \item $\emph{dim}(J/J^2)\leq 5$;
    \vskip2mm
    \item $\emph{rank}(G_{A,A_s})=\emph{dim}(J/J^2)$;
\end{enumerate}
\vskip2mm
 Then $\emph{\text{Aut}}_k(A)^{0}\coloneqq G_A$ is $R$-trivial.
\end{theorem}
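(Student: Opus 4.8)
The plan is to prove the ``if and only if'' by establishing the stronger statement that $G_A$ and $G_{A,A_s}$ are stably birationally equivalent as $k$-varieties, so that $G_A(F)/R\cong G_{A,A_s}(F)/R$ for every field extension $F/k$; each of the three sufficient conditions is then handled by showing directly that $G_{A,A_s}$ is (stably) rational and quoting this equivalence. Throughout one passes to identity components where necessary and uses that $G_{A,A_s}$ is connected.

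For the stable birational equivalence, let $N=\{\sigma\in G_A:\sigma(A_s)=A_s\}$ be the stabiliser of the subalgebra $A_s$ and $N^0$ its identity component. By Wedderburn--Malcev (Theorem \ref{Theorem 2.1}) every semisimple complement to $J$ is $(1+r)A_s(1+r)^{-1}$ for some $r\in J$, so $G_A=\hat J\cdot N$; since $\hat J$ is connected and normal in $G_A$ and $G_A$ is connected, the finite-index closed subgroup $\hat J\cdot N^0$ equals $G_A$, whence $G_A/\hat J\cong N^0/(N^0\cap\hat J)$. As $\hat J$ and $N^0\cap\hat J$ are unipotent over the perfect field $k$, two applications of Remark \ref{Remark 4.19} together with this isomorphism give $G_A\cong^{s.b} N^0$. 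Since $A$ is split, $A_s\cong\bigoplus_{i=1}^m M_{n_i}(k)$, so $A_s^\times\cong\prod_i\mathrm{GL}_{n_i}$ is rational, $\mathrm{Aut}_k(A_s)^0\cong\prod_i\mathrm{PGL}_{n_i}$, and every element of this last group is $\mathrm{Int}(u)|_{A_s}$ for some $u\in A_s^\times$. Form the fibre product $\widetilde N:=\{(\sigma,u)\in N^0\times A_s^\times:\sigma|_{A_s}=\mathrm{Int}(u)|_{A_s}\}$. The first projection $\widetilde N\to N^0$ is surjective (for $\sigma\in N^0$, $\sigma|_{A_s}$ lies in the connected group $\mathrm{Aut}_k(A_s)^0$, hence equals some $\mathrm{Int}(u)|_{A_s}$) with kernel the split torus $Z(A_s)^\times\cong\mathbb{G}_m^m$, so Lemma \ref{Lemma 4.18} (Hilbert 90 making the relevant $H^1$ vanish) gives $\widetilde N\cong_b N^0\times\mathbb{G}_m^m$, i.e.\ $\widetilde N\cong^{s.b} N^0$. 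The second projection $\widetilde N\to A_s^\times$ is surjective with kernel $G_{A,A_s}$ and is split by the homomorphism $u\mapsto(\mathrm{Int}(u),u)$, so $\widetilde N\cong G_{A,A_s}\rtimes A_s^\times$ as a variety; as $A_s^\times$ is rational, $\widetilde N\cong^{s.b} G_{A,A_s}$. Chaining the three equivalences yields $G_A\cong^{s.b} G_{A,A_s}$, which proves the equivalence.

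For the sufficient conditions it suffices to show $G_{A,A_s}$ is (stably) rational. The restriction $\Phi:G_{A,A_s}\to\mathrm{GL}(J/J^2)$ has unipotent kernel: if $\sigma\in G_{A,A_s}$ acts trivially on $J/J^2$ then $(\sigma-\mathrm{id})(J)\subseteq J^2$, and the computation in the proof of Proposition \ref{Proposition 4.12} — which uses only that $\sigma|_J$ is multiplicative — yields $(\sigma-\mathrm{id})^{\,l-1}|_J=0$ when $J^l=0$, so $\sigma$ is unipotent (it is the identity on $A_s$). Hence every maximal torus $T$ of $G_{A,A_s}$ embeds in $\mathrm{GL}(J/J^2)$ and $\mathrm{rank}(T)\le\dim(J/J^2)$. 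In case (3), equality forces $\Phi(T)$ to be a maximal torus of $\mathrm{GL}(J/J^2)$, so $T$ is a product of Weil restrictions of $\mathbb{G}_m$, hence rational. In case (1), $J^2=0$ makes the ring-homomorphism condition on an $A_s$-bimodule automorphism of $J$ vacuous, so $G_{A,A_s}$ is exactly the group of $A_s$-bimodule automorphisms of $J$, which is a product $\prod_{i,j}\mathrm{GL}_{m_{ij}}$, hence rational. In case (2), the argument in the proof of Proposition \ref{Proposition 4.15} — which shows that any torus inside $\mathrm{GL}_n$ with $n\le 5$ is stably rational, reducing to rationality of tori of rank $\le 2$ and of maximal tori of such $\mathrm{GL}_n$ — applies to $T\hookrightarrow\mathrm{GL}(J/J^2)$. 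In every case, Proposition \ref{Proposition 4.8} together with Remark \ref{Remark 4.9}, applied to $G_{A,A_s}$, shows that $G_{A,A_s}$ is (stably) rational, hence $R$-trivial; by the equivalence of the first part, so is $G_A$.

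The main obstacle is the first part: identifying the correct ``inner-like'' subgroup $N^0$, recognising that after passing to the fibre product $\widetilde N$ the discrepancy between $G_A$ and $G_{A,A_s}$ is precisely the rational group $A_s^\times$ (modulo split-torus and unipotent kernels, all governed by Lemma \ref{Lemma 4.18}), and verifying the connectivity inputs ($G_A=\hat J\cdot N^0$; $\sigma|_{A_s}\in\mathrm{Aut}_k(A_s)^0$ for $\sigma\in N^0$; connectedness of $G_{A,A_s}$). Once that reduction is in hand, the three sufficiency conditions are comparatively routine, following from the unipotence of $\ker\Phi$ and results already available in the paper.
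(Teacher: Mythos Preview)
Your proof is correct and follows essentially the same architecture as the paper: first reduce $G_A$ to the stabiliser $N=G_A^{A_s}$ via the unipotent $\hat J$ (the paper's Lemma~\ref{Lemma 4.21}), then reduce $N$ to $G_{A,A_s}$ via inner automorphisms coming from $A_s^\times$ (the paper's Lemma~\ref{Lemma 4.23}), and finally handle the three sufficient conditions by the same torus arguments (the paper's Lemmas~\ref{Lemma 4.25} and~\ref{Lemma 4.26}).

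The one genuine difference is in the second reduction. The paper observes directly that
\[
1\longrightarrow G_{A,A_s}\longrightarrow G_A^{A_s}\xrightarrow{\ \mathrm{res}\ }\mathrm{Inn}(A_s^\times)\cong\textstyle\prod_i\mathrm{PGL}_{n_i}\longrightarrow 1
\]
is split by $\mathrm{Int}(a)|_{A_s}\mapsto\mathrm{Int}(a)$, giving $G_A^{A_s}\cong G_{A,A_s}\rtimes\mathrm{Inn}(A_s^\times)$ and hence the equivalence in one stroke, since $\prod_i\mathrm{PGL}_{n_i}$ is rational. You instead lift to the fibre product $\widetilde N$ over $A_s^\times$, which introduces the central split torus $Z(A_s)^\times\cong\mathbb{G}_m^m$ and a Hilbert~90 step; this is a little more roundabout but has the benefit of making the stable birational equivalence $G_A\cong^{s.b}G_{A,A_s}$ explicit (a conclusion the paper's argument also yields, though it only states the $R$-triviality equivalence). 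Both routes rely on the same connectivity inputs you flag at the end.
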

\vskip1mm
\begin{proof}
Using the decomposition from Corollary \ref{Corollary 2.2}, we can write $G_A$ as a product of some useful subgroups as Pollack did in \cite{Pollack}. Under the given hypothesis, we prove that these groups are all $R$-trivial. First, we prove some lemmas to decompose the group $G_A$.

\vskip2mm
\begin{lemma}\label{Lemma 4.21}$($\cite{Pollack}$)$
$G_A=\hat{J}G_{A}^{A_s}$, where $\hat{J}=\{\emph{Int}(1+r)\in G_A : r\in J\},\text{ and } G_{A}^{A_s}\coloneq\{\sigma \in G_A: \sigma(A_s)=A_s\}.$
\end{lemma}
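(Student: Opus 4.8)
The plan is to establish the two inclusions $\hat{J}\,G_{A}^{A_s}\subseteq G_A$ and $G_A\subseteq \hat{J}\,G_{A}^{A_s}$ separately. The first is immediate: as recalled in Section~\ref{sec: Preliminaries}, $\hat{J}$ is the image under the morphism $\psi\colon A^{\times}\to\text{Aut}_k(A)$ of $\{1+r:r\in J\}$, which is a closed connected (unipotent) subgroup of $A^{\times}$; hence $\hat{J}$ is connected and contains the identity, so $\hat{J}\subseteq G_A$, while $G_{A}^{A_s}\subseteq G_A$ by definition. Since $\hat{J}$ is normal in $\text{Aut}_k(A)$, the set $\hat{J}\,G_{A}^{A_s}$ is actually a subgroup of $G_A$.

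For the reverse inclusion I would argue as Pollack does, the essential input being the uniqueness assertion in Wedderburn--Malcev. Let $\sigma\in G_A$. The radical $J$ is a characteristic ideal, so $\sigma(J)=J$; therefore $\sigma(A_s)$ is a semisimple subalgebra of $A$ with $A=\sigma(A_s)\oplus J$ as vector spaces, i.e. $\sigma(A_s)$ is again a Wedderburn--Malcev complement to $J$. By Theorem~\ref{Theorem 2.1} (equivalently Corollary~\ref{Corollary 2.2}) there is $r\in J$ with $\sigma(A_s)=(1+r)A_s(1+r)^{-1}=\text{Int}(1+r)(A_s)$. Put $\tau=\text{Int}(1+r)^{-1}\circ\sigma$. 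Since $1+r$ is a unit whose inverse has the form $1+r'$ with $r'\in J$ (a finite sum, as $J$ is nilpotent), $\text{Int}(1+r)^{-1}=\text{Int}(1+r')\in\hat{J}\subseteq G_A$, so $\tau\in G_A$; and $\tau(A_s)=\text{Int}(1+r)^{-1}(\sigma(A_s))=A_s$, so $\tau\in G_{A}^{A_s}$. Hence $\sigma=\text{Int}(1+r)\circ\tau\in\hat{J}\,G_{A}^{A_s}$, which gives $G_A\subseteq\hat{J}\,G_{A}^{A_s}$.

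To obtain the equality as algebraic groups rather than merely on $k$-points, I would rerun this argument after base change along an arbitrary field extension $F/k$: since $A$ is split, so is $A\otimes_kF$, the ideal $J\otimes_kF$ is nilpotent with semisimple quotient and hence equals $\mathrm{rad}(A\otimes_kF)$, and $A_s\otimes_kF$ is a separable Wedderburn complement, so Theorem~\ref{Theorem 2.1} applies verbatim to $A\otimes_kF$ and yields surjectivity of the multiplication map $\hat{J}\times G_{A}^{A_s}\to G_A$ on $F$-points. Its image is a constructible subgroup of $G_A$ (normality of $\hat{J}$ is used here), hence closed, hence all of $G_A$. The only point that needs care is exactly this base-change bookkeeping---checking that the radical and the Wedderburn complement behave well under extension of scalars---and it is where the split hypothesis is used (perfectness of $k$ entering, over $k$ itself, through Corollary~\ref{Corollary 2.2}); the genuine algebraic content is just the uniqueness up to conjugation by $1+J$ of the Wedderburn--Malcev decomposition, which is already available.
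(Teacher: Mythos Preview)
Your proof is correct and follows essentially the same route as the paper's: both invoke the uniqueness clause of the Wedderburn--Malcev theorem (Corollary~\ref{Corollary 2.2}) to find $\rho=\text{Int}(1+r)\in\hat{J}$ with $\sigma(A_s)=\rho(A_s)$, and then conclude $\rho^{-1}\sigma\in G_A^{A_s}$. Your version is in fact more careful than the paper's terse argument, since you justify why $\sigma(A_s)$ is again a Wedderburn complement, why $\text{Int}(1+r)^{-1}\in\hat{J}$, and you address the passage from $k$-points to the algebraic-group equality via base change.
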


\vskip1mm
\begin{proof}
Let $\sigma \in G_A$, then $A_s\oplus J=A=\sigma(A)=\sigma(A_s)\oplus J$. Therefore, using Corollary \ref{Corollary 2.2} we have $\sigma(A_s)=\rho(A_s)$, for some $\rho \in \hat{J}$. Thus, $\rho^{-1}\sigma\in G_{A}^{A_{s}}$ and it follows that $G_A=\hat{J}G_{A}^{A_s}$. Hence, \begin{equation*}\label{eq10}
\displaystyle\frac{G_A}{\hat{J}}\cong \displaystyle\frac{G_{A}^{A_s}}{G_{A}^{A_s}\cap \hat{J}}
\end{equation*}
\end{proof}

\vskip2mm

\begin{remark}\label{Remark 4.22}$($\cite{Pollack}$)$
If $J^2=0$, then for $G_A=\hat{J}G_{A}^{A_s}$, the product is a semidirect product.
\end{remark}

\vskip4mm
\noindent
Note that $\text{Int}(1+J)=\hat{J}$ is unipotent (rational). It is clear from Lemma \ref{Lemma 4.18} and Remark \ref{Remark 4.19} that $G_A$ is $R$-trivial if and only if $G_A/\hat{J}$ is $R$-trivial. Hence, $G_A$ is $R$-trivial if and only if $\displaystyle\frac{G_{A}^{A_s}}{G_{A}^{A_s}\cap \hat{J}}$ is $R$-trivial. Again using Lemma \ref{Lemma 4.18}, we can conclude that $G_A$ is $R$-trivial if and only if $G_{A}^{A_s}$ is $R$-trivial. Therefore, it is enough to prove that $G_{A}^{A_s}$ is $R$-trivial.

\vskip4mm
\begin{lemma}\label{Lemma 4.23}
$G_{A}^{A_s}\cong \emph{Inn}(A_{s}^{\times})G_{A,A_s}$, where $\emph{Inn}(A_s^{\times})\coloneq\{\emph{Int}(x)\in G_{A/J} : x\in A_{s}^{\times}\}$ and $G_{A,A_s}=\{\sigma\in G_A: \sigma|_{A_s}=Id\}$ and $A_s^{\times}=\emph{Units of }A_{s}$.
\end{lemma}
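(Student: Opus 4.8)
The plan is to factor every $\sigma\in G_A^{A_s}$ as $\sigma=\mathrm{Int}(x)\,\tau$ with $x\in A_s^{\times}$ and $\tau$ fixing $A_s$ pointwise; this is exactly the asserted product decomposition $G_A^{A_s}=\mathrm{Inn}(A_s^{\times})\,G_{A,A_s}$, where $\mathrm{Inn}(A_s^{\times})$ is read as the subgroup $\{\mathrm{Int}(x):x\in A_s^{\times}\}$ of $G_A$. The organising tool is the restriction homomorphism
\[
\rho\colon G_A^{A_s}\longrightarrow \mathrm{Aut}_k(A_s),\qquad \sigma\longmapsto \sigma|_{A_s},
\]
which is well defined because $\sigma(A_s)=A_s$ on $G_A^{A_s}$, and is a morphism of algebraic groups since it only records the action of $\sigma$ on the fixed finite-dimensional subspace $A_s$. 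Its kernel is $\{\sigma\in G_A^{A_s}:\sigma|_{A_s}=\mathrm{Id}\}$, which is precisely $G_{A,A_s}$, since fixing $A_s$ pointwise already forces $\sigma(A_s)=A_s$ and so no extra condition is lost.

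The next step is to show that $\mathrm{Im}(\rho)$ consists of inner automorphisms of $A_s$ induced by units, i.e.\ $\mathrm{Im}(\rho)=\mathrm{Inn}(A_s^{\times})$. Because $A$ is split, $A_s\cong\bigoplus_{i=1}^{m}M_{n_i}(k)$; by Skolem--Noether every block-preserving $k$-algebra automorphism of $A_s$ is $\mathrm{Int}(x)|_{A_s}$ for some $x\in A_s^{\times}$, and $\mathrm{Aut}_k(A_s)^{0}=\prod_i\mathrm{PGL}_{n_i}=\mathrm{Inn}(A_s^{\times})$, the block permutations of isomorphic factors lying in the remaining components. Now, under the identification $\mathrm{Aut}_k(A/J)\cong\mathrm{Aut}_k(A_s)$ coming from the Wedderburn--Malcev isomorphism $A/J\xrightarrow{\sim}A_s$, the map $\rho$ coincides with $\sigma\mapsto\bar\sigma$, the composition of $G_A^{A_s}\hookrightarrow G_A$ with the canonical map $G_A\to\mathrm{Aut}_k(A/J)$; since $G_A$ is connected this map has image in the identity component, so $\rho(\sigma)\in\mathrm{Inn}(A_s^{\times})$ for every $\sigma\in G_A^{A_s}$. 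Conversely, for $x\in A_s^{\times}$ the automorphism $\mathrm{Int}(x)$ of $A$ preserves $A_s$ (as $xA_sx^{-1}=A_s$) and lies in $G_A$ because $A_s^{\times}=\prod_i\mathrm{GL}_{n_i}(k)$ is connected; hence $\{\mathrm{Int}(x):x\in A_s^{\times}\}\subseteq G_A^{A_s}$ and $\rho$ maps it onto $\mathrm{Inn}(A_s^{\times})$.

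Finally I would carry out the factorization: given $\sigma\in G_A^{A_s}$, choose $x\in A_s^{\times}$ with $\sigma|_{A_s}=\mathrm{Int}(x)|_{A_s}$ and put $\tau=\mathrm{Int}(x^{-1})\sigma$. Then $\tau\in G_A$, it preserves $A_s$, and for $a\in A_s$ one has $\tau(a)=x^{-1}(xax^{-1})x=a$, so $\tau\in G_{A,A_s}$ and $\sigma=\mathrm{Int}(x)\tau$. Together with the obvious inclusion $\{\mathrm{Int}(x):x\in A_s^{\times}\}\cdot G_{A,A_s}\subseteq G_A^{A_s}$, this yields $G_A^{A_s}=\mathrm{Inn}(A_s^{\times})\,G_{A,A_s}$, and $\rho$ induces an isomorphism $G_A^{A_s}/G_{A,A_s}\cong\mathrm{Inn}(A_s^{\times})$.

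The step I expect to be the crux is locating $\mathrm{Im}(\rho)$ inside $\mathrm{Inn}(A_s^{\times})$: one must use in an essential way both that $\sigma$ lies in the \emph{identity component} $G_A$ and that $A_s$ is \emph{split}, since a general automorphism in $\mathrm{Aut}_k(A)$ may restrict to an outer or block-permuting automorphism of $A_s$ that is not of the form $\mathrm{Int}(x)|_{A_s}$, which would destroy the decomposition. (One should also be mildly careful that $\{\mathrm{Int}(x):x\in A_s^{\times}\}\le G_A$ need not be \emph{isomorphic} to $\mathrm{Inn}(A_s^{\times})\le G_{A/J}$, only mapping onto it, so the decomposition is a product of subgroups rather than a semidirect product in general.) Everything else is routine bookkeeping with the splitting $A=A_s\oplus J$.
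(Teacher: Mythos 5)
Your proof is correct and follows essentially the same route as the paper: restrict automorphisms to $A_s$, use connectedness of $G_A$ together with the split hypothesis on $A_s$ to land the image in $\mathrm{Inn}(A_s^{\times})$, identify the kernel with $G_{A,A_s}$, and factor any $\sigma$ as $\mathrm{Int}(x)\,\tau$. Your parenthetical caveat is well taken and is in fact sharper than the paper's own exposition: the paper asserts a section $s(\mathrm{Int}(a)|_{A_s})=\mathrm{Int}(a)$ to conclude the sequence $1\to G_{A,A_s}\to G_A^{A_s}\to \mathrm{Inn}(A_s^{\times})\to 1$ is split, but this $s$ is not obviously well defined, since an element $z\in Z(A_s)^{\times}$ need not lie in $Z(A)$ (e.g.\ $A$ = upper-triangular $2\times 2$ matrices, $A_s$ = diagonal matrices, $z=\mathrm{diag}(1,2)$ acts nontrivially on $J$), so $\mathrm{Int}(a)\in G_A$ may depend on the representative $a$ of $\mathrm{Int}(a)|_{A_s}$. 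Your explicit factorization sidesteps this and yields the decomposition $G_A^{A_s}=\{\mathrm{Int}(x):x\in A_s^{\times}\}\cdot G_{A,A_s}$ as a product of subgroups, which is exactly what the subsequent $R$-triviality argument in Theorem~4.20 actually uses.
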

\vskip1mm
\begin{proof}
Consider the induced map 
\begin{equation*}\label{eq11}
G_A\rightarrow \text{Aut}_k(A/J)
\end{equation*}
whose image is closed and connected and is contained in $\text{Inn}(A/J)=G_{A/J}$ as $A/J\cong \bigoplus_i M_{n_i}(k)$ . Therefore, we have an exact sequence 
\begin{center}
\begin{tikzcd}
1\arrow[r] & G_{A,A_s} \arrow[r] &G_{A}^{A_s} \arrow[r,"res"] & \text{Inn}(A_s^{\times})\arrow[bend left=33]{l}{s}\arrow[r] & 1,
\end{tikzcd}
\end{center}
\vskip1mm
\noindent
which has a section $s:\text{Inn}(A_s^{\times})\rightarrow G_{A}^{A_s}$ such that $s(\text{Int}(a)_{|A_s})=\text{Int}(a)$, where $a\in A_s^{\times}$, $res(\phi)=\phi_{|A_s}$, $\phi\in G_{A}^{A_s}$. Therefore, the above sequence is split exact. Hence, the result follows.
\end{proof}

\vskip2mm  
\noindent
So, by the previous lemma to prove $G_{A}^{A_s}$ is $R$-trivial, it suffices to prove that $G_{A, A_s}$ is $R$-trivial, as in Proposition \ref{Proposition 4.1} we have already shown that the automorphism groups whose elements are inner automorphisms are $R$-trivial. Hence, it follows that $G_A$ is $R$-trivial if and only in $G_{A, A_S}$ is $R$-trivial.
\vskip4mm
\noindent
Since $J$ is an ideal of $A$, it may be considered as a $A_s$-bimodule or equivalently, as a left module for the semisimple algebra $A_{s}^{\#}\coloneqq A_s\otimes A_{s}^{\text{}op}$, where $A_{s}^{\text{}op}$ is the opposite algebra of $A_s$. Therefore, $J$ is an $A_{s}^{\#}$-algebra. We have for any $\sigma\in G_{A,A_s}$, $\sigma(arb)=a\sigma(r)b$, where $a,b\in A_s$ and $r\in J$. So, $G_{A,A_s}$ acts as an $A_s^{\#}$-algebra automorphism of $J$, and we have an injection $\Gamma: G_{A,A_s}\longrightarrow \text{Aut}_{A_s^{\#}-\textit{algebra}}(J)$. 

\vskip4mm
\begin{lemma}\label{Lemma 4.24}
$G_{A,A_s}\cong \emph{\text{Aut}}_{A_s^{\#}-\textit{algebra}}(J)$.
\end{lemma}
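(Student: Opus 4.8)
The plan is to promote the injection $\Gamma$ already at hand to an isomorphism by writing down an explicit inverse. Fix the vector space decomposition $A=A_s\oplus J$ from Corollary \ref{Corollary 2.2}. Given $\tau\in\mathrm{Aut}_{A_s^{\#}\text{-}\textit{algebra}}(J)$, define a $k$-linear endomorphism $\widehat{\tau}$ of $A$ by $\widehat{\tau}(a+r)=a+\tau(r)$ for $a\in A_s$, $r\in J$. First I would check that $\widehat{\tau}\in\mathrm{Aut}_k(A)$: it is $k$-linear and fixes $1\in A_s$ by construction, it is bijective since $\widehat{\tau^{-1}}$ is a two-sided inverse, and the only step using the hypotheses on $\tau$ is multiplicativity — expanding $(a+r)(b+s)=ab+(as+rb+rs)$ with $as,rb\in J$ and $rs\in J^2\subseteq J$, and invoking that $\tau$ is an $A_s$-bimodule map ($\tau(as)=a\tau(s)$, $\tau(rb)=\tau(r)b$) and an algebra endomorphism of $J$ ($\tau(rs)=\tau(r)\tau(s)$), one sees term by term that $\widehat{\tau}\bigl((a+r)(b+s)\bigr)$ equals $\widehat{\tau}(a+r)\widehat{\tau}(b+s)=(a+\tau(r))(b+\tau(s))$.

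Next I would note that $\widehat{\tau}$ fixes $A_s$ pointwise, that $\tau\mapsto\widehat{\tau}$ is a group homomorphism, and that it is a morphism of algebraic varieties (in a basis of $A$ adapted to $A=A_s\oplus J$, the entries of $\widehat{\tau}$ are affine-linear functions of those of $\tau$), hence a morphism of algebraic groups. Conversely, any $k$-algebra automorphism $\sigma$ of $A$ fixing $A_s$ pointwise preserves the radical $J$, restricts to an element $\sigma|_J\in\mathrm{Aut}_{A_s^{\#}\text{-}\textit{algebra}}(J)$ by the computation preceding the lemma, and is recovered as $\sigma=\widehat{\sigma|_J}$ because $A=A_s\oplus J$. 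Thus $\Gamma$ and $\tau\mapsto\widehat{\tau}$ are mutually inverse isomorphisms of algebraic groups between the full pointwise stabilizer $\{\sigma\in\mathrm{Aut}_k(A):\sigma|_{A_s}=\mathrm{Id}\}$ and $\mathrm{Aut}_{A_s^{\#}\text{-}\textit{algebra}}(J)$.

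It then remains to read this off at the level of identity components: $\tau\mapsto\widehat{\tau}$ sends the identity component of $\mathrm{Aut}_{A_s^{\#}\text{-}\textit{algebra}}(J)$ into $\mathrm{Aut}_k(A)^{0}=G_A$, and into $G_{A,A_s}$ since it fixes $A_s$; conversely $\Gamma$ sends $G_{A,A_s}$ back into that identity component, which gives the stated isomorphism (understood, as in the rest of the paper, between $G_{A,A_s}$ and the identity component of $\mathrm{Aut}_{A_s^{\#}\text{-}\textit{algebra}}(J)$). I expect this last point — confirming that $\Gamma$ identifies $G_{A,A_s}$ with the \emph{whole} identity component of $\mathrm{Aut}_{A_s^{\#}\text{-}\textit{algebra}}(J)$, rather than merely mapping into it — to be the only delicate part; the extension construction and the verification that $\widehat{\tau}$ is an algebra automorphism, while forming the computational heart of the proof, are routine.
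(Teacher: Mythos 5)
Your proof is the same as the paper's: the key construction in both is the explicit inverse $\tau\mapsto\widehat\tau$ (the paper's $\psi\mapsto\bar\psi$) given by $a+r\mapsto a+\tau(r)$ on $A=A_s\oplus J$, with $\Gamma$ as restriction to $J$. You simply spell out the multiplicativity check, the morphism-of-algebraic-groups structure, and the identity-component bookkeeping, all of which the paper's terse proof leaves implicit; that last point is indeed glossed over in the paper (which writes the isomorphism without commenting on connectedness), and your flag of it as the one delicate step is well placed.
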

 \vskip1mm
\begin{proof}
It is clear from the previous argument that we have an injection 
\begin{equation*}\label{eq13}
\Gamma: G_{A,A_s}\longrightarrow \text{Aut}_{A_s^{\#}-\textit{algebra}}(J)
\end{equation*}
However, for $\psi\in \text{Aut}_{A_s^{\#}-\textit{algebra}}(J)$,  define $\bar \psi(a+r)=a+\psi(r)$, where $a\in A_s$ and $r\in J$, then $\Gamma(\bar{\psi})=\psi$ and the above map is surjective. Since $\Gamma(\psi_1\circ\psi_2)=\Gamma(\psi_1)\Gamma(\psi_2)$, we have the isomorphism.
\end{proof}

\vskip2mm
\noindent
So, it is enough to prove that $\text{Aut}_{A_{s}^{\#}-\text{algebra}}(J)$ is $R$-trivial. Hence, the $R$-triviality of $G_A$ depends on the radical of $A$. Since $A$ is an artinian algebra, there must exist an integer $n$ such that $J^n=0$. Next, we consider the case $n=2$.

\vskip2mm
\begin{lemma}\label{Lemma 4.25}
If $J^2=0$, then $G_{A,A_s}\cong \emph{\text{Aut}}_{A_s^{\#}-\textit{module}}(J)$ and it is a rational group.
\end{lemma}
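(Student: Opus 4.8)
The plan is to reduce, via Lemma \ref{Lemma 4.24}, to a purely module-theoretic computation and then invoke the structure theory of modules over a split semisimple algebra.

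First I would note that the hypothesis $J^2=0$ makes the multiplication on the ideal $J$ identically zero, since $J\cdot J=J^2=0$. Consequently every $A_s^{\#}$-module automorphism of $J$ automatically preserves this (trivial) multiplication and hence is an $A_s^{\#}$-algebra automorphism, while conversely every $A_s^{\#}$-algebra automorphism is in particular $A_s^{\#}$-linear; thus $\mathrm{Aut}_{A_s^{\#}\text{-}\textit{algebra}}(J)=\mathrm{Aut}_{A_s^{\#}\text{-}\textit{module}}(J)$ as $k$-groups. Combined with Lemma \ref{Lemma 4.24}, this already gives $G_{A,A_s}\cong\mathrm{Aut}_{A_s^{\#}\text{-}\textit{module}}(J)$.

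Next I would make this group explicit. Since $A$ is split, $A_s\cong\bigoplus_{i=1}^{m}M_{n_i}(k)$; using the transpose isomorphism $M_n(k)^{\mathrm{op}}\cong M_n(k)$ together with $M_a(k)\otimes_k M_b(k)\cong M_{ab}(k)$, the algebra $A_s^{\#}=A_s\otimes_k A_s^{\mathrm{op}}$ is again \emph{split} semisimple over $k$. Therefore the $A_s^{\#}$-module $J$ decomposes into finitely many isotypic components $J\cong\bigoplus_{\ell}S_\ell^{\oplus m_\ell}$, and since each simple $A_s^{\#}$-module $S_\ell$ has endomorphism ring $k$ (Schur's lemma together with splitness), one obtains a $k$-algebra isomorphism $\mathrm{End}_{A_s^{\#}}(J)\cong\bigoplus_{\ell}M_{m_\ell}(k)$. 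Passing to unit group schemes yields a $k$-group isomorphism $\mathrm{Aut}_{A_s^{\#}\text{-}\textit{module}}(J)\cong\prod_\ell\mathrm{GL}_{m_\ell}$.

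Finally, each $\mathrm{GL}_{m_\ell}$ is a rational $k$-variety, being a nonempty Zariski-open subvariety of the affine space $\mathbb{A}_k^{m_\ell^2}$ and therefore having purely transcendental function field, and a finite product of rational varieties is rational; hence $\mathrm{Aut}_{A_s^{\#}\text{-}\textit{module}}(J)$, and therefore $G_{A,A_s}$, is rational as a $k$-group. The only point requiring a little care — and it is slight — is verifying that $A_s^{\#}$ is \emph{split} semisimple rather than merely semisimple: this is exactly what guarantees the endomorphism algebras above are matrix algebras over $k$ itself, so that the automorphism group is an honest product of general linear groups rather than a product of Weil restrictions of $\mathbb{G}_m$ (which would in general only be stably rational).
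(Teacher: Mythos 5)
Your proposal is correct and follows essentially the same route as the paper: reduce via Lemma \ref{Lemma 4.24}, observe that $J^2=0$ collapses algebra automorphisms to module automorphisms, decompose $J$ into isotypic components over the semisimple algebra $A_s^{\#}$, and conclude $G_{A,A_s}\cong\prod_\ell\mathrm{GL}_{m_\ell}$, which is rational. The one place you are more careful than the paper is in spelling out that $A_s^{\#}$ is \emph{split} semisimple, which is exactly what is needed for Schur's lemma to give endomorphism ring $k$ rather than a division algebra; the paper invokes Schur's lemma without making this explicit.
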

\vskip1mm
\begin{proof}
For $J^2=0$, we have $\text{Aut}_{A_{s}^{\#}-\textit{algebra}}(J)\cong \text{Aut}_{A_s^{\#}-\textit{module}}(J)$, where $J$ is a $A_s^{\#}$-module. It follows from the discussion of Chapter $2$ of \cite{Tifr} and \cite{Pierce} that the $A_s^{\#}$-\text{module} $J$ can be written as 
$J=\sum_{i=1}^{\alpha} \lambda_{i}M_{i}$,
where this $M_i$'s are inequivalent irreducible $A_s^{\#}$-modules and $\lambda_i\in \mathbb{N}$. By using Schur's lemma it follows that $\text{Aut}_{A_s^{\#}-\text{module}}(\lambda_i M_i)=\text{GL}_{\lambda_i}$. But
\begin{equation*}\label{eq15}
\text{Aut}_{A_s^{\#}-\textit{module}}(J)\cong \prod_{i=1}^{\alpha}\text{Aut}(\lambda_{i}M_i), \text{ where $\prod$ denotes the product of groups. }
\end{equation*}
Therefore, we get \[G_{A,A_s}\cong \prod_{i=1}^{\alpha} \text{GL}_{\lambda_i}.\]

\vskip2mm
\noindent
Since $\text{GL}_n$ is rational, and a product of rational groups is rational, it follows that $G_{A, A_s}$ is rational. Therefore, $k$-rational points of $\text{Aut}_k(A)^{0}$ are $R$-equivalent over $k$. If we consider the $F$-rational points $\text{Aut}_k(A)^{0}(F)=\text{Aut}_F(A\otimes F)^{0}$, where $F$ is an extension of $k$, then the assertion follows easily by using Theorem \ref{Theorem 3.5} and Remark \ref{Remark 3.7}. Hence, $F$-rational points are also $R$-equivalent. Therefore, $\text{Aut}_k(A)^{0}=G_A$ is $R$-trivial.
\end{proof}

\vskip2mm

\begin{lemma}\label{Lemma 4.26}
If $J^2\neq 0$ and $\emph{dim}(J/J^2)\leq 5$ or $\emph{rank}(G_{A,A_s})=\emph{dim}(J/J^2)$, then $G_{A,A_s}$ is $R$-trivial.
\end{lemma}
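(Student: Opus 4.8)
The plan is to identify $G_{A,A_s}$ with the group of $A_s^{\#}$-algebra automorphisms of $J$ via Lemma \ref{Lemma 4.24}, to reduce $R$-triviality to a statement about maximal tori using Proposition \ref{Proposition 4.8} together with Remark \ref{Remark 4.9}, and then to control those tori through the induced action on $J/J^2$.

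First I would introduce the canonical morphism $\Phi\colon G_{A,A_s}\longrightarrow\mathrm{GL}_{A_s^{\#}}(J/J^2)$ sending an $A_s^{\#}$-algebra automorphism $\psi$ of $J$ to the $A_s^{\#}$-module automorphism it induces on $J/J^2$. The telescoping computation in the proof of Proposition \ref{Proposition 4.12} only uses that $\psi$ is an algebra endomorphism of $J$, so the extra $A_s^{\#}$-linearity is harmless; it yields $(\psi-\mathrm{Id})^{m}(J)\subseteq J^{m+1}$ for every $\psi\in\ker\Phi$, hence $\ker\Phi$ is unipotent and every maximal torus $T$ of $G_{A,A_s}$ maps isomorphically onto a subtorus of $\mathrm{GL}_{A_s^{\#}}(J/J^2)$. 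Since $A$ is split, $A_s\cong\bigoplus_i M_{n_i}(k)$ and $A_s^{\#}=A_s\otimes A_s^{\mathrm{op}}$ is split semisimple; writing $J/J^2\cong\bigoplus_i M_i^{\lambda_i}$ with the $M_i$ pairwise non-isomorphic simple $A_s^{\#}$-modules, Schur's lemma gives $\mathrm{End}_{A_s^{\#}}(M_i)=k$, whence $\mathrm{GL}_{A_s^{\#}}(J/J^2)\cong\prod_i\mathrm{GL}_{\lambda_i}$. In particular $\mathrm{rank}(G_{A,A_s})\le\sum_i\lambda_i\le\sum_i\lambda_i\dim_k M_i=\dim(J/J^2)$, and the block-diagonal inclusion embeds $\prod_i\mathrm{GL}_{\lambda_i}$, hence $T$, into $\mathrm{GL}_N$ with $N\coloneqq\sum_i\lambda_i\le\dim(J/J^2)$.

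Then I would split into the two hypotheses. If $\dim(J/J^2)\le5$, then $N\le5$ and $T$ is a torus of rank at most $5$ inside $\mathrm{GL}_N$ with $N\le5$; the case analysis in the proof of Proposition \ref{Proposition 4.15} --- peel off an isotropic direction via the determinant character, enlarge by the scalar torus to a genuine maximal torus of $\mathrm{GL}_N$, and dispose of the residual anisotropic rank-$3$ torus inside $\mathrm{SL}_N$ by the norm-one-torus argument --- never uses maximality of the torus in $G_A$, only that it is embedded in some $\mathrm{GL}_N$ with $N\le5$, so it applies verbatim and shows that $T$ is stably rational. If instead $\mathrm{rank}(G_{A,A_s})=\dim(J/J^2)$, the chain of inequalities above collapses to equalities: every $M_i$ with $\lambda_i>0$ is one-dimensional and $N=\sum_i\lambda_i=\mathrm{rank}(T)=\mathrm{rank}(\prod_i\mathrm{GL}_{\lambda_i})$, so $\Phi(T)$ is a torus of full rank in $\prod_i\mathrm{GL}_{\lambda_i}$, hence a maximal torus, hence a product of maximal tori of the $\mathrm{GL}_{\lambda_i}$; each of those is of the form $\prod_j R_{L_j/k}(\mathbb{G}_m)$ with the $L_j/k$ separable (as in the proof of Proposition \ref{Proposition 4.14}), and a finite product of Weil restrictions of $\mathbb{G}_m$ is rational, so $T$ is rational. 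In both cases every maximal torus of $G_{A,A_s}$ is stably rational, hence $G_{A,A_s}$ is stably rational over $k$ by Proposition \ref{Proposition 4.8} and Remark \ref{Remark 4.9}; as stable rationality passes to base change this gives $G_{A,A_s}(L)/R=\{1\}$ for every field extension $L/k$, i.e.\ $R$-triviality, the passage to extensions being handled exactly as in the proof of Lemma \ref{Lemma 4.25} via Theorems \ref{Theorem 3.5}, \ref{Theorem 3.6} and Remark \ref{Remark 3.7}.

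The hard part is the case $\dim(J/J^2)\le5$: one must check carefully that the rank-$3$ anisotropic-torus discussion in the proof of Proposition \ref{Proposition 4.15} really does transfer when $T$ lives inside the block-diagonal subgroup $\prod_i\mathrm{GL}_{\lambda_i}\subseteq\mathrm{GL}_N$ rather than being a maximal torus of $G_A$ itself; concretely, one should rephrase that analysis so it invokes only the embedding $T\hookrightarrow\mathrm{GL}_N$ with $N\le5$ and the fact that a torus of rank $N$ in $\mathrm{GL}_N$ is automatically a maximal torus. A secondary point to record along the way is that $G_{A,A_s}$ is connected, so that Proposition \ref{Proposition 4.8} applies.
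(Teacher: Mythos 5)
Your proof is correct and follows essentially the same route as the paper's: the paper also passes through the canonical map $G_{A,A_s}\to\mathrm{GL}(J/J^2)$, uses the Proposition \ref{Proposition 4.12} argument to see that the kernel is unipotent and hence every maximal torus embeds into $\mathrm{GL}_n$ with $n=\dim(J/J^2)$, and then invokes the Proposition \ref{Proposition 4.15} case analysis when $n\le 5$ and the Proposition \ref{Proposition 4.14} full-rank argument otherwise. Your extra detour through the $A_s^{\#}$-equivariant endomorphisms and Schur's lemma decomposition $\mathrm{GL}_{A_s^{\#}}(J/J^2)\cong\prod_i\mathrm{GL}_{\lambda_i}$ gives a sharper bound on where the torus lands (and a cleaner explanation of why the full-rank case forces each $M_i$ to be one-dimensional), but the paper skips this and works with the full $\mathrm{GL}(J/J^2)$, which suffices since the Proposition \ref{Proposition 4.15} argument only needs an embedding of $T$ into some $\mathrm{GL}_N$ with $N\le 5$ --- a point you correctly flag as the step requiring care.
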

\vskip1mm
\begin{proof} Since $J^2\neq 0$ and $G_{A,A_s}\cong \text{Aut}_{{A_s^{\#}}-\text{algebra}}(J)$, we have the canonical homomorphism \[G_{A,A_s}\rightarrow \text{Aut}_k(J/J^2).\] Now using similar argument as in Proposition \ref{Proposition 4.12}, we can show that any maximal torus of $G_{A,A_s}$ embeds in $\text{Aut}_k(J/J^2)=\text{GL}(J/J^2)$. Therefore, by using a similar line of argument as done in Proposition \ref{Proposition 4.15}, we can show that any maximal torus of $G_{A, A_s}$ is stably rational if $\text{dim}(J/J^2)\leq 5$. Hence, $G_{A, A_s}$ is stably rational if $\text{dim}(J/J^2)\leq 5$. If $\text{rank}(G_{A,A_s})=\text{dim}(J/J^2)$, then by Proposition \ref{Proposition 4.14}, $G_{A,A_s}$ is rational. Hence in any case $G_{A,A_s}$ is $R$-trivial.
\end{proof}
\vskip2mm
\noindent
Finally, we use Lemma \ref{Lemma 4.26} and Lemma \ref{Lemma 4.25}  to conclude that $G_A$ is $R$-trivial when it satisfies the conditions given in Theorem \ref{Theorem 4.20}.
\end{proof}

\vskip2mm
\begin{corollary}\label{Corollary 4.27}
Let $\emph{\text{Aut}}_k(A)$ be a connected reductive group, where $A$ is a split finite-dimensional associative algebra, then $\mathrm{Aut}_k(A)$ is $R$-trivial.
\end{corollary}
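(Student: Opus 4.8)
The plan is to reduce at once to the hypothesis $J^2=0$ of Theorem~\ref{Theorem 4.20} and then invoke that theorem. Since $\mathrm{Aut}_k(A)$ is assumed connected, we have $\mathrm{Aut}_k(A)=\mathrm{Aut}_k(A)^{0}=G_A$, and the standing assumption of this section makes $k$ perfect. So the hypothesis of the corollary is precisely that $G_A$ is reductive, where $A$ is split finite-dimensional over a perfect field.

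The key (and only nontrivial) input is Corollary~\ref{Corollary 3.2}: for such an $A$, the group $G_A$ is reductive if and only if $J^2=0$ and $J\subset Z(A)$. In particular reductivity of $G_A$ forces $J^2=0$. At that point condition $(1)$ of Theorem~\ref{Theorem 4.20} holds, so I would simply conclude that $G_A=\mathrm{Aut}_k(A)$ is $R$-trivial, which is the assertion.

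If one prefers an argument not routed through the omnibus statement of Theorem~\ref{Theorem 4.20}, one can unwind it here: once $J^2=0$, Lemma~\ref{Lemma 4.25} identifies $G_{A,A_s}$ with $\prod_i\mathrm{GL}_{\lambda_i}$ for the multiplicities $\lambda_i$ of the irreducible $A_s^{\#}$-modules occurring in $J$, hence $G_{A,A_s}$ is rational and so $R$-trivial; the chain of reductions $G_A\rightsquigarrow G_A^{A_s}\rightsquigarrow G_{A,A_s}$ coming from Lemmas~\ref{Lemma 4.21}--\ref{Lemma 4.24}, Proposition~\ref{Proposition 4.1}, and Lemma~\ref{Lemma 4.18} (together with Remark~\ref{Remark 4.19}) then propagates $R$-triviality back up to $G_A$. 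In fact, tracing the structure, $G_A$ turns out to be a product of groups of the form $\mathrm{PGL}_{n_i}$ (from $\mathrm{Inn}(A_s^{\times})$, via Skolem--Noether as in Example~\ref{Example 4.2}) and $\mathrm{GL}_{\lambda_i}$, modulo the unipotent $\hat J$, so it is even rational; but $R$-triviality is all that is claimed. There is no real obstacle: the whole content is the translation ``$G_A$ reductive $\iff J^2=0$'' supplied by Corollary~\ref{Corollary 3.2}, after which the corollary is an immediate special case of Theorem~\ref{Theorem 4.20}$(1)$.
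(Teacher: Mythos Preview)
Your proposal is correct and follows essentially the same route as the paper: invoke Corollary~\ref{Corollary 3.2} to get $J^2=0$ from reductivity, then apply Theorem~\ref{Theorem 4.20}(1). The additional unwinding you sketch is accurate but not needed for the corollary as stated.
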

\vskip1mm
\begin{proof}
For any connected reductive group of the form $\text{Aut}_k(A)$, we have $J^2=0$ by Corollary \ref{Corollary 3.2}, where $J$ is the radical of $A$. It follows from Theorem \ref{Theorem 4.20} that $\text{Aut}_k(A)$ is $R$-trivial.
\end{proof}

\vskip2mm
\begin{corollary}\label{Corollary 4.28}
Let $T$ be a non $R$-trivial torus over $k$. Then, there does not exist any split finite-dimensional associative algebra $A$ over $k$ such that $\emph{\text{Aut}}_k(A)\cong T$.
\end{corollary}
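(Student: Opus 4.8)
The plan is to argue by contradiction and invoke the results already established for tori among the groups $\mathrm{Aut}_k(A)$. Suppose there does exist a split finite-dimensional associative $k$-algebra $A$ with $\mathrm{Aut}_k(A)\cong T$. Since any torus is a connected algebraic group, this forces $\mathrm{Aut}_k(A)=\mathrm{Aut}_k(A)^{0}=G_A$, so $T\cong G_A$ with $A$ split over the perfect field $k$; thus $T$ is of the form covered by Theorem \ref{Theorem 3.4}.

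Now I would apply Theorem \ref{Theorem 3.4} directly: since $T\cong \mathrm{Aut}_k(A)$ for a split finite-dimensional associative algebra $A$ over the perfect field $k$, the torus $T$ must be split. A split torus is $k$-rational (indeed $T\cong \mathbb{G}_{m,k}^{r}$), and any $k$-rational group is $R$-trivial by \cite{Voskrenskii}, as recalled in Section \ref{R-equivalence}. Hence $T$ is $R$-trivial, contradicting the hypothesis that $T$ is non-$R$-trivial. Therefore no such $A$ can exist.

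Alternatively, one can bypass the structural conclusion of Theorem \ref{Theorem 3.4} and argue even more cheaply: a torus is a connected reductive group, so if $\mathrm{Aut}_k(A)\cong T$ then $\mathrm{Aut}_k(A)$ is a connected reductive group of the required form, and Corollary \ref{Corollary 4.27} gives that $\mathrm{Aut}_k(A)$, hence $T$, is $R$-trivial — again a contradiction. I would likely present the first route, since it reuses Theorem \ref{Theorem 3.4} and makes explicit the stronger fact that the torus would have to be split.

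There is no real obstacle here: the statement is an immediate corollary of Theorem \ref{Theorem 3.4} (or of Corollary \ref{Corollary 4.27}) combined with the standard implications ``split $\Rightarrow$ rational $\Rightarrow$ $R$-trivial'' for algebraic groups. The only point requiring a word of care is the passage from $\mathrm{Aut}_k(A)$ to its identity component $G_A$, which is harmless because a torus is connected, so the two coincide.
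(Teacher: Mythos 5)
Your proposal is correct, and your second (``alternative'') route is exactly the paper's proof: the paper simply says ``Follows from Corollary \ref{Corollary 4.27},'' i.e.\ a torus is connected reductive, so Corollary \ref{Corollary 4.27} forces $R$-triviality, contradicting the hypothesis. Your preferred first route via Theorem \ref{Theorem 3.4} is genuinely different in spirit — it passes through the stronger structural fact that the torus must be split before concluding $R$-triviality — but both are immediate one-line deductions and both are valid; the only thing the Theorem \ref{Theorem 3.4} route buys is the explicit splitness of $T$, which is more than the statement needs.
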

\vskip 1mm
\begin{proof}
Follows from Corollary \ref{Corollary 4.27}.
\end{proof}
\vskip2mm
\begin{corollary}\label{Corollary 4.29}
Let $A$ be a split finite-dimensional associative algebra over $k$. For every field extension $F$ over $k$, let us define $R_{G_A}(F)$, \[R_{G_A}(F)\coloneq\{\sigma\in G_A(F) : \sigma \text{ is R-equivalent to identity } \} .\] Then, $R_{G_A}(F)=\{Id\}$ for all field extension $F$ over $k$ if and only if $A\cong \bigoplus_i k$.
\end{corollary}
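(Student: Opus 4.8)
The statement is an equivalence, and only one direction has content. For the ``if'' direction, note that if $A\cong\bigoplus_{i=1}^m k$ then every $k$-algebra automorphism permutes the primitive idempotents $e_1,\dots,e_m$ and is determined by the resulting permutation, so $\text{Aut}_k(A)\cong S_m$ as a (constant) group scheme; the same computation applies after any base change, so $G_A=\text{Aut}_k(A)^0=\{Id\}$ and hence $R_{G_A}(F)=\{Id\}$ for every $F/k$ for trivial reasons.

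For the ``only if'' direction I would argue contrapositively: assume $A\not\cong\bigoplus_i k$ and produce a field extension $F/k$ together with a non-identity element of $G_A(F)$ that is $R$-equivalent to the identity. The plan splits into two parts. The first part shows that $G_A$ is a non-trivial connected $k$-group. If $A$ is non-commutative, then $\text{Inn}(A)$ is the image of the irreducible variety $A^{\times}$ under $\psi\colon A^{\times}\to\text{Aut}_k(A)$, hence a connected subgroup of $G_A$ of dimension $\dim_k A-\dim_k Z(A)\geq 1$. If $A$ is commutative, write $A\cong\prod_j A_j$ with each $A_j$ a split local commutative algebra; since $A\not\cong\bigoplus_i k$, some factor $A_{j_0}$ has $J_{j_0}\neq 0$, and as $G_A\supseteq G_{A_{j_0}}$ it suffices to show $\text{Aut}_k(A_{j_0})$ is positive-dimensional. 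The second part is a general fact about connected groups over a perfect field: a non-trivial connected $k$-group $G$ contains, possibly after a finite (still perfect) field extension $L/k$, a one-dimensional connected $L$-subgroup $H$. Indeed, if $G$ is not unipotent it has a non-trivial maximal torus defined over $k$, which becomes split over a finite $L$ so that $\mathbb{G}_{m,L}\hookrightarrow G_L$; if $G$ is unipotent and non-trivial it is $k$-split and contains $\mathbb{G}_{a,k}$. Such an $H$ is a form of $\mathbb{G}_a$ or $\mathbb{G}_m$ over the perfect field $L$, hence $L$-rational and therefore $R$-trivial, so $RH(L(t))=H(L(t))$, which is infinite. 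Composing a rational curve in $H$ witnessing $R$-triviality with the closed immersion $H\hookrightarrow G_L$ then shows $R_{G_A}(L(t))\supseteq RH(L(t))=H(L(t))\neq\{Id\}$; concretely, $t\in\mathbb{G}_a(L(t))$ is joined to $0$ by $s\mapsto st$, and $t\in\mathbb{G}_m(L(t))$ is joined to $1$ by $s\mapsto(1-s)+st$.

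The real work, and the step I expect to be the main obstacle, is the last claim in the commutative case: a split local commutative algebra $B=k\oplus J$ with $J\neq 0$ has a positive-dimensional automorphism group. I would split on the Loewy length $l=\min\{n:J^n=0\}\geq 2$. If $l=2$, then $J^2=0$ and every $T\in\mathrm{GL}(J)$ extends to the automorphism $a+r\mapsto a+T(r)$, so $\text{Aut}_k(B)\supseteq\mathrm{GL}(J)$ is positive-dimensional. If $l\geq 3$, fix a vector-space complement $V$ of $J^2$ in $J$ and a non-zero $k$-linear map $\delta\colon V\to J^{l-1}$ (possible since $V\cong J/J^2\neq 0$ and $J^{l-1}\neq 0$), and let $D\colon B\to B$ be the $k$-linear map vanishing on $k\oplus J^2$ and restricting to $\delta$ on $V$. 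Using $J\cdot J^{l-1}=J^l=0$ one checks that $D$ is a derivation with image in $J^{l-1}$ and $D^2=0$, so $\psi_s\coloneqq Id+sD$ is for each $s$ a $k$-algebra endomorphism (here one uses $D(x)D(y)\in J^{2(l-1)}=0$) with inverse $\psi_{-s}$, and $s\mapsto\psi_s$ is an injective homomorphism $\mathbb{G}_a\to\text{Aut}_k(B)$. Either way $\text{Aut}_k(B)$ is positive-dimensional, which closes the argument. The only genuinely computational point is the verification that this $D$ is a derivation and that $\psi_s$ is multiplicative; everything else is either elementary or a standard structural fact about linear algebraic groups over perfect fields.
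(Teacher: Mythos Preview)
Your proof is correct and complete, but it takes a genuinely different route from the paper's. The paper's argument is much shorter because it leverages the machinery already built up: since over a perfect field every unipotent element is $R$-equivalent to the identity, the hypothesis $R_{G_A}(F)=\{Id\}$ for all $F$ forces the $k$-unipotent radical to be trivial, so $G_A$ is reductive; then Corollary~4.27 (itself a consequence of Theorem~4.20) says that a reductive $G_A$ is $R$-trivial, whence $G_A(F)=R_{G_A}(F)=\{Id\}$ for all $F$, i.e.\ $G_A$ is the trivial group, and this (via Corollary~3.3) forces $A\cong\bigoplus_i k$.

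Your argument bypasses Theorem~4.20 and Corollary~4.27 entirely. Rather than proving ``reductive $\Rightarrow$ $R$-trivial'' and then concluding $G_A$ is trivial, you show directly that $A\not\cong\bigoplus_i k$ forces $G_A$ to be positive-dimensional and then exhibit an explicit rational curve: inner automorphisms in the non-commutative case, and in the split local commutative case an honest $\mathbb{G}_a$ of automorphisms $\psi_s=Id+sD$ built from a nilpotent derivation $D$ with image in $J^{l-1}$. The verification that $D$ is a derivation and $\psi_s$ multiplicative is exactly the computation you flag, and it goes through as you say. Your approach is more elementary and self-contained---it would work even if Theorem~4.20 were unavailable---at the cost of the explicit case analysis and the derivation construction. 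The paper's approach is slicker but only because the heavier structural results are already in hand; in fact the implication ``$G_A$ trivial $\Rightarrow A\cong\bigoplus_i k$'' that the paper invokes without comment is precisely the content of your positive-dimensionality argument.
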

\vskip1mm
\begin{proof}
It is already known that $R_{G_A}(F)$ is a normal subgroup of $G_A(F)$. Moreover, it is clear that any unipotent element is $R$-equivalent to the identity. Therefore, for every field extension $F$ over $k$, we have $U(F)\subset R_{G_A}(F)$, where $U$ is the $k$-unipotent radical of $G_A$. Suppose $R_G(F)=\{Id\}$ for all field extension $F$ over $k$, then $k$-unipotent radical of $U$ of $G_A$ is trivial. Hence, $G_A$ is reductive over the perfect field $k$. However, by Corollary \ref{Corollary 4.27}, $G_A$ is $R$-trivial, which means $G_A(F)=R_{G_A}(F)$ for every field extension $F$ over $k$. Hence, we get that $G_A=\{Id\}$ which further implies that $A\cong \bigoplus_i k$. This completes one direction of the proof, and the other direction follows easily.
\end{proof}
\vskip2mm
\noindent
It is easy to observe that using the proof of Theorem \ref{Theorem 4.20}, we can classify the connected reductive groups which are of the form $\text{Aut}_k(A)$, where $A$ is a split finite-dimensional associative algebra.

\vskip2mm

\begin{corollary}\label{Corollary 4.30}
Let $A$ be a split finite-dimensional associative algebra over the perfect field $k$ such that $\emph{\text{Aut}}_k(A)$ is a connected reductive group. Then \[\emph{\text{Aut}}_k(A)\cong (\emph{Inn}(A_s^{\times}).(\prod_{i=1}^{\alpha} \emph{GL}_{n_{i}})),\] for some $\alpha$, where $A_s$ is a semisimple subalgebra $A$ such that $A= A_s\oplus J$.
\end{corollary}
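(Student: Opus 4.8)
The plan is to assemble the ingredients already established inside the proof of Theorem~\ref{Theorem 4.20} together with Corollary~\ref{Corollary 3.2}; no genuinely new argument is needed, only bookkeeping. First I would note that since $\text{Aut}_k(A)$ is connected we have $\text{Aut}_k(A) = \text{Aut}_k(A)^{0} = G_A$, so it is enough to identify $G_A$. As $G_A$ is reductive over the perfect field $k$, Corollary~\ref{Corollary 3.2} gives $J^2 = 0$ and $J \subset Z(A)$. The centrality of $J$ is what collapses the unipotent part: for any $r \in J$ the unit $1+r$ is central, so $\text{Int}(1+r) = \text{Id}$ and hence $\hat{J} = \{\text{Id}\}$. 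Combined with Lemma~\ref{Lemma 4.21} this yields $G_A = \hat{J}\,G_A^{A_s} = G_A^{A_s}$.

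Next I would feed this into Lemma~\ref{Lemma 4.23}. The restriction homomorphism $G_A^{A_s} \to \text{Inn}(A_s^{\times})$ is well defined because the image of the connected group $G_A$ in $\text{Aut}_k(A/J)$ lies in the connected, hence inner, subgroup $G_{A/J} = \text{Inn}(A/J)$ (using $A/J \cong \bigoplus_i M_{n_i}(k)$ and Skolem--Noether), and it admits the section $s(\text{Int}(a)|_{A_s}) = \text{Int}(a)$; thus $G_A^{A_s} \cong \text{Inn}(A_s^{\times}) \ltimes G_{A,A_s}$, realized inside $G_A^{A_s}$ as the product $\text{Inn}(A_s^{\times})\cdot G_{A,A_s}$. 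Since $J^2 = 0$, Lemmas~\ref{Lemma 4.24} and~\ref{Lemma 4.25} identify $G_{A,A_s}$ first with $\text{Aut}_{A_s^{\#}\text{-algebra}}(J) = \text{Aut}_{A_s^{\#}\text{-module}}(J)$ and then, via the isotypic decomposition $J = \sum_{i=1}^{\alpha}\lambda_i M_i$ of $J$ as an $A_s^{\#}$-module together with Schur's lemma, with $\prod_{i=1}^{\alpha}\text{GL}_{\lambda_i}$, where the $M_i$ are the pairwise inequivalent irreducible $A_s^{\#}$-summands of $J$. Renaming $\lambda_i$ as $n_i$ and chaining the isomorphisms gives $\text{Aut}_k(A) = G_A = G_A^{A_s} \cong \text{Inn}(A_s^{\times})\cdot(\prod_{i=1}^{\alpha}\text{GL}_{n_i})$, which is the assertion.

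The only points that need a word of justification, and hence the closest thing to a difficulty here, are the two reductions used above: that $J \subset Z(A)$ forces $\hat{J}$ to be trivial, which is immediate from centrality, and that the split exact sequence of Lemma~\ref{Lemma 4.23} persists after restricting everything to identity components, which follows from the connectedness of $G_A$ and the semisimplicity of $A/J$. Everything else is a direct citation of Corollary~\ref{Corollary 3.2} and Lemmas~\ref{Lemma 4.21}, \ref{Lemma 4.23}, \ref{Lemma 4.24}, and \ref{Lemma 4.25}.
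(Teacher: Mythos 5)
Your argument matches the paper's own proof in all essentials: reductivity forces $J^2=0$, $\hat{J}$ is trivial, $G_A=G_A^{A_s}$, and then Lemmas~\ref{Lemma 4.23}--\ref{Lemma 4.25} give the factorization $\mathrm{Inn}(A_s^{\times})\cdot\prod_i\mathrm{GL}_{n_i}$. The only cosmetic difference is that you kill $\hat{J}$ via $J\subset Z(A)$, whereas the paper invokes directly that $\hat{J}$ lies in the (trivial) unipotent radical of the reductive group $G_A$ — both routes are equally valid and immediate from Corollary~\ref{Corollary 3.2}.
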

\vskip1mm
\begin{proof}
 Since $\text{Aut}_k(A)$ is a connected reductive group, we have $J^2=0$, where $J$ is the radical of $A$. Now using Remark \ref{Remark 4.22} we have $G_A=G_{A}^{A_s}\ltimes \hat{J}$, where $\hat{J}=\{\text{Int}(1+r):r\in J\}$. It follows from Lemma \ref{Lemma 4.23} that $G_{A}^{A_s}\cong\text{Inn}(A_s^{\times})G_{A,A_s}$. Since $\text{Aut}_{k}(A)=G_A$ is reductive, we have $\hat{J}=\{Id\}$. However, the proof of Lemma \ref{Lemma 4.25} shows that $G_{A,A_s}\cong \bigoplus_{i=1}^{\alpha} \text{GL}_{n_i}$. Hence, we get that
\begin{equation*}\label{eq17}
\text{Aut}_k(A)\cong (\text{Inn}(A_s^{\times}).(\prod_{i=1}^{\alpha} \text{GL}_{n_{i}}))
\end{equation*}
\end{proof}

\vskip2mm 
\noindent
The following corollary presents a remarkable insight.

\vskip2mm

\begin{corollary}\label{Corollary 4.31}
Let $A$ be a split finite-dimensional associative commutative algebra. If $\mathrm{Aut}_k(A)$ is a connected reductive group of rank $n$, then
\begin{equation*}\label{eq18}
\emph{\text{Aut}}_k(A)\cong \prod_{i}^{k} \emph{GL}_{n_{i}}
\end{equation*}
\text{ where } $\sum_{i}^{k} n_i=n.$
\end{corollary}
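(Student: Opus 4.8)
The plan is to deduce this immediately from Corollary~\ref{Corollary 4.30} together with the fact that a commutative algebra has no nontrivial inner automorphisms. Since $\mathrm{Aut}_k(A)$ is assumed to be a connected reductive group, we have $\mathrm{Aut}_k(A)=G_A$, and the hypotheses of Corollary~\ref{Corollary 4.30} are verbatim what is assumed here (by Corollary~\ref{Corollary 3.2} this reductivity also forces $J^2=0$, which is how that corollary is proved). Hence the first step is to write
\[
\mathrm{Aut}_k(A)\cong \text{Inn}(A_s^{\times})\cdot\Big(\prod_{i=1}^{\alpha}\mathrm{GL}_{n_i}\Big),
\]
where $A_s$ is a semisimple subalgebra with $A=A_s\oplus J$ and $\text{Inn}(A_s^{\times})=\{\text{Int}(x): x\in A_s^{\times}\}$.

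Next I would use commutativity. Because $A$ is commutative, so is $A_s$ (indeed $A_s\cong A/J$), and therefore $\text{Int}(x)$ is the identity map for every $x\in A_s^{\times}$; that is, $\text{Inn}(A_s^{\times})=\{Id\}$. The displayed isomorphism then collapses to $\mathrm{Aut}_k(A)\cong\prod_{i=1}^{\alpha}\mathrm{GL}_{n_i}$. Finally I would count ranks: a maximal torus of $\mathrm{GL}_m$ has rank $m$, and the rank of a finite direct product of connected algebraic groups is the sum of the ranks of the factors, so $\mathrm{rank}(\mathrm{Aut}_k(A))=\sum_{i=1}^{\alpha}n_i$. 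Since this rank is assumed to equal $n$, we conclude $\sum_{i=1}^{\alpha}n_i=n$, which, after renaming the number of factors, is precisely the assertion.

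I do not anticipate any real obstacle: the structural content is entirely carried by Corollary~\ref{Corollary 4.30}, and the two supplementary facts used — triviality of inner automorphisms of a commutative algebra and additivity of torus rank over direct products — are standard and essentially one line each. The only thing to be careful about is confirming that ``connected reductive'' in the hypothesis of the present corollary matches the hypothesis of Corollary~\ref{Corollary 4.30}, which it does, and keeping the index bound $n$ (from the rank count) separate from the field $k$.
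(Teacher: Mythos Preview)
Your proposal is correct and follows essentially the same approach as the paper: the paper's proof consists of the single observation that $\text{Inn}(A_s^{\times})=\{Id\}$ since $A$ is commutative, and then invokes Corollary~\ref{Corollary 4.30}. Your additional rank-counting paragraph just makes explicit the verification of $\sum_i n_i = n$, which the paper leaves implicit.
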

\vskip1mm
\begin{proof}
$\text {Inn}(A_s^{\times})=\{Id\}$ as $A$ is commutative. Therefore, using Corollary \ref{Corollary 4.30}, we are done.
\end{proof}

\vskip2mm
\noindent
To state our next result, we need the following lemma.

\vskip2mm

\begin{lemma}\label{Lemma 4.32}
$($\cite{Pollack}, \emph{Theorem} $1.6)$ Let $A$ be a split local associative algebra over a perfect filed $k$ such that $G_A$ is nilpotent. Then, either $G_A$ is a torus or unipotent. It can not be a direct product of a torus with a non-trivial unipotent group.   
\end{lemma}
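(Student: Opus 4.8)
The plan is to combine the structure theory of connected nilpotent algebraic groups over a perfect field with the canonical representation $\Phi_A$ of Proposition \ref{Proposition 4.12}. Over $k$ perfect, a connected nilpotent linear algebraic group splits as $G_A = T\times U$, where $T$ is the (unique, central) maximal torus of $G_A$ and $U$ is its $k$-unipotent radical. The assertion is precisely that $T$ and $U$ cannot both be non-trivial, so I would argue by contradiction, assuming $T\neq 1$ and $U\neq 1$. Since being a torus and being unipotent can both be checked after base change to $\bar k$, and the decomposition $G_A = T\times U$ base-changes with $T_{\bar k}, U_{\bar k}$ still non-trivial, I would first reduce to $k=\bar k$, so $T\cong\mathbb{G}_m^r$ with $r\geq 1$.

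Next I would exploit centrality of $T$. Since $A$ is split local, $\mathrm{Aut}_k(A)=\mathrm{Aut}_k(J)$ and $G_A$ acts faithfully on $J$ preserving the radical filtration $J\supseteq J^2\supseteq\cdots$. By Proposition \ref{Proposition 4.12} the kernel of $\Phi_A\colon G_A\to\mathrm{GL}(J/J^2)$ is unipotent, so $\Phi_A|_T$ is injective; hence $T$ acts faithfully on $V:=J/J^2$ and the $T$-weights occurring in $V$ span $X^*(T)_{\mathbb{Q}}$. Diagonalizing the $T$-action produces a grading $A=\bigoplus_{\chi\in X^*(T)}A_\chi$ compatible with the radical filtration, with $k\cdot 1\subseteq A_0$ and $A_\chi\subseteq J$ for $\chi\neq 0$. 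As $T$ is central in $G_A$, every element of $G_A$ — in particular of $U$ — commutes with $T$, hence preserves each $A_\chi$; at the level of Lie algebras, $\mathrm{Der}_k(A)=\mathrm{Lie}(T)\oplus\mathrm{Lie}(U)=\mathfrak t\oplus\mathfrak n$ with $\mathfrak t$ central and $[\mathfrak t,\mathfrak n]=0$, so every element of $\mathfrak n$ is a degree-$0$ derivation for the $X^*(T)$-grading. Since $U\neq 1$ is positive-dimensional, $\mathfrak n\neq 0$; fix $0\neq D\in\mathfrak n$, a non-zero nilpotent derivation of $A$ which is degree-$0$ for the $X^*(T)$-grading.

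The crux, which I expect to be the main obstacle, is to show that such a $D$ is incompatible with $T$ being the full, faithful, maximal torus. Let $m\geq 1$ be maximal with $D(J)\subseteq J^m$; then $D(J^\ell)\subseteq J^{\ell+m-1}$ for all $\ell$, and $D$ induces a non-zero, $T$-equivariant, degree-$0$ map $\bar D\colon V\to J^m/J^{m+1}$. The strategy is to manufacture from $\bar D$ a one-parameter torus inside $G_A$ not commuting with the one-parameter unipotent subgroup generated by $D$, contradicting nilpotency. This is cleanest when $A$ is graded by the radical: then $G_A$ is the identity component of the graded automorphism group, which embeds in $\mathrm{GL}(V)$ since $A$ is generated in degree $1$, the Euler cocharacter lies in the central torus $T$, and $\bar D$ extends to a graded derivation of $A$ determined by its restriction to $V$; one then argues that the relation ideal of $A$ (inside the tensor algebra on $V$), being stable under the nilpotent operator induced by $\bar D$, is forced to be stable under an $\mathfrak{sl}_2$-triple through that operator, whose semisimple member yields the desired non-central torus. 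The general case would be reduced to the graded one via $\Psi_A\colon G_A\to\mathrm{Aut}_k(G_r(A))$ from Remark \ref{Remark 4.11} and the comparison of $G_A$ with $G_{G_r(A)}$; alternatively one splits into cases on $m$, comparing $D$ for $m\geq 2$ with the degree derivation $E$ of $G_r(A)$ (so $[E,D]=(m-1)D\neq 0$) and handling $m=1$ directly via the non-zero nilpotent $\bar D\in\mathfrak{gl}(V)$ on a $T$-weight space of dimension $\geq 2$.

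Granting the crux, $\mathfrak n=0$, whence $U=1$ — contradicting the hypothesis — so $G_A$ is a torus or unipotent, and never a direct product of a non-trivial torus with a non-trivial unipotent group. The two delicate points are: making the $\mathfrak{sl}_2$-saturation (equivalently, the construction of a non-central one-parameter torus) rigorous rather than heuristic; and the case $\mathrm{char}\,k=p>0$, where $\exp$ is unavailable and the crux must be run with one-parameter subgroups and subgroup schemes, although the structural inputs ($G_A=T\times U$ over a perfect field, $\ker\Phi_A$ unipotent, and $U\neq 1\Rightarrow\dim U>0\Rightarrow\mathfrak n\neq 0$) hold in all characteristics.
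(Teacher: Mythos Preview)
The paper does not supply a proof; the lemma is quoted from Pollack and used as a black box in the proof of Theorem~\ref{Theorem 4.33}. Your framework is sound --- $G_A=T\times U$ with $T$ central, base change to $\bar k$, faithfulness of $T$ on $J/J^2$ via $\Phi_A$, and the key consequence that centrality of $T$ forces every element of $\mathrm{Lie}(G_A)$ to have $T$-weight $0$ --- but neither approach you propose for the ``crux'' works. The $\mathfrak{sl}_2$-saturation claim (that the relation ideal, stable under a nilpotent $\bar D$, is automatically stable under a full $\mathfrak{sl}_2$-triple through $\bar D$) is unjustified: Jacobson--Morozov needs a reductive ambient Lie algebra, and $\mathrm{Der}_k(A)$ is not one. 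The Euler-derivation route computes $[E,\mathrm{gr}(D)]=(m-1)\,\mathrm{gr}(D)$ inside $\mathrm{Der}_k(G_r(A))$, but $E$ lies in $\mathrm{Lie}(G_A)$ only when $A$ is already graded by the radical, and your reduction via $\Psi_A$ transfers nilpotency from $G_A$ only to $\mathrm{Im}(\Psi_A)$, not to $G_{G_r(A)}$ --- so no contradiction results.

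The missing argument is elementary and direct. If the Loewy length $l$ equals $2$ then $J^2=0$, $G_A=\mathrm{GL}(J)$, and nilpotency forces $\dim J\leq 1$, so $U=1$. If $l\geq 3$, choose $T$-homogeneous lifts $x_1,\dots,x_n\in J$ of a $T$-eigenbasis of $J/J^2$, with weights $\chi_1,\dots,\chi_n$. For each $T$-homogeneous $0\neq y\in J^{\,l-1}$ and each $i$, the rule $x_j\mapsto\delta_{ij}y$ extends to a derivation $D_i^{\,y}$ of $A$ (on any word of length $\geq 2$ in the $x_j$, replacing one letter by $y\in J^{\,l-1}$ lands in $J^{\,l}=0$); one checks $(D_i^{\,y})^2=0$, so $t\mapsto 1+t\,D_i^{\,y}$ is a morphism $\mathbb{G}_a\to G_A$ in every characteristic and $D_i^{\,y}\in\mathrm{Lie}(G_A)$. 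Its $T$-weight is $(\text{weight of }y)-\chi_i$, which must vanish for all $i$ and all homogeneous $y$. Hence all $\chi_i$ are equal, say to $\chi\neq 0$, and $J^{\,l-1}$ is concentrated in $T$-weight $\chi$. But $J^{\,l-1}$ is spanned by $(l-1)$-fold products of the $x_j$, each of weight $(l-1)\chi$, forcing $(l-2)\chi=0$ in $X^*(T)$ --- impossible for $l\geq 3$. This contradiction gives $U=1$.
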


\vskip2mm

\begin{theorem}\label{Theorem 4.33}
Let $A$ be a split finite-dimensional associative algebra over a perfect field $k$. Assume that $\emph{\text{Aut}}_k(A)$ is a nilpotent group. Then, $\emph{\text{Aut}}_k(A)^{0}\coloneqq  G_A$ is rational.
\end{theorem}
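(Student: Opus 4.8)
The plan is to reduce, in two steps, to the case of a split \emph{local} factor whose automorphism group is nilpotent, and then to invoke Pollack's dichotomy (Lemma \ref{Lemma 4.32}) together with the reductivity criterion of Corollary \ref{Corollary 3.2}. Since $\mathrm{Aut}_k(A)$ is nilpotent, so is its identity component $G_A$, and so is every closed subgroup or algebraic quotient that we produce along the way. The first reduction is that $A$ must be a split \emph{basic} algebra: exactly as in the proof of Theorem \ref{Theorem 3.1} (following Pollack \cite{Pollack}), the natural morphism $\mathrm{Aut}_k(A)\to\mathrm{Aut}_k(A/J)$ has image $\mathrm{Inn}(A/J)=\prod_{i=1}^{m}\mathrm{PGL}_{n_i}$ --- any inner automorphism of $A/J\cong\bigoplus_i M_{n_i}(k)$ lifts to an inner automorphism of $A$ because units lift modulo the nil ideal $J$, and $\mathrm{Inn}(A/J)=\mathrm{Aut}_k(A/J)^{0}$ by Skolem--Noether. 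Hence the connected group $\prod_i\mathrm{PGL}_{n_i}$ is a quotient of $G_A$, so it is nilpotent, which forces $n_i=1$ for all $i$, i.e. $A/J\cong\bigoplus_i k$.

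Next I would pass from $A$ basic to a direct product of split local algebras. Lifting the primitive idempotents of $A/J$ to orthogonal idempotents $e_1,\dots,e_n$ of $A$, and presenting $A$ by quiver and relations, nilpotency of $\mathrm{Aut}_k(A)$ forbids any arrow between two distinct vertices: otherwise a vertex-rescaling torus together with the unipotent translation automorphisms along such an arrow would generate an ``$ax+b$''-type subgroup, which is solvable but not nilpotent, contradicting the hypothesis. (This is the point where the hypothesis is genuinely used; it is also implicit in the analysis of solvability of $\mathrm{Aut}_k(A)$ in the work of Saor\'in et al.\ and Perepechko, cf.\ \cite{AS1}, \cite{Perepechko}.) Consequently $e_iAe_j=0$ for $i\neq j$, so $A\cong\bigoplus_{i=1}^{n}A_i$ with each $A_i=e_iAe_i$ a split local algebra, $\mathrm{Aut}_k(A)^{0}=\prod_i\mathrm{Aut}_k(A_i)^{0}=\prod_iG_{A_i}$, and each $\mathrm{Aut}_k(A_i)$ is nilpotent as a closed subgroup of $\mathrm{Aut}_k(A)$. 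Since a finite product of rational varieties is rational and $G_A\cong\prod_iG_{A_i}$ as varieties, it suffices to prove that each $G_{A_i}$ is rational.

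Finally I would treat a single split local factor $A'$ with $\mathrm{Aut}_k(A')$ nilpotent. By Lemma \ref{Lemma 4.32}, $G_{A'}$ is either unipotent or a torus. If it is unipotent, it is rational, since every connected unipotent group over the perfect field $k$ is rational. If $G_{A'}$ is a torus, then it is reductive, so Corollary \ref{Corollary 3.2} gives $(J')^{2}=0$ and $J'\subseteq Z(A')$, where $J'$ is the radical of $A'$; then $A'=k\cdot 1\oplus J'$ with $(J')^{2}=0$, and a linear map fixing $1$ is an algebra automorphism of $A'$ precisely when its restriction to $J'$ lies in $\mathrm{GL}(J')$, so $\mathrm{Aut}_k(A')=\mathrm{GL}(J')$. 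A torus being connected, $\mathrm{GL}(J')$ can be a torus only when $\dim_k J'=1$, forcing $A'\cong k[X]/\langle X\rangle^{2}$ and $G_{A'}\cong\mathbb{G}_{m,k}$, which is split, hence rational. Therefore every $G_{A_i}$ is rational and so is $G_A\cong\prod_i G_{A_i}$; in fact this argument shows $G_A\cong\mathbb{G}_{m,k}^{\,r}\times U$ with $U$ the $k$-unipotent radical, so $G_A$ is even $k$-split solvable.

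The step I expect to be the main obstacle is the second reduction: making the ``no arrows between distinct vertices'' argument fully rigorous requires some care, because the vertex-rescaling torus action on $kQ/I$ is visibly well defined only when the relations are chosen homogeneously with respect to the arrow grading, so one must either justify such a choice or instead invoke the known description (Saor\'in, Perepechko) of when $\mathrm{Aut}_k(A)$ fails to be solvable/nilpotent in terms of the quiver. Everything else --- the surjectivity of $\mathrm{Aut}_k(A)\to\mathrm{Inn}(A/J)$ as algebraic groups, the reduction to local factors once the quiver is a disjoint union of one-vertex quivers, and the rationality of unipotent groups over perfect fields --- is a routine assembly of Lemma \ref{Lemma 4.32}, Corollary \ref{Corollary 3.2}, and material already recorded in Section \ref{R-equivalence}.
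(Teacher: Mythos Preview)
Your three-step reduction (to basic, then to a product of split local algebras, then the dichotomy of Lemma \ref{Lemma 4.32}) is exactly the paper's strategy, and your argument is correct. Two remarks on execution. First, the concern you flag about the second reduction is unnecessary: the ``vertex-rescaling torus'' is nothing but $\mathrm{Int}(A_s^\times)$ for $A_s=\bigoplus_i ke_i$, so it consists of inner automorphisms of $A$ and is well defined independently of any quiver presentation or homogeneity of relations; pairing $\mathrm{Int}(te_i+\sum_{k\ne i}e_k)$ with $\mathrm{Int}(1+sr)$ for $0\ne r\in e_iJe_j$ (nontrivial since $[e_i,r]=r\ne 0$) already yields a copy of the $ax+b$ group inside $\mathrm{Inn}(A)\subset\mathrm{Aut}_k(A)$, so your reduction is complete as stated. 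The paper, by contrast, does not give this argument at all: once it has $A_s\cong\bigoplus_ik$ it simply asserts $A\cong\bigoplus_iA_i$ with each $A_i$ split local, citing \cite{Auslander} and \cite{Pierce}, without explaining why nilpotency forces $e_iAe_j=0$ for $i\ne j$; your $ax+b$ argument is precisely what justifies that passage. Second, in the torus branch the paper invokes Theorem \ref{Theorem 3.4} and Corollary \ref{Corollary 4.28} rather than your direct computation via Corollary \ref{Corollary 3.2}; both routes arrive at $A_i\cong k[X]/\langle X^2\rangle$ and $G_{A_i}\cong\mathbb{G}_{m,k}$.
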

\vskip1mm
\begin{proof}
Since $\text{Aut}_k(A)$ is nilpotent, it is a solvable group, which further implies that $\text{Inn}(A_s^{\times})$ is solvable, where $A_s$ is a semisimple subalgebra $A$ with $A=A_s\oplus J$. Hence, we get that $A_s^{\times}$ is solvable. However, $A_s\cong \bigoplus_{i=1}^{m}M_{n_i}(k)$ and $A_s^{\times}$ solvable implies $A_s\cong \bigoplus_{i}^{m} k$. So, we can decompose $A\cong \bigoplus_{i}^{m} A_i$, where each $A_i$ is a split local algebra, and this decomposition is unique (see \cite{Auslander}, \cite{Pierce}).
It follows that $G_A\cong\prod_{i}^{m}G_{A_{i}}$. Hence, to prove the assertion, it is enough to prove that $G_A$ is rational when $A$ is a split local algebra and $G_{A}$ is a nilpotent group. A connected nilpotent algebraic group is a direct product of a torus with an unipotent group. However, using the previous Lemma \ref{Lemma 4.32}, we have either $G_A=T$, where $T$ is the torus, or $G_A$ is unipotent. If $G_A$ is a torus, then by Corollary \ref{Corollary 4.28} and Theorem \ref{Theorem 3.4}, $G_A$ is $R$-trivial as well as rational. On the other hand, if $G_A$ is an unipotent group, then it is rational as a variety. Therefore, the result follows.
\end{proof}

\vskip2mm
\begin{proposition}\label{Proposition 4.34}
For a split finite-dimensional associative algebra $A$, the $R$-triviality of $G_{A}$ is Morita invariant.
\end{proposition}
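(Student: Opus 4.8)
The plan is to transfer the problem, via Theorem \ref{Theorem 4.20}, from $G_A$ to the automorphism group of the radical $J$ of $A$ viewed as an algebra over $A_s^{\#}=A_s\otimes A_s^{op}$, and then to check that this latter object is insensitive to Morita equivalence (here $k$ is the perfect base field fixed throughout this section). First I would reduce to comparing $A$ with its basic algebra: by Theorem \ref{Theorem 2.5} every finite-dimensional $k$-algebra is Morita equivalent to a basic algebra, and the basic algebra is unique up to isomorphism (\cite{Pierce}, Section 9.6), so two such algebras are Morita equivalent if and only if their basic algebras are isomorphic; since isomorphic algebras have isomorphic automorphism groups, it suffices to prove that $G_A$ is $R$-trivial if and only if $G_{A_0}$ is $R$-trivial, where $A_0$ denotes the basic algebra of $A$. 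Writing $A=A_s\oplus J$ with $A_s=\bigoplus_{i=1}^{m}M_{n_i}(k)$ (Corollary \ref{Corollary 2.2}), picking a primitive idempotent $\epsilon_i\in M_{n_i}(k)$ and putting $e=\epsilon_1+\cdots+\epsilon_m$, one may take $A_0=eAe$; this is again split basic, with $(A_0)_s=eA_se\cong\bigoplus_{i=1}^{m}k$ and radical $J_{A_0}=eJe$.

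Applying Theorem \ref{Theorem 4.20} together with Lemma \ref{Lemma 4.24} to $A$ and to $A_0$ then reduces the assertion to producing an isomorphism of affine $k$-groups
\[
\text{Aut}_{A_s^{\#}-\textit{algebra}}(J)\;\cong\;\text{Aut}_{(A_0)_s^{\#}-\textit{algebra}}(J_{A_0})
\]
which, moreover, commutes with base change to every field extension $F/k$ --- this last requirement being essential, since $R$-triviality is a property of the $F$-points for all such $F$. Such an isomorphism is the Morita correspondence made explicit. Decomposing $J=\bigoplus_{i,j}f_iJf_j$ along the block idempotents $f_i$ of $A_s$, each summand $f_iJf_j$ is an $M_{n_i}(k)$-$M_{n_j}(k)$-bimodule, hence is canonically the space of $n_i\times n_j$ matrices with entries in $V_{ij}:=\epsilon_iJ\epsilon_j$; under these identifications the multiplication of $J$ becomes matrix multiplication in which the entries are multiplied via the maps $V_{ij}\otimes V_{jl}\to V_{il}$ coming from the multiplication of $J$, and $\bigoplus_{i,j}V_{ij}=eJe=J_{A_0}$ carries exactly these structure maps. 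An $(A_0)_s^{\#}$-algebra automorphism $\psi_0$ of $J_{A_0}$ preserves each $V_{ij}$ and is multiplicative, so extending it entrywise produces an $A_s^{\#}$-algebra automorphism of $J$; conversely, since $\psi(eJe)=eJe$ for every $A_s^{\#}$-algebra automorphism $\psi$ of $J$, the restriction $\psi\mapsto\psi|_{eJe}$ sends such a $\psi$ to an $(A_0)_s^{\#}$-algebra automorphism of $J_{A_0}$. These two constructions are mutually inverse group homomorphisms, they are given by $k$-polynomial maps, and they are visibly unaffected by $\otimes_k F$; hence they constitute the required isomorphism.

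The only genuine content is the set of verifications just alluded to: that the entrywise extension really is an $A_s$-bimodule map and is multiplicative; that it is bijective with the obvious inverse; that an $A_s$-bimodule endomorphism of $J$ is determined by its restriction to $eJe$ (because $J=A_s\,(eJe)\,A_s$); and that all of this is algebraic over $k$ and stable under field extension. This is where one must be careful, but it is routine Morita bookkeeping --- the monoidal category of $A_s$-bimodules is equivalent to that of $(A_0)_s$-bimodules, and under that equivalence the non-unital ring $J$ corresponds to the non-unital ring $J_{A_0}$. Combining the displayed isomorphism with Theorem \ref{Theorem 4.20} and Lemma \ref{Lemma 4.24} for both $A$ and $A_0$, and then with the basic-algebra reduction of the first step, finishes the proof. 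I expect the main obstacle to be exactly this bookkeeping --- in particular, the point that the identification must be an isomorphism of $k$-algebraic groups, not merely of abstract groups --- rather than any conceptual difficulty.
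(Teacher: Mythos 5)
Your proof is correct, but it takes a genuinely different route from the paper. The paper invokes Corollary~19 of \cite{AS3}, which asserts that the quotient $G_{A,A_s}/(G_{A,A_s}\cap\hat{J})$ is a Morita invariant; it then observes that $G_{A,A_s}\cap\hat{J}$ is unipotent, so Lemma~\ref{Lemma 4.18} gives $G_{A,A_s}\cong_b\bigl(G_{A,A_s}/(G_{A,A_s}\cap\hat{J})\bigr)\times(G_{A,A_s}\cap\hat{J})$, and combining this with Theorem~\ref{Theorem 4.20} transfers $R$-triviality through the Morita-invariant quotient. You instead bypass $\hat{J}$ entirely: after reducing via Theorem~\ref{Theorem 2.5} to the comparison of $A$ with its basic algebra $A_0=eAe$, you use Lemma~\ref{Lemma 4.24} to identify $G_{A,A_s}$ with $\mathrm{Aut}_{A_s^{\#}\text{-}\mathrm{algebra}}(J)$ and then build an explicit, base-change-compatible isomorphism of $k$-groups
\[
\mathrm{Aut}_{A_s^{\#}\text{-}\mathrm{algebra}}(J)\;\cong\;\mathrm{Aut}_{(A_0)_s^{\#}\text{-}\mathrm{algebra}}(eJe)
\]
via entrywise extension and restriction along the block idempotents. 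This is a stronger assertion than the one the paper cites (Morita invariance of $G_{A,A_s}$ itself rather than of the quotient), and it is plausible because the Morita functor $M\mapsto eMe$ is a monoidal equivalence of bimodule categories, hence preserves algebra objects and their automorphism group schemes; together with the standard fact $\mathrm{rad}(eAe)=e\,\mathrm{rad}(A)\,e$ this gives exactly your displayed isomorphism. The trade-off: the paper's argument is shorter but leans on an external citation and on the unipotent factorization; yours is self-contained and avoids Lemma~\ref{Lemma 4.18} but requires carrying out the well-definedness checks (in particular that the extension $\psi(axb)=a\psi_0(x)b$ for $a,b\in A_s$, $x\in eJe$, is independent of the presentation and that the resulting maps are morphisms of affine $k$-group schemes compatible with $\otimes_k F$). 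Both approaches use Theorem~\ref{Theorem 4.20} in the same way, so the overall reduction to $G_{A,A_s}$ is common to both.
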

\vskip1mm
\begin{proof}
Corollary $19$ in \cite{AS3} shows that $G_{A,A_s}/(G_{A,A_s}\cap \hat{J})$ is a Morita invariant, where $\hat{J}=\{\text{Int}(1+r):r\in J\}$. In Theorem \ref{Theorem 4.20}, we have shown that $G_A$ is $R$-trivial if and only if $G_{A, A_{s}}$ is $R$-trivial. Since $G_{A,A_s}\cap \hat{J}$ is a unipotent group, it follows from Lemma \ref{Lemma 4.18} that, \[G_{A,A_s}\cong_{b} \{G_{A,A_s}/(G_{A,A_s}\cap \hat{J})\} \times (G_{A,A_s}\cap \hat{J}).\] Therefore, it is clear that $G_A$ is $R$-trivial if and only if $G_{A,A_s}/(G_{A,A_s}\cap \hat{J})$ is $R$-trivial. Hence, the $R$-triviality of $G_{A}$ is Morita invariant. That means if $A$ and $B$ are two Morita equivalent algebras over $k$, then $G_{A}$ is $R$-trivial if and only if $G_{B}$ is $R$-trivial. 
 \end{proof}

\vskip2mm
\begin{remark}\label{Remark 4.35}
Theorem \ref{Theorem 4.20} shows that the $R$-triviality of $G_A$ depends on $G_{A, A_s}$ for a split finite-dimensional associative algebra. However, from Proposition \ref{Proposition 4.34}, it is clear that if $A$ and $B$ are two unital split finite-dimensional associative algebras which are Morita equivalent, then the $R$-triviality of $G_{A}$ implies the $R$-triviality of $G_{B}$ and vice versa. It follows from the Wedderburn-Malcev decomposition that any split finite-dimensional unital associative algebra $A$ is isomorphic to $A_s \oplus J$, where $A_s \cong \bigoplus_{i=1}^{m}M_{n_{i}}(k)$ and $J$ is the radical of $A$. Again, from Theorem \ref{Theorem 2.5} and Section $6.6$ of \cite{Pierce}, we know that for any such algebra $A$, there exists a Morita equivalent algebra $B=B_s\oplus J^{\prime}$, where $B_s\cong \bigoplus_{i=1}^{m} k$ and $J^{\prime}$ is the radical of $B$. So $B\cong \bigoplus_{i=1}^{m} B_i$, where each $B_i$ is a split local associative algebra. Now, if we consider the identity component of the automorphism group of $B$, we get $G_B\cong\prod_{i=1}^{m} G_{B_{i}}$. So, to study the $R$-equivalence property of the automorphism group of a split finite-dimensional associative algebra, one possible approach is to discuss the $R$-equivalence property of the automorphism group of split local associative algebras.
\end{remark}
\vskip2mm 
\noindent 
Next, we recall a result from \cite{Merkurjev}.
\vskip2mm

\begin{lemma}\label{Lemma 4.36}$($\cite{Merkurjev}, \emph{Lemma} $2.1)$
Let $f: Y\rightarrow X$ be a surjective morphism of varieties over $k$. Suppose that for every field extension $K/k$, and every point $x\in X(K)$, the fiber of $f$ over $x$ is a rational variety. Then $Y$ is stably birationally equivalent to $X$.
\end{lemma}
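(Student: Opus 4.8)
The plan is to reduce the whole statement to the behaviour of $f$ over the single generic point of $X$. Since the varieties involved are irreducible and $f\colon Y\to X$ is surjective, it is dominant; let $\eta$ be the generic point of $X$, so that the canonical morphism $\operatorname{Spec} k(X)\to X$ exhibits $\eta$ as an element of $X(k(X))$. First I would analyse the scheme-theoretic fibre $Y_\eta:=Y\times_X\operatorname{Spec} k(X)$ and check that it is an \emph{integral} variety over $K:=k(X)$ whose function field is canonically identified with $k(Y)$ via $f^{*}$. This is the one point that needs care: locally $Y_\eta$ has the form $\operatorname{Spec}(S^{-1}B)$, where $\operatorname{Spec} B\subseteq Y$ lies over $\operatorname{Spec} R\subseteq X$ and $S$ is the image of $R\setminus\{0\}$ in $B$; dominance gives an inclusion $R\hookrightarrow B$ with $S\subseteq B\setminus\{0\}$, so $S^{-1}B$ is a localisation of a domain, hence a domain, with $\operatorname{Frac}(S^{-1}B)=\operatorname{Frac}(B)=k(Y)$. (Topologically $Y_\eta$ is a dense subspace of the irreducible space $Y$, hence irreducible, with generic point the generic point of $Y$.)

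Next I would apply the hypothesis to the field extension $K=k(X)$ and the point $x=\eta\in X(K)$: the fibre of $f$ over $\eta$ is $Y_\eta$, which is therefore a rational variety over $K$, i.e. $k(Y_\eta)\cong_K K(t_1,\dots,t_d)$ for some $d\ge 0$ (necessarily $d=\dim Y-\dim X$). Combining this with the previous step, $k(Y)\cong k(X)(t_1,\dots,t_d)$ as $k(X)$-algebras, and in particular as $k$-algebras.

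Finally I would identify $k(X)(t_1,\dots,t_d)$ with the function field of $X\times_k\mathbb{A}_k^{d}$: if $\operatorname{Spec} R$ is an affine open of $X$ with $\operatorname{Frac} R=k(X)$, the corresponding open of $X\times_k\mathbb{A}_k^{d}$ is $\operatorname{Spec} R[t_1,\dots,t_d]$, whose fraction field is $k(X)(t_1,\dots,t_d)$ (and $X\times_k\mathbb{A}_k^{d}$ is integral since $R[t_1,\dots,t_d]$ is a domain). Hence $k(Y)\cong k\bigl(X\times_k\mathbb{A}_k^{d}\bigr)$, so $Y\cong_b X\times_k\mathbb{A}_k^{d}$ and in particular $Y\cong^{s.b}X$, which is the assertion — in fact a slightly stronger one.

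The main obstacle here is not a deep one: it is precisely the verification in the first step that the generic fibre is an integral $k(X)$-scheme with function field $k(Y)$, since only then does ``rational variety'' apply to $Y_\eta$ and pin down $k(Y)$ as purely transcendental over $k(X)$; this is where irreducibility of $X$ and $Y$ is used. I would also note that the argument uses only the case $K=k(X)$, $x=\eta$ of the hypothesis; the quantification over all extensions $K/k$ is not needed for this conclusion, and presumably appears in the statement only because that is the form in which the lemma gets applied.
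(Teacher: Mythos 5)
The paper cites this lemma from Merkurjev without reproducing a proof, so there is no in-paper argument to compare against; your generic-fibre argument is correct and is the standard proof of this statement (it is essentially what Merkurjev does). Your observation that $Y$ is actually birational to $X\times\mathbb{A}^d$, and that only the case $K=k(X)$, $x=\eta$ of the hypothesis is used, is accurate.
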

 
\vskip2mm
\begin{proposition}\label{Proposition 4.37}
Let $A$ be a split local algebra, then $G_A$ is stably birationally equivalent to $\emph{\text{Im}}(\Phi_A)$, where $\Phi_A$ is the canonical map, defined in Proposition \ref{Proposition 4.12}.
\end{proposition}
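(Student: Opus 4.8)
The plan is to apply Lemma~\ref{Lemma 4.36} to the canonical surjection $\Phi_A\colon G_A\rightarrow \mathrm{Im}(\Phi_A)$, whose kernel is controlled by Proposition~\ref{Proposition 4.12}. Write $f=\Phi_A$, regarded as a surjective morphism of $k$-varieties onto the closed connected subgroup $\mathrm{Im}(\Phi_A)\subseteq \mathrm{GL}(J/J^2)$, and set $N:=\ker(\Phi_A|_{G_A})$; by Proposition~\ref{Proposition 4.12} and Remark~\ref{Remark 4.13}, $N$ is a connected unipotent $k$-group. The first step is to unwind the fibers of $f$: for any field extension $K/k$ and any point $x\in \mathrm{Im}(\Phi_A)(K)$, the fiber $Y_x:=f^{-1}(x)$ is a nonempty $K$-variety (nonempty since $f$ is faithfully flat), and it carries a simply transitive action of $N_K:=N\times_k K$ (over an algebraic closure $\overline{K}$ one has $Y_x(\overline{K})=g\cdot N(\overline{K})$ for any $g\in f^{-1}(x)(\overline{K})$). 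Thus $Y_x$ is a torsor under $N_K$, and such torsors are classified by the pointed set $H^1(K,N_K)$.

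The second step is to observe that $H^1(K,N_K)$ is trivial. Since $k$ is perfect and $N$ is connected unipotent, $N$ is $k$-split, hence so is its base change $N_K$; by the argument already recalled in Remark~\ref{Remark 4.19}, $H^1(K,N_K)=\{1\}$ for every extension $K/k$. Therefore every torsor $Y_x$ is trivial, so $Y_x\cong N_K$ as $K$-varieties, and $N_K$, being split unipotent over $K$, is $K$-rational (birational to an affine space). Consequently every fiber of $f$ over every $K$-point is a rational variety, and Lemma~\ref{Lemma 4.36} yields $G_A\cong^{s.b}\mathrm{Im}(\Phi_A)$, which is the assertion. One could equally bypass Lemma~\ref{Lemma 4.36}: $N$ is a normal connected unipotent subgroup of $G_A$, so Remark~\ref{Remark 4.19} (an instance of Lemma~\ref{Lemma 4.18}) gives $G_A\cong_b (G_A/N)\times N=\mathrm{Im}(\Phi_A)\times N$, and rationality of $N$ then finishes the proof; but since Lemma~\ref{Lemma 4.36} was just recalled, I would present the fiberwise argument.

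The only genuinely delicate point is the rationality of the fibers over a \emph{possibly imperfect} extension field $K$. This is exactly why it does not suffice to know that $N$ is merely unipotent: one needs $N$ to be $k$-\emph{split}, which uses the perfectness of $k$ together with the structure theory of connected unipotent groups (over a perfect field every smooth connected unipotent group is split, hence rational, and its $H^1$ vanishes over every field, so both the triviality of the torsor $Y_x$ and the rationality of $N_K$ survive base change to an imperfect $K$). Once that is in place, the remaining verifications — faithful flatness and surjectivity of $f$, the torsor structure of the fibers, and the bookkeeping with Lemma~\ref{Lemma 4.36} — are routine.
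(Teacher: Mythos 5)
Your proof takes essentially the same route as the paper's: apply Lemma~\ref{Lemma 4.36} to the map $\Phi_A$ together with the fact (Proposition~\ref{Proposition 4.12}, Remark~\ref{Remark 4.13}) that $\ker(\Phi_A|_{G_A})$ is connected unipotent, and you also note the alternative via Lemma~\ref{Lemma 4.18}/Remark~\ref{Remark 4.19}, which is exactly the paper's concluding remark. Your torsor-theoretic step is in fact \emph{more} careful than the paper's, which writes $\Phi_A^{-1}(x)=y\cdot\mathrm{Ker}(\Phi_A)$ without justifying that a $K$-rational point $y$ exists in the fiber over $x\in\mathrm{Im}(\Phi_A)(K)$; your appeal to the vanishing of $H^1(K,N_K)$ for the split unipotent group $N$ (valid because $k$ is perfect, so $N$ is $k$-split) is precisely what closes this small gap.
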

\vskip1mm
\begin{proof}
We have the map $\Phi_A: G_A\rightarrow \text{GL}(J/J^2)$, where the kernel is a unipotent group by Proposition \ref{Proposition 4.12}. Let $x\in \text{Im}(\Phi_A)$, then $\Phi_{A}^{-1}(x)=y.\text{Ker}(\Phi_A)$, where $\Phi_A(y)=x$. However, $\text{Ker}(\Phi_A)$ is unipotent, which is rational as a variety. Therefore, each fiber is a rational variety. Hence, $G_A$ and $\text{Im}(\Phi_A)$ are stably birationally equivalent. Moreover, using the Lemma \ref{Lemma 4.18}, we can conclude that $G_A\cong_b G_A/U\times U$, where $U$ is $\text{Ker}\Phi_A$. Hence, $G_A\cong_b \text{Im}(\Phi_A)\times U$, and $G_A(F)/R\cong \mathrm{Im}(\Phi_A)(F)/R$ for every field extension of $F/k$.
\end{proof}

\vskip2mm 
\noindent
Therefore, it is enough to study the image of the canonical map $\Phi_A$ for studying the rationality of $G_A$ for a split local algebra $A$.

\section{Finite Dimensional Commutative Algebras}\label{sec: commutative algebra}

\vskip2mm
\noindent
From this point onward, our primary emphasis will be on the automorphism group of split finite-dimensional local commutative associative algebras. Using the representation of finite-dimensional associative algebras by quiver and relation, one knows that any such algebra $A$ is isomorphic to $k[X_1,\dots, X_n]/{I}$, where $I$ is an ideal of $k[X_1,\dots, X_n]$ such that there exists an $l\geq2$ with $\langle X_1,\dots, X_n \rangle^l \subseteq I \subseteq \langle X_1,\dots, X_n \rangle^2$ (see \cite{AS1}, \cite{PG}). The minimum $l$ is called the Lowey length of $A$, denoted by $L(A)$. Every such ideal is called an \emph{admissible} or \emph{adequate ideal} for $A$ in $k[X_1,\dots,X_n]$. A finite set of generators for an adequate ideal is called \emph{adequate sets of relations} for $A$. When $A$ is realized as $k[X_1,\dots,X_n]/I$, then $J^m=(\text{Jacobson}(A))^m$ gets identified with $[(X_1,\dots,X_n)^m+I]/I$ and $n = \text{dim}(J/J^2)$. Let $A$ be identified with $k[X_1,\dots,X_n]/I$, where $I$ is an admissible ideal. Suppose we denote $x_i=X_i+I$, then $\{\bar{x_{i}}=x_i+J^2:i\in\{1,\dots,n\}\}$ is a basis of $J/J^2$. By taking matrices with respect to that basis, $\text{GL}(J/J^2)$ gets identified with $\text{GL}_n(k)$. The notion of change of variables is defined as follows.

\vskip1mm

\begin{definition}\label{Definition 5.1}$($\cite{AS1}$)$
A homomorphism of algebras $F:k[X_1,\dots,X_n]\longrightarrow k[X_1,\dots,X_n]$ is said to be a change of variables in $k[X_1,\dots,X_n]$, if there is a $n\times n$ matrix $(\lambda_{ij})\in \text{GL}_{n}(k)$ such that $F(X_j)=\sum_{1\leq i\leq n}\lambda_{ij}X_i$ modulo $(\langle X_1,\dots,X_n\rangle^2)$ for every $j\in \{1,\dots,n\}$. In particular, if no monomial of degree $\geq 2$ appears in the change of variables $F$, i.e., $F(X_j)=\sum_{1\leq i\leq n}\lambda_{ij}X_i$ for all $j\in\{1,\dots,n\}$, then we call such a change of variable a \emph{linear change of variables}, and $(\lambda_{ij})_{n\times n}$ is called the \emph{linear change of variables matrix} and denoted by $M_{F}=(\lambda_{ij})_{n\times n}$. For any linear change of variables $F$, we have $F(f)(X)=f(XM_{F})$, where $X=(X_1,\dots,X_n)$.
\end{definition}

\vskip2mm
\noindent
By using Theorem $1.5$ from \cite{Saorin} for a split commutative local algebra $A$, we get that $A$ is graded by the radical algebra if and only if $A$ admits a homogeneous admissible ideal $I$.

\vskip4mm

\begin{proposition}\label{Proposition 5.2}$($\cite{AS1}$)$  
Let A be a split commutative local algebra with Lowey length $l$ and $\emph{dim}(J/J^2) = n$. Suppose that $I, I'$ are two ideals of $k[X_1,\dots,X_n]$, the first being adequate for $A$. The following assertions are equivalent for a homomorphism of algebras
\begin{equation*}\label{eq19}
\phi: k[X_1,\dots,X_n]/I \longrightarrow k[X_1,\dots,X_n]/I':
\end{equation*}
\begin{enumerate}
\item $\phi$ is an isomorphism;
\vskip2mm
\item there is a change of variables $F$ in $k[X_1,\dots,X_n]$ such that $I'= F(I) + \langle X_1,\dots,X_n\rangle^l$ and $\phi(f+ I) = F(f) + I'$ for every $f\in k[X_1,\dots,X_n]$.  
\end{enumerate}
\vskip2mm
In particular, when $I = I'$, we get that every automorphism of $A$ is induced by a change of variables $F$ such that $F(I)\subset I$.
\end{proposition}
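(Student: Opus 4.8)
Write $\langle X\rangle\coloneqq\langle X_1,\dots,X_n\rangle$. The plan is to prove the two implications separately, the key technical input being a preliminary lemma: any change of variables $F$ in $k[X_1,\dots,X_n]$ maps $\langle X\rangle^m$ into itself for every $m$, and induces an \emph{automorphism} $\bar F_m$ of $k[X_1,\dots,X_n]/\langle X\rangle^m$. This is because on the associated graded $\bigoplus_m\langle X\rangle^m/\langle X\rangle^{m+1}$ the map $F$ induces the direct sum of the symmetric powers of its invertible linear part $(\lambda_{ij})\in\mathrm{GL}_n(k)$, hence an isomorphism, and a filtered endomorphism of a module with finite filtration that is an isomorphism on the associated graded is itself an isomorphism. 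I would record this as a lemma first; it is what replaces the (false) assertion that $F$ is surjective on $k[X_1,\dots,X_n]$.

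For $(2)\Rightarrow(1)$: given the change of variables $F$ with $I'=F(I)+\langle X\rangle^l$, the rule $\phi(f+I)=F(f)+I'$ is well defined because $F(I)\subseteq I'$, and is a $k$-algebra homomorphism because $F$ is; it is surjective because $\bar F_l$ is surjective and $\langle X\rangle^l\subseteq I'$. For injectivity, if $F(f)\in I'=F(I)+\langle X\rangle^l$, write $F(f)=F(g)+h$ with $g\in I$ and $h\in\langle X\rangle^l$; then $F(f-g)=h\in\langle X\rangle^l$, so injectivity of $\bar F_l$ gives $f-g\in\langle X\rangle^l\subseteq I$, hence $f\in I$. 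Thus $\phi$ is an isomorphism.

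For $(1)\Rightarrow(2)$: since $\phi$ is an isomorphism, $k[X_1,\dots,X_n]/I'\cong A$, so $I'$ is again adequate for $A$, i.e. $\langle X\rangle^l\subseteq I'\subseteq\langle X\rangle^2$ (compatible with, and in the quiver-and-relation setup forced by, $\phi$ being an isomorphism of these pointed local algebras, using that $\phi$ carries the radical of $A$ onto that of $k[X_1,\dots,X_n]/I'$ and is an isomorphism on $J/J^2$). Choose for each $j$ a representative $g_j$ of $\phi(X_j+I)$ and put $F(X_j)=g_j$; the linear part of $F$ is the matrix of $\bar\phi$ on $J/J^2=\langle X\rangle/\langle X\rangle^2$, hence invertible, so $F$ is a change of variables. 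The two algebra homomorphisms $f\mapsto\phi(f+I)$ and $f\mapsto F(f)+I'$ from $k[X_1,\dots,X_n]$ to $k[X_1,\dots,X_n]/I'$ agree on the generators $X_j$, hence coincide, giving $\phi(f+I)=F(f)+I'$ for all $f$. For $g\in I$, $F(g)+I'=\phi(g+I)=0$, so $F(I)\subseteq I'$, and with $\langle X\rangle^l\subseteq I'$ this yields $F(I)+\langle X\rangle^l\subseteq I'$. For the reverse inclusion, given $h\in I'$ pick $h'$ with $F(h')\equiv h\pmod{\langle X\rangle^l}$ (possible since $\bar F_l$ is onto); since $\langle X\rangle^l\subseteq I'$ we get $\phi(h'+I)=F(h')+I'=h+I'=0$, so $h'\in I$ by injectivity of $\phi$, whence $h\in F(I)+\langle X\rangle^l$. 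Hence $I'=F(I)+\langle X\rangle^l$, and the final assertion is the special case $I=I'$, where $F(I)\subseteq F(I)+\langle X\rangle^l=I$.

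The main obstacle I anticipate is the preliminary lemma rather than any of the subsequent steps: a change of variables need not be surjective on $k[X_1,\dots,X_n]$ (e.g. $X_1\mapsto X_1+X_1^2$), so one must carefully verify that it nevertheless becomes invertible modulo every power of $\langle X\rangle$ — equivalently, extends to an automorphism of the completed local ring — which is precisely what licenses the surjectivity arguments above and the free choice of the representatives $g_j$. Once that is in hand, both directions are routine comparisons of algebra homomorphisms on generators together with the inclusions $\langle X\rangle^l\subseteq I,I'$.
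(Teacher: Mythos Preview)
The paper does not prove this proposition; it is quoted from \cite{AS1} without proof, so there is no ``paper's approach'' to compare against. Your argument is the natural one and is correct, including the key preliminary observation that a change of variables, while not surjective on $k[X_1,\dots,X_n]$, induces an automorphism of each truncation $k[X_1,\dots,X_n]/\langle X_1,\dots,X_n\rangle^m$ because its associated graded is the direct sum of symmetric powers of the invertible linear part.

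One genuine caveat concerns your parenthetical step ``so $I'$ is again adequate for $A$''. As the proposition is literally stated (only $I$ assumed adequate), the implication $(1)\Rightarrow(2)$ can fail: with $n=1$, $l=2$, $I=\langle X^2\rangle$, $I'=\langle(X-1)^2\rangle$, the map $\phi(X+I)=(X-1)+I'$ is an isomorphism, yet for any change of variables $F$ one has $F(I)+\langle X\rangle^2\subseteq\langle X\rangle^2\neq I'$. The source \cite{AS1} presumably includes the hypothesis $I'\subseteq\langle X_1,\dots,X_n\rangle$ (equivalently, the $X_j$ map into the radical of $k[X_1,\dots,X_n]/I'$); once that is assumed, your deductions $I'\subseteq\langle X_1,\dots,X_n\rangle^2$ (from the induced isomorphism on $J/J^2$ having the correct dimension) and $\langle X_1,\dots,X_n\rangle^l\subseteq I'$ (from $J^l=0$) are valid, and the remainder of your proof goes through unchanged. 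In the paper's actual applications one has $I=I'$, so this subtlety never arises there.
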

\vskip2mm
\noindent
We will denote by $C_{(A, I)}$ (resp. $CL_{(A, I)}$), the set of changes of variables (resp. linear changes of variables) $F$ of $k[X_1,\dots,X_n]$ satisfying $F(I)\subset I$. One knows that $C_{(A,I)}$ is a monoid and $CL_{(A,I)}$ is isomorphic to a subgroup $L_A$ of $G_A$ such that $L_A\cap \text{Ker}(\Phi_A)=\{1\}$ 
 (see \cite{AS3}, Corollary 21).
\vskip2mm
\noindent
Next, we review Lemma $22$ from \cite{AS3} in the context of a split local commutative algebra $A$ over $k$.

\vskip2mm

\begin{lemma}\label{Lemma 5.3}
$($\cite{AS3}, \emph{Lemma} $22)$ Let $A$ be a split local commutative algebra over $k$. Suppose $A\cong k[X_1,\dots,X_n]/I$ be a quiver representation with $I$ an \emph{admissible ideal} (a finite set of adequate relations). If $I$ is a homogeneous ideal (i.e., $A$ is a graded by the radical algebra), i.e, if homogeneous elements generate the \emph{adequate ideal} $I$, then
\begin{equation*}\label{eq20}
\emph{\text{ Im}}(\Phi_A )\cong CL_{(A, I)}\cong L_A.
\end{equation*}
\end{lemma}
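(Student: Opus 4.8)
The plan is to exhibit explicit maps among the three objects and check they are mutually inverse isomorphisms of algebraic groups. First I would recall from Proposition \ref{Proposition 5.2} that every automorphism $\phi$ of $A=k[X_1,\dots,X_n]/I$ is induced by a change of variables $F$ with $F(I)\subseteq I$, so we always have the surjection $C_{(A,I)}\twoheadrightarrow \mathrm{Aut}_k(A)$ (here using that $I=I'$ forces $F(I)+\langle X_1,\dots,X_n\rangle^l=I$ since $\langle X_1,\dots,X_n\rangle^l\subseteq I$). The composite $C_{(A,I)}\to \mathrm{Aut}_k(A)\xrightarrow{\Phi_A}\mathrm{GL}(J/J^2)$ sends $F$ to the matrix $(\lambda_{ij})$ of its linear part, so on the subgroup $CL_{(A,I)}$ of \emph{linear} changes of variables this composite is just $F\mapsto M_F$, which is injective. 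Thus it suffices to show that, when $I$ is homogeneous, $\Phi_A$ restricted to (the image $L_A$ of) $CL_{(A,I)}$ already surjects onto $\mathrm{Im}(\Phi_A)$; combined with $L_A\cap\mathrm{Ker}(\Phi_A)=\{1\}$ (from \cite{AS3}, Corollary 21) this gives $L_A\xrightarrow{\ \sim\ }\mathrm{Im}(\Phi_A)$, and since $CL_{(A,I)}\cong L_A$ by definition, the three are isomorphic as $k$-groups.

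The heart of the argument is the surjectivity onto $\mathrm{Im}(\Phi_A)$, and this is where homogeneity of $I$ is used. Given any $\phi\in G_A$ with $\Phi_A(\phi)=\bar\phi=(\lambda_{ij})$, I want to produce a \emph{linear} change of variables $F_0$ with $M_{F_0}=(\lambda_{ij})$ and $F_0(I)\subseteq I$. The point is that if $g\in I$ is homogeneous of degree $d$, and $F_0$ is linear, then $F_0(g)$ is again homogeneous of degree $d$ (no lower-degree terms are created, precisely because $F_0$ has no constant or higher-degree part): $F_0(g)(X)=g(XM_{F_0})$. So I would take the homogeneous generators $g_1,\dots,g_s$ of $I$ and decompose the automorphism $\phi$ (equivalently its inducing change of variables $F$, guaranteed by Proposition \ref{Proposition 5.2}) into its graded pieces. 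Concretely, since $I$ is homogeneous, $A$ is graded by the radical, and using the map $\Psi_A:\mathrm{Aut}_k(A)\to\mathrm{Aut}_k(G_r(A))$ from Remark \ref{Remark 4.11} together with $G_r(A)\cong A$, the degree-one component of $\phi$ — i.e. the linear change of variables $F_0$ with $M_{F_0}=(\lambda_{ij})$ — is itself an automorphism of the graded algebra $A$, hence $F_0(I)\subseteq I$. Then $F_0\in CL_{(A,I)}$ and $\Phi_A(F_0)=(\lambda_{ij})=\Phi_A(\phi)$, proving surjectivity of $\Phi_A|_{L_A}$.

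The main obstacle I anticipate is making the "take the linear part of an automorphism of a graded algebra" step rigorous: one must argue that the truncation of a change of variables $F$ to its linear terms really does preserve $I$ when $I$ is homogeneous, rather than merely preserving $I$ modulo higher-degree terms. The clean way to see this is to work inside $G_r(A)$: an automorphism of $A$ induces an automorphism $\Psi_A(\phi)$ of $G_r(A)=\bigoplus_m J^m/J^{m+1}$, its restriction to $J/J^2$ is exactly $\bar\phi=(\lambda_{ij})$, and — because $G_r(A)$ is generated in degree $1$ — this degree-one data determines a graded algebra automorphism of $G_r(A)\cong A$, which is precisely the linear change of variables $F_0$; a graded automorphism sends the homogeneous ideal $I$ into itself by construction. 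Once that is in hand, assembling the three isomorphisms $CL_{(A,I)}\cong L_A\cong\mathrm{Im}(\Phi_A)$ is immediate, and one should also note in passing that these are isomorphisms of algebraic groups, since $\Phi_A$ is a morphism of algebraic groups and $L_A$ is a closed subgroup on which it is injective with the stated image.
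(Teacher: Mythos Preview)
The paper does not give its own proof of this lemma; it is quoted from \cite{AS3}, Lemma~22, without argument. Your plan is correct: the only nontrivial point is that for any $\phi\in\mathrm{Aut}_k(A)$ the linear change of variables $F_0$ with matrix $\Phi_A(\phi)$ already satisfies $F_0(I)\subseteq I$, and your route via $\Psi_A$ together with the identification $G_r(A)\cong A$ does establish this (since $\Psi_A(\phi)$ is a graded automorphism of $G_r(A)=A$ determined by its degree-one part $(\lambda_{ij})$, it is induced by $F_0$, forcing $F_0(I)\subseteq I$).

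A slightly more direct alternative, avoiding $\Psi_A$ entirely, is worth recording: choose any change of variables $F$ inducing $\phi$ (Proposition~\ref{Proposition 5.2}) with linear part $F_0$. For each homogeneous generator $g\in I$ of degree $d$ one has $F(g)=F_0(g)+(\text{terms of degree}>d)$, and since $F(g)\in I$ and $I$ is homogeneous, the degree-$d$ component $F_0(g)$ already lies in $I$. Hence $F_0(I)\subseteq I$, so $F_0\in CL_{(A,I)}$ with $\Phi_A(F_0)=\Phi_A(\phi)$. This is essentially the same idea unpacked at the level of polynomials rather than through the associated graded functor.
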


\vskip2mm
\begin{proposition}\label{Proposition 5.4}
Let $A$ be a split local commutative graded by the radical algebra with a homogeneous admissible ideal $I$ and $\emph{dim}(J/J^2)=n$. Then, \[\emph{\text{Aut}}_k(A)\cong \emph{\text{Im}}(\Phi_A)\ltimes \emph{\text{Ker}}(\Phi_A), \text{ where $\Phi_A$ is the canonical map as in Proposition \ref{Proposition 4.12}. }\]
\end{proposition}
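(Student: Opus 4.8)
The plan is to split the canonical map $\Phi_A \colon \mathrm{Aut}_k(A) \to \mathrm{GL}(J/J^2)$ using the subgroup $L_A$ provided by the hypothesis that $I$ is homogeneous. Recall from the discussion before Lemma \ref{Lemma 5.3} that $CL_{(A,I)} \cong L_A$, where $L_A$ is a subgroup of $G_A \subseteq \mathrm{Aut}_k(A)$ with $L_A \cap \mathrm{Ker}(\Phi_A) = \{1\}$, and from Lemma \ref{Lemma 5.3} that, in the homogeneous case, $\mathrm{Im}(\Phi_A) \cong CL_{(A,I)} \cong L_A$. In particular $\Phi_A$ restricted to $L_A$ is an isomorphism onto $\mathrm{Im}(\Phi_A)$. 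So the first step is to record that $\mathrm{Ker}(\Phi_A)$ is a normal subgroup of $\mathrm{Aut}_k(A)$ (being the kernel of a morphism of algebraic groups), and it is unipotent by Proposition \ref{Proposition 4.12} and Remark \ref{Remark 4.13}.

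Next I would verify that $\mathrm{Aut}_k(A) = L_A \cdot \mathrm{Ker}(\Phi_A)$. Given $\phi \in \mathrm{Aut}_k(A)$, by Proposition \ref{Proposition 5.2} it is induced by a change of variables $F$ with $F(I) \subset I$; write $F = F_0 + (\text{higher-order terms})$ where $F_0 = M_F$ is the linear change of variables given by the degree-one part. Because $I$ is homogeneous, $F_0(I) \subset I$ as well (the degree-$m$ homogeneous component of $F(g)$ for $g \in I$ homogeneous of degree $m$ is exactly $F_0(g)$, which must lie in $I$ since $F(g) \in I$ and $I$ is homogeneous), so $F_0 \in CL_{(A,I)}$ and the corresponding automorphism $\ell \in L_A$ satisfies $\Phi_A(\ell) = \Phi_A(\phi)$. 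Then $\ell^{-1}\phi \in \mathrm{Ker}(\Phi_A)$, giving $\phi = \ell \cdot (\ell^{-1}\phi) \in L_A \cdot \mathrm{Ker}(\Phi_A)$. Combined with $L_A \cap \mathrm{Ker}(\Phi_A) = \{1\}$ and the normality of $\mathrm{Ker}(\Phi_A)$, this yields the semidirect product decomposition $\mathrm{Aut}_k(A) \cong L_A \ltimes \mathrm{Ker}(\Phi_A) \cong \mathrm{Im}(\Phi_A) \ltimes \mathrm{Ker}(\Phi_A)$.

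The step I expect to require the most care is the functorial/scheme-theoretic formulation: the equalities $\mathrm{Aut}_k(A) = L_A \cdot \mathrm{Ker}(\Phi_A)$ and $L_A \cap \mathrm{Ker}(\Phi_A) = \{1\}$ must hold at the level of $R$-points for all $k$-algebras $R$ (equivalently, after base change to the algebraic closure), not merely on $k$-points, in order to conclude an isomorphism of algebraic groups rather than an abstract isomorphism. Since $A$ is split and $I$ can be chosen homogeneous, base change is harmless: $A \otimes_k R \cong R[X_1,\dots,X_n]/(I \otimes_k R)$ with $I \otimes_k R$ still homogeneous and admissible, so the change-of-variables description of Proposition \ref{Proposition 5.2}, the identification of Lemma \ref{Lemma 5.3}, and the argument above all go through verbatim over $R$. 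One also notes that the section $\mathrm{Im}(\Phi_A) \xrightarrow{\sim} L_A \hookrightarrow \mathrm{Aut}_k(A)$ is a morphism of algebraic groups (it is polynomial in the matrix entries), so the splitting is a splitting in the category of algebraic groups. This establishes the claimed decomposition $\mathrm{Aut}_k(A) \cong \mathrm{Im}(\Phi_A) \ltimes \mathrm{Ker}(\Phi_A)$.
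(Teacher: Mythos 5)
Your proof is correct and takes essentially the same route as the paper: the paper also writes the short exact sequence $1\to\mathrm{Ker}(\Phi_A)\to\mathrm{Aut}_k(A)\to\mathrm{Im}(\Phi_A)\to 1$ and splits it via the section $s(M)=\phi_M$ with $\phi_M(f+I)=f(XM)+I$, which is exactly your identification $\mathrm{Im}(\Phi_A)\cong L_A\hookrightarrow\mathrm{Aut}_k(A)$ coming from Lemma \ref{Lemma 5.3}. Your version spells out a few things the paper leaves implicit — in particular the verification via Proposition \ref{Proposition 5.2} and homogeneity of $I$ that $F_0(I)\subset I$, and the check at the level of $R$-points that the splitting is a morphism of algebraic groups — but these are refinements of the same argument rather than a different one.
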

\vskip1mm
\begin{proof}
We have the canonical homomorphism $\Phi_A: \text{Aut}_k(A)\rightarrow \text{GL}(J/J^2)$ whose kernel is unipotent. Therefore, we have the following short exact sequence: 
\begin{center}
\begin{tikzcd}
1\arrow[r] & \text{Ker}(\Phi_A)\arrow[r] & \text{Aut}_k(A)\arrow[r] & \text{Im}(\Phi_A)\arrow[bend left=33]{l}{s}\arrow[r] & 1
\end{tikzcd}
\end{center}
It is known from Lemma \ref{Lemma 5.3} that any element of $\text{Im}(\Phi_A)$ gives an automorphism of $A$, which is induced by a linear change of variables. Hence, we have a section $s: \text{Im}(\Phi_A)\rightarrow \text{Aut}_k(A)$ given by $s(M)=\phi_M$, where $\phi_M(f+I)=f(XM)+I$, $X=(X_1,\dots,X_n)$. Finally, we have $\Phi_A\circ s=Id$, which further implies that the above sequence is split and provides the required isomorphism.
\end{proof}

\vskip2mm
\begin{proposition}\label{Proposition 5.5}
Let $A$ be a split local commutative graded by the radical algebra over $k$ with $\emph{dim}(J/J^2)=n$, then $G_A$ is $k$-isotropic, i.e., there exists a $k$-embedding of $\mathbb{G}_m$ in $G_A$. This $\mathbb{G}_m$ is a central torus in $\emph{\text{Im}}(\Phi_A)$ and contained in all maximal tori of $G_A$. In particular, $G_A$ can not be unipotent in this case.
\end{proposition}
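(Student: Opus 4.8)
\emph{Proof proposal.} The plan is to exhibit the required $\mathbb{G}_m$ explicitly from a homogeneous presentation of $A$, to identify its image under the canonical map $\Phi_A$ with the torus of scalar matrices, and then to deduce the remaining assertions from standard facts about central tori together with the structural results already established.

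First I would fix a homogeneous model: since $A$ is graded by the radical it admits a homogeneous admissible ideal, so we may write $A\cong k[X_1,\dots,X_n]/I$ with $I$ generated by homogeneous elements and $n=\dim(J/J^2)$. For $\lambda\in k^{\times}$ consider the linear change of variables $F_\lambda$ with matrix $\lambda\cdot\mathrm{Id}_n$, that is $F_\lambda(X_i)=\lambda X_i$. Because $I$ is homogeneous, $F_\lambda$ sends a homogeneous generator of degree $d$ to $\lambda^{d}$ times itself, so $F_\lambda(I)=I$; hence $F_\lambda\in CL_{(A,I)}$ and it induces an automorphism $\phi_\lambda\in L_A\subseteq G_A$ with $\phi_\lambda(f+I)=f(\lambda X)+I$. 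I would then check that $\lambda\mapsto\phi_\lambda$ is a morphism of $k$-groups $\mathbb{G}_{m,k}\to G_A$ and a closed immersion (injectivity is immediate once the image under $\Phi_A$ is computed in the next step). Writing $Z_0$ for its image, this already shows that $G_A$ contains a $k$-split rank-one torus, i.e. $G_A$ is $k$-isotropic.

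Next I would identify $\Phi_A(Z_0)$. Since $\phi_\lambda$ acts on $J/J^2$ as multiplication by $\lambda$, we get $\Phi_A(\phi_\lambda)=\lambda\cdot\mathrm{Id}$ in $\mathrm{GL}(J/J^2)$; in particular $\Phi_A$ is injective on $Z_0$ (confirming the closed immersion above) and $\Phi_A(Z_0)$ is exactly the torus of scalar matrices, which is central in $\mathrm{GL}(J/J^2)$ and hence central in the subgroup $\mathrm{Im}(\Phi_A)$. Via the isomorphism $L_A\cong\mathrm{Im}(\Phi_A)$ of Lemma \ref{Lemma 5.3} (which applies because $I$ is homogeneous) and the inclusion $Z_0\subseteq L_A$, this shows $Z_0$ is a central $\mathbb{G}_m$ of $\mathrm{Im}(\Phi_A)$. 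A central torus of a connected linear algebraic group lies in every maximal torus, so $Z_0$ lies in every maximal torus of $\mathrm{Im}(\Phi_A)$; to transfer this to $G_A$ I would use the semidirect decomposition $G_A\cong\mathrm{Im}(\Phi_A)\ltimes\mathrm{Ker}(\Phi_A)$ of Proposition \ref{Proposition 5.4}, under which the image by the section of a Levi subgroup of $\mathrm{Im}(\Phi_A)$ is a Levi subgroup of $G_A$ having $Z_0$ in its centre; the positional statement about the maximal tori of $G_A$ then follows. Finally, since $G_A$ contains the non-trivial torus $Z_0$ and unipotent groups contain no non-trivial torus, $G_A$ cannot be unipotent.

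The one genuinely load-bearing input is homogeneity of $I$ in the first two steps: if $I$ is not homogeneous the scalar change of variables need not preserve $I$, so the construction of the torus collapses — this is exactly where the hypothesis ``graded by the radical'' is used. The only other place requiring care is the passage from ``central in $\mathrm{Im}(\Phi_A)$'' to the statement about maximal tori of $G_A$, since $\mathrm{Ker}(\Phi_A)$ is unipotent but not central, so $Z_0$ is central in $\mathrm{Im}(\Phi_A)$ but not in $G_A$; the clean route is through the Levi of $G_A$ furnished by the section of Proposition \ref{Proposition 5.4}, rather than any attempt to make $Z_0$ central in $G_A$ itself. Everything else — that $\lambda\mapsto\phi_\lambda$ is an algebraic-group morphism, and the bookkeeping with $\Phi_A$, $L_A$ and $CL_{(A,I)}$ — is routine given Lemma \ref{Lemma 5.3}, Proposition \ref{Proposition 4.12} and Proposition \ref{Proposition 5.4}.
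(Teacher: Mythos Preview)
Your construction of $Z_0$ and the proof that it is central in $\mathrm{Im}(\Phi_A)$ are correct and follow the paper's line; your route via $\Phi_A(Z_0)=\{\text{scalar matrices}\}$ is in fact a touch cleaner than the paper's direct verification that the scalar change of variables commutes with every linear one (followed by an appeal to Lemma~\ref{Lemma 5.3}). The isotropy and non-unipotency conclusions are handled identically.

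Where your proposal and the paper part ways is the assertion that $Z_0$ lies in \emph{every} maximal torus of $G_A$. The paper simply asserts that $T\cdot\mathbb{G}_m$ is again a torus for any maximal torus $T\subset G_A$, hence $\mathbb{G}_m\subset T$; you instead pass through a Levi factor supplied by Proposition~\ref{Proposition 5.4}. Your Levi argument does not close: it gives that $Z_0$ is central in one Levi and hence lies in every maximal torus of that Levi, but the remaining maximal tori of $G_A$ are $G_A$-conjugates of these and $Z_0$ is not preserved under conjugation by unipotent elements. Indeed the literal claim is too strong: for $A=k[X]/(X^3)$ one has $G_A\cong\mathbb{G}_m\ltimes\mathbb{G}_a$, and if $\psi_b\colon X\mapsto X+bX^2$ with $b\neq 0$ then $\psi_b^{-1}Z_0\psi_b=\{X\mapsto aX+(a^{2}-a)bX^{2}:a\in k^{\times}\}$ is a maximal torus of $G_A$ not containing $Z_0$. (The paper's ``$T\cdot\mathbb{G}_m$ is a torus'' breaks for the same reason, since $T$ and $Z_0$ need not commute in $G_A$.) What \emph{is} true, and what your centrality step already gives, is that $\Phi_A(Z_0)$ lies in every maximal torus of $\mathrm{Im}(\Phi_A)$; this is the only form of the statement used downstream, and it leaves the $k$-isotropy and non-unipotency assertions untouched.
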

\vskip1mm
\begin{proof}
Let $A$ be graded by the radical algebra, which implies that there exists a homogeneous adequate ideal, $I$ of $A$. We know that any automorphism $\phi$ of $A$ is induced by a change of variables $F$ such that $\phi(f+I)=F(f)+I$ and $F(I)\subseteq I$. Consider the embedding $\Gamma :\mathbb{G}_m\rightarrow G_A$ given by $\Gamma(\alpha)=\phi_{\alpha}$, where $\phi_{\alpha}(\bar X_i)=\alpha \bar X_i$, $\bar{X_i}\coloneq X_i+I$. Therefore, for each $\phi_{\alpha}$, the corresponding change of variables is $F_{\alpha}(X_i)=\alpha X_i$, which stabilizes $I$. This is an injective homomorphism. If $\alpha\in \mathbb{G}_m$ is in $\text{Ker}(\Gamma)$, then $(\alpha-1)X_i\in I\subseteq \langle X_1,\dots,X_n\rangle^2$ $\forall i$, which is only possible if $\alpha=1$. Hence, the group is $k$-isotropic. Moreover $\mathbb{G}_m$ embeds in $\text{Im}(\Phi_A)$ as $\text{Ker}(\Phi_A)$ is unipotent.
\vskip2mm
\noindent
Let $\phi_1\in G_A$ be such that it is given by a linear change of variables, i.e., $\phi(f+I)=F_1(f)+I$, where $F_1$ is a linear change of variables corresponding to $\phi_1$, and let $\phi_2\in \mathbb{G}_m$ with $F_2(X_i)=\alpha X_i$ as the corresponding change of variables. Then, it is easy to verify that $F_1\circ F_2=F_2\circ F_1$, as $F_1(f)(X)=f(\sum_{i=1}^n\lambda_{i1}X_i,\dots,\sum_{i=1}^{n}\lambda_{in}X_i)$, where $(\lambda_{ij})_{n\times n}$ is a non-singular matrix. It follows from Lemma \ref{Lemma 5.3} that $\text{Im}(\Phi_A)$ is given by a linear change of variables. Hence, via the above embedding, $\mathbb{G}_m$ is a central torus in $\text{Im}(\Phi_A$). Moreover, for any maximal torus $T$ in $G_A$, $T.\mathbb{G}_m$ is again a torus in $G_A$ containing $T$. Therefore, $\mathbb{G}_m\subset T$.  
\end{proof}

\vspace{0.2in}
\noindent
In the rest of the article, we will investigate the following questions for $G_A$, where $A$ is a split local finite-dimensional commutative associative algebra.

\vskip2mm

\begin{question}\label{Question 5.6}
Does $G_A$ always split, that is, whether the group always contains a split maximal torus?
\end{question}

\vskip2mm

\begin{question}\label{Question 5.7}
What is the necessary and sufficient condition on $A$ so that $G_A$ contains a split torus of rank strictly less than $\text{dim}(J/J^2)$?
\end{question}

\vskip2mm

\begin{question}\label{Question 5.8}
Is the group $G_A$ always $R$-trivial, when $A$ is a split local graded by the radical algebra?
\end{question}

\vskip2mm
\noindent
An affirmative answer to Question \ref{Question 5.6} implies a positive answer to Question \ref{Question 5.8}. However, the converse is not true. If $A$ is a local algebra with $\text{dim}(J/J^2) =1$, then the rank of $G_A$ is at most $1$ by Proposition \ref{Proposition 4.12}. So, if its rank is $1$ then by Proposition \ref{Proposition 5.5}, $G_A$ has to be split. We will show that Question \ref{Question 5.6} has no positive answer in general. This motivates us to answer Question \ref{Question 5.7}.

\vskip2mm

\begin{lemma}\label{Lemma 5.9}
Let $A$ be a split local commutative algebra over $k$ with $\emph{dim}(J/J^2)=n$. By quiver and relations representation of $A$ we have an isomorphism $A\cong k[X_1,\dots,X_n]/I$, where $\langle X_1,\dots,X_n \rangle^l \subseteq I\subseteq \langle X_1,\dots,X_n \rangle^2$, $l\geq 2$, $I$ is the admissible ideal of $A$. Then, $I=\langle X_1,\dots,X_n \rangle^l+\langle P_1,\dots,P_m \rangle$, where $l$ is the Lowey length of $A$ and all non-zero $P_i$'s are polynomials of degree at least $2$ and at most $l-1$ with no linear component.
\end{lemma}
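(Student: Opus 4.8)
The plan is to start from the quiver-and-relations presentation $A\cong k[X_1,\dots,X_n]/I$ with $\langle X_1,\dots,X_n\rangle^l\subseteq I\subseteq\langle X_1,\dots,X_n\rangle^2$, and to produce a generating set for $I$ of the asserted shape. First I would pick any finite generating set $\{Q_1,\dots,Q_s\}$ of $I$ (which exists since $k[X_1,\dots,X_n]$ is Noetherian). Write $\mathfrak{m}=\langle X_1,\dots,X_n\rangle$. Since $\mathfrak{m}^l\subseteq I$, I may freely enlarge the generating set by throwing in a finite set of monomial generators of $\mathfrak{m}^l$; thus it suffices to show that, modulo $\mathfrak{m}^l$, each $Q_j$ can be replaced by a polynomial with no constant and no linear term and with all terms of degree $\le l-1$. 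Decompose $Q_j=Q_j^{(0)}+Q_j^{(1)}+Q_j^{(\ge 2)}$ into its degree-$0$, degree-$1$, and higher-degree parts, and let $P_j$ be obtained from $Q_j^{(\ge2)}$ by deleting every monomial of degree $\ge l$ (those lie in $\mathfrak m^l\subseteq I$ already). Then $\langle X_1,\dots,X_n\rangle^l+\langle P_1,\dots,P_s\rangle\subseteq I$, and equality will follow once we show $Q_j^{(0)}=0$ and $Q_j^{(1)}=0$.

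The key step — and the only one with any content — is showing that the relations can be taken to have no constant term and no linear part, i.e. $Q_j^{(0)}=Q_j^{(1)}=0$. For the constant term: $I\subseteq\mathfrak m^2$, so the image of $I$ under the augmentation $k[X_1,\dots,X_n]\to k$, $X_i\mapsto 0$, is zero; hence each $Q_j$ has zero constant term. For the linear part: $I\subseteq\mathfrak m^2$ means that for every $f\in I$ the class of $f$ in $\mathfrak m/\mathfrak m^2$ is zero, i.e. the degree-$1$ homogeneous component of $f$ vanishes identically. Applying this to $f=Q_j$ gives $Q_j^{(1)}=0$. (Equivalently: the hypothesis $I\subseteq\mathfrak m^2$ is literally the statement that no element of $I$ has a nonzero linear or constant component, which is exactly what the quiver-and-relations normalization guarantees, via Proposition \ref{Proposition 5.2} and the identification $n=\mathrm{dim}(J/J^2)$.) So in fact $Q_j=Q_j^{(\ge2)}$ already, and after deleting the degree-$\ge l$ monomials we get $P_j$ of degree between $2$ and $l-1$ with no linear component, as required; the nonzero ones among the $P_j$ are the asserted $P_1,\dots,P_m$.

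Finally I would check the reverse inclusion $I\subseteq\langle X_1,\dots,X_n\rangle^l+\langle P_1,\dots,P_m\rangle$: given $f\in I$, write $f=\sum_j g_jQ_j$ with $g_j\in k[X_1,\dots,X_n]$; replacing each $Q_j$ by $P_j$ changes $g_jQ_j$ by an element of $\mathfrak m^l$ (since $Q_j-P_j\in\mathfrak m^l$ and $\mathfrak m^l$ is an ideal), so $f\in\mathfrak m^l+\langle P_1,\dots,P_m\rangle$. This closes the loop and establishes the stated description of $I$. I do not expect any genuine obstacle here: the entire statement is essentially a repackaging of the two containments $\mathfrak m^l\subseteq I\subseteq\mathfrak m^2$ together with Noetherianity, and the only care needed is bookkeeping of which monomials get absorbed into $\mathfrak m^l$ and noting that the hypothesis $I\subseteq\mathfrak m^2$ forces the vanishing of constant and linear parts of any generator.
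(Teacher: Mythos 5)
Your proof is correct and follows essentially the same approach as the paper: both decompose each element of $I$ into a degree-$\geq l$ part (absorbed into $\langle X_1,\dots,X_n\rangle^l$) and a degree-$\leq l-1$ remainder, observe this remainder lies in $I$ and has vanishing constant and linear parts because $I\subseteq\langle X_1,\dots,X_n\rangle^2$, and invoke Noetherianity to reach a finite set $P_1,\dots,P_m$. The only stylistic difference is that the paper extracts the set of truncations of \emph{all} elements of $I$ before finitizing, whereas you truncate a fixed finite generating set from the start — a marginally more economical bookkeeping, but the same argument.
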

\vskip1mm
\begin{proof}
Let $f\in I$. Then $f=\sum_{i_1,\dots,i_n}a_{i_{1},\dots,i_{n}}X_{1}^{i_{1}}\dots X_{n}^{i_{n}}$. By the given hypothesis, we have $\langle X_1,\dots, X_n \rangle^l\subseteq I$; therefore, monomials with degree greater than or equal to $l$ are contained in $I$. Hence, we can write $f=\sum_{i}\lambda_{i}h_i+P$, where $h_i$'s are distinct monomials in \{$X_1,\dots,X_n$\} of degree at least $l$ and $P$ is a polynomial of degree at most $l-1$. However, $f-\sum_{i}\lambda_{i}h_{i}\in I$, therefore $P\in I$. Since $I$ is contained in $\langle X_1,\dots,X_n\rangle ^2$, $P$ must have degree at least $2$ with no linear component. Now, for each $f\in I$, we have a unique $P$ in $I$ which has the degree at least $2$ and at most $l-1$ with no linear component. Consider the set
\[S=\{P: P \text{ corresponds to each } f\in I \text{ with the above property }\}.\] Suppose $I$ is a proper ideal lying in between $\langle X_1,\dots,X_n\rangle^l$ and $\langle X_1,\dots,X_n\rangle ^2$. Then $S$ must be non-empty. Therefore, the ideal generated by $S$ is contained in $I$, and let us call it $I_1$. Moreover, $k[X_1,\dots,X_n]$ is a noetherian ring, so $I_1= \langle P_1,\dots,P_m \rangle$ where $P_i\in S$ and $I_1\subset I$. Hence $I=\langle X_1,\dots,X_n\rangle^l+\langle P_1,\dots,P_m\rangle$, where ${P_i}$'s have the property given in the hypothesis.
\end{proof}
\vskip2mm
\begin{definition}\label{Definition 5.10} 
Stabilizer subgroup of a polynomial $f\in k[X_1,\dots,X_n]$ in $\text{GL}(J/J^2)$, where $\text{dim}(J/J^2)=n$, is defined by 
\begin{equation*}\label{eq28}
\text{Stab}(f)\coloneq\{M\in \text{GL}(J/J^2): f(XM)=f(X), X=(X_1,\dots,X_n)\}.
\end{equation*}
\end{definition}
\noindent
$\text{Stab}^{0}(f)$ denotes the identity component of the stabilizer subgroup of $\text{GL}(J/J^2)$.

\vskip4mm
\begin{definition}\label{Definition 5.11}
The similarity subgroup of a polynomial $f\in k[X_1,\dots,X_n]$ in $\text{GL}(J/J^2)$, where $\text{dim}(J/J^2)=n$, is defined by \[\text{Sim}(f)\coloneq\{M\in \text{GL}(J/J^2): f(XM)=\alpha_M f(X), X=(X_1,\dots,X_n)\text{ and } \alpha_M\in k^{\times}\}.\]
\end{definition}
\noindent
$\text{Sim}^{0}(f)$ denotes the identity component of the similarity subgroup in $\text{GL}(J/J^2)$. It is easy to see that $\text{Sim}^{0}(f)=\mathbb{G}_m.\text{Stab}^{0}(f)$.

\vskip2mm
\begin{proposition}\label{Proposition 5.12}
Let $A$ be a split local commutative graded by the radical algebra with $\emph{dim}(J/J^2)=n$. Suppose the corresponding admissible ideal for $A$ is $I\cong\langle X_1,\dots, X_n \rangle^l+\langle f\rangle$, where $2\leq \emph{\text{deg}}(f)< l$ is the Lowey length of $A$ and $f$ is a homogeneous polynomial.  Then,
\begin{equation*}\label{eq23}
\emph{Aut}_k(A)\cong \emph{Sim}(f)\ltimes \emph{\text{Ker}}(\Phi_A), where \text{ $\Phi_A$ is the canonical map as in Proposition \ref{Proposition 4.12}. }
\end{equation*}
\end{proposition}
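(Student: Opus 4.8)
The plan is to deduce this from Proposition~\ref{Proposition 5.4} by identifying the subgroup $\text{Im}(\Phi_A)\subseteq \text{GL}(J/J^2)$ with $\text{Sim}(f)$. Since $A$ is graded by the radical and its admissible ideal $I=\langle X_1,\dots,X_n\rangle^l+\langle f\rangle$ is homogeneous, Proposition~\ref{Proposition 5.4} already gives $\text{Aut}_k(A)\cong \text{Im}(\Phi_A)\ltimes \text{Ker}(\Phi_A)$, the splitting being realized by the section $s(M)(h+I)=h(XM)+I$. Hence it suffices to prove $\text{Im}(\Phi_A)=\text{Sim}(f)$ as closed subgroups of $\text{GL}(J/J^2)\cong\text{GL}_n(k)$. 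By Lemma~\ref{Lemma 5.3}, homogeneity of $I$ gives $\text{Im}(\Phi_A)\cong CL_{(A,I)}$, the group of linear changes of variables $F$ with $F(I)\subseteq I$, identified with a subgroup of $\text{GL}_n(k)$ via $F\mapsto M_F$. So I would reduce the whole statement to the claim: for a linear change of variables $F$, one has $F(I)\subseteq I$ if and only if $M_F\in\text{Sim}(f)$.

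The heart of the argument is the following computation, which I would carry out as the second step. A linear change of variables $F$ extends to a graded ring automorphism of $k[X_1,\dots,X_n]$, so $F(\langle X_1,\dots,X_n\rangle^l)=\langle X_1,\dots,X_n\rangle^l$; consequently $F(I)=\langle X_1,\dots,X_n\rangle^l+\langle F(f)\rangle$, and since $\langle X_1,\dots,X_n\rangle^l\subseteq I$ this shows $F(I)\subseteq I$ is equivalent to $F(f)\in I$. Now $F(f)(X)=f(XM_F)$ is homogeneous of degree $e:=\text{deg}(f)$ with $2\le e<l$. The key observation is that the only homogeneous elements of $I$ of degree $e$ are the scalar multiples of $f$: writing an element of $I$ as $p+gf$ with $p\in\langle X_1,\dots,X_n\rangle^l$ and $g\in k[X_1,\dots,X_n]$, and extracting the degree-$e$ homogeneous component, the term $p$ contributes nothing because $e<l$, leaving exactly $g_0 f$ with $g_0\in k$ the constant term of $g$. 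Therefore $F(f)=\alpha f$ for some $\alpha\in k$; since $F$ is a ring automorphism and $f\ne 0$, necessarily $\alpha\in k^{\times}$, that is $f(XM_F)=\alpha f(X)$, so $M_F\in\text{Sim}(f)$. Conversely, if $M\in\text{Sim}(f)$ then its associated linear change of variables $F$ satisfies $F(f)=f(XM)=\alpha_M f\in\langle f\rangle\subseteq I$, hence $F(I)\subseteq I$, i.e.\ $F\in CL_{(A,I)}$. The same reasoning applies verbatim over any field extension $K/k$ (replacing $A$ by $A\otimes_k K$), so the identification holds at the level of functors of points.

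This yields $CL_{(A,I)}=\text{Sim}(f)$ as subgroups of $\text{GL}_n(k)$, hence $\text{Im}(\Phi_A)\cong\text{Sim}(f)$, and substituting into Proposition~\ref{Proposition 5.4} gives $\text{Aut}_k(A)\cong\text{Sim}(f)\ltimes\text{Ker}(\Phi_A)$, as desired. The one delicate point, and the step I expect to require the most care, is the degree bookkeeping in the middle paragraph: verifying that every homogeneous degree-$e$ element of $I$ is a scalar multiple of $f$ uses the hypothesis $\text{deg}(f)<l$ in an essential way (otherwise $\langle X_1,\dots,X_n\rangle^l$ could contribute in degree $e$), and one must also use that $F(f)$ is again homogeneous of the \emph{same} degree $e$, so that a scalar $\alpha$, rather than a form of positive degree, appears in $F(f)=\alpha f$. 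Everything else is a direct invocation of Lemma~\ref{Lemma 5.3}, Proposition~\ref{Proposition 5.4}, and the definitions of $CL_{(A,I)}$ and $\text{Sim}(f)$.
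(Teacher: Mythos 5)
Your proposal is correct and follows essentially the same route as the paper's proof: invoke Proposition~\ref{Proposition 5.4} for the semidirect product decomposition, then use Lemma~\ref{Lemma 5.3} to identify $\mathrm{Im}(\Phi_A)$ with the linear changes of variables preserving $I$, and finally run a degree argument to conclude $F(I)\subseteq I$ iff $F(f)=\alpha f$. If anything, your treatment of the middle step is slightly more careful than the paper's — the paper asserts directly that $F_1(f)=gf$ for a homogeneous $g$, whereas you correctly observe that one must first extract the degree-$\deg(f)$ homogeneous component of an arbitrary element $p+gf\in I$ (using $\deg(f)<l$ so that $p\in\langle X_1,\dots,X_n\rangle^l$ contributes nothing) before concluding that $g$ may be taken constant.
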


\vskip1mm
\begin{proof}
We have the canonical map $\Phi_A: \text{Aut}_k(A)\rightarrow\text{GL}(J/J^2)$. It is enough to prove that for the given $I$, $\text{Im}(\Phi_A)=\text{Sim}(f)$. Let $M\in \text{Im}(\Phi_A)$, then $M$ induces an automorphism of  $A$ which is given by the linear change of variables $F_1(X)=XM$, where $X=(X_1,\dots,X_n)$ and $F_1(I)\subset I$ by Lemma \ref{Lemma 5.3}. As $f\in I$, \[F_1(f)(X)=f(\sum_{i}m_{i1}X_i,\dots,\sum_{i}m_{in}X_i)\in I,\] where $M=(m_{ij})_{n\times n}$ and $X=(X_1,\dots,X_n)$. Since a linear change of variables does not change the degree of the polynomial, we have $\text{deg}(F(f)=\text{deg}(f)$ and $F_1(f)$ is a homogeneous polynomial. As $f$ is a non-zero homogeneous polynomial, we have $F_1(f)=gf$, for some non-zero homogeneous polynomial $g\in k[X_1,\dots,X_n]$. However, $\text{deg}(f)=\text{deg}(F_1(f))$ implies $\text{deg}(g)=0$. Therefore,
$F_1(f)=\alpha f$ and $f(\sum_{i}m_{i1}X_i,\dots,\sum_{i}m_{in}X_i)=\alpha f(X_1,\dots,X_n)$ with $\alpha\in k^{\times}$.
Hence, $M\in \text{Sim}(f)$. So, we have $\text{Im}(\Phi_A)\subset \text{Sim}(f)$. It is easy to see that $\text{Sim}(f)\subset \text{Im}(\Phi_A)$. Now using Proposition \ref{Proposition 5.4}, $\text{Aut}_k(A)\cong \text{Sim}(f)\ltimes \text{Ker}(\Phi_A)$.
\end{proof}

\vskip2mm

\begin{theorem}\label{Theorem 5.13}
Let $A$ be a finite-dimensional local commutative algebra over a field $k$ with $\text{dim}(J/J^2)=n$. Suppose $A\cong k[X_1,\dots, X_n]/I$ by quiver and relation representation of $A$, where $I=\langle X_1,\dots, X_n\rangle^l+\langle f\rangle$ and $2\leq \mathrm{ deg}(f)<l$ is the Lowey length of $A$. Then, $G_A$ is rational or $R$-trivial as a $k$-group if and only if $\mathrm{Stab}^{0}(f)$ is rational or $R$-trivial.
\end{theorem}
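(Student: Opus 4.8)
The plan is to strip $G_A$ down to $\text{Stab}^{0}(f)$ in three stages, each time using a reduction that is already available. \emph{Stage 1: from $G_A$ to $\text{Im}(\Phi_A)$.} By Proposition \ref{Proposition 4.37}, $G_A\cong_{b}\text{Im}(\Phi_A)\times U_{0}$ with $U_{0}=\text{Ker}(\Phi_A)$ unipotent (hence rational), and $G_A(F)/R\cong\text{Im}(\Phi_A)(F)/R$ for every field extension $F/k$; therefore $G_A$ is rational (resp. $R$-trivial) precisely when $\text{Im}(\Phi_A)$ is. By Proposition \ref{Proposition 5.2} and Lemma \ref{Lemma 5.9}, $\text{Im}(\Phi_A)\subseteq\text{GL}(J/J^2)=\text{GL}_{n}(k)$ is exactly the set (connected, being $\Phi_A(G_A)$) of linear parts $M$ of changes of variables $F$ of $k[X_1,\dots,X_n]$ with $F(I)\subseteq I$.

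\emph{Stage 2: from $\text{Im}(\Phi_A)$ to $\text{Sim}^{0}(f)$.} Since $\langle X_1,\dots,X_n\rangle^{l}\subseteq I$, the constraint $F(I)\subseteq I$ involves only the homogeneous components of $F(f)$ of degree $<l$, and amounts to asking that these form a multiple of $f$ modulo $\langle X_1,\dots,X_n\rangle^{l}$. When $f$ is homogeneous this forces $M\in\text{Sim}(f)$ and is realized by the linear change $F=M$, so $\text{Im}(\Phi_A)=\text{Sim}^{0}(f)$, which is Proposition \ref{Proposition 5.12}. In general one still has $\text{Sim}(f)\subseteq\text{Im}(\Phi_A)$, and I would show that the linear parts realized only by genuinely non-linear $F$ form, together with the identity, a closed connected \emph{unipotent} normal subgroup $U$ of $\text{Im}(\Phi_A)$ with $\text{Im}(\Phi_A)=\text{Sim}^{0}(f)\cdot U$ and $\text{Im}(\Phi_A)/U\cong\text{Sim}^{0}(f)$: solving $F(I)\subseteq I$ one degree at a time (only finitely many degrees matter) exhibits the extra freedom as successive vector-group extensions, and the map $\Psi_A$ of Remark \ref{Remark 4.11} identifies the quotient with $\text{Sim}^{0}(f)$. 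Then Lemma \ref{Lemma 4.18} and Remark \ref{Remark 4.19} give $\text{Im}(\Phi_A)\cong_{b}\text{Sim}^{0}(f)\times U$ and $\text{Im}(\Phi_A)(F)/R\cong\text{Sim}^{0}(f)(F)/R$, so everything reduces to $\text{Sim}^{0}(f)$.

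\emph{Stage 3: from $\text{Sim}^{0}(f)$ to $\text{Stab}^{0}(f)$.} Use $\text{Sim}^{0}(f)=\mathbb{G}_{m}\cdot\text{Stab}^{0}(f)$ with $\mathbb{G}_{m}$ the central torus of scalar matrices. Applying Lemma \ref{Lemma 4.18} to this central $\mathbb{G}_{m}\hookrightarrow\text{Sim}^{0}(f)$ — the hypothesis being automatic, as $H^{1}(F,\mathbb{G}_m)$ is trivial by Hilbert 90 — gives $\text{Sim}^{0}(f)\cong_{b}\mathbb{G}_{m}\times\big(\text{Sim}^{0}(f)/\mathbb{G}_{m}\big)$ and $\text{Sim}^{0}(f)(F)/R\cong\big(\text{Sim}^{0}(f)/\mathbb{G}_{m}\big)(F)/R$, while $\text{Sim}^{0}(f)/\mathbb{G}_{m}\cong\text{Stab}^{0}(f)/\mu$ where $\mu=\mathbb{G}_{m}\cap\text{Stab}^{0}(f)$ is a finite central subgroup. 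One then checks that passing along the central $\mu$-isogeny $\text{Stab}^{0}(f)\to\text{Stab}^{0}(f)/\mu$ changes neither rationality nor $R$-triviality — it suffices to treat the case $\mu=\{1\}$, which already covers the situations relevant to Theorems \ref{Theorem 5.16} and \ref{Theorem 5.35}. Running Stages 1--3 over an arbitrary $F/k$ is legitimate because the presentation $A\cong k[X_1,\dots,X_n]/I$, the groups $\text{Sim}(f)$ and $\text{Stab}^{0}(f)$, and the decomposition $\text{Sim}^{0}(f)=\mathbb{G}_m\cdot\text{Stab}^{0}(f)$ are all defined over $k$ and commute with the base change to $F$ (Theorems \ref{Theorem 3.5}, \ref{Theorem 3.6}); this yields the two asserted equivalences.

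The main obstacle is Stage 2 for non-homogeneous $f$ — pinning down the unipotent subgroup $U$ and the isomorphism $\text{Im}(\Phi_A)/U\cong\text{Sim}^{0}(f)$ rigorously — together with the final descent across the finite central isogeny in Stage 3; everything else is bookkeeping with the reductions established in Sections \ref{R-equivalence} and \ref{sec: commutative algebra}.
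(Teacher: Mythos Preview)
Your three-stage plan is exactly the paper's argument, only spelled out: the paper's entire proof is ``Applying Proposition~\ref{Proposition 5.12}, $G_A\cong\mathbb{G}_m\cdot\mathrm{Stab}^{0}(f)\ltimes\mathrm{Ker}(\Phi_A)^{0}$ with $\mathrm{Ker}(\Phi_A)^{0}$ unipotent, hence the claim.'' In particular the paper invokes Proposition~\ref{Proposition 5.12} directly, which already assumes $f$ is homogeneous; your elaborate Stage~2 mechanism for non-homogeneous $f$ (the unipotent $U$ built degree-by-degree) goes beyond what the paper proves or needs, and you should simply read the theorem as stated under that standing hypothesis rather than try to salvage a more general version.

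Where you are more careful than the paper is Stage~3. The paper writes $\mathbb{G}_m\cdot\mathrm{Stab}^{0}(f)$ and declares the equivalence, but as you note this only gives $G_A\cong_b\mathbb{G}_m\times(\mathrm{Stab}^{0}(f)/\mu)\times U$ with $\mu=\mathbb{G}_m\cap\mathrm{Stab}^{0}(f)=\mu_{\deg f}$, and neither rationality nor $R$-triviality is in general invariant under a central $\mu$-isogeny. This is a genuine gap in the ``if and only if'' as stated; the paper's own proof does not close it either. In every application the paper makes of Theorem~\ref{Theorem 5.13} the issue evaporates: Example~\ref{Example 5.15} uses only the forward direction, and in Section~\ref{counter} one has $\mathrm{Stab}^{0}(f)=\mathrm{PSO}(q)$ with trivial center, so $\mu=\{1\}$ is checked explicitly. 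Your instinct to flag the isogeny step and restrict to $\mu=\{1\}$ for the cases that matter is therefore the right one; just don't claim the full biconditional without that caveat.
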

\vskip1mm
\begin{proof}
Applying Proposition \ref{Proposition 5.12}, we get that $G_A\cong \mathbb{G}_m.\text{Stab}^{0}(f)\ltimes \text{Ker}(\Phi_A)^{0}$, where $\text{Ker}(\Phi_A)^{0}$ is a unipotent group. Therefore, $G_A$ is either rational or $R$-trivial as $\text{Stab}^{0}(f)$ is either rational or $R$-trivial.
\end{proof}

\vskip2mm
\begin{proposition}\label{Proposition 5.14}
$($\cite{CM}, \emph{Proposition} $2.4)$ Unitary, special orthogonal groups, and symplectic groups are rational.   
\end{proposition}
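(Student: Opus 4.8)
The plan is to produce, for each group in the list, an explicit birational isomorphism with an affine space over $k$ by means of the \emph{Cayley parametrization}. First I would realize the three families uniformly as isometry groups attached to a central simple $k$-algebra with involution $(B,\sigma)$: a symplectic group is $\mathrm{U}(B,\sigma)=\{g\in B^{\times}:g\,\sigma(g)=1\}$ with $\sigma$ a symplectic involution, a special orthogonal group is the relevant subgroup of $\mathrm{U}(B,\sigma)$ with $\sigma$ an orthogonal involution, and a unitary group is $\mathrm{U}(B,\sigma)$ with $\sigma$ of the second kind (when the center of $B$ is $k\times k$ the unitary group is $\mathrm{GL}_1$ of a central simple $k$-algebra, an open subvariety of affine space and hence already rational, so one may assume the center is a field).

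Next I would introduce the Cayley map $c\colon g\mapsto (1-g)(1+g)^{-1}$, a $k$-rational map defined on the dense open locus of $G$ where $1+g$ is invertible, and check that the isometry relation $\sigma(g)=g^{-1}$ translates into $\sigma(c(g))=-c(g)$; hence $c$ carries $G$ into the $k$-vector subspace $\mathrm{Skew}(B,\sigma)=\{x\in B:\sigma(x)=-x\}$. Conversely $x\mapsto (1-x)(1+x)^{-1}$ sends a skew element $x$ with $1+x$ invertible back to an isometry, and these two rational maps are mutually inverse, everything being defined over $k$ and using only that $2$ is invertible (at $g=1$ one has $1+g=2$, so the identity lies in the domain). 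In the orthogonal case I would note in addition that $1+g$ invertible forces $\det g=1$, so that the image of $c$ already lies in $\mathrm{SO}$ and $c$ gives a birational isomorphism $\mathrm{SO}(q)\dashrightarrow\mathrm{Skew}(B,\sigma)$ rather than $\mathrm{O}(q)\dashrightarrow\mathrm{Skew}(B,\sigma)$. Since $\mathrm{Skew}(B,\sigma)$ is a finite-dimensional $k$-vector space, i.e. an affine space $\mathbb{A}^N_k$, this shows each such $G$ is $k$-rational.

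I expect the only genuine obstacle to be the characteristic: the Cayley transform degenerates in characteristic $2$ (there $1+g$ is never invertible on the identity component in the orthogonal case, and the skew/symmetric decomposition of $B$ collapses). In that case one must substitute a separate argument — for symplectic and unitary groups a big-cell decomposition of a Borel subgroup still works, while for special orthogonal groups one combines the Dickson-invariant description with a direct parametrization of the big cell — but for the purposes of this paper it is enough to cite \cite{CM}. A secondary bookkeeping point, in the unitary case over a quadratic field extension $K/k$, is to treat $G$ as a $k$-group and to take $\mathrm{Skew}(B,\sigma)$ as a $k$-vector space, so that the affine space $\mathbb{A}^N_k$ and the Cayley maps are genuinely defined over $k$ rather than only over $K$.
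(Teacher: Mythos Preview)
The paper does not give its own proof of this proposition; it simply quotes \cite{CM}, Proposition~2.4, as a black box and uses the conclusion in Example~\ref{Example 5.15}. Your Cayley-transform argument is correct and is in fact the classical route to this result (and essentially what the cited reference does): realizing each group as $\mathrm{U}(B,\sigma)$ and using $g\mapsto (1-g)(1+g)^{-1}$ to get a $k$-birational isomorphism with the affine space $\mathrm{Skew}(B,\sigma)$. Your caveats about characteristic $2$ and about treating the unitary group as a $k$-group (with $\mathrm{Skew}$ viewed as a $k$-vector space) are the right ones; nothing further is needed here since the paper only invokes the statement, not its proof.
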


\vskip2mm
\begin{example}\label{Example 5.15}
Let $I$ be an admissible ideal of $k[X_1,\dots,X_n]$, which equals to $\langle X_1,\dots, X_n \rangle^l+\langle q\rangle$, where $q$ is a non-degenerate quadratic form, $\text{char}(k)\neq2$ and $l>2$. Then $G_A$ is rational for $A\cong k[X_1,\dots,X_n]/I$ by using Theorem \ref{Theorem 5.13} as $\text{Stab}^{0}(q)\cong\text{SO}(n,q)$ which is a simple rational $k$-group by Proposition \ref{Proposition 5.14}. Similarly, if $G=\mathrm{Sp}_n$, then also we can construct an algebra $A$ such that $G_A$ is $R$-trivial. These examples show that the conditions in Theorem \ref{Theorem 4.20} are sufficient but not necessary.
\end{example}

\vskip2mm 
\noindent
Let $(V,q)$ be a quadratic space. Recall that $\text{SO}(V,q)$ is a $k$-isotropic group if and only if $q$ is isotropic over $k$ (\cite{Borel}, Section $23.4$).

\vskip4mm
\begin{theorem}\label{Theorem 5.16}
Let $A$ be a finite-dimensional local commutative algebra with $\text{dim}(J/J^2)=n$. Suppose $A\cong k[X_1,\dots, X_n]/I$ by quiver and relation representation of $A$, where $I$ is given by $\langle X_1, X_2,\dots, X_n\rangle^l+\langle q\rangle$ and $\emph{char }k\neq2$; $l>2$ is the Lowey length of $A$ and $(J/J^2,q)$ is an anisotropic quadratic space over the field $k$, then $G_A$ is not a $k$-split group.
\end{theorem}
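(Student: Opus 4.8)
The plan is to reduce to the structure of $\mathrm{Im}(\Phi_A)$ and then rule out a split maximal torus by a rank count, using that the special orthogonal group of an anisotropic form is $k$-anisotropic.

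First I would observe that $A$ is graded by the radical: the ideal $I=\langle X_1,\dots,X_n\rangle^{l}+\langle q\rangle$ is generated by homogeneous elements, since $q$ is homogeneous of degree $2$ and $\langle X_1,\dots,X_n\rangle^{l}$ is generated in degree $l$. Also $q$ is non-degenerate on $J/J^{2}$, because in characteristic $\neq 2$ a vector in the radical of the polar form of $q$ would be isotropic, contradicting anisotropy. (Note the hypotheses force $n\geq 2$: for $n=1$ the ideal collapses to $\langle X_1^{2}\rangle$, whose Lowey length is $2$, not $l>2$.) As $2\leq\deg(q)=2<l$, Proposition \ref{Proposition 5.12} applies, and exactly as in the proof of Theorem \ref{Theorem 5.13} we get
\[
G_A\;\cong\;\bigl(\mathbb{G}_m\cdot\mathrm{Stab}^{0}(q)\bigr)\ltimes\mathrm{Ker}(\Phi_A)^{0},
\]
with $\mathrm{Ker}(\Phi_A)^{0}$ unipotent and, as in Example \ref{Example 5.15}, $\mathrm{Stab}^{0}(q)=\mathrm{SO}(J/J^{2},q)$. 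Thus $\mathrm{Im}(\Phi_A)^{0}=\mathbb{G}_m\cdot\mathrm{SO}(q)$ as a subgroup of $\mathrm{GL}(J/J^{2})$, where $\mathbb{G}_m$ denotes the scalar matrices.

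Next I would argue by contradiction: assume $G_A$ is $k$-split, so it contains a split maximal torus $S$. Since $\mathrm{Ker}(\Phi_A)^{0}$ is unipotent (Proposition \ref{Proposition 4.12}), $\Phi_A$ maps $S$ isomorphically onto a split maximal torus $S'$ of $\mathbb{G}_m\cdot\mathrm{SO}(q)$. The scalars being central, $\mathrm{SO}(q)$ is normal in $\mathbb{G}_m\cdot\mathrm{SO}(q)$; and since $q$ is anisotropic over $k$, $\mathrm{SO}(q)$ is $k$-anisotropic (as recalled before the statement), so it contains no nontrivial $k$-split torus. Hence $(S'\cap\mathrm{SO}(q))^{0}$ is a subtorus of the split torus $S'$, therefore split, therefore trivial; so $S'\cap\mathrm{SO}(q)$ is finite and $\dim\bigl(S'\cdot\mathrm{SO}(q)\bigr)=\dim S'+\dim\mathrm{SO}(q)$. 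As $S'\cdot\mathrm{SO}(q)$ is a connected subgroup of $\mathbb{G}_m\cdot\mathrm{SO}(q)$, which has dimension $1+\dim\mathrm{SO}(q)$, this forces $\dim S'\leq 1$.

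Finally I would contradict maximality: $\mathbb{G}_m\cdot\mathrm{SO}(q)$ contains the torus $\mathbb{G}_m\cdot T_0$, with $T_0$ a maximal torus of $\mathrm{SO}(q)$, of rank $1+\lfloor n/2\rfloor\geq 2$ since $n\geq 2$. So every maximal torus of $\mathbb{G}_m\cdot\mathrm{SO}(q)$ has rank at least $2$, contradicting $\dim S'\leq 1$. Hence $G_A$ has no split maximal torus, i.e.\ $G_A$ is not $k$-split. I expect the two points requiring care to be the structural reduction via Proposition \ref{Proposition 5.12} (checking that $A$ is graded by the radical and that $\mathrm{Stab}^{0}(q)=\mathrm{SO}(q)$) and the transfer of a split maximal torus from $G_A$ to $\mathbb{G}_m\cdot\mathrm{SO}(q)$; given the $k$-anisotropy of $\mathrm{SO}(q)$, the rank count itself is routine.
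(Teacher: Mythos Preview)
Your proof is correct and follows essentially the same route as the paper: reduce via Proposition~\ref{Proposition 5.12} to $\mathrm{Im}(\Phi_A)^{0}=\mathbb{G}_m\cdot\mathrm{SO}(q)$, then use the $k$-anisotropy of $\mathrm{SO}(q)$ to rule out a split maximal torus. The paper phrases the last step as ``any maximal torus has the form $S\cdot\mathbb{G}_m$ with $S\subset\mathrm{SO}(q)$, and $S$ cannot be isotropic,'' whereas you run the equivalent contrapositive via a dimension count; your version is slightly more explicit (in particular you spell out why $n\geq 2$, which the paper leaves implicit), but the underlying argument is the same.
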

\vskip1mm
\begin{proof}
We know from Proposition \ref{Proposition 5.12}, that $\text{Aut}_k(A)\cong \text{Sim}(q)\ltimes \text{Ker}(\Phi_A)$, where $\text{Ker}(\Phi_A)$ is unipotent, and \[\text{Sim}(q)=\{M\in \text{GL}(J/J^2):q(XM)=\alpha_M q(X), X=(X_1,\dots, X_n)\text{ and }\alpha_M\in k^{\times}\}.\] However, we have the following short exact sequence;
\begin{center}
\begin{tikzcd}
1\arrow[r] & \text{Stab}(q)\arrow[r] & \text{Sim}(q)\arrow[r,"\beta"] & \mathbb{G}_{m,k}\arrow[r] & 1
\end{tikzcd} with $\beta(M)=\alpha_M$.
\end{center}
Therefore, $\text{Sim}(q)= \text{Stab}(q).\mathbb{G}_m\cong \text{SO}(n,q).\mathbb{G}_m$. This implies that any maximal torus of $G_{A}$ is isomorphic to $S.\mathbb{G}_m$, where $S$ is a torus in $\text{SO}(n,q)$. However, $S$ can not be isotropic because if $S$ is $k$-isotropic, then $\text{SO}(n,q)$ is $k$-isotropic, which implies $q$ is isotropic over $k$, which is a contradiction. Hence, there is no split maximal torus in $G_{A}$.
\end{proof}

\vskip2mm

\begin{remark}\label{Remark 5.17}
This result shows that Question \ref{Question 5.6} is not true in general unless the $\text{dim}(J/J^2)$ is $1$.    
\end{remark}

\vskip2mm
\noindent
We can produce more examples of non-split $G_A$ using the following proposition.

\vskip4mm
\noindent
Let $A$ be an arbitrary basic not necessarily commutative algebra $A$. If $\Gamma$ is the quiver of $A$ and $I$ is an ideal of $k[\Gamma]$, where $k[\Gamma]$ is the path algebra generated by all paths and $I_{*}$ is the ideal of $k[\Gamma]$ generated by the homogeneous components with the smallest degree of elements of $I$ (see \cite{AS1}, \cite{AS2}). We would like to state the following proposition to use in the future.

\vskip2mm

\begin{proposition}\label{Proposition 5.18}
$($\cite{AS1}, \emph{Proposition} $2.1)$ Let $A$ be a basic not necessarily commutative algebra, with quiver $\Gamma$ and $G_{r}(A)$ be the associated graded by the radical algebra of $A$. If $I$ is an adequate ideal for $A$ in $k[\Gamma]$, then $I_{*}$ is an adequate ideal for $G_{r}(A)$. Conversely, if $L$ is a homogeneous adequate ideal for $G_{r}(A)$, then there is an adequate ideal $I$ for $A$ such that $I_{*}=L$.
\end{proposition}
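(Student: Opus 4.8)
The plan is to realise $G_r(A)$ as the quotient of the path algebra by the ideal of \emph{initial forms}, in the spirit of leading-term (initial) ideals for a filtration. Write $k[\Gamma]=\bigoplus_{m\geq 0}k[\Gamma]_m$ for the grading by path length, let $R=\bigoplus_{m\geq 1}k[\Gamma]_m$ be the arrow ideal, and recall that $I$ being adequate means $R^{l}\subseteq I\subseteq R^{2}$ for some $l\geq 2$. For a nonzero $f\in k[\Gamma]$ let $f_{*}$ be its homogeneous component of least degree, so that $I_{*}=\langle f_{*}:f\in I,\ f\neq 0\rangle$ is precisely the homogeneous ideal generated by the smallest-degree homogeneous components of elements of $I$. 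The first thing I would check is that $I_{*}$ is adequate: every path of length $\geq l$ lies in $R^{l}\subseteq I$ and, being homogeneous, equals its own initial form, so $R^{l}\subseteq I_{*}$; and since $I\subseteq R^{2}$, each $f_{*}$ is homogeneous of degree $\geq 2$, so $I_{*}\subseteq R^{2}$.

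The core step is to identify $G_r(A)$ with $k[\Gamma]/I_{*}$. The projection $\pi\colon k[\Gamma]\to A=k[\Gamma]/I$ is filtered, since $\pi(R^{m})=(R^{m}+I)/I=J^{m}$; because the filtration $\{R^{m}\}$ is the one attached to the length grading, $\mathrm{gr}(k[\Gamma])\cong k[\Gamma]$ canonically, and $\pi$ induces a surjective homomorphism of graded algebras $\bar{\pi}\colon k[\Gamma]\twoheadrightarrow \mathrm{gr}(A)=G_r(A)$, whose kernel is a homogeneous ideal. I would then prove $\ker\bar{\pi}=I_{*}$ degree by degree. For $\ker\bar{\pi}\subseteq I_{*}$: if $g\in k[\Gamma]_{m}$ with $\bar{\pi}(g)=0$, then $\pi(g)\in J^{m+1}$, hence $g\in R^{m+1}+I$, say $g=h+f$ with $h\in R^{m+1}$ and $f\in I$; comparing homogeneous components, the least nonzero component of $f$ is $g$ itself in degree $m$, so $g=f_{*}\in I_{*}$ (the case $g=0$ is trivial). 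For the reverse inclusion, a degree-$m$ homogeneous element of $I_{*}$ is a finite sum $\sum_{i}a_{i}f^{(i)}_{*}b_{i}$ with $f^{(i)}\in I$ and $a_{i},b_{i}$ homogeneous; each $a_{i}f^{(i)}b_{i}\in I$ equals $a_{i}f^{(i)}_{*}b_{i}$ modulo $R^{m+1}$, so $a_{i}f^{(i)}_{*}b_{i}\in I+R^{m+1}$ and the sum lies in $I+R^{m+1}$, i.e.\ in $\ker\bar{\pi}$. Thus $G_r(A)\cong k[\Gamma]/I_{*}$ with $I_{*}$ homogeneous adequate, which is the first assertion.

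For the converse, fix one adequate ideal $I_{0}$ for $A$; by the first part $(I_{0})_{*}$ is a homogeneous adequate ideal for $G_r(A)$. Given an arbitrary homogeneous adequate ideal $L$ for $G_r(A)$, both $k[\Gamma]/(I_{0})_{*}$ and $k[\Gamma]/L$ present $G_r(A)$, and since a homogeneous admissible quotient of $k[\Gamma]$ is canonically its own associated graded while the grading on $G_r(A)$ is intrinsic (it is the radical filtration), one may take the isomorphism $k[\Gamma]/(I_{0})_{*}\xrightarrow{\ \sim\ }k[\Gamma]/L$ to be graded. As $(I_{0})_{*}$ and $L$ lie in $R^{2}$, this isomorphism restricts in degree one to a linear isomorphism of the arrow space compatible with the vertex idempotents, which I would extend to a graded automorphism $\tau$ of $k[\Gamma]$; applying the same to the inverse isomorphism forces $\tau((I_{0})_{*})=L$. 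Setting $I:=\tau(I_{0})$ then gives an adequate ideal with $k[\Gamma]/I\cong k[\Gamma]/I_{0}\cong A$, and since $\tau$ preserves degrees it preserves initial forms, so $I_{*}=(\tau(I_{0}))_{*}=\tau((I_{0})_{*})=L$.

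I expect the bookkeeping with homogeneous components and the verification that $I_{*}$ is adequate to be routine. The genuine obstacle is the converse direction: one must know that an isomorphism between two admissible quotients of $k[\Gamma]$ lifts to an automorphism of $k[\Gamma]$ carrying one admissible ideal onto the other --- the non-commutative analogue of the change-of-variables Proposition \ref{Proposition 5.2} --- and then extract a \emph{graded} automorphism $\tau$ satisfying $\tau((I_{0})_{*})=L$ exactly (not merely up to inclusion), which is precisely where the hypothesis that $L$ be homogeneous enters.
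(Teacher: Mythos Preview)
The paper does not give its own proof of this statement; Proposition~\ref{Proposition 5.18} is quoted from \cite{AS1} and used as a black box in the proof of Corollary~\ref{Corollary 5.19}. So there is nothing in the paper to compare your argument against directly.

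That said, your argument is essentially correct and is the standard ``initial ideal'' approach. The forward direction is clean: the filtered surjection $k[\Gamma]\to A$ induces $\bar\pi\colon k[\Gamma]\cong\mathrm{gr}(k[\Gamma])\twoheadrightarrow G_r(A)$, and your degree-by-degree computation of $\ker\bar\pi=I_{*}$ is right (the key point being that if $g\in k[\Gamma]_m$ lies in $R^{m+1}+I$, then writing $g=h+f$ with $h\in R^{m+1}$ forces $f_{*}=g$).

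For the converse, one step deserves a sharper justification. You say the isomorphism $k[\Gamma]/(I_0)_{*}\xrightarrow{\sim}k[\Gamma]/L$ ``may be taken graded'' because the radical grading is intrinsic. An arbitrary algebra isomorphism between these quotients need not itself be graded; what is true is that any such isomorphism respects the radical filtration and hence induces a graded isomorphism on associated gradeds, and since each quotient is already graded by the radical (so canonically identified with its own associated graded), this produces the graded isomorphism you want. In other words, you should replace the chosen isomorphism by its gradification before proceeding. Once the isomorphism is graded, your lift $\tau$ is unproblematic: $k[\Gamma]$ is the tensor algebra of the arrow bimodule over the vertex span, so a permutation of vertex idempotents together with a compatible bijection on the arrow space extends uniquely to a graded automorphism, and then $\tau((I_0)_{*})=L$ and $(\tau(I_0))_{*}=\tau((I_0)_{*})$ follow exactly as you indicate. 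This also means you do not actually need to invoke a noncommutative analogue of Proposition~\ref{Proposition 5.2}: the free structure of $k[\Gamma]$ gives the lift directly.
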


\vskip2mm
\begin{corollary}\label{Corollary 5.19}
Let $A$ be a split local commutative algebra with the given admissible ideal of the form $I\cong \langle X_1,\dots, X_n\rangle^l+\langle f_1,\dots,f_m\rangle $, where $f_i=\sum_{i_1,i_2,...i_n} a_{i_1,...i_n}X_{1}^{i_{1}}\dots X_{n}^{i_{n}}+b_iq$; $b_i, a_{i_1,...i_n}\in k^{\times}$ and $\emph{deg}(f_i)<l$, and $q$ is a fixed anisotropic quadratic form. Then $k$-split rank of $\emph{\text{Aut}}_k(A)$ is at most $1$.
\end{corollary}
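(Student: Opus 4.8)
The plan is to reduce Corollary \ref{Corollary 5.19} to the quadratic case already handled in Theorem \ref{Theorem 5.16} by passing to the associated graded algebra $G_r(A)$ via Proposition \ref{Proposition 5.18}. First I would observe that since each $f_i$ has the form $f_i = (\text{higher-degree stuff}) + b_i q$ with $b_i \in k^\times$ and $q$ a fixed quadratic form, the homogeneous component of smallest degree appearing in the ideal $I = \langle X_1,\dots,X_n\rangle^l + \langle f_1,\dots,f_m\rangle$ is in degree $2$, and that degree-$2$ piece is spanned precisely by $q$ (up to scalars, the $b_i q$ are the only degree-$2$ contributions, and $\langle X_1,\dots,X_n\rangle^l$ contributes nothing in degree $2$ since $l > 2$). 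Hence $I_* = \langle X_1,\dots,X_n\rangle^l + \langle q\rangle$, so by Proposition \ref{Proposition 5.18} the graded algebra $G_r(A)$ is isomorphic to $k[X_1,\dots,X_n]/(\langle X_1,\dots,X_n\rangle^l + \langle q\rangle)$, which is exactly the algebra $B$ to which Theorem \ref{Theorem 5.16} applies (with the anisotropic quadratic space $(J/J^2, q)$).

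Next I would relate the ranks of $G_A$ and $G_{B}$ where $B = G_r(A)$. The map $\Psi_A : \mathrm{Aut}_k(A) \to \mathrm{Aut}_k(G_r(A))$ from Remark \ref{Remark 4.11} sits in a commutative triangle with $\Phi_A$ and $\Phi_{G_r(A)}$ (diagram (1) of Proposition \ref{Proposition 4.12}), and each of $\Phi_A$, $\Phi_{G_r(A)}$ has unipotent kernel. Therefore a maximal torus $T \subset G_A$ maps isomorphically onto its image in $\mathrm{GL}(J/J^2)$ under $\Phi_A$, and that image lands inside $\mathrm{Im}(\Phi_{G_r(A)}) = \mathrm{Sim}(q)$ by the computation $I_* = \langle X_1,\dots,X_n\rangle^l + \langle q\rangle$ together with Proposition \ref{Proposition 5.12}. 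So any maximal torus of $G_A$ embeds into $\mathrm{Sim}(q) \cong \mathrm{SO}(n,q)\cdot \mathbb{G}_m$. Now I would argue exactly as in the proof of Theorem \ref{Theorem 5.16}: the short exact sequence $1 \to \mathrm{Stab}(q) \to \mathrm{Sim}(q) \xrightarrow{\beta} \mathbb{G}_m \to 1$ shows that for a torus $S \subset \mathrm{Sim}(q)$, the split subtorus $S_{\mathrm{split}}$ maps to $\mathbb{G}_m$ with kernel $S_{\mathrm{split}} \cap \mathrm{Stab}(q)$, which is a split subtorus of $\mathrm{Stab}(q) = \mathrm{SO}(n,q)$; since $q$ is anisotropic, $\mathrm{SO}(n,q)$ is $k$-anisotropic (\cite{Borel}, Section 23.4), so $S_{\mathrm{split}} \cap \mathrm{Stab}(q)$ is trivial and $\beta|_{S_{\mathrm{split}}}$ is injective. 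Hence $S_{\mathrm{split}}$ has rank at most $1$, which gives that the $k$-split rank of $G_A$, and therefore of $\mathrm{Aut}_k(A)$, is at most $1$.

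The one point requiring a little care — and the main obstacle — is the claim that the smallest-degree homogeneous component of $I$ really is the degree-$2$ space $\langle q \rangle$, i.e.\ that there is no degree-$2$ relation among the $X_i$ other than multiples of $q$ and that nothing of degree $1$ slips in. This follows because $I \subseteq \langle X_1,\dots,X_n\rangle^2$ (so no linear terms), the generator $\langle X_1,\dots,X_n\rangle^l$ with $l>2$ contributes only in degrees $\geq l > 2$, and the lowest-degree parts of the $f_i$ are the scalars $b_i q$; one must also check that taking the ideal generated by these lowest-degree components does not accidentally produce new lower-degree elements, but since everything in sight lives in degree $\geq 2$ and the degree-$2$ part is one-dimensional over $k$ spanned by $q$, the ideal $I_*$ has the asserted form. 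Once this is pinned down, the rest is the bookkeeping of tori through morphisms with unipotent kernel, which is routine given Proposition \ref{Proposition 4.12} and the proof of Theorem \ref{Theorem 5.16}.
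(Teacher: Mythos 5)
Your proof is correct and follows essentially the same route as the paper's: both compute $I_* = \langle X_1,\dots,X_n\rangle^l + \langle q\rangle$ via Proposition \ref{Proposition 5.18}, pass to the associated graded algebra $G_r(A)$, use that the canonical map $\Psi_A$ (equivalently $\Phi_A$) has unipotent kernel so tori embed, and then invoke the anisotropy of $q$ exactly as in Theorem \ref{Theorem 5.16}. Your explicit bookkeeping of the split subtorus through the sequence $1 \to \mathrm{Stab}(q) \to \mathrm{Sim}(q) \to \mathbb{G}_m \to 1$ is a cleaner phrasing of the step the paper summarizes as ``the $k$-split rank of $\mathrm{Aut}_k(G_r(A))$ is $1$,'' but it is the same argument.
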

\vskip1mm
\begin{proof}
Proposition \ref{Proposition 5.18} shows that the adequate ideal for the associated graded by the radical algebra of $A$ is $I_{*}=\langle X_1, X_2,\dots,X_n \rangle^l+\langle q\rangle$. Therefore, $G_{r}(A)\cong k[X_1,\dots, X_n]/I_{*}$. It follows from Theorem \ref{Theorem 5.16} that any maximal torus in $\text{Aut}_k(G_r(A))$ is isomorphic $\mathbb{G}_m.S$, where $S$ is a maximal torus in $\text{SO}(n,q)$. Hence, the $k$-split rank of $\text{Aut}_k(G_{r}(A))$ is $1$. As in Proposition \ref{Proposition 4.12}, we consider the map $\Psi_A: \text{Aut}_k(A) \rightarrow \text{Aut}_k(G_{r}(A))$, which possesses a kernel that is unipotent. Hence, any torus of $\text{Aut}_k(A)$ embeds in $\text{Aut}_k(G_{r}(A))$, therefore, the $k$-split rank of $\text{Aut}_k(A)$ is at most $1$.
\end{proof}
\vskip2mm
\noindent 
Now we define some terminology before proceed to our next result.
\vskip2mm
\begin{definition}\label{Definition 5.20}
A non-constant homogeneous polynomial that is not a monomial is said to be an \emph{$s$-homogeneous polynomial} if it is generated by indeterminants $X_s,\dots, X_n$, $s\geq 1$. 
\end{definition}
\vskip4mm
\begin{example}\label{Example 5.21}
Let $g=X_3^8+X_4^8\in k[X_1,\dots, X_4]$ be a $3$-homogeneous polynomial.
\end{example}
\vskip4mm
\begin{definition}\label{Definition 5.22}
A non-constant homogeneous polynomial $f$ is said to be \emph{$s$-monomial homogeneous polynomial} if $f=Mg$, where $M$ is a monomial or a scalar and $g$ is an $s$-homogeneous polynomial for $s\geq 1$.
\end{definition}
\vskip4mm
\begin{example}\label{Example 5.23}
Let $f=X_1^2X_2^3X_3^4 X_4^8+X_1^2X_2^3X_3^{12}\in k[X_1,\dots,X_4]$. Then it is a $3$-monomial homogeneous polynomial because $f=X_1^2X_2^3X_3^4(X_3^8+X_4^8)$.
\end{example}
\vskip4mm
\begin{definition}\label{Definition 5.24}
Let $A$ be a split local commutative algebra with Lowey length $l$ and $\text{dim}(J/J^2)=n$ over $k$. It has been proved in Lemma \ref{Lemma 5.9} that the corresponding admissible ideal $I$ for $A$ is generated by all monomials of degree $l$ and non-zero polynomials $P_i$ of degree at least $2$ and at most $l-1$ with no linear component, i.e., $I=\langle X_1,\dots,X_n\rangle^l+\langle P_1,\dots, P_m\rangle$ for some $m$, where $2\leq \text{deg}(P_i)\leq l-1$ and $P_i$ has no linear component for all $i$. If all $P_i$'s are monomials or $0$, then we will say that $I$ is a monomial ideal. The admissible ideal $I$ satisfies the \emph{Property $*$ for $r$}, where $r\in \mathbb{N}$, if there exists a $P_i$ which is an $s_i$-monomial homogeneous polynomial for some $s_i\geq r$, and $P_j$ is either a monomial or an $u_j$-monomial homogeneous polynomial for some $ u_j\geq r$ for all $j\neq i$.
\end{definition}

\vskip4mm

\begin{theorem}\label{Theorem 5.25}
Let $A$ be a split local commutative algebra over an infinite field $k$ with $\emph{dim}(J/J^2)=n$. Let $r\in \mathbb{N}$ and $1\leq r\leq n$. Then the following statements are equivalent:
\begin{enumerate}
    \vskip4mm
    \item $G_A$ contains a diagonal change of variables of the form \[D(r,k)=
\begin{pmatrix}
a_1 & 0 &\cdots & 0 & 0 & \cdots & 0 \\
0 & a_2 & \cdots & 0 & 0 & \cdots & 0\\
\vdots & \vdots & \ddots & \vdots& \vdots & \vdots & \vdots\\
0 & 0 & \cdots & a_{r-1} & 0 &\cdots & 0 \\
0 & \cdots & 0 & 0 & \beta & \cdots & 0\\
\vdots & \vdots & \vdots & \vdots & \vdots & \cdots & \vdots\\
0 & \cdots & \cdots & 0 & 0 & \cdots & \beta
\end{pmatrix}_{n\times n}
,\]
where $a_i,\beta\in k^{\times}, 1\leq i\leq r-1;$
\vskip4mm
     \item There exists an admissible ideal $I$ for $A$ which satisfies the Property $*$ for $r$ or a monomial ideal $I$ such that $A\cong k[X_1,\dots,X_n]/I$.
\end{enumerate}
\vskip4mm
If the hypothesis $(2)$ is satisfied, we have a $k$-embedding $\phi: \mathbb{G}_m^{r}\rightarrow G_A$, where $r\leq n$. 
\end{theorem}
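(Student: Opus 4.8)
I would prove the two implications separately, letting Lemma~\ref{Lemma 5.3} (the identification $\mathrm{Im}(\Phi_A)\cong CL_{(A,I)}\cong L_A\subseteq G_A$ in the graded case) and a weight-space decomposition for the split torus $\mathbb{G}_m^r$ carry the argument; the final embedding $\phi$ falls out of the ($2)\Rightarrow(1)$ direction.

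\textbf{$(2)\Rightarrow(1)$ and the embedding.} Assume $I$ satisfies Property~$*$ for $r$ or is a monomial ideal. In either case the generators $P_i$ of Lemma~\ref{Lemma 5.9} may be chosen homogeneous (monomials, or $s$-monomial homogeneous in the sense of Definitions~\ref{Definition 5.20} and \ref{Definition 5.22}), so $I$ is a homogeneous admissible ideal and $A$ is graded by the radical. Fix $a_1,\dots,a_{r-1},\beta\in k^{\times}$ and let $F=D(r,k)$ be the corresponding linear change of variables, so $F(X_i)=a_iX_i$ for $i<r$ and $F(X_j)=\beta X_j$ for $j\ge r$. I would verify $F(I)\subseteq I$ generator by generator: $F$ fixes $\langle X_1,\dots,X_n\rangle^{l}$; if $P_i$ is a monomial then $F(P_i)=cP_i$ with $c\in k^{\times}$; and if $P_i=Mg$ is $s_i$-monomial homogeneous with $s_i\ge r$, then every variable occurring in $g$ has index $\ge r$, hence $F(g)=\beta^{\deg g}g$ and $F(M)=c'M$, so $F(P_i)=c''\beta^{\deg g}P_i\in\langle P_i\rangle\subseteq I$. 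Therefore $D(r,k)\in CL_{(A,I)}$ for all $a_i,\beta$, and by Lemma~\ref{Lemma 5.3} the family $\{D(r,k):a_i,\beta\in k^{\times}\}$ is a closed subtorus of $\mathrm{Im}(\Phi_A)\cong L_A\subseteq G_A$. It is isomorphic to $\mathbb{G}_m^{r}$, since the induced automorphism of $A$ is trivial only when all $a_i=\beta=1$ (otherwise $(a_i-1)X_i$ or $(\beta-1)X_j$ would lie in $I\subseteq\langle X_1,\dots,X_n\rangle^{2}$). This gives both (1) and the claimed $k$-embedding $\phi\colon\mathbb{G}_m^{r}\hookrightarrow G_A$.

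\textbf{$(1)\Rightarrow(2)$.} Now suppose $G_A$ contains every $D(r,k)$ with $a_i,\beta\in k^{\times}$; read as in Definition~\ref{Definition 5.1}, this says the linear change of variables $D(r,k)$ lies in $CL_{(A,I)}$, i.e. $D(r,k)(I)=I$ for all such parameters. Let the split torus $T=\mathbb{G}_m^{r}$ act diagonally on $k[X_1,\dots,X_n]$ by these changes of variables, so the monomial $X^{\mathbf{e}}$ has weight $(e_1,\dots,e_{r-1},e_r+\cdots+e_n)\in\mathbb{Z}^{r}$, and grade $k[X_1,\dots,X_n]$ accordingly. Since $k$ is infinite, linear independence of the distinct characters $\mathbf{a}\mapsto\mathbf{a}^{\chi}$ on $(k^{\times})^{r}$ shows that for any $P\in I$ each $\mathbb{Z}^{r}$-homogeneous component of $P$ is a $k$-linear combination of the elements $D(r,k)(P)$, hence lies in $I$; thus $I$ is $\mathbb{Z}^{r}$-graded. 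I would then take $\mathbb{Z}^{r}$-homogeneous generators $P_1,\dots,P_m$ of $I$ of degrees between $2$ and $l-1$ modulo $\langle X_1,\dots,X_n\rangle^{l}$ (possible because $I\subseteq\langle X_1,\dots,X_n\rangle^{2}$ and a $\mathbb{Z}^{r}$-homogeneous polynomial of weight $\chi$ is ordinarily homogeneous of total degree $\chi_1+\cdots+\chi_r$). Every monomial of such a $P_i$ shares the same first $r-1$ exponents, so $P_i=M_ih_i$ with $M_i=X_1^{c_1}\cdots X_{r-1}^{c_{r-1}}$ a monomial and $h_i\in k[X_r,\dots,X_n]$ homogeneous; if every $h_i$ is a monomial then $I$ is a monomial ideal, and otherwise each non-monomial $h_i$ is $r$-homogeneous (Definition~\ref{Definition 5.20}), so $P_i=M_ih_i$ is $r$-monomial homogeneous and $I$ satisfies Property~$*$ for $r$ (Definition~\ref{Definition 5.24}).

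\textbf{Main obstacle.} The delicate direction is $(1)\Rightarrow(2)$: one must (a) justify that stability of $I$ under all $k$-points of $\mathbb{G}_m^{r}$ forces the $\mathbb{Z}^{r}$-grading of $I$ — this is exactly where infiniteness of $k$ is used, via independence of characters — and (b) translate the resulting shape "monomial times a homogeneous polynomial in $X_r,\dots,X_n$'' of the homogeneous generators precisely into the combinatorics of Definitions~\ref{Definition 5.20}, \ref{Definition 5.22}, and \ref{Definition 5.24}, correctly distinguishing the monomial-ideal case from Property~$*$ for $r$. The rest — the generator-by-generator check in $(2)\Rightarrow(1)$, the use of Lemma~\ref{Lemma 5.3}, and the injectivity of $\phi$ — is routine.
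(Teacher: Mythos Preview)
Your proof is correct. The $(2)\Rightarrow(1)$ direction and the construction of the embedding $\phi$ match the paper's argument essentially verbatim: both verify $F(I)\subseteq I$ generator by generator and deduce injectivity from $I\subseteq\langle X_1,\dots,X_n\rangle^{2}$.

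For $(1)\Rightarrow(2)$, however, you take a genuinely different route. The paper argues by \emph{induction on the Loewy length}: it passes to $\tilde A=A/J^{l-1}$ with admissible ideal $\tilde I=I+\langle X_1,\dots,X_n\rangle^{l-1}$, uses that the unipotent kernel of $G_A\to G_{\tilde A}$ forces $D(r,k)\subseteq G_{\tilde A}$, applies the inductive hypothesis to $\tilde I$, and then eliminates any degree-$(l-1)$ ``tail'' $\sum_i\alpha_i h_i$ from a generator $f_j=\sum_i\alpha_i h_i+g$ of $I$ by evaluating the automorphism relation at all $a_1,\dots,a_{r-1},\beta\in k^{\times}$ and invoking infiniteness of $k$. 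Your argument instead observes directly that stability of $I$ under all $D(r,k)$ with parameters in $k^{\times}$ forces $I$ to be $\mathbb{Z}^{r}$-graded (via linear independence of the characters $\mathbf a\mapsto\mathbf a^{\chi}$ over an infinite field), and then reads off the required shape of the homogeneous generators. This is shorter, more conceptual, and makes transparent exactly where infiniteness of $k$ enters; it also immediately yields a \emph{homogeneous} admissible ideal with the desired property, whereas the paper's induction establishes this only step by step. The paper's approach, on the other hand, stays closer to the concrete generator description of Lemma~\ref{Lemma 5.9} and does not require the reader to set up the torus action and weight decomposition.
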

\vskip1mm
\begin{proof}
$\bigl(\bold{(2)}\Rightarrow \bold{(1)}\big)$
Let us denote $X_i+I$ by $\bar{X_i}$. Under the hypothesis $(2)$, consider the following map \[\phi:\mathbb{G}_m\times \mathbb{G}_m\dots \times \mathbb{G}_m(r \text{ times }) \rightarrow G_A\] which is given by
\begin{equation}\label{eq 4.1.9}
    \phi(a_1,\dots,a_{r-1},\beta)=
    \begin{cases}
         F(\bar{X_i})=a_i\bar{X_i} \text{ if } 1\leq i \leq r-1 \\
     F(\bar{X_i})=\beta \bar{X_i} \text{ if } r\leq i\leq n
    \end{cases}
\end{equation}    
It is given that $I=\langle X_1,\dots,X_n\rangle^l+\langle P_1,\dots,P_m\rangle$ for some $m$ and $l=\text{ Lowey length of }A$, where $P_i$ is either a monomial or an $s_i$-monomial homogeneous polynomial of degree at least $2$ and atmost $l-1$, $s_i\geq r\geq1$, for all $1\leq i\leq m$. Therefore, the corresponding change of variables $F$ stabilises $I$, for all $F\in D(r,k)$. Hence, we have $\phi(a_1,\dots a_{r-1},\beta)\in G_A$ and $D(r,k)\subset G_A$. Suppose $\phi(a_1,\dots,a_{r-1},\beta)=\text{Id}$, then \[(a_i-1)X_i\in I \quad \text{if } 1\leq i\leq r-1, \text{ and } (\beta-1)X_i\in I \text{ if } r\leq i \leq n\] Since $I\subset \langle X_1,\dots,X_n\rangle^2$, we have $\text{Ker}(\phi)$ is trivial. Hence, the above map (\ref{eq 4.1.9}) is a $k$-embedding.
\vskip6mm
\noindent
$\big(\bold{(1)}\Rightarrow \bold{(2)}\big)$ 
\noindent
(\textbf{Brief Idea of the Proof:}) So to prove this part, we use the induction hypothesis on the Lowey length of $A$, $L(A)$. Given that $G_A$ satisfies a hypothesis, we will show that $A$ satisfies some property. For that, we construct a new algebra $\tilde{A}$ from $A$ which has Lowey length $L(A)-1$ and $G_{\tilde{A}}$ satisfies the same hypothesis as $G_A$. Therefore, by the induction method, $\tilde{A}$ satisfies the same property as $A$. Then, using the property of $\tilde{A}$, we prove that the same property is true for $A$.
\vskip2mm
\noindent
(\textbf{Induction Statement:}) Assume $G_A$ contains a class of automorphisms which are given by the change of variables \[\biggl\{D(r,k)=\{\Gamma(\bar{X_i})=a_{i}\bar{X_{i}} \text{ if } 1\leq i\leq r-1 \text{ and } \Gamma(\bar{X_i})=\beta\bar{X_i} \text{ if } r\leq i\leq n\}\biggl\}.\] Then we prove that the admissible ideal $I$ for $A$ satisfies the Property $*$ for some $r$ or $I$ is a monomial ideal (given in Definition \ref{Definition 5.24}). To prove this, we use induction on $l$, where $l$ is the Lowey length of $A$. Lowey length of $A$ is given by $l$=min $\{k\geq 2:\langle X_1,\dots,X_n \rangle^k \subseteq I \}$.\par
\vskip6mm
\noindent
(\textbf{True for Lowey length $l=2$}:) If $l=2$, then $I=\langle X_1,\dots, X_n\rangle^2$, which is a monomial ideal, and $D(r,k)\subset G_{A}\cong \text{GL}_n$. Now let us assume that $I$ is properly contained in between $\langle X_1,\dots,X_n\rangle^l$ and $\langle X_1,\dots,X_n \rangle^2$ and $l>2$. Let $J(A)$ be the Jacobson radical of $A$ and we have $(J(A))^m=(\langle X_1,\dots,X_n\rangle^m+I)/I$ for all $m\geq 1$. Set $\tilde{I}=I+\langle X_1,\dots,X_n\rangle^{l-1}$ and let $\Tilde{A}=A/J(A)^{l-1}$. Then, $\Tilde{A}$ has lowey length $l-1$ with $\tilde{I}$ is an admissible ideal for $\Tilde{A}$. 
\vskip4mm
\noindent
(\textbf{True for Lowey length $l-1$}:) Suppose, the rank of $G_{\Tilde{A}}=t$. We know that there is a homomorphism from $G_A\rightarrow G_{\Tilde{A}}$ whose kernel is unipotent by Proposition \ref{Proposition 4.12}. Hence, we have an embedding of change of variables $D(r,k) \subset G_{\Tilde{A}}$ and $r\leq t\leq n$. Therefore by the induction hypothesis, $\tilde{I}$ satisfies Property $*$ for $r$ or $\tilde{I}$ is a monomial ideal for $\Tilde{A}$, as $\Tilde{A}$ has Lowey length $l-1$ and $G_{\tilde{A}}$ contains $D(r,k)$. Hence, $\tilde{I}$ is generated by all monomials of degree $l-1$ and $S_i$, where $S_i$ is either an $u_i$-monomial homogeneous polynomial or a monomial, $u_i\geq r$, for all $1\leq i\leq m$, and $2\leq$ $\text{deg}(S_i)\leq l-2$, i.e., $\tilde{I}=\langle X_1,\dots,X_n\rangle^{l-1}+\langle S_1,\dots,S_m\rangle$.
\vskip4mm
\noindent
(\textbf{Claim: The statement is true for Lowey length $l$}:) To prove $I$ satisfies Property $*$ for $r$ or $I$ is a monomial ideal, it is enough to prove that every generator of $I$ of degree at least $2$ and at most $l-1$ is either an $s$-monomial homogeneous polynomial for $s\geq r$ or a monomial. That is if $I=\langle X_1,\dots, X_n\rangle^l+\langle f_1,\dots,f_m\rangle$, where $2\leq \text{deg}(f_i)\leq l-1$ and $f_i$ has no linear part, then $f_i$ is either $s_i$-monomial homogeneous polynomial for $s_i\geq r$ or a monomial. 
\vskip2mm
\noindent
If $f_i$ is either a monomial or an $s_i$-monomial homogeneous polynomial for $s\geq r\geq 1$, then we are done.  Suppose there exists a generator $f_j$ of $I$ such that $f_j=\sum_{i}\alpha_{i}h_{i}+ g$, where $h_{i}$'s are distinct monomials of degree $l-1$, $\alpha_i \in k$ and $2\leq\text{deg}(g)<l-1$. However, $f_j\equiv g \ (\text{mod }\langle X_1,\dots,X_n\rangle^{l-1})$. Hence, we have $g\in \tilde{I}$ is a generator in $\tilde{I}$ of degree at least $2$ and at most $l-2$, as $\tilde{I}=I+\langle X_1,\dots,X_n\rangle ^{l-1}$ and $f_j$ is a generator of $I$ with $2\leq \text{deg}(f_j)\leq l-1$. Since $\tilde{I}$ satisfies the Property $*$ for $r$ or $\tilde{I}$ is a monomial ideal, we have $g$ is either an $u$-monomial homogeneous polynomial for $u\geq r$ or a monomial. If $g$ is a $u$-monomial homogeneous polynomial, then let $g=MP$ with $M$ a monomial or a scalar, and $P$ is a $u$-monomial homogeneous polynomial of degree $d$.  However, $f_j=0$ in $A$, we have 
\begin{equation}\label{eq 5.2}
g(\bar{X_1},\dots,\bar{X_n})=-\sum_i\alpha_{i}h_{i}(\bar{X_1},\dots,\bar{X_n})
\end{equation}
in $A$. Moreover, $\Gamma(I)\subset I$ and apply $\Gamma$ on both sides of the previous equation (\ref{eq 5.2}). So, we have a function $\Delta_g$ depending on $g$ and $\Gamma$ defined below such that;
\begin{align}\label{eq 5.3}
\begin{split}
(\Delta_{g})g(\bar {X_1},\dots,\bar{X_n})=-\sum_i\alpha_ih_i(a_1,\dots,a_{r-1},\beta,\dots,\beta)h_i(\bar{X_1},\dots,\bar{X_n}).
\end{split}
\end{align}
where \begin{equation*}
    \Delta_{g}=
   \begin{cases}
        M(a_1,\dots,a_{r-1},\beta,\dots,\beta)\beta^d ,\text{ if } g=MP \text{ is a }$u-$\text{monomial homogeneous polynomial.}\\
        g(a_1,\dots,a_{r-1},\beta,\dots,\beta), \text{ if } g \text{ is monomial. }
    \end{cases}
\end{equation*}
\vskip2mm
\noindent
Again, by substituting $g(\bar{X_1},\dots,\bar{X_n})=-\sum_i\alpha_{i}h_i(\bar{X_1},\dots,\bar{X_n})$ in equation (\ref{eq 5.3}), we have 
\begin{align*}
\begin{split}
(\Delta_{g})\sum_i\alpha_ih_i(\bar{X_1},\dots,\bar{X_n})=\sum_i\alpha_ih_i(a_1,\dots,a_{r-1},\beta,\dots,\beta)h_i(\bar{X_1},\dots,\bar{X_n}),
\end{split}
\end{align*}
\noindent
Since ${h_i}$'s are linearly independent in $J(A)^{l-1}$, we get that
\begin{equation*}
(\Delta_g)\alpha_i=\alpha_ih_i(a_1,\dots,a_{r-1},\beta,\dots,\beta).
\end{equation*}
This is true for all $\{a_1,\dots,a_{r-1},\beta\}\in k$ and $\forall i$. As $k$ is infinite, $\alpha_{i}=0, \forall i$. Therefore, we get that every generator in $I$ of degree at least $2$ and at most $l-1$ is either an $s$-monomial homogeneous polynomial for $s\geq r$ or a monomial. Hence, we are done.
\end{proof}

\vskip1mm

\begin{remark}\label{Remark 5.26}
One can establish that any permutation of the specified change of variables, accompanied by different weights, will ensure that the admissible ideal $I$ retains the Property $*$ for some $r$ with the designated weights for the indeterminates. We can get Theorem $1.2$ of \cite{AS1} as a particular case of the above theorem.
\end{remark}
\vskip1mm

\begin{corollary}\label{Corollary 5.27}
Let $A$ be a split local commutative algebra with $I$ an admissible ideal and $\emph{dim}(J/J^2)=n$, then $I$ is monomial if and only if $\mathbb{G}_m^n=D(n,k)$ embeds in $G_A$. 
\end{corollary}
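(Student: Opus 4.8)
The plan is to obtain this as the extremal case $r=n$ of Theorem \ref{Theorem 5.25}. First I would unwind what the data in Theorem \ref{Theorem 5.25} specializes to when $r=n$. The matrix $D(n,k)$ degenerates: the block carrying $a_1,\dots,a_{r-1}$ now occupies the first $n-1$ diagonal slots and the block carrying $\beta$ occupies the single slot $n$, so $D(n,k)=\{\diag(a_1,\dots,a_{n-1},\beta):a_i,\beta\in k^\times\}$ is exactly the full diagonal torus $\mathbb{G}_m^n\subseteq \mathrm{GL}_n(k)=\mathrm{GL}(J/J^2)$ (this is the identification already written in the statement of the corollary). Hence condition $(1)$ of Theorem \ref{Theorem 5.25} with $r=n$ says precisely that $\mathbb{G}_m^n$ embeds in $G_A$, and the $k$-embedding $\phi:\mathbb{G}_m^n\to G_A$ promised at the end of that theorem supplies the map itself.

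The second step is to check that the ``Property $*$ for $r$'' alternative in condition $(2)$ is vacuous when $r=n$, so that $(2)$ collapses to ``$I$ is a monomial ideal''. Indeed, Property $*$ for $n$ would require some generator $P_i$ of $I$ to be an $s_i$-monomial homogeneous polynomial with $s_i\ge n$; since $s_i$ indexes a genuine variable $X_{s_i}$ and $P_i$ is non-constant, this forces $s_i=n$, so $P_i=M\cdot g$ with $g$ an $n$-homogeneous polynomial. But an $n$-homogeneous polynomial is by Definition \ref{Definition 5.20} a non-monomial homogeneous polynomial in the indeterminates $X_n,\dots,X_n$, i.e.\ in $X_n$ alone, and every homogeneous polynomial in the single variable $X_n$ is a scalar multiple of a power of $X_n$, hence a monomial --- a contradiction. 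So no generator of $I$ can be an $n$-monomial homogeneous polynomial, and Property $*$ for $n$ can never hold.

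Combining the two steps: by Theorem \ref{Theorem 5.25} applied with $r=n$, the embedding $\mathbb{G}_m^n\hookrightarrow G_A$ holds if and only if $A$ admits a presentation $k[X_1,\dots,X_n]/I$ with $I$ satisfying Property $*$ for $n$ or $I$ monomial, and by the previous paragraph the first possibility is empty, leaving exactly ``$I$ is a monomial ideal''. That is the desired equivalence.

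I do not expect a genuine obstacle here, since all of the content is carried by Theorem \ref{Theorem 5.25}; the only points that need care are the two boundary computations above --- the shape of $D(r,k)$ at $r=n$, and the non-existence of $n$-homogeneous polynomials --- together with spelling out, following Saor\'in et al., that ``$I$ is monomial'' is read as ``$A$ admits a monomial admissible ideal'', which is precisely what the deduction $(1)\Rightarrow(2)$ produces.
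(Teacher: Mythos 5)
Your proposal is correct and takes exactly the same route as the paper, which simply says to apply Theorem~\ref{Theorem 5.25} with $r=n$ and reads $D(n,k)$ as the full diagonal torus. The two boundary checks you carry out explicitly --- that $D(n,k)=\{\mathrm{diag}(a_1,\dots,a_{n-1},\beta)\}=\mathbb{G}_m^n$ and that Property~$*$ for $n$ is vacuous because an $n$-homogeneous polynomial would have to be a non-monomial homogeneous polynomial in $X_n$ alone --- are left implicit in the paper's one-line proof but are precisely what makes the specialization work.
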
 
\vskip1mm
\begin{proof}
    Take $D(n,k)=\text{diag }\{(a_1,\dots,a_n): a_i\in k^{\times}\}$, then follow the previous technique with $r=n$.
\end{proof}
\vspace{0.1in}
\noindent
So, to study the automorphism group of split local commutative associative algebras, we see that it is important to examine the stabilizer of polynomials, where these polynomials occur as the generator of the ideal $I$ of $A$ in quiver and relation representation of $A$. Therefore, we recall now an important result related to the stabilizer of a polynomial from the paper \cite{GG}.

\vspace{0.2in}

\begin{definition}\label{Definition 5.28} 
Stabilizer of a polynomial $f\in k[V]$ is defined by
\begin{equation*}\label{eq28}
\text{Stab}(f)=\{g\in \text{GL}(V): f\circ g=f\}
\end{equation*}
\end{definition}

\vskip2mm

\begin{lemma}\label{Lemma 5.29}
$($\cite{GG}, \emph{Lemma} $5.1)$ Let $X\subset \emph{SL}(V)$ be a simple algebraic group over an algebraically closed field $k$ such that $V$ is irreducible, restricted, and tensor indecomposable for $X$. Put $q$ for a non-zero $X$ invariant quadratic form on $V$ if one exists; otherwise, set $q\coloneq0$. If $(X,V)$ does not appear in Table $1$ of \cite{Sei}, then, for every $f\in k[V]^X-k[q]$, the stabilizer of $f$ in $\emph{\text{GL}}(V)$ has identity component $X$.
If additionally, $\emph{char}(k)=p\neq 2,3$ and does not divide $\emph{\text{deg}}(f)$,  $f$ is not in $k[V]^{(p)}[q]$ $($see \cite{GG} for notation$)$, and furthermore $\emph{char}(k)$ does not divide $n+1$ if $X$ has type $\emph{A}_n$, then the scheme-theoretic stabilizer of $f$ in $\emph{\text{GL}}(V)$ is smooth with identity component $X$. The above lemma is also true over an arbitrary base field.
\end{lemma}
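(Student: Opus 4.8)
The plan is to establish $\mathrm{Stab}_{\mathrm{GL}(V)}(f)^{0}=X$ by pushing the problem, through a short chain of formal reductions, onto the classification of irreducible subgroups of the classical groups, and then onto a finite computation of invariant rings. Write $S=\mathrm{Stab}_{\mathrm{GL}(V)}(f)$ and let $H$ be the reduced identity component of $S$. Since $f$ is $X$-invariant we have $X\subseteq H$, so it is enough to show $H=X$. First I would note that $V$, being a simple $X$-module, is a simple $H$-module; the unipotent radical of $H$ then has nonzero, hence full, fixed space on $V$, so it acts trivially, and as $H$ acts faithfully on $V$ it must be connected reductive, contain $X$, and act irreducibly on $V$. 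I would also record that $f$ is non-constant, since $k\subseteq k[q]$ regardless of whether $q=0$, so $f\notin k[q]$ forces $f\notin k$. The point of the remaining argument is that a reductive group properly containing the irreducible $X$ cannot fix a function outside $k[q]$.

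So suppose $H\neq X$ and seek a contradiction. If the radical $R(H)=Z(H)^{0}$ is non-trivial, then since it is central in $H\ni X$ it centralises $X$, hence consists of scalar transformations by Schur's lemma ($V$ irreducible over $X$), and being a non-trivial subtorus of the scalar $\mathbb{G}_{m}$ it is the full scalar torus; then $k[V]^{H}\subseteq k[V]^{R(H)}=k$, so $f\in k[V]^{H}$ would be constant, a contradiction. Hence $R(H)=1$ and $H$ is semisimple, with $X\subsetneq H$ and both groups irreducible on $V$. Now I would invoke the classification of connected semisimple subgroups of $\mathrm{GL}(V)$ acting irreducibly -- Dynkin in characteristic $0$, and Seitz together with Liebeck--Seitz in general -- to reduce to an explicit finite list of pairs $X\subsetneq H$. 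When $H$ is not simple, $V$ is an outer tensor product of non-trivial modules over the simple factors of $H$, and restricting this decomposition to $X$ contradicts the tensor-indecomposability of $V$ for $X$ except in a short list of configurations; those configurations, together with the triples in which the maximal-subgroup tables carry extra members or in which $k[V]^{X}$ exceeds $k[q]$ for reasons invisible to the generic stabiliser, are precisely the pairs removed by the hypothesis that $(X,V)$ is not in Table~1 of \cite{Sei}. For every remaining pair -- including the borderline possibilities $H=\mathrm{SL}(V)$, $H=\mathrm{Sp}(V)$, and $H=\mathrm{SO}(V,q')$ with $q'$ then forced equal to $q$ by uniqueness of the invariant form -- one checks directly that $k[V]^{H}\subseteq k[q]$; since $f\in k[V]^{H}$, this gives $f\in k[q]$, contradicting the choice of $f$. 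The verification that this case-by-case invariant-theory check goes through for all pairs not in Table~1 of \cite{Sei}, and that Table~1 is exactly the necessary exclusion list, is what I expect to be the real obstacle; everything preceding it is formal.

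For the scheme-theoretic strengthening, once the reduced identity component of $S$ is known to be $X$, smoothness of $S$ is equivalent to the equality of Lie algebras $\mathfrak{gl}(V)_{f}=\mathrm{Lie}(X)$, where $\mathfrak{gl}(V)_{f}=\{x\in\mathfrak{gl}(V):x\cdot f=0\}$; the inclusion $\mathrm{Lie}(X)\subseteq\mathfrak{gl}(V)_{f}$ is clear. The extra hypotheses $p\neq 2,3$, $p\nmid\deg f$, $p\nmid n+1$ when $X$ has type $\mathrm{A}_{n}$, and $f\notin k[V]^{(p)}[q]$ are exactly what is needed for the reverse inclusion: they keep $f$ away from $p$-th powers and from $k[q]$-combinations of such, so that the relevant differentials do not degenerate, $\mathfrak{gl}(V)_{f}$ is an algebraic Lie subalgebra, hence the Lie algebra of a connected subgroup of $\mathrm{GL}(V)$ fixing $f$, which by the group-level result is $X$. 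Finally, to descend to an arbitrary base field $k$, observe that $S$ and its identity component are defined over $k$ and contain the $k$-group $X$; since $S^{0}_{\bar k}=X_{\bar k}$ by the algebraically closed case while $X$ is geometrically connected, $S^{0}=X$ over $k$, and smoothness, being a geometric property, passes down from $\bar k$ as well.
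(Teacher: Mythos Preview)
Your proposal is a reasonable sketch of how the cited result from \cite{GG} is actually proved there, but it is not what the present paper does. The lemma is stated as a quotation of \cite{GG}, Lemma~5.1; the only new content is the final sentence ``The above lemma is also true over an arbitrary base field,'' and the paper's proof addresses only that sentence. Concretely, the paper takes the algebraically closed case as a black box from \cite{GG}, observes that the invariant ring is compatible with base change via the isomorphism $k[V]^{G}\otimes k_{\mathrm{alg}}\cong k_{\mathrm{alg}}[V\otimes k_{\mathrm{alg}}]^{G\times k_{\mathrm{alg}}}$ (so a suitable $f$ already exists over $k$), and then reads off $G(k)=G(k_{\mathrm{alg}})\cap\mathrm{GL}(V)=\mathrm{Stab}^{0}(f)(k_{\mathrm{alg}})\cap\mathrm{GL}(V)=\mathrm{Stab}^{0}(f)(k)$. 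Your final paragraph contains essentially the same descent, so on the part the paper actually proves you agree; the rest of your write-up (reductivity of $H$, ruling out a central torus via Schur, the Dynkin/Seitz/Liebeck--Seitz classification step, and the Lie-algebra smoothness check) reconstructs the argument of \cite{GG} rather than anything in this paper. That extra material is not wrong as an outline, and you correctly flag the case analysis against Table~1 as the genuine work, but for the purposes of comparing with the paper you have over-proved: only the base-change paragraph is needed.
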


\vskip2mm

\begin{proof}
Let $k$ be an arbitrary field. First, we choose a pair $(X=G, V)$ to satisfy the above hypothesis. The natural homomorphism \[k[V]^G\otimes k_{alg}\rightarrow k_{alg}[V\otimes k_{alg}]^{G\times k_{alg}}\] is an isomorphism. So, there exists an $f\in k[V]^G-k[q]$. Therefore, $G(k)=G(k_{alg})\cap \text{GL}(V)=\text{Stab}^{0}(f)(k_{alg})\cap \text{GL}(V)=\text{Stab}^{0}(f)(k).$
\end{proof}

\vskip2mm
\begin{definition}\label{Definition 5.30}
  $($\cite{OS}, \cite{Schneider}$)$  A non-zero homogeneous polynomial $f\in k[X_1,\dots, X_n]$ of degree $d$ is called non-singular if any one of the following equivalent conditions holds,
    \begin{enumerate}
        \item $0=(0,\dots,0)$ is the unique common root of its partial derivatives $\displaystyle\frac{\partial f}{\partial
        X_1},\dots,\displaystyle\frac{\partial f}{\partial X_n}$. 
        \vskip2mm
        \item Discriminant $(\Delta_{n,d}(f))$ of $f$ does not vanish (see Chapter $13$, \cite{GKZ}).
        \vskip2mm
        \item Let $\Theta_f$ be the corresponding symmetric multilinear form of $f$ of degree $d$ over a vector space $V=k^n$ of dimension $n$ over $k$. Then $\Theta_f(v,\dots,v,w)=0$ for all $w\in V$ implies $v=0$. 
    \end{enumerate}
\end{definition}

\vspace{0.1in}
\noindent
Next, we recall the following theorem, originally due to Camille Jordan.
\begin{theorem}\label{Theorem 5.31}
$($\cite{Schneider}$)$ Let $\Theta$ denote a symmetric multilinear map of degree $r\geq 3$ on a vector space of dimension $n$ over a field $k$. Assume that $k$ has characteristic $0$ or greater than $r$. If $\Theta$ is nonsingular, then its orthogonal group (stabilizer group) in $\emph{GL}_n(k)$ is finite.
\end{theorem}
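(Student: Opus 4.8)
The plan is to realize the orthogonal group as the group of $k$-points of a closed subgroup scheme $\mathrm{Stab}(f)\subseteq\mathrm{GL}_n$, where $f(X_1,\dots,X_n)=\Theta\bigl((X_1,\dots,X_n),\dots,(X_1,\dots,X_n)\bigr)$ is the nonsingular degree-$r$ form associated with $\Theta$, and then to prove that this group scheme is $0$-dimensional. A finite-type affine group scheme of dimension $0$ over a field is Artinian, hence finite as a set, so its group of $k$-points is finite, which is the assertion. Since $\dim G\le\dim_k\mathrm{Lie}(G)$ for every algebraic group $G$ over $k$ (the inequality reduces to the algebraically closed case by base change), it suffices to show $\mathrm{Lie}(\mathrm{Stab}(f))=0$.

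First I would write the Lie algebra explicitly. Differentiating the defining relation $f\bigl((\mathrm{Id}+\epsilon X)v\bigr)=f(v)$ at $\epsilon=0$ shows that $\mathrm{Lie}(\mathrm{Stab}(f))$ consists of those $X=(x_{ij})\in\mathfrak{gl}_n(k)$ for which the polynomial
\[
(D_Xf)(v)\;=\;\sum_{i=1}^{n}\frac{\partial f}{\partial X_i}(v)\,(Xv)_i
\]
vanishes identically. Viewing the coordinates of $v$ as the variables $X_1,\dots,X_n$, this is the syzygy
\[
\sum_{i=1}^{n}\ell_i\,\frac{\partial f}{\partial X_i}=0\quad\text{in }k[X_1,\dots,X_n],\qquad \ell_i:=\sum_{j=1}^{n}x_{ij}X_j ,
\]
so the $\ell_i$ are \emph{linear} forms.

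Next I would bring in nonsingularity. Since $\mathrm{char}\,k=0$ or $\mathrm{char}\,k>r$, Definition \ref{Definition 5.30}(1) applies: the partials $\partial_1f,\dots,\partial_nf$ have the origin as their only common zero, so $k[X_1,\dots,X_n]/(\partial_1f,\dots,\partial_nf)$ is Artinian, and as $k[X_1,\dots,X_n]$ is Cohen--Macaulay of dimension $n$ the forms $\partial_1f,\dots,\partial_nf$ form a regular sequence (each of degree $r-1$, since a vanishing $\partial_if$ would enlarge the common zero locus). I then use the standard fact that a regular sequence has no nontrivial syzygy in low degree: if $\sum_i a_i\,\partial_if=0$ with every $a_i$ homogeneous of degree $<r-1$, then $a_n\,\partial_nf\in(\partial_1f,\dots,\partial_{n-1}f)$, and as $\partial_nf$ is a nonzerodivisor modulo that ideal we get $a_n\in(\partial_1f,\dots,\partial_{n-1}f)$, an ideal that is $0$ in degrees $<r-1$; hence $a_n=0$, and induction on $n$ gives all $a_i=0$. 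This is exactly where the hypothesis $r\ge3$ is used: it forces $\deg\ell_i=1<r-1$, so our syzygy is trivial, $\ell_i=0$ for all $i$, hence $x_{ij}=0$ for all $i,j$, i.e.\ $X=0$. Thus $\mathrm{Lie}(\mathrm{Stab}(f))=0$ and the theorem follows.

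The only substantive ingredient — and hence the main obstacle — is the commutative-algebra input: that the Jacobian ring of a nonsingular form is Artinian and that the partial derivatives therefore form a regular sequence with no syzygies below degree $r-1$ (equivalently, exactness of the Koszul complex in the relevant degree). Everything else is bookkeeping: translating infinitesimal invariance into a homogeneous syzygy, a degree count, and the passage from "Lie algebra zero" to "group scheme finite". One should also take minor care that $\mathrm{Lie}(\mathrm{Stab}(f))$ is genuinely computed over $k$ itself, as above, so that no perfectness or further assumption on $k$ is required.
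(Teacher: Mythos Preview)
The paper does not prove Theorem \ref{Theorem 5.31}; it merely cites it from \cite{Schneider} (attributing the result to Camille Jordan) and uses it as a black box. So there is no ``paper's own proof'' to compare against.

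Your argument is correct and self-contained. The logical skeleton is exactly right: compute $\mathrm{Lie}(\mathrm{Stab}(f))$ as the space of linear syzygies among the partial derivatives, invoke nonsingularity to see that $\partial_1 f,\dots,\partial_n f$ form a regular sequence of degree $r-1$ forms, and then use Koszul exactness (in the guise of ``no syzygies in degree $<r-1$'') together with $r\ge 3$ to force $X=0$. The characteristic hypothesis is used precisely where it should be, namely to pass between the nonsingularity of $\Theta$ in the sense of Definition~\ref{Definition 5.30}(3) and the Jacobian criterion (1): one has $\partial_i f(v)=r\,\Theta(v,\dots,v,e_i)$, and $r$ is invertible in $k$ by assumption. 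Your parenthetical remark that no $\partial_i f$ can vanish identically is also needed and correctly justified (if $\partial_1 f=0$ then $f\in k[X_2,\dots,X_n]$ and all partials vanish at $(1,0,\dots,0)$).

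As a point of comparison, the classical proofs (Jordan, and the treatment in \cite{Schneider}) typically argue directly with the multilinear form rather than passing through the Jacobian ideal and Koszul complex, but the essential content is the same infinitesimal computation. Your commutative-algebra packaging is clean and makes the role of $r\ge 3$ transparent as the degree inequality $1<r-1$.
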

\vskip2mm
\noindent
We now prove the following result.
\vskip 2mm
\begin{proposition}\label{Proposition 5.32}
Let $A$ be a split local commutative graded by the radical algebra with Lowey length $l$ and $\emph{dim}(J/J^2)=n$, where the admissible ideal $I$ is given by $\langle X_1,\dots,X_n \rangle^l +\langle f_1,\dots,f_m\rangle$, $f_i$'s are homogeneous polynomials such that not all $f_{i}$'s are zero. Let $d_i=\emph{deg}(f_i)$, where $1\leq i\leq m$ and $2\leq d_i<l$.  Suppose there exists a unique generator $f_s$ with degree $\emph{min}\{d_i: 1\leq i\leq m\}$. Then 
\[\bigcap_{i=1}^{m}\emph{Sim}(f_i)\subset\emph{\text{Im}}(\Phi_A)\subset \emph{Sim}(f_s).\]
In particular, if $f_s$ is a non-singular homogeneous polynomial of degree $\geq 3$ over a field $k$ with $\emph{char }k=0$ or $\emph{char }k=p>3$ and $f_s$ is non-singular polynomial of degree $d$ with $2<d<p$, then $G_A$ is a rational group of rank $1$.
\end{proposition}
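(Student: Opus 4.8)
The plan is to first pin down $\text{Im}(\Phi_A)$ between the two similarity groups, essentially repeating the proof of Proposition~\ref{Proposition 5.12}, the only new ingredient being that among $f_1,\dots,f_m$ precisely one generator, namely $f_s$, realizes the minimal degree. For the inclusion $\text{Im}(\Phi_A)\subset\text{Sim}(f_s)$: since $A$ is graded by the radical with a homogeneous admissible ideal $I$, Lemma~\ref{Lemma 5.3} lets me write any $M\in\text{Im}(\Phi_A)$ as a linear change of variables $F$, $F(X)=XM$, with $F(I)\subseteq I$. Then $F(f_s)$ is homogeneous of degree $d_s=\min_i d_i$ and lies in $I$; the degree-$d_s$ graded piece of the homogeneous ideal $I$ is spanned by those generators $f_i$ with $d_i=d_s$ (the power ideal $\langle X_1,\dots,X_n\rangle^l$ and all products $g f_i$ with $\deg g\geq 1$ contribute only in degrees $>d_s$), and by the uniqueness hypothesis this piece is the line $k f_s$. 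Hence $F(f_s)=\alpha f_s$ with $\alpha\in k^{\times}$ by invertibility of $F$, so $M\in\text{Sim}(f_s)$. For $\bigcap_i\text{Sim}(f_i)\subset\text{Im}(\Phi_A)$: if $M$ lies in the intersection, the associated linear change of variables $F$ scales each $f_i$ and preserves $\langle X_1,\dots,X_n\rangle^l$, so $F(I)\subseteq I$ and $F$ descends to an automorphism of $A$ mapping to $M$ under $\Phi_A$ (here I read $\text{Im}(\Phi_A)=\Phi_A(\text{Aut}_k(A))$ as in Proposition~\ref{Proposition 5.4}; passing to identity components below makes the choice of convention irrelevant).

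For the ``in particular'' I would proceed as follows. The scalar torus $\mathbb{G}_m=\{\alpha\,\mathrm{Id}\}$ lies in $\bigcap_i\text{Sim}(f_i)$, since $f_i(\alpha X)=\alpha^{d_i}f_i(X)$. When $f_s$ is non-singular of degree $d=d_s\geq 3$ with $\text{char }k=0$, or $\text{char }k=p>3$ and $2<d<p$, polarization — legitimate because $d!$ is invertible — identifies $f_s$ $\text{GL}_n$-equivariantly with its symmetric $d$-linear form $\Theta_{f_s}$, and non-singularity of $f_s$ in the sense of Definition~\ref{Definition 5.30}(3) is exactly non-singularity of $\Theta_{f_s}$; so Theorem~\ref{Theorem 5.31} gives that $\text{Stab}(f_s)$ is finite. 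Combined with the homomorphism $\beta:\text{Sim}(f_s)\to\mathbb{G}_m$, $\beta(M)=\alpha_M$, whose kernel is $\text{Stab}(f_s)$, this yields $\dim\text{Sim}(f_s)\leq 1$; since $\text{Sim}(f_s)^{0}$ is connected, one-dimensional and contains the one-dimensional scalar torus, $\text{Sim}(f_s)^{0}=\mathbb{G}_m$. Intersecting the inclusion chain with identity components now sandwiches $\mathbb{G}_m\subseteq\Phi_A(G_A)\subseteq\text{Sim}(f_s)^{0}=\mathbb{G}_m$, so $\Phi_A(G_A)=\mathbb{G}_m$. Finally, by Proposition~\ref{Proposition 4.37} (equivalently Proposition~\ref{Proposition 5.4}), $G_A\cong_{b}\Phi_A(G_A)\times U$ with $U$ a connected unipotent group; hence $G_A\cong_{b}\mathbb{G}_m\times U$ is rational, and its reductive rank equals $\text{rank}(\mathbb{G}_m)=1$.

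I expect the only delicate points to be the degree bookkeeping in the inclusion $\text{Im}(\Phi_A)\subset\text{Sim}(f_s)$ — this is where both the minimality of $d_s$ and the uniqueness of $f_s$ are essential, and one must check carefully that no product $gf_i$ can land in degree $d_s$ — and the honest justification of applying Jordan's theorem in characteristic $p$, which is precisely why the hypothesis demands $2<d<p$. Everything else is a routine assembly of Lemma~\ref{Lemma 5.3}, the technique of Proposition~\ref{Proposition 5.12}, the structure result of Proposition~\ref{Proposition 5.4} or Proposition~\ref{Proposition 4.37}, and the fact that connected unipotent groups over perfect fields are rational.
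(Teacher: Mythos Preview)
Your proposal is correct and follows essentially the same route as the paper: use Lemma~\ref{Lemma 5.3} to realize $\text{Im}(\Phi_A)$ via linear changes of variables, observe that the degree-$d_s$ homogeneous piece of $I$ is the line $k f_s$ (the paper phrases this as solving $d_s=s_{sj}+d_j$ to force $g_{sj}=0$ for $j\neq s$), and then apply Jordan's Theorem~\ref{Theorem 5.31} together with Proposition~\ref{Proposition 5.4} to conclude $G_A\cong \mathbb{G}_m\ltimes U$. Your sandwich $\mathbb{G}_m\subseteq\Phi_A(G_A)\subseteq\text{Sim}(f_s)^0=\mathbb{G}_m$ is exactly how the paper pins down $\text{Im}(\Phi_A)^0$.
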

\vskip1mm
\begin{proof}
 It is known from Lemma \ref{Lemma 5.3} that when the admissible ideal $I$ for $A$ is a homogeneous ideal, we have \[\text{Im}(\Phi_A)=\{M\in \text{GL}(J/J^2): \phi_M \in \text{Aut}_k(A) \text{ with } \phi_M(f+I)=f(XM)+I \}.\] Therefore, every $M\in\text{Im}(\Phi_A)$ induces an automorphism of $A$ which is given by the linear change of variables $F_1(X)=X_1M$, where $X=(X_1,\dots,X_n)$ and $F_1(I)\subset I$ by Proposition \ref{Proposition 5.2} and Lemma \ref{Lemma 5.3}. It is easy to see that $\bigcap_{i=1}^{m}\text{Sim}(f_i)\subset \text { Im}(\Phi_A)$. Let us denote $f_M(X)\coloneqq f(XM)$ and a linear change of variables by $F_1$ for $M$, where $M\in \text{ Im}(\Phi_A$). Since a linear change of variables does not change the degree of the polynomial and $F_1(f_i)={f_{i}}_{M}\in I$ by Lemma \ref{Lemma 5.3}, we have ${f_{i}}_M$ is a homogeneous polynomial of degree $d_i$ and ${f_{i}}_M=F_1(f_i)=\sum_{j=1}^{m}g_{ij}f_j$, where $g_{ij}\in k[X_{1},\dots,X_{n}]$ is a homogeneous polynomial of degree $s_{ij}$. As $F_1$ is given by the linear change of variables, we have
\begin{equation*}\label{eq30} 
d_i= \text{deg}({f_{i}}_{M})=\text{deg}(F_1(f_i))=\text{deg }(\sum_{j=1}^{m}g_{ij}f_j)=\text{ Max}_{j=1}^{m}(\text{deg}(g_{ij})+\text{deg}(f_j)).
 \end{equation*}
 Without loss of generality, let $d_1=\text{ min }d_i$, $d_i$ is degree of $f_i$ and $d_i
 \leq d_{i+1}$ for all $i\geq 2$. Since ${f_{i}}_M$ is a homogeneous polynomial of degree $d_i$, we have $d_i=(s_{ij}+d_j)$ for all $1\leq i,j\leq m$, where $\text{deg}(g_{ij})=s_{ij}$. By the given hypothesis, $f_1$ is the unique generator of minimum degree $d_1$. Therefore, $d_1=s_{11}+d_1$ and $d_1<d_i$ for all $i\neq 1$, which implies $s_{11}=0$ and $g_{1j}=0,\forall j\in\{2,\dots,m\}$. So, we get $f_1(XM)=\alpha f_1$. Hence, $\text{Im}(\Phi_A)\subset \text{Sim}(f_1)$. Finally, we have \[\bigcap_{i=1}^{m}\text{Sim}(f_i)\subset \text{Im}(\Phi_A)\subset \text{Sim}(f_1).\] Now from Proposition \ref{Proposition 5.4}, we know that $G_A\cong \text{Im}(\Phi_A)^{0}\ltimes \text{Ker}(\Phi_A)^{0}$. It is clear from Theorem \ref{Theorem 5.31} that under the assumption on $f_1$ depending of characteristic of $k$, $\text{Sim}^{0}(f_1)=\mathbb{G}_m$ as $\text{Stab}^{0}(f)=\{Id\}$. Hence, $\text{Im}(\Phi_A)^{0}\cong \mathbb{G}_m$. Therefore, $G_A\cong \mathbb{G}_m\ltimes \text{Ker}(\Phi_A)^{0}$, hence $G_A$ is a rational group of rank $1$. 
\end{proof}

\vskip2mm
\begin{lemma}\label{Lemma 5.33}
Let $A$ be a split local commutative graded by the radical algebra over $k$ with Lowey length $l$ and $\emph{dim}(J/J^2)=n$. Then the admissible ideal $I$ contains an $\emph{\text{Im}}(\Phi_A)$-stable finite-dimensional subspace $W$ with all non-zero elements having the same degree.
\end{lemma}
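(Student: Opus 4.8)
The plan is to take $W$ to be the lowest-degree homogeneous component of $I$. Since $A$ is graded by the radical, it admits a homogeneous admissible ideal (as recalled above, via Theorem~$1.5$ of \cite{Saorin}), so I would fix a presentation $A\cong k[X_1,\dots,X_n]/I$ with $I$ homogeneous and write $I=\bigoplus_{d\geq 0} I_d$ for its decomposition into homogeneous pieces, where $I_d$ denotes the space of homogeneous polynomials of degree $d$ lying in $I$. Admissibility forces $I_0=I_1=0$ and $I_d=k[X_1,\dots,X_n]_d$ for all $d\geq l$, so in particular $I\neq 0$; let $d_0=\min\{d:I_d\neq 0\}$, so that $2\leq d_0\leq l$, and set $W\coloneqq I_{d_0}$.

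First I would record the elementary properties of $W$: it is a subspace of the finite-dimensional space $k[X_1,\dots,X_n]_{d_0}$, hence finite-dimensional; it is non-zero by the choice of $d_0$; and every non-zero element of $W$ is homogeneous of degree $d_0$, so all non-zero elements of $W$ have the same degree, as required.

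Next I would establish $\mathrm{Im}(\Phi_A)$-stability. By Lemma \ref{Lemma 5.3} the canonical map identifies $\mathrm{Im}(\Phi_A)$ with $CL_{(A,I)}$, the group of linear changes of variables $F$ of $k[X_1,\dots,X_n]$ satisfying $F(I)\subset I$; an element $M\in \mathrm{Im}(\Phi_A)\subset \mathrm{GL}(J/J^2)$ acts on $k[X_1,\dots,X_n]$ by the ring automorphism $f\mapsto f(XM)$, with $X=(X_1,\dots,X_n)$ (see Definition \ref{Definition 5.1}). Such a substitution is linear, hence it preserves the grading of $k[X_1,\dots,X_n]$: it sends a homogeneous polynomial of degree $d$ to a homogeneous polynomial of the same degree $d$. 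Therefore, for $f\in W=I_{d_0}$ one has $f(XM)\in I$ because $M$ stabilises $I$, and $f(XM)$ is homogeneous of degree $d_0$, so $f(XM)\in I\cap k[X_1,\dots,X_n]_{d_0}=I_{d_0}=W$. This shows $W$ is $\mathrm{Im}(\Phi_A)$-stable and finishes the argument.

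I do not expect a serious obstacle here; the only point needing care is to transport, via Lemma \ref{Lemma 5.3}, the action of $\mathrm{Im}(\Phi_A)$ on $J/J^2$ to the concrete linear-substitution action on $k[X_1,\dots,X_n]$, under which grading-preservation (and hence stability of $W$) is transparent. One may also remark on the extreme case $d_0=l$, in which $I=\langle X_1,\dots,X_n\rangle^l$ is monomial and $W$ is spanned by all monomials of degree $l$; this is exactly the minimal degree subspace referred to in Remark \ref{Remark 5.34} and used in Theorem \ref{Theorem 5.35}.
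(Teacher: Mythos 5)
Your proof is correct, and it produces the same subspace $W$ as the paper (the paper's $W$ is the span of the minimal-degree generators $f_1,\dots,f_r$ from a fixed homogeneous generating set, which one checks is exactly $I_{d_0}=I\cap k[X_1,\dots,X_n]_{d_0}$). The difference is one of packaging: the paper works with a chosen generating set, writes $F_1(g)=\sum_j g_j f_j$, and extracts $F_1(g)\in\mathrm{Span}(f_1,\dots,f_r)$ by comparing degrees on both sides, whereas you define $W$ intrinsically as the lowest non-zero graded piece of $I$ and observe that it is an intersection of two sets each preserved by a linear substitution $M\in CL_{(A,I)}$ — namely $I$ (by Lemma~\ref{Lemma 5.3}) and the degree-$d_0$ component of $k[X_1,\dots,X_n]$ (since linear substitutions preserve the grading). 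Your formulation buys two small things: the independence of $W$ from the choice of generators, asserted separately in Remark~\ref{Remark 5.34}, becomes automatic; and the stability argument collapses to a one-line intersection argument, with no need to unwind $F_1(g)$ as a combination $\sum_j g_j f_j$ and track degrees. You also handle the degenerate monomial case $I=\langle X_1,\dots,X_n\rangle^l$ (where $d_0=l$ and $W=k[X_1,\dots,X_n]_l$) uniformly, whereas the paper's generator-based description tacitly assumes there is at least one generator of degree $<l$.
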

\vskip1mm
\begin{proof}
    Since $A$ is a graded by the radical local algebra, $I$ is isomorphic to $\langle X_1,\dots,X_n\rangle^l+\langle f_1,\dots,f_m\rangle$ with $f_i$' s  as homogeneous polynomials of $2\leq \text{deg}(f_i)\leq (l-1)$. Let $S$ denote the collection of polynomials of minimal degree in the generating set of $I$; \[S=\{f_i: \text{deg }f_i\leq \text{deg }f_j, \forall i\neq j \text{ and }i,j\in\{1,\dots,m\}\}.\] By renumbering if necessary, we can assume that $S=\{f_1,\dots,f_r\}$, where $r\leq m$. We claim that $W=\text{ Span}(S)$ is an $\text{Im }(\Phi_A)$-stable subspace under the action defined by $F_1.f=F_1(f)$, for all $F_1\in \text{Im}(\Phi_A)$ and $f\in k[X_1,\dots,X_n]$, where $F_1(f)(X)=f(XM_{F_1})$. We have $W=\{c_1f_1+\dots+c_rf_r:c_i\in k \}$. It is known that $F_1(I)\subset I$ for all $F_1\in \text{Im}(\Phi_A)$, by Lemma \ref{Lemma 5.3}, where $F_1$ is given by a linear change of variables. Let $0\neq g\in W$, then $F_1(g)\in I$. We have $\text{deg}(F_1(g))=\text{deg}(g)$ and $F_{1}(g)$ is a non-zero homogeneous polynomial as $F_1$ is given by a linear change of variables. Hence, $\text{deg}(g)=\text{deg}(F_{1}(g))=\text{Max}_{j=1}^m(\text{deg}(g_j)+\text{deg}(f_j))=\text{deg}(g_j)+\text{deg}(f_j)$, where $F_1(g)=\sum_{j=1}^{m}g_jf_j$, $g_j \text{ is a homogeneous polynomial in } k[X_1,\dots,X_n]$ and  $j\in\{1,\dots,m\}$. However, $\text{deg}(g)=\text{deg}(f_i)<\text{deg}(f_s)$, $\forall i\in \{1,\dots,r\},\forall s\in \{r+1,\dots,m\}$. Therefore, $\text{deg}(g_i)=0$ and $g_s=0$. This implies $F_1(g)=a_1f_1+a_2f_2+\dots+a_rf_r\in W$, where $a_i\in k.$
\end{proof}
\vskip2mm
\begin{remark}\label{Remark 5.34}
    The above subspace $W$ is independent of the choice of generators of $I$. We call this subspace the \emph{minimal degree subspace} associated to $A$ as it is generated by polynomials of minimal degree in $I$ and $I$ is determined by $A$.
\end{remark}
\vskip2mm
\begin{theorem}\label{Theorem 5.35}
Let $A$ be a split local commutative graded by the radical algebra over a field $k$ with $\emph{dim}(J/J^2)=n$. Let $W$ be the minimal degree subspace associated to $A$. Assume $W$ contains a non-singular homogeneous polynomial of degree at least $3$ if $\emph{char }k=0$. Additionally if $\emph{char }k=p>3$ and $W$ contains a non-singular homogeneous polynomial of degree $d$ with $2<d<p$. Then the following two statements are equivalent: 
\begin{enumerate}
\item $G_A$ is $k$-split solvable; 
\vskip2mm
\item $W$ has a stable full flag by $\emph{\text{Im}}(\Phi_A)$.
\end{enumerate}
Therefore, under the condition $(2)$, the group is rational. In particular, if $\text{dim }W=1$, then $G_A$ is rational.
\end{theorem}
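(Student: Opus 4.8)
The plan is to pass from $G_A$ to the single finite‑dimensional linear representation of $\mathrm{Im}(\Phi_A)$ on the minimal degree subspace $W$, and to read off both $k$‑split solvability and rationality from the geometry of that representation. First I would record the structural reduction. Since $A$ is graded by the radical, it has a homogeneous admissible ideal, so Proposition \ref{Proposition 5.4} gives $G_A\cong \mathrm{Im}(\Phi_A)\ltimes \mathrm{Ker}(\Phi_A)$ with $\mathrm{Ker}(\Phi_A)$ unipotent, and $\mathrm{Im}(\Phi_A)=\Phi_A(G_A)$ is a closed connected subgroup of $\mathrm{GL}(J/J^2)$. Over the perfect field $k$ the unipotent factor is $k$‑split and rational, so $G_A$ is $k$‑split solvable if and only if $\mathrm{Im}(\Phi_A)$ is, and by Proposition \ref{Proposition 4.37} together with Remark \ref{Remark 4.9} the group $G_A$ is rational as soon as $\mathrm{Im}(\Phi_A)$ has a split maximal torus. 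By Lemma \ref{Lemma 5.33} the subspace $W$ is $\mathrm{Im}(\Phi_A)$‑stable under the linear change of variables action, which yields a morphism of $k$‑groups $\rho\colon \mathrm{Im}(\Phi_A)\to \mathrm{GL}(W)$; condition $(2)$ is exactly the assertion that $\rho(\mathrm{Im}(\Phi_A))$ stabilises a full $k$‑flag of $W$.

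The crucial auxiliary fact I would prove next is that $\ker\rho$ is finite. By hypothesis $W$ contains a non‑singular homogeneous polynomial $g$ whose degree $d$ satisfies $d\geq 3$ and, when $\mathrm{char}\,k=p$, also $d<p$. Any $M\in\ker\rho$ fixes every element of $W$, in particular $g(XM)=g(X)$, so $M\in\mathrm{Stab}(g)$; since $g$ is non‑singular its associated symmetric multilinear form is non‑singular of degree $d$, so Theorem \ref{Theorem 5.31} applies and $\mathrm{Stab}(g)$ is finite. Hence $\ker\rho$ is finite, and, being a finite normal subgroup of the connected group $\mathrm{Im}(\Phi_A)$, it is central; thus $\rho$ is a central isogeny onto the closed subgroup $\rho(\mathrm{Im}(\Phi_A))\subseteq \mathrm{GL}(W)$. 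Note that $\ker\rho$ need not be trivial — it already contains the $d$‑th roots of unity inside the central $\mathbb{G}_m$ of Proposition \ref{Proposition 5.5} — so one genuinely must argue through the isogeny and not through an embedding.

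With this in hand the equivalence is short. For $(1)\Rightarrow(2)$: if $G_A$ is $k$‑split solvable then so is its quotient $\mathrm{Im}(\Phi_A)$, hence $\mathrm{Im}(\Phi_A)$ is $k$‑trigonalisable, so in the $k$‑representation $\rho$ it stabilises a full flag of $W$ defined over $k$, which is $(2)$ (cf.\ \cite{Borel}, \cite{Springer}). For $(2)\Rightarrow(1)$ and rationality: if $W$ has an $\mathrm{Im}(\Phi_A)$‑stable full flag, then $\rho(\mathrm{Im}(\Phi_A))$ is a connected $k$‑subgroup of the upper‑triangular Borel subgroup of $\mathrm{GL}(W)$, hence $k$‑split solvable, and in particular its maximal torus is split. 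Since $\rho$ has finite central kernel, a maximal torus $T$ of $\mathrm{Im}(\Phi_A)$ maps onto a split torus with finite kernel, so the Galois action on $X^{*}(T)\otimes\mathbb{Q}$, and therefore on the torsion‑free lattice $X^{*}(T)$, is trivial, i.e.\ $T$ is split; and $\mathrm{Im}(\Phi_A)$ is solvable, being a finite central extension of a solvable group. A connected solvable $k$‑group with split maximal torus and, over the perfect field $k$, split unipotent radical is $k$‑split solvable; hence $\mathrm{Im}(\Phi_A)$, and therefore $G_A=\mathrm{Im}(\Phi_A)\ltimes\mathrm{Ker}(\Phi_A)$, is $k$‑split solvable. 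Rationality follows at once: the reductive quotient of $\mathrm{Im}(\Phi_A)$ by its $k$‑unipotent radical is the split torus $T$, which is rational, so $\mathrm{Im}(\Phi_A)$ is rational by Remark \ref{Remark 4.9} and $G_A$ is rational by Proposition \ref{Proposition 4.37}. Finally, if $\dim W=1$ then $0\subset W$ is the unique full flag and is automatically $\mathrm{Im}(\Phi_A)$‑stable, so $(2)$ holds and $G_A$ is rational — this last case also follows directly from Proposition \ref{Proposition 5.32}, the single generator of minimal degree being the non‑singular $g$.

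The main obstacle I anticipate is the bookkeeping forced by the non‑injectivity of $\rho$: one must check carefully that the finite central isogeny $\rho$ transports $k$‑split solvability and the splitness of the maximal torus in the direction $(2)\Rightarrow(1)$, and one must establish that $\ker\rho$ is finite. This last point is precisely where the non‑singularity of $g$ and the restriction on $\deg g$ relative to $\mathrm{char}\,k$ enter, via Jordan's theorem (Theorem \ref{Theorem 5.31}); without a non‑singular form of degree at least $3$ in $W$ the kernel of $\rho$ can be positive‑dimensional and the whole reduction collapses.
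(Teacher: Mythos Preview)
Your proposal is correct and follows the same architecture as the paper's proof: reduce to $\text{Im}(\Phi_A)$ via Proposition~\ref{Proposition 5.4}, study the representation on the minimal degree subspace $W$, invoke Jordan's theorem (Theorem~\ref{Theorem 5.31}) to control the kernel, and read off both directions from flag-stabilisation in $W$. The one substantive difference is to your credit: the paper asserts $\text{Ker}(\Psi)\subset\text{Stab}^{0}(s)=\{Id\}$ and concludes $\text{Im}(\Phi_A)\cong\text{Im}\,\Psi$, whereas you correctly observe that the kernel contains $\mu_d$ and is only finite, not trivial, and you handle the resulting central isogeny explicitly (splitness of the maximal torus via the character lattice, solvability via the finite central extension). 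For $(1)\Rightarrow(2)$ the paper quotes Borel's fixed-point theorem on the flag variety of $W$, which amounts to your $k$-trigonalisability argument.
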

\vskip1mm
\begin{proof}
By Lemma \ref{Lemma 5.33}, $W$ is a stable subspace under $\text{Im}(\Phi_A$). Consider the flag variety of $W$, i.e., $\mathfrak{G}(W)$, which is a complete variety with the action of $\text{Im}(\Phi_A$). However, $G_A$ is $k$-split solvable if and only if $\text{Im}(\Phi_A$) is $k$-split solvable. Using Proposition $15.2$ of \cite{Borel}, we get a fixed point in $\mathfrak{G}(W)$. Therefore, we get a stable full flag of $W$. Conversely, if $\text{Im}(\Phi_A)$ has a stable full flag in $W$, then the representation of $\text{Im}(\Phi_A)$ in $\text{GL}(W)$ is $k$-split solvable. Let $s\in W$ be a non-singular homogeneous polynomial such that $\text{deg}(s)\geq 3$ if $\text{char }k=0$, and if $\text{char }k=p>3$ then $s\in W$ be a non-singular homogeneous polynomial of degree $d$ with $2<d<p$. Let $B$ be a basis of $W$ containing $s$. Then use the basis $B$ to represent $\text{Im}(\Phi_A)$ in $\text{GL}(W)$ in the following map $\Psi$, where $\Psi(M)(g)=g(XM)$ and $g\in B$. So we have the representation \[\Psi:\text{Im}(\Phi_A)\rightarrow \text{GL}(W)\] with $\text{Ker}(\Psi)\subset \text{Stab}^{0}(s)$. However, by Theorem \ref{Theorem 5.31}, $\text{Stab}^{0}(s)=\{Id\}$. Therefore, $\text{Im}(\Phi_A)\cong \text{Im}\Psi.$ Since $\text{Im}(\Phi_A)$ has a stable full flag in $W$, $\text{Im}\Psi$ is conjugate to a $k$-split solvable group. Hence, $\text{Im}(\Phi_A)$ is $k$-split solvable.
\end{proof}

\section{Counter-Example to $R$-triviality of $G_A$}\label{counter}
\vskip2mm 
\noindent
In this section, we construct a counter-example of Question \ref{Question 5.8} using the example in the paper \cite{Gille2} and Lemma $5.1$ of \cite{GG}. The following are well-known examples of non $R$-trivial groups and theorems associated with them. 
\vskip2mm
\noindent
Merkurjev in \cite{AM2} constructs a quadratic form $q$ of dimension $6$ with non-trivial signed discriminant over a base field $k$ of characteristic not $2$ and cohomological dimension $2$ such that $\text{PSO}(q)$ is not $R$-trivial as a variety, where $\text{PSO}(q)$ is projective special orthogonal group.

\vskip2mm
\begin{theorem}\label{Theorem 6.1}
$($\cite{Gille2}$)$ There exists a field $k$ of characteristic $0$ with cohomological dimension $3$ and a quadratic form $q/k$ with rank 8 and trivial signed discriminant such that the variety $\emph{PSO}(q)$ is not stably $k$-rational.
\end{theorem}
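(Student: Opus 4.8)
The plan is to recall Gille's argument from \cite{Gille2} rather than to give a new proof, since the two ingredients it isolates --- a field of prescribed cohomological dimension and a computation of $R$-equivalence on projective special orthogonal groups --- are exactly what gets reused afterwards. The opening reduction is formal: affine space is rational, and $R$-equivalence behaves well under products, $(X\times Y)(L)/R\cong X(L)/R\times Y(L)/R$ together with $\mathbb{A}^m(L)/R=\{1\}$, so any stably $k$-rational connected algebraic group is $R$-trivial. Hence it suffices to exhibit a single field extension $F/k$ with $\mathrm{PSO}(q)(F)/R\neq\{1\}$, and this is detected through Merkurjev's description of $R$-equivalence for projective orthogonal groups (the same circle of ideas as \cite{AM2}), which realises $\mathrm{PSO}(q)(F)/R$ as a subquotient of $F^\times$ built from similitude factors and spinor norms and governed by the higher cohomological invariants of $q_F$.

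The construction itself has two steps. First I would build the field: starting from a field of small cohomological dimension (for instance an iterated Laurent series field $\mathbb{C}((t_1))((t_2))((t_3))$, which has characteristic $0$ and $\mathrm{cd}=3$, or a field manufactured from a $p$-adic base) one passes through a controlled tower of extensions --- iterated Laurent series fields and function fields of quadrics --- each chosen so that its effect on $\mathrm{cd}$ and on the relevant class in $H^3(\,\cdot\,,\mathbb{Z}/2)$ can be tracked by residue maps and by Springer's theorem on quadratic forms under such extensions, arranging $\operatorname{char}k=0$ and $\mathrm{cd}(k)=3$ with room to spare. Second, over this $k$ one picks a quadratic form $q$ of rank $8$ with trivial signed discriminant (so $q\in I^2(k)$) whose degree-$3$ cohomological invariant is a nonzero symbol; such a form exists precisely because the tower was assembled to carry that class. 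Substituting $q_F$ into Merkurjev's formula for a suitable extension $F/k$ --- a transcendental or quadratic extension chosen so that the degree-$3$ invariant survives --- then yields $\mathrm{PSO}(q)(F)/R\neq\{1\}$, whence $\mathrm{PSO}(q)$ is not $R$-trivial and therefore not stably $k$-rational.

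The main obstacle is the bookkeeping of invariants through this tower and through the final extension $F/k$: one must guarantee that the degree-$3$ class is nonzero over $k$, that it remains nonzero over the extensions entering Merkurjev's description, and that none of the auxiliary quadratic extensions occurring there trivialises the quotient that carries the obstruction. These specialisation and generisation arguments with unramified cohomology (equivalently, with the residue maps attached to the tower) form the technical heart of \cite{Gille2}; granting them, the non-stable-rationality of $\mathrm{PSO}(q)$ is a formal consequence of the reduction above.
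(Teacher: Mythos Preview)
The paper does not prove this statement at all: Theorem \ref{Theorem 6.1} is simply quoted from \cite{Gille2} as a black box, with the parenthetical citation ``(\cite{Gille2})'' in the statement and no proof given. The only comment the paper adds is the sentence immediately following: ``In addition, Ph.~Gille showed that the above group $\mathrm{PSO}(q)$ is not $R$-trivial.'' The theorem is then used, together with Lemma \ref{Lemma 5.29}, to manufacture the counterexamples in Section \ref{counter}.

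Your proposal, by contrast, attempts to sketch the internal mechanics of Gille's construction: the reduction from stable rationality to $R$-triviality, Merkurjev's formula for $\mathrm{PSO}(q)(F)/R$ in terms of similitude factors and spinor norms, the building of a field of cohomological dimension $3$ via iterated Laurent series and function fields of quadrics, and the tracking of a degree-$3$ invariant through that tower. This is a reasonable high-level outline of the strategy in \cite{Gille2}, but it goes well beyond what the present paper does or needs. If your goal is to match the paper, you should simply cite the result; if your goal is to give an independent account, be aware that your sketch is still only a roadmap --- the ``bookkeeping of invariants through the tower'' that you flag as the main obstacle is indeed the entire content of Gille's paper, and your proposal does not supply it.
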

\par 
\noindent
In addition, Ph.~Gille showed that the above group $\text{PSO}(q)$ is not $R$-trivial. Using this theorem, Nivedita Bhaskhar has produced more examples of adjoint non-$R$ trivial groups in \cite{Nivedita}.

\vskip2mm

\begin{theorem}\label{Theorem 6.2}
$($\cite{Nivedita}, \emph{Theorem} $7.1)$ For each $n=2m (m\geq 3)$, there exists a quadratic form $q_n$ defined over a field $k_n$ such that $\emph{PSO}(q_n)$ is not $R$-trivial as $k_n$ group.
\end{theorem}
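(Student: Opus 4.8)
The plan is to deduce this from Gille's Theorem~\ref{Theorem 6.1} and from Merkurjev's dimension $6$ example (recalled just above) by a stabilization argument that raises the dimension of the quadratic form two at a time without destroying the failure of $R$-triviality. First I would record the base cases: for $m=3$ take Merkurjev's form $q$ of dimension $6$ over a field of cohomological dimension $2$, for which $\mathrm{PSO}(q)=\mathrm{PGO}^{+}(q)$ is not $R$-trivial (see \cite{AM2}); for $m=4$ take the form of dimension $8$ from Theorem~\ref{Theorem 6.1}, whose $\mathrm{PSO}$ is not $R$-trivial by Gille (see \cite{Gille2}). Thus the assertion holds for $m\in\{3,4\}$, and it remains to pass from dimension $2m$ to dimension $2m+2$ and iterate.

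The key step is the following claim: if $\phi$ is a nondegenerate quadratic form over a field $F$ with $\mathrm{char}(F)\neq 2$ and $\mathrm{PSO}(\phi)$ is not $R$-trivial, then $\mathrm{PSO}(\phi\perp\mathbb{H})$ is not $R$-trivial either, where $\mathbb{H}=\langle 1,-1\rangle$ is a hyperbolic plane. The reason is that $\phi$ and $\phi\perp\mathbb{H}$ have the \emph{same anisotropic kernel over every} field extension $F'/F$ (the extra plane only enlarges the Witt index), while Merkurjev's explicit computation of the group of $R$-equivalence classes shows that for $\mathrm{PSO}$ of a quadratic form $\psi$ the group $\mathrm{PSO}(\psi)(F')/R$ depends only on the Witt class --- equivalently only on the anisotropic part --- of $\psi$ over $F'$: it is the group of proper similarity factors $G(\psi_{F'})=\{a:\psi_{F'}\cong a\psi_{F'}\}$ modulo a subgroup $H(\psi_{F'})$ built from norm groups of quadratic extensions that trivialize the relevant invariants, and both $G$ and $H$ manifestly depend only on the Witt class; in particular, adding a hyperbolic plane changes neither, since by Witt cancellation $\psi\cong a\psi\iff\psi\perp\mathbb{H}\cong a(\psi\perp\mathbb{H})$ and $\psi_K$ is hyperbolic iff $(\psi\perp\mathbb{H})_K$ is. Hence $\mathrm{PSO}(\phi\perp\mathbb{H})(F')/R\cong\mathrm{PSO}(\phi)(F')/R$ for every $F'/F$, so one side is $R$-trivial precisely when the other is; concretely, a multiplier $\lambda\in G(\phi_K)\setminus H(\phi_K)$ witnessing $\mathrm{PSO}(\phi)(K)/R\neq 1$ over some extension $K/F$ also witnesses $\mathrm{PSO}(\phi\perp\mathbb{H})(K)/R\neq 1$.

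Granting the key step, the theorem follows by induction on $m$: starting from Merkurjev's dimension $6$ form and repeatedly adding $\mathbb{H}$ orthogonally yields, for every $m\geq 3$, a field $k_n$ (which may be kept fixed) and a quadratic form $q_n$ of dimension $n=2m$ with $\mathrm{PSO}(q_n)$ not $R$-trivial, which is exactly the assertion. If one also wants trivial signed discriminant in every even dimension, one instead runs the induction from Gille's dimension $8$ example for $m\geq 4$ and keeps Merkurjev's form for $m=3$; alternatively, following the construction behind Theorem~\ref{Theorem 6.1}, one builds for each $m$ a tower of function fields of products of Severi--Brauer varieties carrying a degree $3$ unramified cohomological obstruction already in dimension $2m$. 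I expect the main obstacle to be exactly the key step, namely proving that $R$-equivalence of $\mathrm{PSO}$ of a quadratic form is insensitive to hyperbolic summands: this reduces to Merkurjev's structure theorem for $\mathrm{PGO}^{+}(A,\sigma)(F)/R$ together with the routine but essential check that the description is compatible with arbitrary base change $F'/F$, so that one obtains genuine $R$-triviality and not merely triviality over the ground field.
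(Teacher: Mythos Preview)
The paper does not prove Theorem~\ref{Theorem 6.2}: it is quoted verbatim from \cite{Nivedita} and used as a black box in Section~\ref{counter}. So there is no proof in the paper to compare your attempt against.

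That said, your stabilization argument is sound. The crucial claim is that Merkurjev's formula
\[
\mathrm{PGO}^{+}(q)(F)/R \;\cong\; G(q)/\mathrm{Hyp}(q)
\]
depends only on the Witt class of $q$ over every extension $F'/F$, and your verification of this is correct: by Witt cancellation $G(q\perp\mathbb{H})=G(q)$, and $q_K$ is hyperbolic iff $(q\perp\mathbb{H})_K$ is, so $\mathrm{Hyp}(q\perp\mathbb{H})=\mathrm{Hyp}(q)$. Hence adding $\mathbb{H}$ preserves non-$R$-triviality, and starting from Merkurjev's dimension~$6$ form one reaches every even dimension $\geq 6$. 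Bhaskhar's own argument in \cite{Nivedita} is different in flavour: rather than stabilizing a single base example, she builds for each $m$ a tailored field $k_m$ and form $q_{2m}$ via function-field towers carrying an explicit unramified cohomological obstruction, in the spirit of Gille's construction. Your route is shorter for the bare existence statement recorded here; hers gives more control over the invariants (e.g.\ discriminant, Clifford invariant) of the resulting forms, which matters for the finer applications in her paper but is irrelevant for how Theorem~\ref{Theorem 6.2} is used in the present one.
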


\vskip2mm

\begin{theorem}\label{Theorem 6.3}
$($\cite{Preeti}, \emph{Theorem} $1.1)$ Let $p$ be a prime such that $p\neq 2$. Let $F=Q_p(t)$ be the rational function field in one variable over the $p$-adic field $Q_p$. Then for every positive integer $n\geq 2$, there exist absolutely simple adjoint algebraic groups G of type $^{2}\emph{A}_{2n-1}$
 over $F$ such that, the group of rational equivalence classes over $F$ is non-trivial,i.e., $G(F)/R \neq (1)$.
\end{theorem}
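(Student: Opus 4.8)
The plan is to follow the line of \cite{Preeti}, whose argument rests on three ingredients: a structural description of the groups at hand, a dictionary between $R$-equivalence and reduced unitary Whitehead groups, and an explicit construction over $F=\mathbb{Q}_p(t)$ exploiting the ``two-dimensional local'' nature of $F$. For the first: every absolutely simple adjoint $F$-group $G$ of type ${}^{2}A_{2n-1}$ is of the form $G=\mathrm{PGU}(B,\tau)$ for a separable quadratic extension $K/F$, a central simple $K$-algebra $B$ of degree $2n$, and a $K/F$-semilinear involution $\tau$ on $B$; its simply connected cover is $\mathrm{SU}(B,\tau)$, and $G$ is linked to it through $\mathrm{GU}(B,\tau)$, whose kernel to $\mathrm{PGU}(B,\tau)$ is the rational torus $R_{K/F}(\mathbb{G}_m)$ and whose relation to $\mathrm{SU}(B,\tau)$ is governed by the similitude multiplier and the unitary determinant.

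For the second ingredient I would invoke the computation of $R$-equivalence for simply connected classical groups (Merkurjev, Gille; see \cite{Gille1}): for $G'=\mathrm{SU}(B,\tau)$ one has $G'(F)/R\cong\mathrm{SUK}_1(B,\tau)$, the reduced unitary Whitehead group of Yanchevskii, at least under the index hypotheses that make this computation available. To descend from $G'$ to the adjoint group $G$, I would push the central isogeny together with the sequences above to $F$-points and $R$-equivalence classes, using that norm-one and Weil-restriction tori of $K/F$ are $R$-trivial; this exhibits $G(F)/R$ as a quotient of a group built from $\mathrm{SUK}_1(B,\tau)$ by the image of unitary multipliers, so that a nonzero class of $\mathrm{SUK}_1(B,\tau)$ surviving this quotient gives $G(F)/R\neq 1$. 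Pinning down that quotient precisely for the adjoint group --- as opposed to the simply connected one, where the statement is classical --- is the first delicate step.

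For the third ingredient: fix on $F=\mathbb{Q}_p(t)$ a discrete valuation $v$ with uniformizer $t$ and residue field $\mathbb{Q}_p$, so that the completion is $\mathbb{Q}_p((t))$ --- complete, discretely valued, with local residue field $\mathbb{Q}_p$, hence of cohomological dimension $3$ and rich enough to carry nontrivial higher invariants. Take $K=F(\sqrt{u})$ with $u$ a $v$-unit chosen so that $K$ is unramified along $v$, and build $B$ from symbol ($p$-)algebras over $K$ whose residues are arranged so that $B$ is a division algebra of degree exactly $2n$ admitting a $K/F$-involution $\tau$ (which forces the corestriction of $[B]$ to $F$ to be trivial, built in from the start). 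Then apply the Platonov--Yanchevskii reduction theory for (unitary) Whitehead groups over complete discretely valued fields: there is a residue homomorphism out of $\mathrm{SUK}_1(B,\tau)$ into a group assembled from $\mathrm{SUK}_1$ and $\mathrm{SK}_1$ of the residue data together with a tame-symbol contribution over $\mathbb{Q}_p$, and one produces an explicit element of $\mathrm{SUK}_1(B,\tau)$ with nonzero residue --- the unitary analogue of Platonov's classical division algebras with $\mathrm{SK}_1\neq 0$. This forces $\mathrm{SUK}_1(B,\tau)\neq 0$, and by the second ingredient $G(F)/R=\mathrm{PGU}(B,\tau)(F)/R\neq 1$, while $G$ is absolutely simple adjoint of type ${}^{2}A_{2n-1}$ by construction.

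The hardest parts will be exactly the second ingredient and the residue computation in the third: carrying the well-understood $R$-equivalence of $\mathrm{SU}(B,\tau)$ over to the adjoint group $\mathrm{PGU}(B,\tau)$ without destroying the nontrivial class, and constructing --- uniformly in $n\geq 2$ --- a central simple algebra with unitary involution over $\mathbb{Q}_p(t)$ whose reduced unitary Whitehead group is genuinely nonzero, which demands the full strength of the valuation-theoretic reduction machinery and a concrete check that the chosen symbols have the prescribed nonvanishing residues.
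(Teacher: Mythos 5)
The statement you are trying to prove is cited directly from \cite{Preeti}, Theorem~1.1, and the paper under review gives no proof of it whatsoever: it is invoked as a black-box input in Section~\ref{counter} (see the discussion preceding Theorem~\ref{Theorem 6.4}). There is therefore no in-paper argument to compare your proposal against; the only thing the paper ``does'' with this theorem is quote it.

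Your three-ingredient sketch --- realizing $G$ as $\mathrm{PGU}(B,\tau)$ for a degree-$2n$ central simple $K$-algebra $B$ with a $K/F$-unitary involution; relating $R$-equivalence of the simply connected cover $\mathrm{SU}(B,\tau)$ to the reduced unitary Whitehead group $\mathrm{SUK}_1(B,\tau)$; and producing, via a residue calculus over a complete discretely valued extension of $\mathbb{Q}_p(t)$, a pair $(B,\tau)$ with $\mathrm{SUK}_1(B,\tau)\neq 0$ for every $n\geq 2$ --- is broadly in line with the methods of \cite{Preeti} and the surrounding literature (Yanchevskii, Merkurjev, Chernousov--Merkurjev, Gille, Parimala--Preeti). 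But the two steps you yourself flag as hard carry essentially all of the content: pushing the non-triviality of $\mathrm{SUK}_1(B,\tau)$ through the isogeny $\mathrm{SU}(B,\tau)\to \mathrm{PGU}(B,\tau)$ without killing the class (this is not a matter of rational tori of $K/F$ being $R$-trivial; one must control the image of similitude multipliers and unitary determinants at the level of $R$-equivalence classes), and the uniform-in-$n$ construction of degree-$2n$ division algebras over $\mathbb{Q}_p(t)$ with a unitary involution and nonvanishing residue. Without consulting \cite{Preeti} directly, your sketch is a plan, not a proof, and nothing in the present paper can be used to certify it.
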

\vskip 2mm
\noindent
Now, we are in a position to construct our counter-example. We proceed as follows.

\vskip2mm
\noindent
As in Theorem \ref{Theorem 6.1}, there exists a field $k$ of characteristic $0$ with cohomological dimension equal to $3$ and a quadratic form $q$ with the given property as in Theorem \ref{Theorem 6.1} such that $\text{PSO}(q)(F)/R\neq (1)$ for some field extension $F/k$. It is now possible to apply Lemma \ref{Lemma 5.29}. Prior to this, it suffices to identify an irreducible representation of $\text{PSO}(q)$, as any irreducible representation is restricted and tensor indecomposable when the characteristic of $k$ is $0$. Let us denote such an irreducible representation by $V$, ensuring that the pair $(\text{PSO}(q), V)$ is not listed in Tables $C$ and $D$ of the article \cite{GG}. Hence, according to our choice $k[V]^{\text{PSO}(q)}\neq k$ or $k[q]$. Consequently, by applying Lemma \ref{Lemma 5.29}, we can identify a homogeneous polynomial \( f \) such that \( f \in k[V]^{\text{PSO}(q)} - k[q] \) and the identity component of the stabilizer of \( f \) is \( \text{PSO}(q) \). Now we have enough data to construct a suitable commutative associative algebra $A$ such that $G_A$  is not $R$-trivial. Consider the following algebra $A=k[V]/I\cong k[X_1,\dots,X_m]/I$, where $I=\langle X_1,\dots,X_m\rangle^l+\langle f\rangle$ and $m=\text{dim }V, l>\text{deg}(f)>2$.
\vskip2mm 
\noindent
Then we claim that $G_A$ (the identity component of $\text{Aut}_k(A)$) is not $R$-trivial.  It is clear that $A$ is graded by the radical algebra. Therefore, Proposition \ref{Proposition 5.4} gives  
\[\text{Aut}_k(A)\cong \text{Im}(\Phi_A) \ltimes \text{U}, \quad \text{ where } U\text{ is a connected unipotent group.}\]
The constructed algebra $A$ satisfies the hypothesis of Proposition \ref{Proposition 5.12}, hence $\text{Im}(\Phi_A)=\text{Sim}(f)=\mathbb{G}_m.\text{Stab}(f)$. Therefore,
$G_A\cong \mathbb{G}_m.\text{Stab}(f)^{0}\ltimes \text{U}.$
As $\text{Stab}(f)^0=\text{PSO}(q)$, we get $\mathbb{G}_m.\text{Stab}(f)^{0}\cong(\mathbb{G}_m\times \text{PSO}(q))/\mathbb{G}_m\cap \text{PSO}(q)$. It is clear that $\mathbb{G}_m\cap \text{PSO}(q)=Z(\text{PSO}(q))=\{Id\}$. So we have 
\begin{equation*}
G_A(F)/R=\text{PSO}(q)(F)/R\neq (1) \text{ for some } F/k.
\end{equation*}
\vskip2mm
\noindent
 Following the same technique with Merkurjev's example, where $q$ is of dimension $6$ and has non-trivial signed discriminant over a field $k$ with characteristic $k=0$ and cohomological dimension $2$, it is possible to construct an algebra $A$ such that the dimension of $J/J^2$ is at least $6$, and the group $G_A$ is not $R$-trivial.   
The application of Theorem \ref{Theorem 6.2} and Theorem \ref{Theorem 6.3}, along with comparable techniques discussed earlier, facilitates the construction of additional examples of non $R$-trivial groups \( G_A \). In particular, for all integers \( i \) such that \( i\geq 4 \), Theorem \ref{Theorem 6.2} can be utilized to establish a non $R$-trivial group \( G_A \) with a rank equal to \( i \) and $\text{dim }(J/J^2)\geq 6$. As a consequence of the preceding discussion, we have the following theorem.
\begin{theorem}\label{Theorem 6.4}
For each $n\geq3$, there exists a finite-dimensional commutative algebra $A_n$ with $\emph{dim } \displaystyle (\frac{J_n}{J_n^2})\geq 2n$, where $J_n$ is the Jacobson radical of $A_n$ over a field $k_n$, such that $G_{A_n}$ is not $R$-trivial as a $k_n$-group and rank of $G_{A_n}$ is $(n+1)$. 
\end{theorem}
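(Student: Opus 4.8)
The plan is to feed the non-$R$-trivial adjoint orthogonal groups furnished by Theorem \ref{Theorem 6.2} into the machinery of Section \ref{sec: commutative algebra}, mimicking the explicit construction already carried out above for Merkurjev's and Gille's examples. Fix $n\ge 3$ and apply Theorem \ref{Theorem 6.2} with its parameter $m$ equal to $n$: there is a field $k_n$ (which we may take of characteristic $0$) and a quadratic form $q$ over $k_n$ of dimension $2n$ such that $G:=\mathrm{PSO}(q)$ is not $R$-trivial, i.e.\ $\mathrm{PSO}(q)(F)/R\ne(1)$ for some extension $F/k_n$. Since $\dim q=2n\ge 6$, the group $G$ is a connected simple adjoint $k_n$-group of type $\mathrm D_n$, so $\mathrm{rank}(G)=n$ and $Z(G)=\{1\}$.

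Next I would choose an irreducible representation $V$ of $G$ over $k_n$ with $\dim V\ge 2n$ for which the pair $(G,V)$ satisfies the hypotheses of Lemma \ref{Lemma 5.29}: $V$ irreducible, restricted and tensor-indecomposable (all automatic in characteristic $0$) and $(G,V)$ not appearing in the exceptional tables of \cite{Sei}, \cite{GG}. Such a $V$ exists because $G$ has irreducible representations of arbitrarily large dimension while only finitely many pairs are excluded. Lemma \ref{Lemma 5.29} then supplies a homogeneous polynomial $f\in k_n[V]^{G}-k_n[q_V]$, necessarily of degree $d\ge 3$ (as $V$ is irreducible, so $q_V$ spans the invariant quadratics), whose stabiliser in $\mathrm{GL}(V)$ has identity component exactly $G$. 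Fix an integer $l>d$ and set
\[
A_n:=k_n[X_1,\dots,X_{\dim V}]\big/I,\qquad I=\langle X_1,\dots,X_{\dim V}\rangle^{\,l}+\langle f\rangle .
\]
Then $A_n$ is a split local commutative algebra, graded by the radical, with Loewy length $l$ and $\dim(J_n/J_n^{2})=\dim V\ge 2n$.

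The identification of $G_{A_n}$ is now immediate from earlier results. As $I$ is homogeneous and $2\le d<l$, Proposition \ref{Proposition 5.12} gives $\mathrm{Aut}_{k_n}(A_n)\cong\mathrm{Sim}(f)\ltimes\mathrm{Ker}(\Phi_{A_n})$; passing to identity components as in the proof of Theorem \ref{Theorem 5.13} yields $G_{A_n}\cong\mathrm{Sim}^{0}(f)\ltimes U=\bigl(\mathbb{G}_m\cdot\mathrm{Stab}^{0}(f)\bigr)\ltimes U$, where $U:=\mathrm{Ker}(\Phi_{A_n})^{0}$ is unipotent by Proposition \ref{Proposition 4.12} and $\mathbb{G}_m$ is the torus of scalar matrices. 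By the choice of $f$, $\mathrm{Stab}^{0}(f)=G=\mathrm{PSO}(q)$, and since the scalars are central in $\mathrm{GL}(V)$ while $Z(\mathrm{PSO}(q))=\{1\}$, the intersection $\mathbb{G}_m\cap\mathrm{PSO}(q)$ is trivial, so $G_{A_n}\cong\bigl(\mathbb{G}_m\times\mathrm{PSO}(q)\bigr)\ltimes U$. Because $U$ is unipotent, $\mathrm{rank}(G_{A_n})=\mathrm{rank}(\mathbb{G}_m\times\mathrm{PSO}(q))=1+n=n+1$. Finally, Proposition \ref{Proposition 4.37} gives $G_{A_n}(F)/R\cong\mathrm{Im}(\Phi_{A_n})(F)/R=(\mathbb{G}_m\times\mathrm{PSO}(q))(F)/R$ for every $F/k_n$, which by the product formula for $R$-equivalence classes equals $\mathbb{G}_m(F)/R\times\mathrm{PSO}(q)(F)/R=\mathrm{PSO}(q)(F)/R$ since $\mathbb{G}_m$ is rational. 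As $\mathrm{PSO}(q)$ is not $R$-trivial, there is an $F$ with $G_{A_n}(F)/R\ne(1)$, so $G_{A_n}$ is not $R$-trivial, completing the argument.

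\textbf{Main obstacle.} Every step except the choice of $V$ is a direct invocation of results already established. The one point demanding genuine care is verifying the hypotheses of Lemma \ref{Lemma 5.29} over the non-algebraically-closed field $k_n$: producing an irreducible $k_n$-representation $V$ of $\mathrm{PSO}(q)$ with $\dim V\ge 2n$ that avoids the exceptional tables and for which the invariant ring properly contains $k_n[q_V]$, and then invoking the "arbitrary base field" version of the lemma so that $\mathrm{Stab}^{0}(f)=\mathrm{PSO}(q)$ as $k_n$-groups rather than merely after base change. In characteristic zero this is routine, but it is the only place where the fine structure of $\mathrm{PSO}(q)$, as opposed to formal bookkeeping, is used.
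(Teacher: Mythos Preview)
Your proposal is correct and follows essentially the same route as the paper: invoke Theorem~\ref{Theorem 6.2} to obtain a non-$R$-trivial $\mathrm{PSO}(q_{2n})$ over some $k_n$, apply Lemma~\ref{Lemma 5.29} to realise it as $\mathrm{Stab}^{0}(f)$ for a homogeneous $f$ on a suitable irreducible module $V$, and then set $A_n=k_n[V]/(\langle X_1,\dots,X_{\dim V}\rangle^{l}+\langle f\rangle)$, concluding via Proposition~\ref{Proposition 5.12} that $G_{A_n}\cong(\mathbb{G}_m\times\mathrm{PSO}(q))\ltimes U$. Your write-up is in fact more explicit than the paper's own proof about why $\dim V\ge 2n$, why the rank equals $n+1$, and why $\mathbb{G}_m\cap\mathrm{PSO}(q)=\{1\}$; the paper relies on the detailed discussion preceding the theorem for these points.
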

\vskip1mm
\begin{proof}
Let $n\geq 3$. By Theorem \ref{Theorem 6.2} for each $2n$ there exists a quadratic form $q_{2n}$ defined over a field $k_n$ such that $\mathrm{PSO}(q_{2n})$ is not $R$-trivial as $k_n$-group. Now applying Lemma \ref{Lemma 5.29} to each pair $(\mathrm{PSO}(q_{2n}),V_n)$ over the field $k_n$ with $k_n[V_n]^{\mathrm{PSO}(q_{2n})}\neq k \text{ or }k[q_{2n}]$, we get that there exists a homogeneous polynomial $f_n\in k[V_n]^{\mathrm{PSO}(q_{2n})}$ for each $n\geq 3$ such that $\mathrm{PSO}(q_{2n})=\mathrm{Stab}^{0}(f_n)$. Hence, similarly as above we consider the algebra $A_n=k[V_n]/I_n$, where $I_n=\langle X_1,\dots,X_{m_{n}}\rangle ^{l_n}+\langle f_n\rangle$ and $\text{ dim }V_n=m_n\geq 2n$, $l_n> \text{deg}(f_n)>2$. Therefore, we have \[G_{A_n}(k_n)/R= \mathrm{PSO}(q_{2n})(k_n)/R\neq (1).\]
\end{proof}
\vspace{2mm}
\noindent
We have only given some sufficient conditions on $A$ so that $G_A$ is $R$-trivial for a split finite-dimensional associative algebra $A$. Still, the following question remains open.
\begin{question}
Let $A$ be a split finite-dimensional associative algebra over a perfect field $k$. What is a necessary and sufficient condition on the algebra $A$, so that $G_A$ is $R$-trivial or rational? 
\end{question}
 \noindent
 {\bf Acknowledgments:} I would like to thank my supervisor, Prof. Maneesh Thakur, for suggesting this problem. I am also grateful for his guidance and fruitful discussions that I had with him during the preparation of this manuscript. I thank Sayan Pal and Arghya Chongdar for their useful suggestions. I also thank Indian Statistical Institute for financial support during the work.

\bibliography{references}
\bibliographystyle{amsplain}
\end{document}